\definecolor{myred}{rgb}{0.75,0,0}
\definecolor{mygreen}{rgb}{0,0.5,0}
\definecolor{myblue}{rgb}{0,0,0.65}
\def\un{\underline}
\newcommand{\gl}{{\mathfrak{gl}}}
\renewcommand{\sl}{{\mathfrak{sl}}}
\newcommand{\into}{\hookrightarrow}
\newcommand{\onto}{\twoheadrightarrow}
\newcommand{\co}{\colon}
\newcommand{\ot}{\otimes}
\newcommand{\namedto}[1]{\stackrel{#1}{\longrightarrow}}
\newcommand{\sumset}{\stackrel{\scriptstyle{\oplus}}{\scriptstyle{\subset}}}
\newcommand{\ig}[2]{\vcenter{\xy (0,0)*{\includegraphics[scale=#1]{./#2}} \endxy}}
\newtheorem{thm}{Theorem}[section]
\newtheorem{lem}[thm]{Lemma}
\newtheorem{prop}[thm]{Proposition}
\newtheorem{cor}[thm]{Corollary}
\newtheorem{conj}[thm]{Conjecture}
\theoremstyle{definition}
\newtheorem{defn}[thm]{Definition}
\newtheorem{notation}[thm]{Notation}
\newtheorem{ex}[thm]{Example}
\theoremstyle{remark}
\newtheorem{rem}[thm]{Remark}
\def\ZZ{\mathbb{Z}}
\def\QQ{\mathbb{Q}}
\def\CC{\mathbb{C}}
\def\BB{\mathbb{B}}
\def\XX{\mathbb{X}}
\def\NN{\mathbb{N}}
\def\RR{\mathbb{R}}
\DeclareMathOperator{\rt}{root}
\DeclareMathOperator{\lon}{long}
\DeclareMathOperator{\Sym}{Sym}
\DeclareMathOperator{\End}{End}
\DeclareMathOperator{\boring}{bore}
\DeclareMathOperator{\aff}{aff}
\DeclareMathOperator{\fin}{fin}
\DeclareMathOperator{\BS}{BS}
\def\de{\delta}
\def\De{\Delta}
\def\ze{\zeta}
\def\La{\Lambda}
\def\la{\lambda}
\def\al{\alpha}
\def\be{\beta}
\def\si{\sigma}
\def\Om{\Omega}
\def\om{\omega}
\def\ga{\gamma}
\newcommand{\yb}{\mathbf{y}}
\newcommand{\AC}{\mathcal{A}}
\newcommand{\pa}{\partial}
\newcommand{\NC}{NC}
\newcommand{\NH}{NH}
\newcommand{\stair}{P}
\newcommand{\ab}{\mathbf{a}}
\DeclareMathOperator{\cw}{cw}
\DeclareMathOperator{\ws}{ws}
\newcommand{\bTheta}{\bar{\Theta}}
\newcommand{\zz}{\mathbf{a}}
\newcommand{\maxcalc}{20}
\DeclareMathOperator{\bottom}{bot}
\DeclareMathOperator{\blahblah}{foobar}
\title[]{Frobenius extensions and the exotic nilCoxeter algebra for $G(m,m,3)$}
\author[]{Ben Elias}
\address{University of Oregon.}
\email{belias@uoregon.edu}%\thanks{Blah blah}
\author[]{Daniel Juteau}
\address{LAMFA, Universit\'{e} de Picardie Jules Verne.}
\email{daniel.juteau@u-picardie.fr}%\thanks{Blah blah}
\author[]{Benjamin Young}
\address{University of Oregon.}
\email{bjy@uoregon.edu}%\thanks{Blah blah}
\begin{document}

\begin{abstract} In a previous paper of the first author, the type $A_{n-1}$ affine Cartan matrix was $q$-deformed to produce a deformation of the reflection representation of the
affine Weyl group $W_{\aff}$. This deformation plays a role in the quantum geometric Satake equivalence. In this paper we introduce the study of $q$-deformed divided difference
operators.

When $q$ is specialized to a primitive $2m$-th root of unity, this reflection representation of $W_{\aff}$ factors through a quotient, the complex reflection group $G(m,m,n)$. The
divided difference operators now generate a finite-dimensional algebra we call the exotic nilCoxeter algebra. This algebra is new and has surprising features. In addition to the
usual braid relations, we prove a new relation called the roundabout relation.

A classic result of Demazure, for Weyl groups, states that the polynomial ring of the reflection representation is a Frobenius extension over its subring of invariant polynomials, and describes how the Frobenius trace can be constructed within the nilCoxeter algebra. We study the analogous Frobenius extension for $G(m,m,n)$, and identify the Frobenius trace within the exotic nilCoxeter algebra for $G(m,m,3)$. 

% For all $n$ we prove that its polynomial ring is a Frobenius extension over the invariant subring. For degree reasons, any element in the affine Weyl group of length $\binom{n}{2} m$ will induce an operator in the exotic nilCoxeter algebra which is a scalar multiple of the Frobenius trace. When $n=3$, we compute these scalars. For $n=3$, the roundabout relation explains precisely which elements of the affine Weyl group induce the zero operator, and the remaining operators give a valid Frobenius trace.

% It is well-known that the polynomial ring of the reflection representation of a Weyl group is a Frobenius extension over its subring of symmetric polynomials. We prove the analogous
% result for the reflection representation of $G(m,m,n)$. More interestingly, a classic result of Demazure states that the Frobenius trace can be constructed within the nilCoxeter
% algebra. We study an exotic nilCoxeter algebra acting on the polynomial ring of $G(m,m,n)$, and prove an analogous theorem for $n=3$.
%
%  We
% prove that all nonzero operators of length $3m$ give rise to a valid Frobenius trace.
\end{abstract}

\maketitle

\tableofcontents

The affine Weyl group $W_{\aff}$ can be viewed as the semidirect product of the finite Weyl group $W_{\fin}$ with its root lattice $\La_{\rt}$. In \cite{EQuantumI}, the first author
introduced a $q$-deformation of the affine Cartan matrix in type $\tilde{A}_{n-1}$, leading to a $q$-deformation of the reflection representation of $W_{\aff}$. The existence of a
one-parameter deformation of this representation was shown by Lusztig in \cite{LuszPeriodic97}, stemming from earlier work in \cite{LusSqint} (see \cite[Section 5.3]{EQuantumI} for more
details). However, the parametrization in \cite[Equation (1.1)]{EQuantumI} is special in that, when $q$ is set equal to a $2m$-th root of unity, the $m$-th multiples of the root lattice
generate the kernel of the action. Thus we obtain a faithful representation of the quotient $W_m:= W_{\aff}/m \La_{\rt}$, a finite group also known as the complex reflection group
$G(m,m,n)$. The result is a well-known representation of $G(m,m,n)$, though not usually studied through the lens of $W_{\aff}$.

This paper is concerned with very natural, purely algebraic questions about this $q$-deformed reflection representation at a root of unity and its polynomial ring, which we now
summarize. Let $R_m$ be the symmetric algebra of the reflection representation, and $R_m^{W_m}$ the subring of invariant polynomials. The ring $R_m$ comes equipped an $R_m^{W_m}$-linear
endomorphism $\pa_s$ for each simple reflection $s$ in the affine Weyl group, called a Demazure operator or divided difference operator. We study the \emph{exotic nilCoxeter algebra} $\NC(m,m,n)$, the subring of
$\End_{R_m^{W_m}}(R_m)$ generated by these Demazure operators. We provide some new relations which hold in these algebras for all $m$ and $n$. We prove that $R_m^{W_m} \subset R_m$ is a
Frobenius extension. When $n=3$, we announce a result from a followup paper, which gives a precise relationship between the Frobenius trace map and the nilCoxeter algebra. Finally, we discuss experimental data on the presentation
of $\NC(m,m,3)$ by generators and relations.

\begin{ex} \label{ex:n2start} Let $n=2$. The affine Weyl group $W_{\aff}$ is the infinite dihedral group with simple reflections $\{s,t\}$. The product $st$ is translation by a root, so $(st)^m$ generates $m \La_{\rt}$. The quotient group $G(m,m,2)$ is isomorphic to the finite dihedral group of type $I_2(m)$, with its usual Coxeter presentation. The $q$-deformed Cartan matrix of $W_{\aff}$ is 
\begin{equation} \left( \begin{array}{cc} 2 & -(q+q^{-1}) \\ -(q+q^{-1}) & 2 \end{array} \right). \end{equation}
Setting $q = 1$ we recover the usual Cartan matrix in type $\tilde{A}_1$. When $\theta = \frac{\pi}{m}$ and $q$ is specialized to $e^{i \theta}$, then $q + q^{-1} = 2 \cos(\theta)$, and we recover the Cartan matrix of the usual reflection representation of $I_2(m)$. Thus we can apply classical results to study the (not very exotic) nilCoxeter algebra. In particular, $\NC(m,m,2)$ is a quotient of the nilCoxeter
algebra of $W_{\aff}$ by a single new relation, the braid relation of length $m$. The graded dimension of $\NC(m,m,2)$ is the Poincar\'{e} polynomial of $W_m$, and any element of length $2m$ in the affine Weyl group gives rise to a Frobenius trace.
\end{ex}

\begin{ex} \label{ex:m2n3start} Let $m = 2$ and $n=3$. Let $\{s, t, u\}$ denote the simple reflections of $W_{\aff}$, with $m_{st} = m_{su} = m_{tu} = 3$. We think of $\{s,t\}$ as
generating the finite Weyl group, and $u$ as the affine reflection. We use these conventions for all examples with $n=3$.

The group $G(2,2,3)$ is abstractly isomorphic to $S_4$, via an
isomorphism which sends $s \mapsto (12)$, $t \mapsto (13)$, and $u \mapsto (14)$. Note that $sts \mapsto (23)$ and thus the images of $sts$ and $u$ commute. The element $stsu \in
W_{\aff}$ is translation by the highest root. The kernel of the map $W_{\aff} \to S_4$ is generated by $(stsu)^2$. For more details see \S\ref{ssec:223intro}.

Like the previous example, this quotient $G(2,2,3)$ of $W_{\aff}$ is itself a Coxeter group $S_4$. Unlike the previous example, the presentation of $G(2,2,3)$ as a quotient of $W_{\aff}$ is unrelated to the Coxeter presentation of $S_4$, a fact with significant implications. The ordinary nilCoxeter algebra of $S_4$ has dimension $24$, with graded dimension
\begin{equation} 1 + 3v + 5v^2 + 6v^3 + 5v^4 + 3v^5 + v^6\end{equation} 
matching the usual Poincar\'{e} polynomial of $S_4$. Meanwhile, with a different length function induced from $W_{\aff}$, $G(2,2,3)$ has Poincar\'{e} polynomial
\begin{equation} \label{eq:PPofG223} 1 + 3v + 6v^2 + 9v^3 + 5v^4. \end{equation}
The reader may be surprised to learn that $\NC(2,2,3)$ has dimension $36$, with Poincar\'{e} polynomial
\begin{equation} \label{eq:PPofNC223} 1 + 3v + 6v^2 + 9v^3 + 10 v^4 + 6 v^5 + v^6. \end{equation}
Like the ordinary nilCoxeter algebra of $S_4$, $\NC(2,2,3)$ is one dimensional in degree $-6$, the negative-most degree, and any non-zero element in this degree serves as a Frobenius trace. All reduced expressions of length $6$ in $W_{\aff}$ give rise to non-zero elements in this degree, with the important exception of cyclic expressions like $stustu$ or $tsutsu$. A presentation of $\NC(2,2,3)$ can be found in \S\ref{ssec:223intro}. \end{ex}

\begin{rem} For $m \ge 3$, $G(m,m,3)$ is not a Coxeter group. The dimension of $\NC(m,m,3)$ for $m \ge 2$ forms a sequence $(36, 84, 153, 243, \ldots)$ which does not yet appear in the OEIS \cite{OEIS}. \end{rem}

While this paper is mostly algebraic and low-tech, and the topic seems esoteric, there is very strong motivation coming from geometric representation theory. In
\cite{EQuantumI}, the first author gave a reformulation of the geometric Satake equivalence as an equivalence between two monoidal categories: colored $\sl_n$-webs, which describe
morphisms between representations of $U(\sl_n)$, and certain singular Soergel bimodules for $W_{\aff}$. More surprisingly, it was observed that the $q$-deformation of the reflection
representation of $W_{\aff}$ gives rise to a $q$-deformation of singular Soergel bimodules which matches a standard $q$-deformation in representation theory, representations of the
quantum group $U_q(\sl_n)$. Specializing to a root of unity, this produces new connections between, on one side, tilting modules for quantum groups at roots of unity, and on the other
side, singular Soergel bimodules for $G(m,m,n)$. The overarching goal of this paper is to develop the algebraic theory needed to study the category of singular Soergel bimodules
associated to this reflection representation of $G(m,m,n)$.

Categorification of complex reflection groups and their representation theory has been a long-standing open problem since the introduction of Spetses \cite{BMMSpetses}. While we do not
believe that these singular Soergel bimodules will categorify the Hecke algebra of $G(m,m,n)$ itself, we do expect a reasonably close relationship. We give more details on this
motivational material in \S\ref{ssec:introGRT}.

\begin{rem} For some unrelated initial progress on categorifying complex reflection groups using constructions similar to Soergel bimodules, see \cite{GobetThiel}. \end{rem}
	
\textbf{Acknowledgments} The first author was supported by NSF grants DMS-1800498 and DMS-2201387.  We thank Ulrich Thiel for several discussions, including a very helpful explanation of his work and the history behind the study of these Frobenius extensions. Major progress on this paper was made while the first two authors were visiting the Institute of Advanced Study, a visit supported by NSF grant DMS-1926686. Both authors would like to thank the Institute for their hospitality during the long Covid year. We would also like to thank our bubble buddy Anne Dranowski. The first and third author appreciate the support given to their research group by NSF grant DMS-2039316.

\section{Extended Introduction} \label{sec:intro}
%========================================================
%%%%%%%%%%%%%%%%%%%%%%%%%%%%%%%%%%%%%%%%%%%%%%%%%%%%%%%%%

%========================================================
\subsection{Complex reflection groups and Frobenius extensions} \label{ssec:tryme}
%========================================================

Let $W$ be a finite group acting faithfully on a complex vector space $V$. A \emph{(complex) reflection} is an endomorphism of $V$ of finite order whose $1$-eigenspace has codimension $1$. Let $R$ denote the polynomial ring $\Sym(V)$, and $R^W$ the subring of $W$-invariant polynomials. It is a
classic theorem of Shephard and Todd \cite{ShephardTodd} that the following statements are equivalent: \begin{itemize} \item The ring $R^W$ is also a polynomial ring, and $R$ is free over
$R^W$ of finite rank, \item $W$ is generated by reflections, whence $W$ is known as a \emph{complex reflection group}. \end{itemize} When $W$ is infinite and generated by reflections (such as an affine Weyl group), it is often the case that $R^W$ is a polynomial ring but with fewer generators, so that $R$ will be free over $R^W$ of infinite rank.

% \begin{rem} We are unaware of any systematic analysis in the literature of what $R^W$ looks like when $W$ is an infinite group generated by reflections. \end{rem}

When $W$ is a finite Coxeter group acting on its reflection representation, Demazure \cite{Demazure} proved a stronger statement, which is that $R$ is a Frobenius extension over $R^W$.
There are many equivalent ways of stating what a Frobenius extension is, but the one we focus on in this paper is the existence of a \emph{Frobenius trace} map $\pa_W \co R \to
R^W$, an $R^W$-linear map which is \emph{non-degenerate} in the sense that the $R^W$-bilinear pairing \begin{equation} R \times R \to R^W, \qquad (f,g) \mapsto \pa_W(fg)
\end{equation} is perfect. That is, $R$ admits two bases (as a free module over $R^W$) which are dual with respect to this pairing. Note that any Frobenius extension is free of finite rank, so the Shephard-Todd Theorem is a prerequisite. It is easy to provide an explicit construction of the operator $\pa_W$:
\begin{equation} \label{eq:defnpaW} \pa_W(f) = \frac{\sum_{w \in W} (-1)^{\ell(w)} w(f)}{\Pi_{\alpha \in \Phi^+} \alpha}. \end{equation}
Here $\Phi^+$ represents the set of positive roots, viewed as linear polynomials in $R$. That is, to obtain $\pa_W(f)$ one antisymmetrizes the polynomial $f$, and then divides by the ``canonical antisymmetric polynomial.''

\begin{rem} \label{rmk:coinvariant} Let $C$ be the \emph{coinvariant algebra}, the quotient of $R$ by the ideal $I$ generated by positive degree elements of $R^W$. When $W$ is a finite Coxeter group, $C$ is
a finite dimensional algebra, whose graded dimension matches the graded rank of $R$ over $R^W$. An $R^W$-linear operator $R \to R^W$ is a Frobenius trace if and only if the induced operator $C \to \CC$ is a Frobenius trace. The literature commonly focuses on the coinvariant algebra rather than the extension $R^W
\subset R$. When $W$ is a Weyl group, $C$ is isomorphic to the cohomology ring of the flag variety, and the Frobenius trace can be described as integration over this compact manifold. That integration induces a perfect pairing is the statement of Poincar\'{e} duality. \end{rem}

Moreover, Demazure proved that the Frobenius trace has an alternate construction within the nilCoxeter algebra. For each simple reflection $s$ in a Coxeter group $W$, let
$R^s \subset R$ denote the subring of $s$-invariants. Then $R^s \subset R$ is a Frobenius extension with Frobenius trace $\pa_s$:
\begin{equation} \pa_s(f) = \frac{f - s(f)}{\alpha_s}. \end{equation} The \emph{nilCoxeter algebra} is the subalgebra of
$\End_{R^W}(R)$ generated by $\pa_s$ for all simple reflections $s$. Demazure proves that, whenever $s_1 s_2 \cdots s_d$ is a reduced expression for the longest element of $W$, then
\begin{equation} \label{paWfromlongest}\pa_W = \pa_{s_1} \circ \pa_{s_2} \cdots \cdots \circ \pa_{s_d}.\end{equation} There are a number of significant consequences to this
innocuous algebraic fact, some of which we discuss in \S\ref{ssec:introGRT}. Demazure operators also play a major role in algebraic combinatorics, where they are used to construct Schubert polynomials.

Now suppose $W$ is a complex reflection group rather than a Coxeter group. There is no analogue of the nilCoxeter algebra in the literature; there is not necessarily a sign
representation or a theory of roots which would enable a definition like \eqref{eq:defnpaW}. Nonetheless, $R^W \subset R$ is still a Frobenius extension, though we had to ask many an
expert before finding one who knew this fact. The following remark discusses the proof, as explained to us by Ulrich Thiel (and as found within his thesis \cite[Proposition 17.33]{ThielThesis}). However, this proof does not supply an explicit Frobenius trace operator $\pa_W$, it only implies that one must exist.

\begin{rem} As noted in Remark \ref{rmk:coinvariant}, it is equivalent to prove that the coinvariant algebra is a Frobenius algebra (over the ground field). A similar (and sufficient) structure is that of a Poincar\'{e} duality algebra, a positively graded ring with a one-dimensional top degree for which multiplication to that degree is a non-degenerate pairing. In \cite[Theorem 5.7.4]{NeuselSmithBook} one can find a general result showing that when $R$ is an $n$-dimensional polynomial ring (or more generally a Gorenstein ring) and $I$ is an ideal generated by $n$ algebraically-independent homogeneous elements, then $R/I$ is a Poincar\'{e} duality algebra. This applies to the construction of the coinvariant algebra. One can find the details carefully spelled out in \cite[Chapter 17]{ThielThesis}. \end{rem}

Is there a description of the Frobenius trace map $\pa_W$ analogous to \eqref{eq:defnpaW}? Is there a way of describing $\pa_W$ using an analogue of the
nilCoxeter algebra? We are unaware of any systematic approach to these questions in the literature, but for the complex groups $G(m,d,n)$ there is a history of studying related questions, which we discuss in \S\ref{ssec:introhistory}. The nilCoxeter algebra we study in this paper is different from those in the literature.

%========================================================
\subsection{Questions and results} \label{ssec:whynot}
%========================================================

Let us now set notation and fix our setting precisely. For simplicity and symmetry we abandon the variable $q$ for a nicer variable $z$, the relationship being that $z^n = q^{-2}$. More details can be found in \S\ref{sec:qref}.

\begin{notation} Fix $n \ge 2$. Let $\Om = \ZZ/n\ZZ$ be the vertices in the affine Dynkin diagram in type $\widetilde{A}_{n-1}$. Let $W_{\aff}$ be the affine Weyl group, with simple
reflections $S = \{s_i\}_{i \in \Om}$. \end{notation}

\begin{defn} Let $z$ be a formal variable, and let $V_{z}$ be the free $\CC[z,z^{-1}]$-module with basis $\{x_i\}_{i \in \Om}$. It has an action of $W_{\aff}$ defined as
follows: \begin{equation} s_i(x_i) = z x_{i+1}, \quad s_i(x_{i+1}) = z^{-1} x_i, \quad s_i(x_j) = x_j \text{ if } j \notin \{i,i+1\}. \end{equation} For any
$m \ge 2$ let $V_m$ be the $\CC$-vector space obtained by specializing $z$ to a primitive $(nm)$-th root of unity $\ze \in \CC$. \end{defn}

This representation $V_z$ is a deformation\footnote{More precisely, $V_z$ comes from a deformed affine Cartan matrix. However, specializing $z=1$ yields the inflation to $W_{\aff}$ of the permutation representation of $S_n$. This is a representation of $W_{\aff}$ where $S$ acts by reflections, but is not what is commonly called the reflection representation of $W_{\aff}$.} of the reflection representation of $W_{\aff}$. The specialization $V_m$ is a faithful representation of the quotient group $W_m$.
% \footnote{Rather, it deforms the action of $W_{\aff}$ on the torus of affine $\gl_n$ rather than
% that of affine $\sl_n$. This $\gl_n$ variant is new in this paper, and is significantly easier to work with than the $\sl_n$ variant.}

\begin{notation} Let $R_z$ be the polynomial ring $\Sym(V_z)$ over the base ring $\CC[z,z^{-1}]$, and $R_m$ be the polynomial ring $\Sym(V_m)$ over $\CC$. Both are graded so that $V_z$ (resp. $V_m$) appears in degree $1$. \end{notation}
	
\begin{defn} For each $i \in \Om$ define
certain linear maps $R_z \to R_z$ of degree $-1$, called \emph{divided difference operators} or \emph{Demazure operators}, as follows: \begin{equation} \pa_i(f) = \frac{f - s_i f}{x_i - z x_{i+1}}. \end{equation}
These maps descend to $R_m$. \end{defn}

\begin{defn}
The subalgebra of $\End_{R_z^{W_{\aff}}}(R_z)$ generated by Demazure operators $\pa_i$ for $i \in \Om$ is called the \emph{deformed affine nilCoxeter algebra} $\NC(z,n)$.
The subalgebra of $\End_{R_m^{W_m}}(R_m)$ generated by Demazure operators $\pa_i$ for $i \in \Om$ is called the \emph{exotic nilCoxeter algebra} $\NC(m,m,n)$. \end{defn}
	
% The Shephard-Todd theorem implies that $R_m$ is free over $R_m^{W_m}$ of finite rank (the graded rank is known explicitly). As a consequence, $\End_{R_m^{W_m}}(R_m)$ is isomorphic to a matrix algebra over $R_m^{W_m}$, and has finite graded rank over $R_m^{W_m}$. Since the Demazure operators have negative degree, this implies that $\NC(m,m,n)$ is finite-dimensional.
A consequence of the Shephard-Todd theorem is that $\NC(m,m,n)$ is finite-dimensional. In this paper we ask the following questions.
\begin{enumerate}
	\item What is the graded dimension of $\NC(m,m,n)$?
	\item What are the relations between Demazure operators $\pa_i$? Can we find a presentation of $\NC(m,m,n)$?	
	\item We know $R_m^{W_m} \subset R_m$ is a Frobenius extension. Can we explicitly construct a Frobenius trace $\pa_{W_m}$ using an antisymmetrization formula similar to \eqref{eq:defnpaW}?
	\item For which words (i.e. sequences $(s_1, \ldots, s_d)$ of simple reflections) in $S$ is $\pa_{s_1} \circ \cdots \circ \pa_{s_d}$ an invertible scalar multiple of the Frobenius trace $\pa_{W_m}$?
\end{enumerate}

As noted in Example \ref{ex:n2start}, these questions all have classical answers for $n=2$, because $G(m,m,2)$ is the dihedral group with its usual Coxeter presentation.

Theorem \ref{thm:JFrob}, our first main result, reproves that $R_m^{W_m} \subset R_m$ is a Frobenius extension for all $m \ge 2$ and $n \ge 3$. It constructs an explicit Frobenius trace map $\pa_{W_m}$ using an ``antisymmetrization'' formula which generalizes \eqref{eq:defnpaW}. We explicitly compute the pairing matrix on a particular basis of monomials, and relate it to a pairing matrix in finite type which
is known to be nondegenerate.

\begin{rem} Demazure's lovely proof for Coxeter groups \cite[Proposition 4, Corollary afterwards, Theorem 2]{Demazure} critically uses the fact that multiplication by the longest
element is an involution which induces a symmetry on the Poincar\'{e} polynomial of $W$. In our setting we have no unique longest element, nor is the Poincar\'{e} polynomial of either $W_m$ or
$\NC(m,m,n)$ symmetric, c.f. \eqref{eq:PPofG223} and \eqref{eq:PPofNC223}. We explain these issues and the additional questions they raise in \S\ref{ssec:proofcomparison}. \end{rem}

\begin{rem} Note that the degree of $\pa_{W_m}$ is $-\binom{n}{2} m$, not $-nm$. Many of our examples use $n=3$, a special case where $n = \binom{n}{2}$, and we wish to avert confusion. \end{rem}

Now we consider the exotic nilCoxeter algebra. It is easy to show that $\NC(m,m,n)$ satisfies familiar quadratic relations like (using notation from Example \ref{ex:m2n3start})
\[ \pa_s \pa_s = 0 \]
and slightly unfamiliar braid relations like (recall that $\ze$ is an $nm$-th root of unity)
\[ \ze \pa_s \pa_t \pa_s = \pa_t \pa_s \pa_t. \]
 Our second main result is Theorem \ref{thm:roundabout}, which proves a reasonably elegant pair of relations in $\NC(m,m,n)$ called the \emph{roundabout relations}. Living in degree $(n-1)m$, the roundabout relation is a linear relation between the Demazure operators of the $n$ clockwise (resp. anticlockwise) cyclic words of that length.

\begin{ex} When $n=3$ and $m=3$, the clockwise roundabout relation states that
\begin{equation} \pa_s \pa_t \pa_u \pa_s \pa_t \pa_u + \ze^{-3} \pa_t \pa_u \pa_s \pa_t \pa_u \pa_s + \ze^{-6} \pa_u \pa_s \pa_t \pa_u \pa_s \pa_t = 0. \end{equation}
By right-multiplying this relation with $\pa_s \pa_t$, one can quickly deduce that
\begin{equation} \pa_s \pa_t \pa_u \pa_s \pa_t \pa_u \pa_s \pa_t = 0, \end{equation}
and the same for any cyclic word of length $8$.
\end{ex}

\begin{ex} When $n=2$, the roundabout relation is the usual length $m$ braid relation for $I_2(m)$. As a consequence, the Demazure operator associated to any alternating word of length $m+1$ is zero. \end{ex}

We deduce our roundabout relations from Theorem \ref{thm:thetakrotates}, which proves an interesting result about similar operators in $\NC(z,n)$, before specializing to a root of
unity. This is one of many instances of a key theme in this paper, that the study of $G(m,m,n)$ is clarified by putting it in the larger context of $W_{\aff}$. It is
difficult to prove general results about $\NC(m,m,n)$ by induction, since there is no a priori relationship between $\NC(m,m,n)$ and $\NC(m', m', n)$. It is easier to prove results
about $\NC(z,n)$ by induction on length, and then specialize to a root of unity.

For the rest of this section we focus on the case of $n=3$. The roundabout relations in degree $2m$ imply that the Demazure operator of any cyclic word of length $2m+2$ is zero. This then implies that
the Demazure operators of many words of length $3m$ are zero. Meanwhile, every other word of length $3m$ has a nonzero Demazure operator! To state the theorem concisely, we use the following parametrization of $W_{\aff}$.

\begin{defn} \label{defn:abiintro} Fix $a, b \ge 0$, and $i \in \Om$. Let $\un{w}(a,b,i)$ be the word in $\Om$ defined as follows. \begin{itemize}
	\item It begins with a clockwise cycle of length $a+1$, and ends with an anticlockwise cycle of length $b+1$. These cycles overlap in one letter, making the total length $a+b+1$.
	\item The final letter is $i$. \end{itemize}
We let $\pa_{(a,b,i)}$ denote the corresponding operator in $\NC(m,m,3)$, a composition of Demazure operators for simple reflections.
\end{defn}

\begin{ex} We have $\un{w}(3, 5, 2) = (1,2,3,1,3,2,1,3,2)$. One can view this as a clockwise word $1231$ of length $4 = a+1$ and an anticlockwise word $132132$ of length $6 = b+1$ which overlap in the middle index $1$. The last index is $i = 2$. We have 
	\[ \pa_{(3,5,2)} = \pa_1 \pa_2 \pa_3 \pa_1 \pa_3 \pa_2 \pa_1 \pa_3 \pa_2.\]
Using the notation of Example \ref{ex:m2n3start}, we might instead write $\un{w}(3,5,t) = (s,t,u,s,u,t,s,u,t)$.
\end{ex}
	
Every non-identity element of $W_{\aff}$ has a unique reduced expression of the form $\un{w}(a,b,i)$ for a unique triple $(a,b,i)$, see Lemma \ref{lem:abiparametrize}. If $a \ge 2m+1$ or $b \ge 2m+1$ then
$\un{w}(a,b,i)$ contains a cyclic word of length $2m+2$, so $\pa_{(a,b,i)}$ will vanish by the roundabout relation.
%If $a+b+1 = 3m$, this vanishing happens unless $m-1 \le a,b \le 2m$.
	
\begin{thm} \label{thm:whennonzerointro} Let $i \in \Om$ and $a,b \ge 0$ with $a+b+1 = 3m$. The operator $\pa_{(a,b,i)} \in \NC(m,m,3)$ is nonzero, and hence a Frobenius trace, if and only if $a, b \le 2m$. \end{thm}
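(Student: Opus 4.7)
The plan is to prove the two directions separately. The ``only if'' direction is an immediate corollary of the roundabout relation (Theorem \ref{thm:roundabout}): if $a \ge 2m+1$, the initial segment of $\un{w}(a,b,i)$ of length $2m+2$ is a clockwise cyclic word. As explained in the paragraph following Theorem \ref{thm:roundabout}, right-multiplying the degree-$2m$ roundabout relation by the next two Demazure operators continuing the cycle and simplifying with $\pa_s^2 = 0$ and the deformed braid relations forces every clockwise cyclic word of length $2m+2$ to yield the zero operator. Thus $\pa_{(a,b,i)} = 0$. The case $b \ge 2m+1$ is handled symmetrically via the anticlockwise roundabout relation.

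For the ``if'' direction, assume $a,b \le 2m$ and $a+b+1 = 3m$. The degree $-3m$ piece of $\NC(m,m,3)$ is one-dimensional, spanned by the Frobenius trace $\pa_{W_m}$ constructed via Theorem \ref{thm:JFrob}. Hence $\pa_{(a,b,i)} = c_{(a,b,i)} \pa_{W_m}$ for some scalar $c_{(a,b,i)} \in \CC$, and it suffices to show $c_{(a,b,i)} \ne 0$; equivalently, to exhibit any polynomial $F \in R_m$ of degree $3m$ with $\pa_{(a,b,i)}(F) \ne 0$. Non-vanishing forces $\pa_{(a,b,i)}$ to be a nonzero scalar multiple of $\pa_{W_m}$, hence itself a Frobenius trace.

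To produce such a witness, I would first exploit the $\ZZ/3$-rotation symmetry and the clockwise/anticlockwise flip of the affine Dynkin diagram $\tilde{A}_2$, both of which induce automorphisms of $\NC(m,m,3)$ permuting the family $\{\pa_{(a,b,i)}\}$. This reduces the $3(m+2)$ valid triples to a handful of representatives, say $m-1 \le a \le \lfloor (3m-1)/2 \rfloor$ with $i$ fixed. For each representative, evaluate $\pa_{(a,b,i)}(F)$ on a carefully chosen polynomial $F$---for instance a monomial $x_0^{a_0} x_1^{a_1} x_2^{a_2}$ of total degree $3m$, or a product of linear forms $x_j - z x_{j+1}$ matching the denominators of the Demazure operators---and verify non-vanishing at $z = \ze$.

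The main obstacle is making this nonvanishing check uniform in $(a,b,i)$. A direct induction $\pa_{(a,b,i)} \to \pa_{(a+1,b-1,i')}$ is blocked because $\un{w}(a,b,i)$ and $\un{w}(a+1,b-1,i')$ are reduced expressions for \emph{distinct} elements of $W_\aff$ and so are not connected by braid moves alone. My preferred workaround, following the paper's philosophy of lifting to the generic setting, is to do the computation first in the deformed algebra $\NC(z,n)$: there every $\pa_{(a,b,i)}$ is nonzero for generic $z$ (being the Demazure operator for a reduced word in the Coxeter group $W_\aff$), so the question becomes tracking the order of vanishing of the relevant polynomial coefficients at $z = \ze$. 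One expects the roundabout vanishings to account for \emph{all} specialization cancellations, precisely in the forbidden range $a \ge 2m+1$ or $b \ge 2m+1$; proving that no other cancellations occur for the valid triples reduces to a concrete---though delicate---polynomial identity in $R_z$.
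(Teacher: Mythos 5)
Your ``only if'' direction is exactly the paper's: the roundabout relation (Theorem \ref{thm:roundabout} and Corollary \ref{cor:otherroundabout}) kills every cyclic word of length $2m+2$, hence $\pa_{(a,b,i)}=0$ whenever $a\ge 2m+1$ or $b\ge 2m+1$. The gap is in the ``if'' direction, which as written is a plan rather than a proof. Reducing to ``exhibit one degree-$3m$ polynomial $F$ with $\pa_{(a,b,i)}(F)\ne 0$'' is correct but buys nothing beyond Lemma \ref{lem:uniqueuptoscalar}: since any homogeneous $R^{W}$-linear map of degree $-3m$ equals $\pa(\stair)\cdot J$, testing any $F$ is equivalent to testing the single monomial $\stair=x_1^{2m}x_2^m$, so the whole theorem is exactly the statement that $\Xi_m(a,i)=\pa_{(a,b,i)}(\stair)\ne 0$ for $m-1\le a\le 2m$ --- and you never establish this. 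Your symmetry reduction ($\si$ and $\tau$) does not leave ``a handful'' of cases: it leaves roughly $m/2$ values of $a$, a number growing with $m$, so no finite or case-by-case verification can work uniformly; one needs a computation valid for all $m$ and all $a$ in the range. Finally, the closing heuristic --- that the roundabout vanishings should ``account for all specialization cancellations'' and that the remaining nonvanishing ``reduces to a concrete polynomial identity'' --- is precisely the hard content of the theorem, stated as an expectation rather than proved; note also that the specialized scalars are not simply the generic values with no cancellation, since at $z=\ze$ massive simplification occurs (factors of $m^2$ and quantum binomial coefficients appear, and different valid $(a,b,i)$ give traces differing by factors such as $1+\ze^3$), so ``no other cancellations occur'' is not even the right framing.

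For comparison, the paper does not argue by an abstract nonvanishing principle at all. It proves (in the companion paper \cite{EJY2}, announced here as Theorem \ref{thm:whatisw0}) a closed formula for $\Xi(a,b,i,k)=\pa_{(a,b,i)}(x_1^k x_2^{a+b+1-k})$ in $\NC(z,3)$ for \emph{all} quadruples, by a lengthy induction on length in the generic algebra (this is where your observation that induction among the $\un{w}(a,b,i)$ is blocked gets resolved: one inducts over all monomial evaluations, not just the top-degree case). Specializing $z\mapsto\ze$, $k=2m$ then yields $\Xi_m(a,i)=\pm\ze^{\bullet}\, m^2\,{m-1-\bottom\brack \beta-\bottom}_q$, which is visibly nonzero exactly when $m-1\le a,b\le 2m$ (the quantum binomial at $q=\ze^{-3/2}$ involves only $[j]$ with $j\le m-1$, all nonzero). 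Combined with $J(\stair)=1$ (Lemma \ref{lem:stair1}), Corollary \ref{cor:nonzeroonstairgoodenough}, and the fact that changing the reduced expression only rescales by a unit, this gives Corollary \ref{cor:actuallymaintheorem}. So your proposal identifies the correct reduction and the correct philosophy (compute generically, then specialize), but the decisive step --- the explicit evaluation of $\pa_{(a,b,i)}(\stair)$ at the root of unity --- is missing, and without it the ``if'' direction is not established.
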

		
\begin{ex} The operator $\pa_{(3,5,2)}$ is a Frobenius trace inside $\NC(3,3,3)$. \end{ex}
	
Theorem \ref{thm:whennonzerointro} is stated again as Corollary \ref{cor:actuallymaintheorem}, and proven as a consequence of our third main result, which we announce as Theorem
\ref{thm:whatisw0}. The proof of Theorem \ref{thm:whatisw0} can be found in the companion paper \cite{EJY2}. This theorem gives an explicit computation of $\pa_{(a,b,i)}(\stair)$, where $\stair = x_1^{2m} x_2^m$, when $a+b+1 = 3m$. The end result is a scalar, which we denote $\Xi_m(a,b,i)$. For degree reasons, any Demazure operator in degree $-3m$
is either zero or a scalar multiple of the Frobenius trace $\pa_{W_m}$ constructed previously. We prove that $\pa_{W_m}(\stair) = 1$, so that the scalar computed by Theorem
\ref{thm:whatisw0} is also the proportion between $\pa_{(a,b,i)}$ and $\pa_{W_m}$.

We motivate the importance of knowing these scalars precisely in Remark \ref{rem:wanttoknowscalar}. We continue the discussion of these
scalars and other computational results in \S\ref{ssec:computations}. For now, we note the surprise appearance of quantum binomial coefficients (evaluated at $q = \ze^{\frac{-n}{2}}$) in these scalars.

% Following the same pattern as before, our proof of Theorem \ref{thm:whatisw0} relies on a general computation of similar scalars coming from operators in $\NC(z,3)$ before
% specializing to a root of unity. See Theorem \ref{thm:mainnasty} for this general computation. This proof of Theorem \ref{thm:mainnasty} is its own paper, \cite{EJY2}. \

Finally, let us discuss the presentation of $\NC(m,m,3)$, and its graded dimension. The roundabout relations (together with the quadratic and braid relations) are already
sufficient to ensure that the nilCoxeter algebra is finite dimensional, but many more relations are needed to cut it down to size. We explored this question by computer for $m \le
\maxcalc$, using MAGMA \cite{MAGMA}. The rest of the presentation of $\NC(m,m,3)$ is mysterious and slightly chaotic.

For $2 \le m \le 5$, one needs $3(m-1)$ relations in degree $3m-1$. The purpose of these relations seems to be to ensure that the dimension of $\NC(m,m,3)$ in degree
$3m-1$ is exactly $6$. For $6 \le m \le 13$ one instead has $3(m-5)$ relations in degree $3m-2$, ensuring that the dimension of $\NC(m,m,3)$ in degree $3m-2$ is exactly $21$ (in
degree $3m-1$ it is still dimension $6$). This suggests that the precise presentation is not as significant as the graded dimension, which has interesting asymptotics. We provide
the results of our computer calculations with interpretation in \S\ref{ssec:results}. Code can be found at \cite{EJYcode}.

%========================================================
\subsection{Motivation from geometric representation theory} \label{ssec:introGRT}
%========================================================

Let $(W,S)$ be a Coxeter system acting on a vector space $V$, where $S$ acts by reflections. A subset $I \subset S$ is called \emph{finitary} if the parabolic subgroup $W_I$ that it generates is finite. One can consider the polynomial ring $R = \Sym(V)$, and its invariant subring $R^I := R^{W_I}$. If $I \subset J$ then $R^J \subset R^I$. We usually write $R^W$ rather than $R^S$.

Let $I_{\bullet} = [[I_0 \supset I_1 \subset I_2 \supset \cdots \subset I_d]]$ be a collection of finitary subsets of $S$, where $I_k$ is a subset of $I_{k \pm 1}$ whenever $k$ is odd. To this sequence we may associate an $(R^{I_0},R^{I_d})$-bimodule
\begin{equation} \label{eq:singBS} \BS(I_{\bullet}) := {}_{R^{I_0}} R^{I_1} \ot_{R^{I_2}} R^{I_3} \ot_{R^{I_4}} \cdots \ot_{R^{I_{d-2}}} R^{I_{d-1}} {}_{R^{I_d}}, \end{equation}
called a \emph{(singular) Bott-Samelson bimodule}\footnote{We largely ignore grading shifts in this introduction.}. Taking all direct summands of singular Bott-Samelson bimodules, we obtain the \emph{singular Soergel bimodules}.

Singular Soergel bimodules form a graded additive 2-category, with one object for each finitary $I \subset S$. A $1$-morphism from $I$ to $J$ is a singular Soergel
$(R^J,R^I)$-bimodule, and composition of 1-morphisms is given by tensor product. Thus the bimodule in \eqref{eq:singBS} is a $1$-morphism from $I_d$ to $I_0$. The $2$-morphisms are bimodule maps.

What makes singular Soergel bimodules particularly well behaved is the aforementioned result of Demazure \cite{Demazure}, stating that $R^I \subset R$ is a Frobenius extension
whenever $I$ is finitary. For a Frobenius extension, induction and restriction are biadjoint (up to grading shift). The same goes for $R^J \subset R^I$ whenver $I \subset J$ and
both are finitary. Taking the tensor product with a singular Bott-Samelson bimodule is the same as an iterated composition of induction and restriction functors.

Meanwhile, when $W_I$ is infinite, the ring extension $R^I \subset R$ is not of finite rank, and is not Frobenius. This was the reason we restricted to finite parabolic subgroups in the
definition of singular Soergel bimodules. In particular, when $W = W_{\aff}$, we can consider all invariant polynomial rings $R^I$ except for $R^{W_{\aff}}$ itself.

The geometric Satake equivalence is an equivalence between representations of a Lie group $G$ and equivariant perverse sheaves on the affine Grassmannian of the Langlands dual Lie
group. Using results of Soergel and Harterich \cite{Soer90, Harterich}, this result was reformulated in a purely algebraic fashion by the first-named author in \cite{EQuantumI}. The category of
equivariant perverse sheaves is replaced by a $2$-category built using singular Soergel bimodules for the reflection representation\footnote{For this purpose one could use either the usual reflection representation of the affine Weyl group, where the simple roots are linearly independent, or the specialization of $V_z$ at $z=1$, where they are linearly dependent.} of the affine Weyl group $W_{\aff}$.
To each representation of $G$ one can associate a particular $(R^J, R^I)$-bimodule, where $J$ and $I$ are (possibly different) copies of the finite Dynkin diagram inside the affine
Dynkin diagram. The morphisms between representations precisely match the degree zero bimodule maps between these singular Soergel bimodules. Note that (singular) Soergel bimodules
are amenable to algebraic and computational (i.e. diagrammatic) approaches, see \cite[Chapter 24]{EMTW}.

As noted earlier, in type $\tilde{A}_{n-1}$ \cite{EQuantumI} introduced a $q$-deformation $V_q$ of the affine reflection representation, which is related to $V_z$ by a change of variables. This produces a deformed $2$-category of singular Soergel bimodules, and the degree zero bimodule maps encode\footnote{In \cite{EQuantumI} only the cases $n=2, 3$ are proven.} morphisms between representations of $U_q(\sl_n)$. 

% More interestingly, in type $A$,
% \cite{EQuantumI} introduced a $q$-deformation $V_q$ of $V_{\refl}$, and deformed the $2$-functor so that it related representations of the quantum group $U_q(\sl_n)$ to singular Soergel
% bimodules for $V_q$. This deformed version could be called the \emph{quantum geometric Satake equivalence}, though at the time there was no geometric explanation for this
% $q$-deformation. More recently, in \BE{cite work in prep}, the first-named author and Williamson have connected $V_q$ to a $K$-theoretic analogue of Soergel bimodules, a first step
% towards reintroducing geometry to the picture. Our representation $V_{\ze}$ above is merely a more symmetric renormalization of $V_q$, related by $\ze^n = q^{-2}$.

This paper initiates the study of what happens when $q$ (or $z$) is specialized to a root of unity. The result is related to representations of quantum groups at roots of unity, of
course, but is a richer and weirder object. New degree zero morphisms appear between singular Soergel bimodules, not corresponding directly to morphisms between representations of
$U_q(\sl_n)$! We hope to understand these new morphisms.

By our Theorem \ref{thm:JFrob}, when $q$ is specialized to a root of unity so that the action of $W_{\aff}$ factors through $G(m,m,n)$, the ring extension $R^{W_{\aff}} \subset R$ is a Frobenius extension. One can now formally extend the definition of Bott-Samelson bimodules to allow factoring through $R^{W_{\aff}}$,
introducing bimodules like 
\begin{equation} \label{deep} R^I \ot_{R^{W_{\aff}}} R^J. \end{equation} It turns out that such bimodules already appear as direct summands of existing Bott-Samelson
bimodules, so the formal extension is unnecessary. For each $I, J \subset S$ there is a unique indecomposable bimodule factoring through $R^{W_{\aff}}$, namely the one in \eqref{deep}, which we might call the \emph{deep} $(R^I, R^J)$-bimodule. These deep bimodules form a monoidal ideal in the $2$-category: direct sums of deep bimodules are closed under induction and restriction.
% taking the tensor product of a
% deep bimodule with any other singular Soergel bimodule, every indecomposable direct summand will be another deep bimodule.

The question ``which bimodules contain a deep bimodule as a direct summand'' is directly related to Question (4) from \S\ref{ssec:whynot} (which sequences of simple reflections
induce the Frobenius trace), as we will soon explain. Earlier we said that deep bimodules were already direct summands of Bott-Samelson bimodules; more precisely, there are several non-isomorphic indecomposable bimodules when $q$ is a formal variable, summands of different Bott-Samelson bimodules, which all become isomorphic to the deep bimodule when $q$ is specialized to a root of unity. This gives rise to new morphisms between Bott-Samelson bimodules, by projecting and including through the (now) common summand.
This seems to explain all the new morphisms between singular Soergel bimodules which arise when $q$ is set to a root of unity.

Let us demonstrate this in the case $n=2$. Here, the representation of the infinite dihedral group $W_{\aff}$ actually factors through a finite dihedral group $W_m = G(m,m,2)$. This
being a Coxeter group rather than just a complex reflection group, Soergel theory was already developed. An intense diagrammatic study of morphisms in this setting can be found in
\cite{ECathedral}.

To be concrete, assume that $m=4$, though everything generalizes to arbitrary $m \ge 2$. Let $s$ and $t$ denote the two simple reflections and write $W$ for $W_{\aff}$. There are two $(R^s, R^t)$ bimodule maps \[ R^s \ot_{R^W} R^t \namedto{i} {}_{R^s} R \ot_{R^t} R \ot_{R^s} R_{R^t}
\namedto{p} R^s \ot_{R^W} R^t. \] (Looking on \cite[bottom of p49]{ECathedral}, take the diagram $C_k$ for $k=m$ and chop it in half horizontally; the bottom half is $i$, the top half is $p$, and the composition $C_m$ is $p \circ i$.) If $p \circ i$ is the identity map of $R^s \ot_{R^W} R^t$, then they form the inclusion and projection map of a direct summand, and
\begin{equation} \label{itsasummand} R^s \ot_{R^W} R^t \sumset R \ot_{R^t} R \ot_{R^s} R, \quad \text{ as } (R^s,R^t)\text{-bimodules}. \end{equation}
	
That $p \circ i = 1$ is equivalent (see the proof of \cite[Claim 6.5]{ECathedral}) to the algebraic fact that \[ \pa^s_W = \pa_t \pa_s \pa_t \qquad \text{ or equivalently } \qquad \pa_W = \pa_s \pa_t \pa_s
\pa_t.\] Here $\pa^s_W$ is the Frobenius trace for $R^W \subset R^s$. In other words, it is Demazure's description of the Frobenius trace from \eqref{paWfromlongest} which implies \eqref{itsasummand}. Inducing to $R$ on both the left and
right, we deduce that the deep bimodule $B_{w_0} := R \ot_{R^{W}} R$ is a direct summand of \[ \BS(stst) := R \ot_{R^s} R \ot_{R^t} R \ot_{R^s} R \ot_{R^t} R.\]

% \begin{rem} An analogous computation works in arbitrary type, not just for dihedral groups, as will be proven in forthcoming work of Elias, Ko, Libedinsky, and Patimo. \end{rem}

Before we specialized $q$ to a root of unity, the Demazure operators $\pa_s$ and $\pa_t$ satisfied the relations of the nilCoxeter algebra of $W_{\aff}$, with no braid relation.
After specializing they satisfy a braid relation $\pa_W := \pa_s \pa_t \pa_s \pa_t = \pa_t \pa_s \pa_t \pa_s$ of length $m$, giving two descriptions of the Frobenius trace for $R^W
\subset R$. Thus by the same computation, $B_{w_0}$ is also a direct summand of $\BS(tsts)$, defined analogously to $\BS(stst)$. This common summand leads to a morphism $\BS(stst)
\to \BS(tsts)$, the $2m$-valent vertex from \cite{ECathedral}. The $2m$-valent vertex is a new morphism which appears after specializing, and generates all such new morphisms, c.f.
\cite[Lemma 6.8, Claim 6.20]{ECathedral}. To summarize, by finding multiple descriptions of the Frobenius trace using compositions of Demazure operators for simple reflections, one discovers new morphisms between Bott-Samelson bimodules.

\begin{rem} \label{rem:wanttoknowscalar} Above we had $\pa_W := \pa_s \pa_t \pa_s \pa_t = \pa_t \pa_s \pa_t \pa_s$ with no scalars required. In a related setting discussed in \cite[Appendix]{ECathedral}, one might have e.g. $\pa_W = a \pa_s \pa_t \pa_s \pa_t$ for some scalar $a$. This scalar affects the relations between morphisms, see e.g. \cite[(A.9), (A.10)]{ECathedral}. Thus, it is important to know the Frobenius trace precisely, not just up to scalar, in order to present the category of Soergel bimodules by generators and relations. \end{rem}

Now we consider the other side of the Satake equivalence, still with $n=2$. The right-hand side of \eqref{itsasummand} corresponds to the representation $L_1^{\ot m-1}$ (where $L_1$ is
the standard representation of $U_q(\sl_2)$). In \cite[Definition 5.28, Theorem 5.29]{ECathedral} it was proven that the idempotent $i \circ p$ agrees with the (negligible)
Jones-Wenzl projector, so that the deep summand corresponds to the negligible tilting module $T_{m-1}$. In fact, the deep monoidal ideal corresponds to the negligible ideal in the
category of quantum group representations!

When $n\ge 3$, we expect similar results on both sides of the Satake equivalence. We expect descriptions of $\pa_W$ as a composition of other Demazure operators to explain the existence
of direct summands inside Soergel bimodules, corresponding to negligible summands inside certain tensor products of representations. We expect new morphisms (generalizations of the
$2m$-valent vertex) to arise from common direct summands of this form, and to generate all the new morphisms between Soergel bimodules. This gives a strong practical motivation for the
study of Question (4) from \S\ref{ssec:whynot}, which we answer when $n=3$. The monoidal ideal generated by $R^{W_{\aff}}$ will correspond to some monoidal ideal in representations of
$U_q(\sl_n)$, most likely one properly contained in the negligible ideal. We also hope that the connection to representation theory will ultimately explain why quantum binomial
coefficients appeared in our formulas for $\Xi_m(a,b,i)$.

\begin{rem} Soergel bimodules for $G(m,m,3)$ should lead to a new construction of (some of) the \emph{trihedral $2$-representations} described by Mackaay, Mazorchuk, Miemietz, and Tubbenhauer in their fascinating paper \cite{MMMTTrihedral}. The introduction to that paper is very readable and gives additional motivation on the topic. \end{rem}

\begin{rem} Many geometric constructions which exist for Weyl groups have combinatorial and algebraic analogues which continue to exist for Coxeter groups and even complex reflection
groups. For example, Soergel bimodules are the algebraic version of perverse sheaves on the flag variety, and are defined for all Coxeter groups even in the absence of a flag variety.
The Spetses program \cite{BMMSpetses} hypothesizes some sort of unipotent character sheaves for complex reflection groups, despite the lack of geometry. More precisely,
\cite{BMMSpetses} studies unipotent characters, which are the combinatorial shadow of unipotent character sheaves. For Weyl groups, character sheaves can be understood \cite{BFO,
BzNcharacter} as the Drinfeld center\footnote{An object of the Drinfeld center is an object $Z$ together with a functorial isomorphism $(-) \ot Z \to Z \ot (-)$.} of the categorified
$J$-ring, which is built from Soergel bimodules (one can also study the Drinfeld center of the homotopy category of Soergel bimodules as a surrogate). Recent work of Rogel-Thiel
\cite{RogelThiel} has verified that Soergel bimodules for finite dihedral groups $G(m,m,2)$ do indeed satisfy the properties desired by the Spetses program. One
wonders what new central objects will appear using Soergel bimodules for $G(m,m,n)$ when $n > 2$. \end{rem}

%========================================================
\subsection{Computations} \label{ssec:computations}
%========================================================

We return on a more technical level to the discussion of Theorem \ref{thm:whennonzerointro}, and the study of the operators $\pa_{(a,b,i)}$. We now state formulas for these operators
when applied to specific monomials.

\begin{thm}
Fix $i \in \Om$ and $a, b \ge 0$ such that $a+b+1 = 3m$ and $m-1 \le a,b \le 2m$. Define $\alpha$ and $\beta$ by $a = 2\alpha + 2$ or $2 \alpha + 1$, and $b = 2 \beta + 2$ or $2 \beta + 1$, depending on
parity. Note that the condition $m-1 \le a \le 2m$ is equivalent to the condition $\bottom \le \alpha, \beta \le m-1$, for an integer $\bottom$ which is approximately $\frac{m}{2}$, see \S\ref{ssec:evalatrou} for details. Then
\begin{equation} \label{eq:paabiintro} \pa_{(a,b,i)}(x_1^{2m} x_2^m) = \eta(a,b) \cdot m^2 \cdot {m-1-\bottom \brack \beta - \bottom}_q, \end{equation}
where $\eta(a,b)$ is a sign times a power of the root of unity $\ze$. The quantum binomial coefficient is with respect to the parameter $q = \ze^{\frac{-n}{2}}$  For the precise formula, see Theorem \ref{thm:whatisw0}.
\end{thm}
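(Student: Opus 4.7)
The plan is to compute $\pa_{(a,b,i)}(\stair)$ by working in the generic algebra $\NC(z,n)$ with $z$ a formal parameter, performing the computation there, and then specializing to $z = \ze$ at the end. Working generically has the advantage that fewer accidental cancellations occur, and one can isolate the role of each divided difference. The monomial $\stair = x_1^{2m} x_2^m$ has a very restricted support (only two variables), which should collapse most of the combinatorial complexity.

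First I would establish a recursion on the pair $(a,b)$. The key move is to peel off either the outermost letter of the clockwise half or the outermost letter of the anticlockwise half and rewrite the result as a Demazure operator applied to a simpler monomial, or as a sum of two such. Specifically, I would identify a small ``straightening'' identity in $\NC(z,n)$ that expresses the action of a single $\pa_j$ on a monomial $x_1^A x_2^B$ as a sum of $z$-weighted monomials. Iterating, one sees that $\pa_{(a,b,i)}(\stair)$ is a signed sum over certain lattice paths, each weighted by a product of $z$-factors drawn from the denominators $x_i - z x_{i+1}$ and from the $s_j$-action on the $x$'s. The condition $a+b+1 = 3m$ means we are at the ``top of the staircase'' — the total degree drops from $3m$ to $0$ — so only paths that exactly annihilate $\stair$ contribute, leaving a scalar.

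Next, I would observe that under the substitution $a = 2\alpha+\varepsilon$, $b = 2\beta+\varepsilon'$ and the constraint $a+b+1 = 3m$, the resulting recursion takes the shape of a Pascal-type relation
\begin{equation}
\Xi_m(a,b,i) \;=\; A(a,b) \cdot \Xi_m(a-2,b,i') \;+\; B(a,b) \cdot \Xi_m(a,b-2,i'')
\end{equation}
for coefficients $A, B$ that are explicit signs times powers of $\ze$. This is exactly the recursion satisfied by the quantum binomial ${m-1-\bottom \brack \beta - \bottom}_q$ (with $q = \ze^{-n/2}$), up to an overall normalization, so the main identification boils down to matching initial conditions. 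The natural base case is $a = 2m$ (or $b = 2m$), where one side is a pure clockwise or anticlockwise cycle on $\stair$ and can be computed directly by iterated application of $\pa_i$.

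The main obstacle, and where the bookkeeping becomes genuinely delicate, is tracking the prefactor $\eta(a,b)$: the global sign and the precise power of $\ze$ accumulated through $3m$ divided differences. Each $\pa_j$ contributes a sign from the antisymmetrization and a power of $z$ from both the action of $s_j$ and the denominator $x_j - z x_{j+1}$, and these powers depend on where the ``active variables'' $x_1, x_2$ sit relative to $j$ at each step. I would handle this by decorating each Demazure operator in $\un{w}(a,b,i)$ with its cyclic position and setting up a generating-function style account of the total $z$-exponent; the constraint $m-1 \le a,b \le 2m$ (equivalently $\bottom \le \alpha,\beta \le m-1$) should be exactly what ensures this exponent is well-defined modulo $nm$ and that no path of paths overshoots the support of $\stair$. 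The factor $m^2$ then arises naturally once one specializes $z \to \ze$ and collects the $\ze^{nm} = 1$ identifications, via the fact that two of the three variables play a symmetric role in $\stair$. The precise form of $\eta(a,b)$ is then read off from the base case, completing the identification with the right-hand side of \eqref{eq:paabiintro}.
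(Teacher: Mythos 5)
Your broad strategy (compute generically in $\NC(z,n)$, set up a recursion, specialize to $z=\ze$ at the end) does agree with the paper's stated plan, and the observation that only degree-zero ``paths'' survive is correct. But the specific recursion you propose is not well-formed, and that is the crux of the matter. If you peel a letter off $\un{w}(a,b,i)$, the thing you obtain is not another scalar $\Xi_m(a-2,b,i')$ or $\Xi_m(a,b-2,i'')$ evaluated on the same monomial $\stair$: it is $\pa_{\un{w}'}$ applied to $\pa_i(\stair)$, and $\pa_i(x_1^{2m}x_2^m)$ is a geometric series of many monomials, not a scalar multiple of a single one. Moreover your proposed recursion changes $a$ or $b$ by $2$ while keeping the other fixed, so the constraint $a+b+1=3m$ is not preserved and the recursands are not in the right degree to return scalars. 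The paper flags exactly this obstacle: the closed recursion must be on a larger family, namely $\Xi(a,b,i,k)=\pa_{(a,b,i)}(x_1^k x_2^{a+b+1-k})$ for all $k$, working over $\ZZ[z^{\pm 1}]$; that is the quantity which admits an inductive proof (and it is deferred to the companion paper \cite{EJY2} because of the ``truly immense amount of bookkeeping'').

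Two further points you assert are not right. First, the generic formula is explicitly \emph{not} a quantum binomial: formula \eqref{eq:paabikintro} is $[\alpha]!\,[\beta]!$ times a double sum of $q$-binomials whose $q$-exponent does not match the $q$-Chu--Vandermonde pattern, so it does not collapse to ${m-1-\bottom \brack \beta-\bottom}_q$ until one specializes $z\mapsto\ze$, and that simplification is ``far from straightforward.'' So you cannot expect the generic recursion to be the Pascal recursion for the target quantum binomial; the target only emerges at the root of unity. Second, your proposed base case and your explanation of the $m^2$ factor are off. When $a=2m$ we have $b=m-1>0$, so $\un{w}(2m,m-1,i)$ is not a pure cyclic word; your ``pure cycle'' base case would require $b=0$, i.e.\ $a=3m-1$, which is out of range. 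And the $m^2$ does not come from symmetry of variables in $\stair$; it comes from the specialization of the factor $[\alpha]!\,[\beta]!\,(q-q^{-1})^{\alpha+\beta+1}$ at the root of unity, using identities like $(q-q^{-1})^{m-1}[m-1]!\doteq m$ from \cite[Section 6.2]{EJY2}.
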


We found the formula \eqref{eq:paabiintro} to be highly satisfying and surprising. Here is a visualization of this result for $m=3$.
\begin{equation} \label{m3n3binomials} \ig{.15}{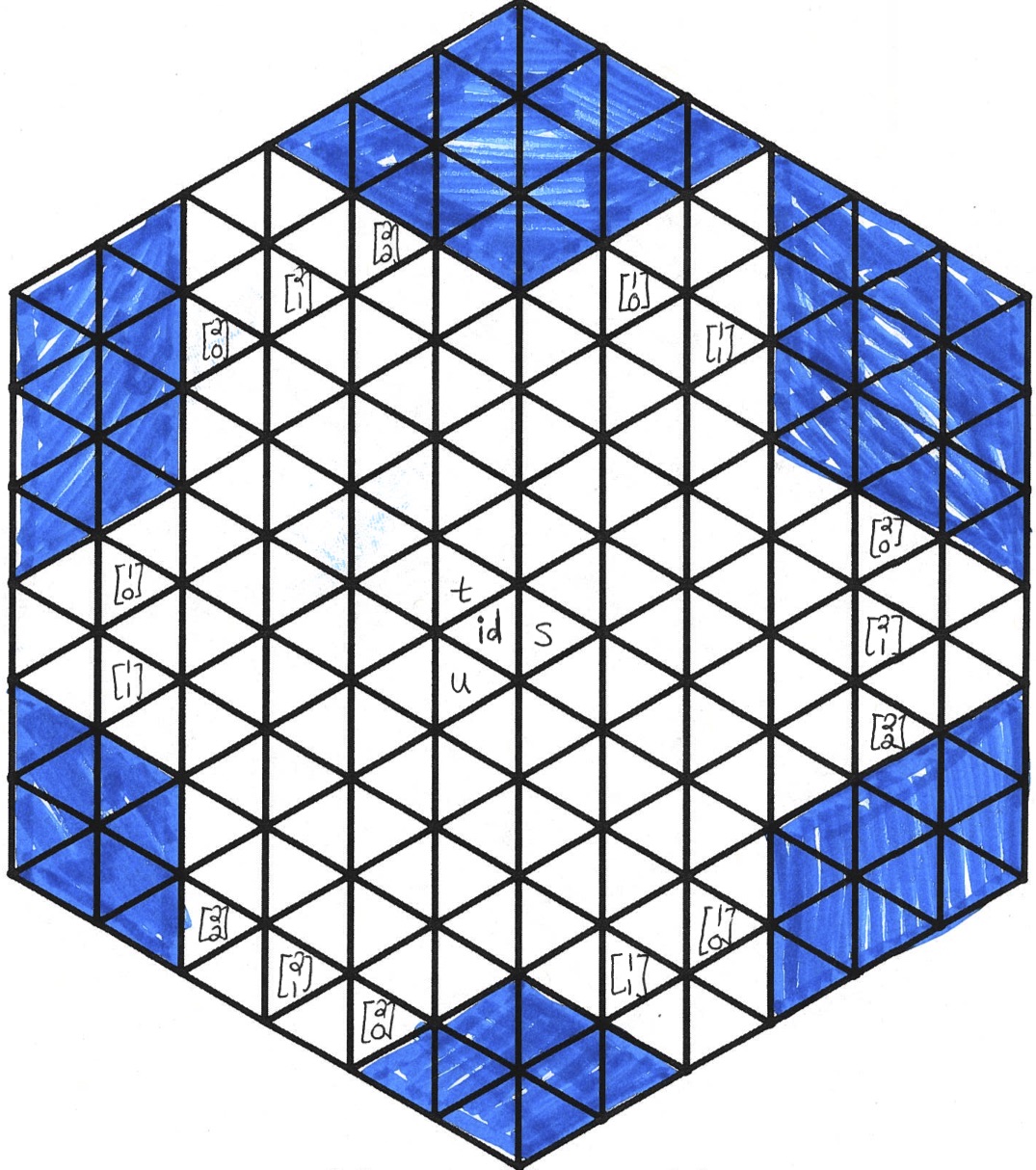} \end{equation}
Each alcove in this grid represents an element of $W_{\aff}$. The blue alcoves represent elements $w \in W_{\aff}$ for which $\pa_w = 0$ when $m=3$ by the roundabout relation. The remaining alcoves representing elements of length $9 = 3m$ have been labeled with the binomial coefficients appearing in \eqref{eq:paabiintro} (ignoring the factors $\eta(a,b)$ and $m^2$). Even though this is a relatively small example, the pattern is clear and it continues for higher $m$.

The formula \eqref{eq:paabiintro} is independent of $i$. Indeed, the bottom degree of $\NC(m,m,n)$ admits the trivial
representation of the rotation operator on the affine Dynkin diagram, a fact which can be seen from the explicit formula for $\pa_{W_m}$ as well.

% \begin{thm} \label{thm:whatisw0intro} Fix $i \in \Om$ and $a, b \ge 0$ such that $a+b+1 = 3m$ and $m-1 \le a,b \le 2m$. Define $\alpha$ and $\beta$ by $a = 2\alpha + 2$ or $2 \alpha + 1$, and $b = 2 \beta + 2$ or $2 \beta + 1$, depending on
% parity. Note that the condition $m-1 \le a \le 2m$ is equivalent to the condition $\bottom \le \alpha \le m-1$, for an integer $\bottom$ which is approximately $\frac{m}{2}$, see \S\ref{ssec:evalatrou} for details. Then
% \begin{equation} \label{eq:paabiintro} \pa_{(a,b,i)}(x_1^{2m} x_2^m) = \eta(a,b) \cdot m^2 {m-1-\bottom \brack \alpha - \bottom}, \end{equation}
% where $\eta(a,b)$ is a sign times a power of the root of unity $\ze$. Moreover, $\eta$ is independent of $i$. For the precise formula, see Theorem \ref{thm:whatisw0}.
% \end{thm}

The quantum binomial coefficient ${m-1-\bottom \brack \beta - \bottom}$ is defined using quantum numbers in the variable $q = \ze^{-\frac{3}{2}}$. To define this scalar
we introduced a square root of $\ze$, and $\eta$ is technically a half-power of $\ze$. The final result lives in $\ZZ[\ze]$. This description of the formula is for convenience; no half-powers of $\ze$ are technically necessary.

% \begin{rem} To prove \eqref{eq:paabiintro} directly is difficult because of a lack of inductive approach. There is no a priori connection between computations within $\NC(m,m,3)$
% and $\NC(m-1, m-1, 3)$, so induction along $m$ did not seem reasonable. We were unable to find tractable patterns in the values of $\pa_{\un{w}'}(x_1^{2m} x_2^m)$ for words $\un{w}'$ of
% length $< 3m$; these are polynomials rather than scalars, and have many non-vanishing coefficients. Thus we were unable to use induction along subwords of $\un{w}(a,b,i)$. \end{rem}

In order to prove \eqref{eq:paabiintro} we ended up proving a much stronger result about Demazure operators in $\NC(z,3)$, before specializing $z$ to a root of unity. We found
an explicit closed formula for the scalar \begin{equation} \label{eq:whatnastycomputesintro} \pa_{(a,b,i)}(x_1^k x_2^{a+b+1-k}) \end{equation} for all quadruples $(a,b,i,k)$. After
applying various symmetries, this suffices to explicitly compute $\pa_{(a,b,i)}(f)$ for any monomial $f$ of the appropriate degree (so that the result is a scalar). Thus we reiterate the same theme: it is hard to prove \eqref{eq:paabiintro} by induction, but the computation of \eqref{eq:whatnastycomputesintro} before specializing is amenable to inductive proof, albeit a difficult one.

The formula for \eqref{eq:whatnastycomputesintro} is significantly nastier than the formula
from \eqref{eq:paabiintro}, though it has its own elegance. For example, when $a$ is even and $b$ is odd and $k \ne 0$ and $a+b+1-k \ne 0$ then \begin{align} \label{eq:paabikintro} \pa_{(a,b,i)}(x_1^k x_2^{a+b+1-k})
= \eta'(a,b,i) \cdot & [\alpha]! \cdot [\beta]! \cdot [\alpha+\beta+2-k] \cdot (q-q^{-1})^{\alpha+\beta+1} \\ \nonumber & \cdot \sum_{j=0}^{\beta} {k-1 \brack \beta-j} {\alpha+\beta+1-k \brack j} q^{j(-3(\alpha+\beta+2) +
2k)}, \end{align} where $\eta'$ is a sign and a (half-)power of $z$ (depending on $i$).

\begin{rem} The $q$-Chu-Vandermonde identity states that
\begin{equation} \label{qchuvander} \sum_{j=0}^{\beta} {M \brack \beta-j} {N \brack j} q^{j(M+N)} = q^{N \beta} {M+N \brack \beta}. \end{equation}
However, the sum in \eqref{eq:paabikintro} does not match the left-hand side of \eqref{qchuvander} (for $M = k-1$ and $N = \alpha+\beta-(k-1)$) because the exponent of $q$ does not match. As a consequence, we do not have a simpler form for our sum, analogous to the right-hand side of \eqref{qchuvander}. This sum should be viewed as some new and unusual $q$-deformation of $\binom{\alpha+\beta}{\beta}$. \end{rem}

We note that our formula for \eqref{eq:whatnastycomputesintro} has many edge cases (e.g. when $k=0$ or $a=0$, etcetera), in addition to twelve main cases (the parity of $a$ and $b$, the choice of $i$). Only the main cases are important when considering \eqref{eq:paabiintro} at a root of unity, but the edge cases are important for the inductive proof of correctness. The proof involves interesting manipulations of sums involving quantum binomial coefficients, and a truly immense amount of bookkeeping.

After specializing $z \mapsto \ze$ and $k = 2m$, \eqref{eq:paabikintro} can be simplified to \eqref{eq:paabiintro}. This simplification is far from straightforward. 

\begin{rem} The reader may be surprised by the factor of $m^2$ in \eqref{eq:paabiintro}, and wonder where it came from in \eqref{eq:paabikintro}. Quantum numbers at roots of unity
satisfy many interesting relations. For example, $(q-q^{-1})^{m-1} [m-1]!$ is equal to $m$, up to a power of $\sqrt{-1}$, see \cite[Section 6.2]{EJY2}. The sum of double
binomial coefficients also simplifies at a root of unity, becoming equal (up to unit) to a quantum factorial number, see \cite[Section 6.3]{EJY2}. \end{rem}

While a closed formula for Demazure operators might be of independent interest, the computation of these scalars is extremely technical. The quantum number manipulations in the proof
also seem to speak to a different audience. We wished for this paper to be more accessible, as the first introduction to the exotic nilCoxeter algebra. We have opted to place our
general formula for \eqref{eq:whatnastycomputesintro}, together with the verification that it specializes to \eqref{eq:paabiintro} at a root of unity, in a companion paper \cite{EJY2}.

Both the formulas \eqref{eq:paabiintro} and \eqref{eq:paabikintro} were discovered by staring very hard at computer calculations and extracting the pattern. Readable code which
verifies these formulas may be found at \cite{EJYcode}; we gratefully used the MAGMA language \cite{MAGMA}.

\begin{rem} Let us briefly note some of additional difficulties, invisible now that the problem is solved. MAGMA thinks of the cyclotomic field $\QQ(\ze)$ as $\QQ[z]/\Phi(z)$ for
the cyclotomic polynomial $\Phi$, and records scalars as polynomials in $z$ of degree less than $\deg(\Phi)$. It is not easy to recognize a quantum binomial coefficient, or even a
power of $\ze$, from its description as a low-degree polynomial! Moreover, there are many coincidences between binomial coefficients at roots of unity, making it hard to find a
pattern. \end{rem}

% Ultimately, we opted to place our general formula for \eqref{eq:whatnastycomputesintro}, together with the verification that it specializes to \eqref{eq:paabiintro} at a root of
% unity, in a companion paper \cite{EJY2} which should be viewed as a long appendix to this paper. The formula depends in finicky ways on the parity of $a$ and $b$, and the value of $i$,
% with numerous edge cases (e.g. when $k=0$ or $a=0$, etcetera). Altogether one can think that there are twelve main cases of the formula (two choices of parity, one choice in
% $\ZZ/3\ZZ$), and eighty-one total cases. None of the edge cases are important when considering \eqref{eq:paabiintro} at a root of unity, but they are important for the inductive proof. The proof involves some interesting manipulations of quantum binomial coefficients, and a truly immense amount of bookkeeping. While a closed formula for Demazure operators might be of
% independent interest, the proof is very technical. We wished for this paper to be more accessible, as the literature's first introduction to the exotic
% nilCoxeter algebra.

%========================================================
\subsection{Other approaches to Demazure operators for $G(m,d,n)$} \label{ssec:introhistory}
%========================================================

We end the introduction by discussing previous approaches to Demazure operators and Frobenius extensions for $G(m,d,n)$. We begin this discussion here, and continue in
\S\ref{ssec:rampetas}. Our discussion of the literature is for context, and will play no role in this paper.

Famously, the affine Weyl group has two presentations, the Coxeter presentation (viewing it as a Coxeter group) and the loop presentation (viewing it as a semidirect product of a
finite Weyl group with a lattice). There are two corresponding presentations of its reflection representation. Similarly, the extended affine Weyl group in type $A$ (obtained from
the affine Weyl group by adjoining an operator $\si$ which rotates the Dynkin diagram) has both a loop and a ``Coxeter-esque'' presentation. As noted above, $G(m,m,n)$ is a quotient
of this affine Weyl group. Similarly, $G(m,1,n)$ is a quotient of the extended affine Weyl group, and $G(m,d,n)$ is a quotient of a subgroup of the partially extended affine Weyl
group, which only adjoins certain powers of $\si$. Thus each of these complex reflection groups inherits two presentations, the loop-esque and Coxeter-esque presentations. For reasons we explain below, we call the Coxeter-esque presentation the \emph{exotic presentation}.
Meanwhile, $G(2,2,n)$ is the Weyl group of type $D_n$, which has a Coxeter presentation, and there is another standard presentation of $G(m,m,n)$ and its reflection representation
which generalizes that of $D_n$.

It is an oversimplification, though a morally accurate one, to say that the existing literature on $G(m,d,n)$ uses either the loop-esque presentation or the type-$D$-style
presentation, when constructing length functions and Demazure operators. We are not aware of any previous works studying $G(m,d,n)$ using its exotic presentation. For our
applications to quantum geometric Satake, it is essential that we use the exotic presentation, e.g. that our nilCoxeter algebra is generated by the operators $\pa_i$
associated to the simple reflections of the affine Weyl group.

\begin{rem} The paper \cite{BMR} introduces a class of presentations for complex reflection groups analogous to the Coxeter presentations. The exotic presentation of $G(m,m,n)$
involves taking the Coxeter presentation of the affine Weyl group and adding one more relation. This one relation involves all the simple reflections at once! In particular, it does
not fit into the framework of \cite{BMR}, whose relations involve at most three simple reflections at once. \end{rem}

For the complex reflection group $G(m,1,n)$, Shoji and Rampetas \cite{RamShoIandII} define a collection of Demazure operators $\De_w$ acting on the appropriate analogue of $R$ and $C$,
one for each element $w$ of the group. They build closely upon work of Bremke and Malle \cite{BreMal1, BreMal2}, which defines (using the loop presentation of $G(m,1,n)$) a length
function $\ell$ on $G(m,1,n)$, together with a root system. The Demazure operator $\De_w$ has degree $-\ell(w)$. Shoji and Rampetas prove \cite[part II, Theorem 2.18]{RamShoIandII} that
the space spanned by their Demazure operators is dual to the coinvariant algebra, and go one step further to prove that $\De_{w_0}$ is a Frobenius trace. However, their operators
$\De_w$ do not span a subalgebra within the endomorphisms of the polynomial ring.

Rampetas \cite{Ramp} went on to define Demazure operators for $G(m,m,n)$. Again he builds on Bremke-Malle \cite{BreMal2}, which defines a length function for $G(m,m,n)$ using the
type-$D$-style presentation. That the space of Demazure operators is dual to the coinvariant algebra is only conjectured, and no mention is made of Frobenius extensions. Again,
Rampetas' Demazure operators do not span a subalgebra. We emphasize that our nilCoxeter algebra will be completely distinct from algebra generated by Rampetas' operators, see \S\ref{ssec:otherviews} and \S\ref{ssec:rampetas}.

Let us mention a few more papers related to this study. Shoji in \cite{ShojiGrpn} generalized the length functions of Bremke and Malle to $G(m,d,n)$. In \cite{HughesMorris}, one can find root systems for a large class of (mostly exceptional) complex reflection groups. In \cite{TotaroGe1nSchubert} one can find the beginnings of Schubert calculus for $G(m,1,n)$, with a formula related to the Pieri rule for the top class in the coinvariant ring. In \cite{KirillovMaeno} a Nichols algebra is defined for arbitrary complex reflection groups which contains the coinvariant algebra, and the Shoji-Rampetas algebra in type $G(m,1,n)$ shown to be related to this Nichols algebra, giving another perspective on their statement of duality. In \cite{OrtizGKM} one can find another approach to $G(m,1,n)$ and Schubert calculus for its coinvariant ring, using a variant on moment graphs suited to the loop presentation, and yet another algebra of Demazure operators is studied.

% \begin{rem} We expect that the techniques used in this paper can also be used to study $G(m,d,n)$, as quotients of the (partially) extended affine Weyl group with its Coxeter presentation. \end{rem}

%========================================================
\subsection{Outline of paper} \label{ssec:outline}
%========================================================

We have just completed \S\ref{sec:intro}, which serves as an introduction to both this paper and the companion paper \cite{EJY2}. In particular, \S\ref{ssec:introGRT} motivated the study of the exotic nilCoxeter algebra using the quantum geometric Satake equivalence, and \S\ref{ssec:introhistory} described some of the earlier work of Shoji and Rampetas on Demazure operators for $G(m,d,n)$. In \S\ref{ssec:outline} you can find a helpful outline of the paper.

In \S\ref{sec:qref}, we describe and discuss the representations $V_z$, $V_m$, and their variants. We also discuss how root systems work in these representations. We give more
detail than is needed, so that this paper may serve as a helpful reference. In \S\ref{sec:polys} we prove results about the polynomial ring of $V_m$ and its invariant subring. We
define an antisymmetrization operator, and prove that $R_m^{W_m} \subset R_m$ is a Frobenius extension. In \S\ref{ssec:proofcomparison} we contrast our proof with Demazure's proof
for Coxeter groups. In \S\ref{sec:nilcox} we introduce the exotic nilCoxeter algebra and prove the roundabout relations, as well as related formulas for generic $z$. In
\S\ref{ssec:rampetas} we give more background on the Demazure operators of Rampetas and Shoji-Rampetas.

In \S\ref{sec:closedformularou} we specialize to $n=3$. We discuss the closed formula for Demazure operators acting on monomials in $\NC(z,3)$ and its specialization to
$\NC(m,m,3)$. These formulas are proven in the companion paper, though we give some indication of their proof here. Using these formulas, it is straightforward to prove Theorem \ref{thm:whennonzerointro}.

In \S\ref{sec:exex}, we give more details on the examples of $\NC(2,2,3)$ and $\NC(3,3,3)$, and provide experimental results on $\NC(m,m,3)$ for $m \le \maxcalc$. The reader can skip
directly to \S\ref{sec:exex} if desired, for some juicy surprises.

%%%%%%%%%%%%%%%%%%%%%%%%%%%%%%%%%%%%%%%%%%%%%%%%%%%%%%%%%
%========================================================
\section{The deformed reflection representation} \label{sec:qref}
%========================================================
%%%%%%%%%%%%%%%%%%%%%%%%%%%%%%%%%%%%%%%%%%%%%%%%%%%%%%%%%

We establish basic properties of the representation $V_z$, and define the corresponding root system. For context we also discuss a host of other realizations of affine Weyl groups which appear in the literature. The reader interested in our main theorems need only look at Definitions \ref{defn:boring} and \ref{defn:Vz} for the definitions of $V_{\boring}$ and $V_z$, and at Definition \ref{defn:symmetries} and \S\ref{ssec:basicpropsofrealizations} for some symmetries and basic properties. Then one is able to read the remainder of the paper, starting from \S\ref{ssec:roots}, save for some historical comments.

%========================================================
\subsection{The affine Weyl group} \label{ssec:affineweyl}
%========================================================

Let $W_{\aff}$ be the affine Weyl group associated to the symmetric group $S_n$. It has two common descriptions. Viewing $W_{\aff}$ as a Coxeter group, it is generated by a set of simple reflections $S = \{s_i\}$ indexed by $\Om = \ZZ/n\ZZ$. We have $m_{s_i s_j} = 3$ if $j = i \pm 1$, and $m_{s_i
s_j} = 2$ otherwise. In this paper all indices will be considered modulo $n$. One can think that $\{s_1, \ldots, s_{n-1}\}$ generate the finite Weyl group $S_n$, while $s_0$ is the \emph{affine reflection}.

Meanwhile, we can also describe $W$ as a semidirect product $S_n \ltimes \La_{\rt}$, where $\La_{\rt}$ is the root lattice of $S_n$. Elements of $\La_{\rt}$ are called \emph{translations}. A nice way to pass between these two descriptions is as follows. Let
\begin{equation} t_{\lon} := s_1 s_2 \cdots s_{n-2} s_{n-1} s_{n-2} \cdots s_2 s_1, \end{equation}
which is $(1n)$ in cycle notation. This is the reflection in $S_n$ corresponding to the highest root $\alpha_{\lon}$. Then $s_0 t_{\lon}$ is translation by $\alpha_{\lon}$. All other roots are in the $S_n$ orbit of $\alpha_{\lon}$, so all other translations by roots are conjugate to $s_0 t_{\lon}$ under $S_n$.

%========================================================
\subsection{Several realizations of the affine Weyl group} \label{ssec:qrefdefined}
%========================================================

\begin{defn} (See \cite[Definition 3.1]{EWGr4sb}) Let $(W,S)$ be a Coxeter group and $\Bbbk$ a commutative domain. A \emph{realization} of $W$ over $\Bbbk$ is the data of a free
$\Bbbk$ module $V$ with a $W$-action, together with a set of roots $\{\al_s\} \subset V$ and coroots $\{\al_s^\vee\} \subset V^*$ indexed by $s \in S$. We require that the action of
$s \in S$ on $V$ is given by the formula \[ s(v) = v - \langle \al_s^\vee, v \rangle \al_s. \] We also require $\langle \al_s^\vee, \al_s \rangle = 2$ for all $s$, and $\langle
\al_s^\vee, \al_t \rangle = 0$ whenever $m_{st} = 2$. The \emph{Cartan matrix} of the realization is the $S \times S$ matrix with entries $\langle \al_s^\vee, \al_t \rangle$.
\end{defn}

A realization is a formalization of what it means to be a ``reflection representation.''

\begin{defn} \label{defn:boring} The ``boring'' realization of $W_{\aff}$ over $\Bbbk$ is the free module $V_{\boring} = \Bbbk^n$ with basis $\{y_i\}_{i \in \Om}$, where
\begin{equation} s_i(y_i) = y_{i+1}, \quad s_i(y_{i+1}) = y_i, \quad s_i(y_j) = y_j \text{ if } j \notin \{i,i+1\}. \end{equation}
If $\{y_i^*\}$ is the dual basis to $\{y_i\}$, then the simple roots and coroots are
\begin{equation} \al_i = y_i - y_{i+1}, \qquad \al_i^\vee = y_i^* - y_{i+1}^*, \end{equation}
and the Cartan matrix is the usual affine Cartan matrix. \end{defn}

As $s_0$ and $t_{\lon}$ both act as the transposition $(1n)$, the translation $s_0 t_{\lon}$ acts trivially on $V_{\boring}$. By conjugation, all generators of $\La_{\rt}$ act trivially on $V_{\boring}$, and this representation factors through the quotient map $W_{\aff} \onto S_n$.  There is a common method to modify $V_{\boring}$ to get a faithful representation of $W_{\aff}$ with the same Cartan matrix, see for instance \cite[Section 2.1.1]{MacThi}. 

\begin{defn} \label{defn:Vde} There is a $\Bbbk$-linear realization on the  $V_{\de} = \Bbbk^n \oplus \Bbbk \cdot \de$, where $W_{\aff}$ fixes $\de$. The action of $S_n$ on $\Bbbk^n$ is as before, while the action of $s_0$ is modified so that
\begin{equation} s_0(y_n) = y_1 - \de, \qquad s_0(y_1) = y_n + \de, \qquad s_0(y_j) = y_j \text{ if } j \notin \{1,n\}. \end{equation} 
One sets
\begin{equation} \al_0 = y_n - y_1 + \de, \qquad \al_0^\vee = y_n^* - y_1^*. \end{equation}
Now we can see that the operator $s_0 t_{\lon}$ has infinite order on $V_{\de}$, sending $y_1 \mapsto y_1 - \de$ and $y_n \mapsto y_n + \de$. \end{defn}

What is less well-known is another method to modify $V_{\boring}$ to get a faithful representation of $W_{\aff}$, this time over the base ring $\AC_q := \Bbbk[q,q^{-1}]$.

\begin{defn} \label{defn:Vq} There is a $\AC_q$-linear realization of $W_{\aff}$, acting on the free module $V_q := \AC_q^n$ with basis $\{v_i\}$. The action of $S_n$ is to permute the $v_i$ as usual, while the action of $s_0$ is modified so that
\begin{equation} \label{qrefrep} s_0(v_n) = q^{-2} v_1, \qquad s_0(v_1) = q^2 v_n, \qquad s_0(v_j) = v_j \text{ if } j \notin \{1,n\}. \end{equation}
Letting $\{v_i^*\}$ denote the dual basis of $\{v_i\}$, for $i \ne 0$ we have
\begin{equation} \al_i = v_i - v_{i+1}, \qquad \al_i^\vee = v_i^* - v_{i+1}^*. \end{equation}
Meanwhile, one has
\begin{equation} \al_0 = q v_n - q^{-1} v_1, \qquad \al_0^\vee = q^{-1} v_n^* - q v_1^*. \end{equation}
The Cartan matrix is
\begin{equation} \label{slnqCartan} \left( \begin{array}{cccccc}
2 & -1 & 0 & \cdots & 0 & -q^{-1} \\
-1 & 2 & -1 & \cdots & 0 & 0 \\
0 & -1 & 2 & \cdots &  & 0 \\
\vdots & \vdots & \vdots & \ddots & -1 & 0 \\
0 & 0 &   & -1 & 2 & -q \\
-q & 0 & 0 & \cdots & -q^{-1} & 2
\end{array} \right), \end{equation}
though in the special case $n=2$ the Cartan matrix is
\begin{equation} \label{sl2qCartan} \left( \begin{array}{cc}
2 & -(q+q^{-1}) \\
-(q+q^{-1}) & 2 \end{array} \right). \end{equation}
\end{defn}

These Cartan matrices first appeared in \cite{EQuantumI}. Setting $q = 1$ recovers $V_{\boring}$ and the usual Cartan matrix. Unlike the usual setting, this Cartan matrix has nonzero determinant $2 - q^2 - q^{-2}$. Now the translation operator $s_0 t_{\lon}$ acts by
\begin{equation} \label{transopq} s_0 t_{\lon}(v_1) = q^{-2} v_1, \qquad s_0 t_{\lon}(v_n) = q^2 v^n, \qquad s_0 t_{\lon}(v_j) = v_j \text{ if } j \notin \{1,n\}. \end{equation}
Once again, it has infinite order.

\begin{rem} \label{rmk:rootversion} For all three realizations $V_{\boring}$, $V_{\de}$, and $V_q$, one could consider the \emph{root subrealization} spanned by the simple roots.
Morally, this subrealization is associated to $\sl_n$ rather than $\gl_n$. These three realizations have three different behaviors: \begin{itemize} \item For $V_{\boring}$ the sum of
the simple roots is zero. So $V_{\boring}$ has rank $n$, and the root subrealization has rank $n-1$. \item In $V_{\de}$ the simple roots are linearly independent, and their sum is the
$W$-invariant element $\de$. So $V_{\de}$ has rank $n+1$, and the root subrealization has rank $n$. \item In $V_q$ the simple roots are linearly independent, but no linear combination
is $W$-invariant or particularly special. Both $V_q$ and the root subrealization have rank $n$, and indeed after base change to a field (where $2 - q^2 - q^{-2} \ne 0$) they are equal. \end{itemize} \end{rem}

\begin{rem} In \cite{EQuantumI} only the root subrealization is studied. In this paper we introduce $V_q$ and its variant $V_z$ below. This slightly larger realization is significantly easier to deal with, both on a theoretical and practical level, see e.g. Remark \ref{rem:betterthanroot1} and Remark \ref{rmk:stairslnvsgln}. \end{rem}

The symmetries of the affine Dynkin diagram act compatibly on $V_{\boring}$. They do act on $V_{\de}$ and $V_q$ as well, but not by obvious, memorable formulas. We can alter both realizations to produce a more symmetric version.

\begin{defn} \label{defn:Vden} Let $V_{\de/n}$ be the free module $\Bbbk^n \oplus \Bbbk \cdot \de/n$, where $\Bbbk^n$ has basis $\{\gamma_i\}$. For all $i \in \Om$ we set
\begin{equation} \label{denrefrep} s_i(\gamma_i) = \gamma_{i+1} - \frac{\de}{n}, \qquad s_i(\gamma_{i+1}) = \gamma_i + \frac{\de}{n}, \qquad s_i(\gamma_j) = \gamma_j \text{ if } j \notin \{i,i+1\}. \end{equation}
The simple roots and coroots are
\begin{equation} \al_i = \gamma_i - \gamma_{i+1} + \frac{\de}{n}, \qquad \al_i^\vee = \gamma_i^* - \gamma_{i+1}^*. \end{equation}
\end{defn}

After base change to $\Bbbk' = \Bbbk[\frac{1}{n}]$, setting $\gamma_k = y_k + \frac{k}{n} \de$ for $1 \le k \le n$ (not considered modulo $n$ for purposes of this formula) gives an identification of $V_{\de/n}$ with $V_{\de}$. When working over $\Bbbk = \ZZ$, $V_{\de/n}$ is a larger $\ZZ$-lattice which properly contains $V_{\de}$.

\begin{defn} \label{defn:Vz} Let $z$ be a formal variable, and let $\AC_{z} := \Bbbk[z,z^{-1}]$. Let $V_z$ be the free $\AC_z$-module $\AC_z^{\oplus n}$ with basis $\{x_i\}_{i \in \Om}$. For all $i \in \Om$ define
\begin{equation} \label{zetarefrep} s_i(x_i) = z x_{i+1}, \quad s_i(x_{i+1}) = z^{-1} x_i, \quad s_i(x_j) = x_j \text{ if } j \notin \{i,i+1\}. \end{equation}
The simple roots and coroots are
\begin{equation} \al_i = x_i - z x_{i+1}, \qquad \al_i^\vee = x_i^* - z^{-1} x_{i+1}^*. \end{equation}
The Cartan matrix is
\begin{equation} \label{slnzeCartan} \left( \begin{array}{cccccc}
2 & -z & 0 & \cdots & 0 & -z^{-1} \\
-z^{-1} & 2 & -z & \cdots & 0 & 0 \\
0 & -z^{-1} & 2 & \cdots &  & 0 \\
\vdots & \vdots & \vdots & \ddots & -z & 0 \\
0 & 0 &   & -z^{-1} & 2 & -z \\
-z & 0 & 0 & \cdots & -z^{-1} & 2
\end{array} \right), \end{equation}
though in the special case $n=2$ the Cartan matrix is
\begin{equation} \label{sl2zeCartan} \left( \begin{array}{cc}
2 & -(z+z^{-1}) \\
-(z+z^{-1}) & 2 \end{array} \right). \end{equation}
The determinant is $2 - z^n - z^{-n}$.
\end{defn} 

After base change to $\Bbbk[p,p^{-1}]$, where $z = p^2$ and $q = p^{-n}$, setting $v_k = z^k x_k$ for $1 \le k \le n$ (not considered modulo $n$ for purposes of this formula) gives an identification of $V_z$ with $V_q$. This is Lemma \ref{lem:matchqz} below.

\begin{rem} Why do we identify $q^2 = z^{-n}$ rather than $q^2 = z^n$? In this paper there is no reason to prefer one convention over the other. Even in the quantum geometric Satake
equivalence, most scalars involve Laurent polynomials in $q$ which are invariant under $q \mapsto q^{-1}$. However, the braiding between quantum representations is not invariant under
$q \mapsto q^{-1}$. Preliminary computations seem to indicate that setting $z^n = q^{-2}$ will lead to less annoying formulas for the braiding when defined in terms of $z$, so we
suffer this annoyance now. \end{rem}

Let us now define the symmetries in question.

\begin{defn} \label{defn:symmetries} Let $\si$ denote the rotation operator on $\Om$, for which $\si(i) = i+1$. We let $\si$ act on $W_{\aff}$ by permuting indices, so $\si(s_i) =
s_{i+1}$, and similarly we let it act $\Bbbk$-linearly on $V_{\boring}$, $V_{\de/n}$, and $V_{z}$ by permuting indices. We also specify that $\si(\de/n) = \de/n$ and $\si(z) = z$.

The reflection operator requires more care, because simple reflections (which permute $i$ and $i+1$) are really centered around half-integers. Let $\tau$ be the automorphism of $W_{\aff}$ defined by $\tau(s_i) = s_{-i}$. Let $\tau$ be the $\Bbbk$-linear automorphism of $V_{\boring}$ defined by $\tau(y_i) = y_{1-i}$, and similarly define $\tau$ on $V_{\de/n}$ and $V_z$. We also specify that $\tau(\de/n) = -\de/n$ and $\tau(z) = z^{-1}$. \end{defn}

Note that, if one thinks of $z$ as a root of unity in $\CC$, then the action of $\tau$ on $V_z$ is conjugate-linear rather than linear.

%========================================================
\subsection{Basic properties of these realizations} \label{ssec:basicpropsofrealizations}
%========================================================

\begin{lem} \label{lem:easypermute} Inside $V_z$, the symmetric group $S_n \subset W_{\aff}$ permutes the vectors $z^k x_k$ for $1 \le k \le n$  according to their index. \end{lem}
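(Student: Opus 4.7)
The plan is to verify directly that each Coxeter generator of $S_n$ acts as the expected simple transposition on the set $\{z^k x_k : 1 \le k \le n\}$, and then invoke the fact that $S_n \subset W_{\aff}$ is generated by $\{s_1, s_2, \ldots, s_{n-1}\}$ (the simple reflections corresponding to the finite Dynkin subdiagram, excluding the affine reflection $s_0$).

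First I would fix $i$ with $1 \le i \le n-1$ and use the defining formulas of Definition~\ref{defn:Vz}. Applying $s_i$ to $z^i x_i$ gives $z^i \cdot s_i(x_i) = z^i \cdot z x_{i+1} = z^{i+1} x_{i+1}$, and applying $s_i$ to $z^{i+1} x_{i+1}$ gives $z^{i+1} \cdot s_i(x_{i+1}) = z^{i+1} \cdot z^{-1} x_i = z^i x_i$. For any $k \in \{1, \ldots, n\} \setminus \{i, i+1\}$, the defining formula immediately gives $s_i(z^k x_k) = z^k x_k$. Hence $s_i$ implements the transposition $(i,\, i{+}1)$ on the indexed collection $\{z^k x_k\}_{k=1}^{n}$, as claimed.

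Since $S_n$ is generated inside $W_{\aff}$ by these elements, the full claim follows by composition: any $w \in S_n$, written as a product of simple transpositions, acts on $\{z^k x_k\}$ by the corresponding permutation of $\{1, \ldots, n\}$.

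There is no real obstacle here; the only subtle point worth flagging is that the range $1 \le k \le n$ is essential. The index labels live in $\Omega = \ZZ/n\ZZ$, so $x_0 = x_n$, but the exponent $k$ of $z$ in $z^k x_k$ is taken as an integer from $1$ to $n$ (not modulo $n$). With this convention, one checks that $s_0$ does \emph{not} permute the set $\{z^k x_k\}_{k=1}^{n}$ (it would send $z x_1$ to $x_n = z^0 x_n$ rather than to $z^n x_n$), which is precisely why we must restrict to the subgroup $S_n$ rather than all of $W_{\aff}$.
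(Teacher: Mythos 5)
Your proof is correct and is exactly the ``easy computation'' that the paper leaves to the reader: checking the action of each finite simple reflection $s_i$, $1 \le i \le n-1$, on the rescaled basis and then using that these generate $S_n$. Your closing remark correctly identifies why $s_0$ is excluded, which matches the intent of the lemma.
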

	
\begin{proof} An easy computation. \end{proof}

\begin{lem} \label{lem:matchqz} Let $\Bbbk'$ be a $\Bbbk$-algebra which is a commutative domain, containing invertible elements $z$ and $q$ such that $q^2 = z^{-n}$. There is an isomorphism $V_q \ot_{\AC_q} \Bbbk' \to V_z \ot_{\AC_z} \Bbbk'$ of $W_{\aff}$ representations. This isomorphism sends
\begin{equation} v_k \mapsto z^k x_k \end{equation} for $1 \le k \le n$. The simple roots are rescaled by this isomorphism: we have
\begin{equation} \al_k \mapsto z^k \al_k \text{ for } 1 \le k \le n-1, \qquad \al_0 \mapsto q^{-1} \al_0. \end{equation}
The simple coroots are rescaled by the inverse scalars. \end{lem}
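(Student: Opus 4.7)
The plan is to define the candidate map $\phi \co V_q \ot_{\AC_q} \Bbbk' \to V_z \ot_{\AC_z} \Bbbk'$ by the stated formula $v_k \mapsto z^k x_k$ for $1 \le k \le n$, and then verify directly that it is $W_{\aff}$-equivariant by checking on the generating set $\{s_i\}_{i \in \Om}$. Since $\phi$ sends a basis of the source to a basis of the target scaled by units (powers of $z$), invertibility is automatic; so the content is the intertwining property, together with tracking the rescaling of the simple roots and coroots.

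For the simple reflections $s_i$ with $i \ne 0$, the action on both sides only involves the finite symmetric group $S_n$, and the verification is immediate from Lemma \ref{lem:easypermute}: one has $\phi(s_i v_i) = z^{i+1} x_{i+1}$ and $s_i \phi(v_i) = s_i(z^i x_i) = z^i \cdot z x_{i+1}$, and symmetrically for $v_{i+1}$, while basis vectors $v_k$ with $k \notin \{i,i+1\}$ are fixed on both sides. The interesting case is $s_0$, where one must remember that in $V_z$ the index $n$ is $0 \in \Om$, so $v_n \mapsto z^n x_n = z^n x_0$. Then
\begin{equation*}
\phi(s_0 v_n) = \phi(q^{-2} v_1) = q^{-2} z x_1, \qquad s_0 \phi(v_n) = z^n \cdot s_0(x_0) = z^{n+1} x_1,
\end{equation*}
and these agree precisely when $q^{-2} = z^n$, i.e.\ $q^2 = z^{-n}$. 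Likewise $\phi(s_0 v_1) = q^2 z^n x_0$ and $s_0 \phi(v_1) = z \cdot z^{-1} x_0 = x_0$, again matched by $q^2 z^n = 1$. The remaining basis vectors $v_j$ for $j \notin \{1,n\}$ are fixed by $s_0$ on both sides.

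The rescaling of simple roots follows by the same computation. For $1 \le k \le n-1$ one gets $\phi(\alpha_k) = \phi(v_k - v_{k+1}) = z^k x_k - z^{k+1} x_{k+1} = z^k(x_k - z x_{k+1}) = z^k \alpha_k$. For the affine root, $\phi(\alpha_0) = q z^n x_0 - q^{-1} z x_1$, and using $q z^n = q \cdot q^{-2} = q^{-1}$ this equals $q^{-1}(x_0 - z x_1) = q^{-1} \alpha_0$. The corresponding statement for coroots follows from the observation that if the basis is rescaled by $\phi(v_k) = z^k x_k$, then pulling back dual vectors rescales $x_k^*$ by $z^{-k}$, i.e.\ by the inverse scalar; a short calculation then gives $(\phi^*)^{-1}(\alpha_k^\vee) = z^{-k} \alpha_k^\vee$ for $k \ne 0$ and $(\phi^*)^{-1}(\alpha_0^\vee) = q \alpha_0^\vee$, matching the inverse scalars $z^{-k}$ and $q$.

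There is no real obstacle here: the lemma is a change-of-variables computation, and the only thing to be careful about is the bookkeeping between the index set $\{1,\ldots,n\}$ used for $V_q$ and the index set $\Om = \ZZ/n\ZZ$ used for $V_z$, together with the single non-trivial identity $q^2 = z^{-n}$ which is invoked exactly where the $s_0$-action crosses from index $n$ to index $1$.
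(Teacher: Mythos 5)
Your proof is correct, and it is exactly the direct computation that the paper leaves implicit (its proof of Lemma \ref{lem:matchqz} is simply ``an easy computation''): check equivariance on the generators, with the hypothesis $q^2 = z^{-n}$ entering only at $s_0$, then read off the rescaling of roots and coroots. No differences in approach worth noting.
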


\begin{proof} An easy computation. \end{proof}

\begin{rem} Note that the identification of $V_q$ with $V_z$ is different from the identification of $V_{\de}$ with $V_{\de/n}$, as the roots are rescaled. This rescaling leads to a
changed Cartan matrix, and to a rescaled definition of Demazure operators. This accounts for the difference between our braid relations in $\NC(z,3)$ found in \eqref{zeR3}, and the braid relations found in \cite[Equations (3.9abc)]{EQuantumI}. \end{rem}

Let us pin down the action of the root lattice on $V_z$.

\begin{lem} \label{lem:easytranslate} Let $\La$ denote the subgroup of $\ZZ^n$ consisting of tuples of integers $(a_1, \ldots, a_n)$ such that $\sum a_i = 0$. There is an embedding $\La \to \End_{\AC_z}(V_z)$, where the action is given by
\begin{equation} x_i \mapsto z^{n a_i} x_i. \end{equation}
Then $\La \subset \End_{\AC_z}(V_z)$ is the isomorphic
 image of the action of $\La_{\rt} \subset W_{\aff}$.
\end{lem}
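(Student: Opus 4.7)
The plan is to compute the action of one particular translation, namely the translation $s_0 t_{\lon}$ by the highest root, transfer the computation through the isomorphism of Lemma \ref{lem:matchqz}, and then use $S_n$-equivariance to reach all roots.

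First I would check that the assignment $(a_1,\ldots,a_n) \mapsto \operatorname{diag}(z^{na_1},\ldots,z^{na_n})$ is a well-defined group homomorphism from $\Lambda$ to $\operatorname{End}_{\mathcal{A}_z}(V_z)^\times$: diagonal operators commute and composition adds exponents, so this is immediate. Injectivity is also immediate: in the Laurent polynomial ring $\AC_z = \Bbbk[z,z^{-1}]$, the equality $z^{na_i} = 1$ forces $a_i = 0$, so no nonzero tuple in $\Lambda$ lies in the kernel.

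Next I would compute the action of $s_0 t_{\lon}$ on $V_z$. Using Lemma \ref{lem:matchqz} with $q^2 = z^{-n}$ and $v_k = z^k x_k$, equation \eqref{transopq} translates directly: $s_0 t_{\lon}(v_1) = q^{-2} v_1 = z^n v_1$ becomes $s_0 t_{\lon}(x_1) = z^n x_1$; similarly $s_0 t_{\lon}(x_n) = z^{-n} x_n$; and $s_0 t_{\lon}(x_j) = x_j$ for $j \notin \{1,n\}$. This is precisely the diagonal operator attached to the tuple $(1,0,\ldots,0,-1)$, which is how the highest root $\alpha_{\lon}$ sits inside $\Lambda$ under the standard identification of the finite root lattice with $\Lambda$ by $\alpha_i \leftrightarrow e_i - e_{i+1}$.

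Now I would use $S_n$-equivariance. Conjugation by $w \in S_n \subset W_{\aff}$ sends the translation $t_\mu$ to $t_{w(\mu)}$, so the images under $W_{\aff}$ of $S_n$-conjugates of $\alpha_{\lon}$ in $\Lambda_{\rt}$ account for translations by all roots. On the $V_z$ side, Lemma \ref{lem:easypermute} says that $S_n$ permutes the basis vectors $z^k x_k$ according to the index; conjugating a diagonal operator by such a permutation permutes the diagonal entries. Therefore translation by $w(\alpha_{\lon})$ acts as the diagonal operator associated to $w \cdot (1,0,\ldots,0,-1) \in \Lambda$. Since the roots $w(\alpha_{\lon})$ for $w \in S_n$ generate the root lattice, I obtain that every element of $\Lambda_{\rt}$ acts via the claimed diagonal formula, so the image of $\Lambda_{\rt} \subset W_{\aff}$ in $\operatorname{End}_{\AC_z}(V_z)$ coincides with the image of $\Lambda$ under the diagonal embedding.

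The main obstacle is really just pedantic bookkeeping: untangling the rescaling $v_k = z^k x_k$ (and the associated rescaling of roots) in Lemma \ref{lem:matchqz} and making sure the signs on the exponents $z^{\pm n}$ come out consistently with the abstract identification $\alpha_{\lon} \leftrightarrow e_1 - e_n$. Once this is aligned the rest is a straightforward symmetry argument.
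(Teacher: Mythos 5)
Your proposal is correct and follows essentially the same route as the paper: compute that $s_0 t_{\lon}$ acts diagonally on the $x_i$ by $z^{\pm n}$ (the paper does this directly in $V_z$ rather than transferring from \eqref{transopq} via Lemma \ref{lem:matchqz}, a cosmetic difference), then use $S_n$-conjugation together with Lemma \ref{lem:easypermute} and the fact that these conjugates generate both $\La_{\rt}$ and $\La$. Your extra check that the diagonal embedding $\La \to \End_{\AC_z}(V_z)$ is injective is a harmless addition the paper leaves implicit.
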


\begin{proof}
The translation operator $s_0 t_{\lon}$ acts by
\begin{equation}
\label{transop} s_0 t_{\lon}(x_1) = z^n x_1, \qquad s_0 t_{\lon}(x_n) = z^{-n} x^n, \qquad s_0 t_{\lon}(x_j) = x_j \text{ if } j \notin \{1,n\}. \end{equation}
The $S_n$-conjugates of this operator live inside $\La$, and generate it, just as for $\La_{\rt}$. \end{proof}	

%========================================================
\subsection{Dual realization} \label{ssec:dualrealization}
%========================================================

Let $V$ be a realization of a Coxeter group $W$. Recall that a simple reflection $s$ acts on a vector $v \in V$ by the formula
\begin{equation} s(v) = v - \langle \alpha_s^\vee, v \rangle \cdot \alpha_s. \end{equation}
Similarly, one can define an action of simple reflections on $\phi \in V^*$ by the formula
\begin{equation} s(\phi) = \phi - \langle \phi, \alpha_s \rangle \cdot \alpha_s^\vee. \end{equation}
The corresponding action of the Coxeter group $W$ on $V^*$ is the \emph{dual realization}, where one swaps the role of roots and coroots.

It it easy to verify from these formulas that $\langle s \phi, s v \rangle = \langle \phi, v \rangle$. Thus for any $w \in W$, $v \in V$, and $\phi \in V^*$ we have
\begin{equation} \label{eq:Wpreservespairing} \langle w \phi, wv \rangle = \langle \phi, v \rangle. \end{equation}
	
\begin{prop} \label{prop:antilineardual} A $\ZZ$-linear map $\psi$ between $\AC_z$-modules is called \emph{$z$-antilinear} if $\psi(zm) = z^{-1} \psi(m)$. The $z$-antilinear map $V_z \to V_z^*$ which sends
	\begin{equation} z \mapsto z^{-1}, \qquad v_i \mapsto v_i^*, \end{equation}
is a $W$-equivariant isomorphism between $V_z$ and $V_z^*$, sending roots to roots and coroots to coroots. \end{prop}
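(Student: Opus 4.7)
The plan is to verify the three assertions — that $\psi$ is a well-defined $\ZZ$-linear bijection, that it is $W$-equivariant, and that it carries the simple root/coroot data correctly — by a direct computation on the basis $\{x_i\}_{i\in\Omega}$ of $V_z$ (interpreting the ``$v_i$'' in the statement as the distinguished basis vectors $x_i$). Bijectivity is immediate: since $\{x_i\}$ is an $\AC_z$-basis of $V_z$ and $\{x_i^*\}$ an $\AC_z$-basis of $V_z^*$, the prescription $x_i \mapsto x_i^*$ together with $z \mapsto z^{-1}$ uniquely determines a $\ZZ$-linear map, and the prescription $x_i^* \mapsto x_i$, $z \mapsto z^{-1}$ provides a two-sided inverse.

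For $W$-equivariance, since $W_{\aff}$ is generated by the simple reflections it suffices to verify $\psi(s_i(x_j)) = s_i(\psi(x_j))$ for all $i,j \in \Omega$. The left-hand side is computed from the action \eqref{zetarefrep} and the $z$-antilinearity of $\psi$; the right-hand side requires the formula for the dual action $s_i(\phi) = \phi - \langle \phi, \alpha_i\rangle\, \alpha_i^\vee$, applied to $\phi = x_j^*$, using $\alpha_i = x_i - z\, x_{i+1}$ and $\alpha_i^\vee = x_i^* - z^{-1} x_{i+1}^*$. I would then dispatch the three cases $j = i$, $j = i+1$, and $j \notin \{i,i+1\}$ in turn. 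For instance, in the first case one gets $s_i(x_i^*) = x_i^* - (x_i^* - z^{-1} x_{i+1}^*) = z^{-1} x_{i+1}^*$, while $\psi(s_i(x_i)) = \psi(z\, x_{i+1}) = z^{-1}\, x_{i+1}^*$, matching. The other two cases are analogous.

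For the final assertion, observe that $\psi(\alpha_i) = \psi(x_i - z\, x_{i+1}) = x_i^* - z^{-1} x_{i+1}^* = \alpha_i^\vee$. Thus $\psi$ carries the simple roots of $V_z$ to the simple coroots of $V_z$, which are exactly the simple roots of the dual realization on $V_z^*$; dually, $\psi^{-1}$ carries the simple roots of $V_z^*$ to the simple roots of $V_z$. This is the content of ``sending roots to roots and coroots to coroots'' under the convention that in the dual realization the roles of roots and coroots are exchanged.

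The main delicacy is purely bookkeeping: the whole proposition works only because $\psi$ is $z$-antilinear and the coefficient $z$ appearing in the simple root $\alpha_i$ flips to the $z^{-1}$ appearing in the simple coroot $\alpha_i^\vee$. Conceptually, a cleaner route would be to derive $W$-equivariance directly from \eqref{eq:Wpreservespairing}: the pairing $\langle -, - \rangle\co V_z^* \times V_z \to \AC_z$ is $W$-invariant, and the antilinear dual basis assignment $x_i \mapsto x_i^*$ is the essentially unique $\ZZ$-linear intertwiner compatible with this pairing up to the involution $z \mapsto z^{-1}$. Either route reduces the proof to routine verification; no significant obstacle is expected.
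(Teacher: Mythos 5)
Your proof is correct and takes essentially the same approach as the paper, which simply declares the result ``evident from the formulas in Definition \ref{defn:Vz}''; you supply the routine verification (noting correctly that the $v_i$ in the statement should be the basis vectors $x_i$ of $V_z$) that the paper leaves to the reader.
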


\begin{proof} This is evident from the formulas in Definition \ref{defn:Vz}. \end{proof}

We use the following corollary when studying roots and length functions. The proof is immediate from Proposition \ref{prop:antilineardual}.

\begin{cor} \label{cor:actiononcoroots} Let $w \in W_{\aff}$ and $d \in \ZZ$ and $j, k \in \Om$ be such that $w(\alpha_j) = z^d \alpha_k$. Then $w(\alpha_j^\vee) = z^{-d} \alpha_k^\vee$. \end{cor}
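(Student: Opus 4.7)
The plan is to apply the $z$-antilinear $W$-equivariant isomorphism $\psi \co V_z \to V_z^*$ constructed in Proposition \ref{prop:antilineardual} to both sides of the given equation $w(\alpha_j) = z^d \alpha_k$. Since Proposition \ref{prop:antilineardual} guarantees that $\psi$ takes each simple root $\alpha_i \in V_z$ to the corresponding simple coroot $\alpha_i^\vee \in V_z^*$ (a ``root'' of the dual realization), the image of each side will land exactly in the context we want.

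Concretely, applying $\psi$ to the left-hand side and invoking $W$-equivariance gives
\[ \psi(w(\alpha_j)) = w(\psi(\alpha_j)) = w(\alpha_j^\vee), \]
where the $W$-action on the right is the dual action on $V_z^*$ described in \S\ref{ssec:dualrealization}. Applying $\psi$ to the right-hand side and using $z$-antilinearity, $\psi(zm) = z^{-1}\psi(m)$, yields
\[ \psi(z^d \alpha_k) = z^{-d} \psi(\alpha_k) = z^{-d} \alpha_k^\vee. \]
Equating the two expressions gives the claimed identity $w(\alpha_j^\vee) = z^{-d} \alpha_k^\vee$.

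There really is no obstacle here beyond keeping the bookkeeping straight: one must remember that $\alpha_j^\vee$ and $\alpha_k^\vee$ are elements of the dual realization $V_z^*$, on which $W_{\aff}$ acts by the formula recalled at the start of \S\ref{ssec:dualrealization}, and that Proposition \ref{prop:antilineardual} packages the matching between this dual action and the original action into a single isomorphism. All the real work has already been done in establishing that proposition; the corollary is simply its functorial consequence upon evaluating on the equation $w(\alpha_j) = z^d \alpha_k$.
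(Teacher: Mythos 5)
Your proof is correct and is exactly the argument the paper intends: the paper derives Corollary \ref{cor:actiononcoroots} immediately from Proposition \ref{prop:antilineardual}, and you have simply spelled out the two steps (apply the $z$-antilinear $W$-equivariant isomorphism $V_z \to V_z^*$ to both sides, using equivariance on the left and antilinearity plus $\alpha_k \mapsto \alpha_k^\vee$ on the right). No gaps; nothing further is needed.
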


\begin{ex} The above corollary is consistent with the following computation, which uses \eqref{eq:Wpreservespairing}.
\begin{equation} 2 z^d = \langle \alpha_k^\vee, z^d \alpha_k \rangle =  \langle \alpha_k^\vee, w(\alpha_j) \rangle = \langle w^{-1}(\alpha_k^\vee), \alpha_j \rangle = 
	\langle z^d \alpha_j, \alpha_j \rangle = 2 z^d. \end{equation}
\end{ex}

%========================================================
\subsection{Embeddings of affine realizations} \label{ssec:embeddings}
%========================================================

Let us mention an embedding of realizations which is motivational, and we expect will be useful in future work. It is well-known classically that the affine reflection $s_0$ and the longest finite reflection $t_{\lon}$ generate an infinite dihedral group, i.e. a copy of the affine Weyl group of type $A_1$. This embedding of groups (though not an embedding of Coxeter systems) is consistent with an embedding of their reflection representations. Here we argue that this embedding is also consistent with their deformed reflection representations.

\begin{rem} In type $A_1$ there is little difference between $V_q$ and $V_z$, as $q^2 = z^{-2}$, the Cartan matrices agree, and the bases agree up to rescaling. For clarity we use $V_q$ below. \end{rem}
	
\begin{defn} In any realization of $W_{\aff}$, let $\alpha_{\lon}$ denote $s_{n-1} \cdots s_3 s_2 (\alpha_1)$. Define $\alpha_{\lon}^\vee$ similarly. \end{defn}
	
\begin{thm} \label{thm:A1embedding} Let $\Bbbk'$ be a $\Bbbk$-algebra which is a commutative domain, containing invertible elements $z$ and $q$ such that $q^2 = z^{-n}$. Let $W_{\aff, A_1}$ denote the affine Weyl group in type $A_1$, an infinite dihedral group generated by $t$ and $u$. Let $V_{q, A_1}$ denote the realization for $W_{\aff,
A_1}$ constructed in Definition \ref{defn:Vq}. There is an embedding of Coxeter groups $W_{\aff, A_1} \to W_{\aff}$ sending $t
\mapsto t_{\lon}$ and $u \mapsto s_0$. Correspondingly, there is an embedding of realizations $V_{q, A_1} \to V_z$ sending
\begin{equation} \label{eq:rootscorootsA1embedding} \alpha_t \mapsto \alpha_{\lon}, \qquad \alpha_u \mapsto
q z^{n-1} \alpha_0, \qquad \alpha_{\lon}^\vee \mapsto \alpha_t^\vee, \qquad q^{-1} z^{1-n} \alpha_0^\vee \mapsto \alpha_u^\vee. \end{equation} \end{thm}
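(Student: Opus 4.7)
My plan is to construct the embedding explicitly and then verify equivariance on generators. The group-theoretic embedding is classical: as noted in \S\ref{ssec:affineweyl}, $s_0 t_{\lon}$ is translation by the highest root $\alpha_{\lon}$ and hence has infinite order in $W_{\aff}$, so the involutions $t_{\lon}$ and $s_0$ generate an infinite dihedral subgroup, providing the required injection $W_{\aff, A_1} \into W_{\aff}$.

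For the embedding of realizations, after base change to $\Bbbk'$ I would define $\phi \co V_{q, A_1} \to V_z$ by
\begin{equation*} v_1 \mapsto x_1, \qquad v_2 \mapsto z^{n-1} x_n, \end{equation*}
where $\{v_1, v_2\}$ is the basis from Definition \ref{defn:Vq} with $n = 2$; this is manifestly injective. The choice is essentially forced by $t$-equivariance via Lemma \ref{lem:easypermute}: since $S_n$ permutes the vectors $z^k x_k$ and $t_{\lon} = (1\;n)$, the element $t_{\lon}$ interchanges $x_1$ and $z^{n-1} x_n$, matching the swap $v_1 \leftrightarrow v_2$ induced by $t$. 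For $u$-equivariance one computes $s_0(x_1) = z^{-1} x_n$ and $s_0(z^{n-1} x_n) = z^n x_1$; invoking $z^n = q^{-2}$ rewrites these as $q^2 \cdot z^{n-1} x_n$ and $q^{-2} \cdot x_1$ respectively, matching $u(v_1) = q^2 v_2$ and $u(v_2) = q^{-2} v_1$ under $\phi$.

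It remains to confirm that $\phi$ sends the specified roots and coroots. An easy induction gives $s_k \cdots s_2(\alpha_1) = x_1 - z^k x_{k+1}$, hence $\alpha_{\lon} = x_1 - z^{n-1} x_n$ and $\phi(\alpha_t) = \phi(v_1 - v_2) = \alpha_{\lon}$. Using $q^{-1} = q z^n$, one has
\begin{equation*} \phi(\alpha_u) = q z^{n-1} x_n - q^{-1} x_1 = q z^{n-1}(x_n - z x_1) = q z^{n-1} \alpha_0. \end{equation*}
For the coroots one dualizes $\phi$: computing $\phi^*(x_1^*) = v_1^*$ and $\phi^*(x_n^*) = z^{n-1} v_2^*$, and combining with $\alpha_{\lon}^\vee = x_1^* - z^{1-n} x_n^*$ (obtained from $\alpha_{\lon}$ via Proposition \ref{prop:antilineardual}) together with $\alpha_0^\vee = x_n^* - z^{-1} x_1^*$, one obtains $\phi^*(\alpha_{\lon}^\vee) = \alpha_t^\vee$ and $\phi^*(q^{-1} z^{1-n} \alpha_0^\vee) = \alpha_u^\vee$, invoking $z^{-n} = q^2$ once more.

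The main obstacle is purely bookkeeping: tracking the asymmetric scalars $q$, $z^{n-1}$, and their inverses, and invoking $q^2 = z^{-n}$ at the appropriate moments. There is no conceptual difficulty beyond recognizing that the natural basis assignment $v_1 \mapsto x_1$, $v_2 \mapsto z^{n-1} x_n$ forced by $t$-equivariance automatically yields the correct scalings for $u$-equivariance as well as for the root and coroot assertions.
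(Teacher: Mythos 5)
Your proposal is correct and takes essentially the same approach as the paper: both construct the embedding by $v_1 \mapsto x_1$, $v_2 \mapsto z^{n-1} x_n$ and then check equivariance for $t_{\lon}$ and $s_0$ along with the images of roots and coroots. The only difference is one of detail — the paper declares the verification straightforward, while you carry it out explicitly (including the infinite-dihedral group embedding and the coroot computation via the dual map), all of which checks out.
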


Perhaps this theorem can be most easily summarized by considering the pairing matrix between $\{\alpha_{\lon}^\vee, \alpha_0^\vee\}$ and $\{\alpha_{\lon}, \alpha_0\}$ in $V_z$:
\begin{equation} \left( \begin{array}{cc} 2 & -z(1+z^{-n}) \\ -z^{-1}(1+z^n) & 2 \end{array} \right). \end{equation}
Conjugating by a diagonal matrix, we obtain the Cartan matrix \eqref{sl2qCartan}.

\begin{proof} Recall that $V_{q, A_1}$ has basis $v_1$ and $v_2$, with $t(v_1) = v_2$ and $u(v_1) = q^2 v_2$. We have
\begin{equation} \alpha_t = v_1 - v_2, \qquad \alpha_u = qv_2 - q^{-1} v_1, \qquad \alpha_t^\vee = v_1^* - v_2^*, \qquad \alpha_u^\vee = q^{-1} v_2^* - q v_1^*. \end{equation}
Meanwhile, $V_z$ has basis $\{x_i\}$ for $1 \le i \le n$, with $t_{\lon}(x_1) = z^{n-1} x_n$ and $s_0(x_1) = z^{-1} x_n$. We have
\begin{equation} \alpha_{\lon} = x_1 - z^{n-1} x_n, \qquad \alpha_0 = x_n - z x_1, \qquad \alpha_{\lon}^\vee = x_1^* - z^{1-n} x_n^*, \qquad \alpha_0^\vee = x_n^* - z^{-1} x_1^*. \end{equation}
It is straightforward to verify that the linear transformation
\begin{equation} v_1 \mapsto x_1, \qquad v_2 \mapsto z^{n-1} x_n, \end{equation}
will intertwine the action of $t$ with $t_{\lon}$, and the action of $u$ with $s_0$. Moreover, it will act on roots and coroots as in \eqref{eq:rootscorootsA1embedding}. \end{proof}

Because of this embedding, one expects that phenomena which occur for infinite dihedral groups when $q$ is specialized to a root of unity will carry over to $V_z$.

\begin{rem} Similarly, when $n = k \ell$ there is an embedding of the affine Weyl group of $S_{k}$ into the affine Weyl group of $S_n$, and of $V_{z^{\ell}, S_{k}}$ into $V_{z, S_n}$. \end{rem}
	
%========================================================
\subsection{Exponentiation} \label{ssec:exponentiation}
%========================================================

Let us note the relationship between $V_{\de/n}$ and $V_z$, due to \cite{EWKSBim} where it is studied in more detail. One can define $V_{\de/n}$ integrally, so that $W_{\aff}$ is acting
on a lattice. Then $W_{\aff}$ acts on the group algebra $R_K$ of this lattice, which is a Laurent polynomial ring on the variables $e^{\gamma_i}$ and $e^{\de/n}$. Note that $R_K$ has a
geometric interpretation as the equivariant $K$-theory of a point under the action of the torus of (the $GL_n$-analogue of) the affine Kac-Moody group. There is a $W_{\aff}$-equivariant
map $V_z \to R_K$ which sends $x_i \mapsto e^{\gamma_i}$ and $z \mapsto e^{\de/n}$. The reader should compare \eqref{zetarefrep} with \eqref{denrefrep}. Taking the Laurent polynomial
ring $\AC_z[x_1^{\pm}, \ldots, x_n^{\pm}]$ of $V_z$, we get a ring isomorphic to $R_K$.

Said briefly, the polynomial ring of $V_z$ is a subring of the group algebra of the lattice $V_{\de/n}$, intertwining the action of $W_{\aff}$. One can perform a similar operation to
relate $V_{\de}$ and $V_q$. It is worth emphasizing that $V_{\de/n}$ is rank $n+1$, while $V_z$ is rank $n$; the rank difference translates into a difference in base ring, as the
$W_{\aff}$-invariant span of $\de/n$ is absorbed into the scalars $\AC_z$ of $V_z$.

\begin{rem} \label{rem:betterthanroot1} One can define the ring $R_K$ analogously for any crystallographic Coxeter groups acting on its reflection representation. However, it will not
typically contain a $W$-invariant polynomial subring $\AC_z[x_1, \ldots, x_n]$ from which we could extract a reflection representation like $V_z$. In fact, even the root subrealization
of $V_{\de/n}$ spanned by roots will not admit a polynomial subring. What is special about type $A$ is the realization coming from the defining representation of $S_n$ rather than the
standard representation, i.e. acting on the $\gl_n$ torus rather than the $\sl_n$ torus. \end{rem}

\begin{rem} This warning is for the reader who might try to use the exponentiation map to prove facts about $V_z$ from the classical realization $V_{\de/n}$. The exponentiation map, though a $W_{\aff}$-equivariant map from $V_{\de/n}$ to $V_z$, does not send roots to roots. For example, $e^{\gamma_1 - \gamma_2 +
\de/n} = z x_1 x_2^{-1}$, which is different from the root $x_1 - z x_2 = e^{\gamma_1} - e^{\gamma_2 + \de/n}$. \end{rem}

%========================================================
\subsection{Roots} \label{ssec:roots}
%========================================================

For the remainder of this paper we work solely with $V_z$ and its specializations, having introduced the other realizations mostly for context. 

\begin{defn} A \emph{root} in $V_{z}$ is an element in the $W_{\aff}$-orbit of some simple root. We let $\Phi$ denote the set of roots. \end{defn}
	
The behavior of the roots when $n=2$ is quite different to the case when $n \ge 3$, and we focus on the latter here. For the $n=2$ case see Remark \ref{rmk:rootsn2}.

\begin{prop} Suppose $n \ge 3$. The roots in $V_{z}$ have the form
\begin{equation} \al = z^{k}(z^{i+ln}x_i - z^{j} x_j) \end{equation}
for $1 \le i \ne j \le n$ and $k,l \in \ZZ$. \end{prop}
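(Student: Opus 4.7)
The plan is to verify the formula for the simple roots directly, and then to show that the set of vectors of the prescribed form is stable under $W_{\aff}$. Using the semidirect product decomposition $W_{\aff} = S_n \ltimes \La_{\rt}$ recalled in \S\ref{ssec:affineweyl}, together with Lemmas \ref{lem:easypermute} and \ref{lem:easytranslate}, stability reduces to separate checks for the finite group $S_n$ and for translations.

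First I would verify that every simple root has the claimed shape. For $i \in \{1, \dots, n-1\}$ we have $\al_i = x_i - z x_{i+1} = z^{-i}(z^i x_i - z^{i+1} x_{i+1})$, matching with $(k,l) = (-i, 0)$. For $\al_0 = x_n - z x_1$ (taking $0 \equiv n$ in $\Om$), the identity $\al_0 = z^0(z^{n + (-1)n} x_n - z^1 x_1)$ gives $(k,l) = (0, -1)$.

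Let $\mathcal{S} \subset V_z$ denote the set of vectors of the prescribed form. To see that $\mathcal{S}$ is $S_n$-stable, Lemma \ref{lem:easypermute} gives $\pi(x_m) = z^{\pi(m) - m} x_{\pi(m)}$, and a direct computation yields
\[
\pi\bigl(z^k(z^{i+ln} x_i - z^j x_j)\bigr) = z^k\bigl(z^{\pi(i) + ln} x_{\pi(i)} - z^{\pi(j)} x_{\pi(j)}\bigr),
\]
again in $\mathcal{S}$. For translations, Lemma \ref{lem:easytranslate} gives $t_a(x_m) = z^{n a_m} x_m$ for $a \in \La_{\rt}$, and then
\[
t_a\bigl(z^k(z^{i+ln} x_i - z^j x_j)\bigr) = z^{k + n a_j}\bigl(z^{i + (l + a_i - a_j)n} x_i - z^j x_j\bigr),
\]
an element of $\mathcal{S}$ with new parameters $k' = k + n a_j$ and $l' = l + a_i - a_j$. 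Combining, the orbit of any simple root under $W_{\aff}$ stays inside $\mathcal{S}$, which is what we want.

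There is no substantive obstacle; the proof is pure bookkeeping with exponents of $z$. The only mildly delicate point is the asymmetric shape of the formula, in which $x_i$ carries an extra $z^{ln}$ factor while $x_j$ does not. This asymmetry is precisely what is needed to absorb the two independent contributions of a translation: the prefactor $z^k$ soaks up the $n a_j$ shift attached to $x_j$, while $z^{ln}$ soaks up the relative shift $n(a_i - a_j)$. The same asymmetry is what forces a nonzero $l$ in the base case $\al_0$, reconciling the ``wraparound'' exponent $n$ attached to $x_n$ with the convention that the first summand's exponent is $i + ln$ rather than $i$.
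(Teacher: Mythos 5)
Your argument only establishes one inclusion: you check that the simple roots lie in the set $\mathcal{S}$ of vectors $z^{k}(z^{i+ln}x_i - z^{j}x_j)$ and that $\mathcal{S}$ is stable under $S_n$ and under $\La_{\rt}$, hence every root lies in $\mathcal{S}$. Those computations are correct, but in the paper this is the easy half, disposed of in one sentence via Lemmas \ref{lem:easypermute} and \ref{lem:easytranslate}. The substantive content of the proposition — and the way it is used afterwards — is the reverse inclusion: every vector of the form $z^{k}(z^{i+ln}x_i - z^{j}x_j)$ actually \emph{is} a root, i.e.\ lies in the $W_{\aff}$-orbit of a simple root. That direction is needed later: the set $\Phi^1$ in \eqref{eq:Phiplus} is defined as a collection of such vectors and is asserted to consist of roots, $\Phi$ must be closed under multiplication by $z^{\pm 1}$ for Lemma \ref{lem:posandneg} and Proposition \ref{onlyonerootgoesneg}, and Proposition \ref{prop} counts colinearity classes of roots. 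None of this follows from the containment you proved.

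A telltale sign of the gap is that your proof never uses the hypothesis $n \ge 3$; it goes through verbatim for $n=2$, where the ``roots are exactly this set'' statement is false (by Remark \ref{rmk:rootsn2}, no two roots are colinear for $n=2$ except up to $\pm 1$, whereas $\mathcal{S}$ is visibly closed under multiplication by $z$). The missing argument runs as follows: applying $s_i s_{i+1}$ to $\al_i = x_i - z x_{i+1}$ gives $z(x_{i+1} - z x_{i+2})$, and iterating shows $z^{k}\al_{i+k}$ is a root for all $k$, so $\Phi$ is closed under multiplication by $z^{\pm 1}$; in particular $z^i\al_i = z^i x_i - z^{i+1}x_{i+1}$ is a root, and acting by $S_n$ produces $z^{i}x_i - z^{j}x_j$ for all $i \ne j$. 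Finally — and this is precisely where $n \ge 3$ enters — there is a translation in $\La_{\rt}$ that fixes $x_j$, sends $x_i \mapsto z^{n}x_i$, and moves only some third variable $x_h$ with $h \ne i,j$; applying it $l$ times yields $z^{i+ln}x_i - z^{j}x_j$, and multiplying by $z^{k}$ gives the general element of $\mathcal{S}$. You should add this second half to complete the proof.
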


\begin{proof} With Lemmas \ref{lem:easypermute} and \ref{lem:easytranslate}, it is easy to observe that both $S_n$ and $\La_{\rt}$ preserve this set of vectors, so we need only argue that every element in the set is a root.
	
Starting at $\al_i = x_i - z x_{i+1}$ and applying $s_i s_{i+1}$, we get $z(x_{i+1} - z x_{i+2})$. Continuing in this fashion, we can apply an element of $W_{\aff}$ to get to
$z^{k} \al_{i+k}$ for any $i$ and $k$. This implies that $\Phi$ is closed under multiplication by $z^{\pm 1}$. In particular, $z^i \al_i = z^i x_i - z^{i+1} x_{i+1}$ is a
root. The action of $S_n$ allows us to reach $z^{i}x_i - z^{j} x_j$ for any $i,j$. Finally, because $n \ge 3$ there is a simple root whose
translation operator in $\La_{\rt}$ will send $x_i \mapsto z^n x_i$ and $x_h \mapsto z^{-n} x_h$ for some $h \ne i, j$, and will fix $x_j$. Applying this operator $l$ times, we
get $z^{i+ln}x_i - z^{j} x_j$. \end{proof}

In ordinary Coxeter theory one has positive and negative roots. The length of an element in a Coxeter group is the number of positive roots that it sends to negative roots. Meanwhile,
as just seen, many roots in $V_z$ are colinear, since roots are closed under multiplication by $z^{\pm 1}$, as well as multiplication by $-1$. To find an analogue of positive roots,
should one choose a single root in each line, making a set $\Phi^1$, or should one partition the roots into two halves $\Phi^+$ and $\Phi^-$ related by a minus sign? These are two
different things, and both are useful, playing the role of the positive roots in different contexts: this idea was used repeatedly in \cite{BreMal1, BreMal2} when studying $G(m,1,n)$
and $G(m,m,n)$. They choose $\Phi^1 \subset \Phi^+$ and $\Phi^- = - \Phi^+$ appropriately, and define a length function on $G(m,1,n)$ and $G(m,m,n)$ so that the length is equal to the
number of roots in $\Phi^1$ which are sent to $\Phi^-$. We perform the same ritual for the affine Weyl group here.

Here is our preferred choice of one root in each line.

\begin{defn} For each ordered pair $(i,j) \in \Om$ with $i \ne j$, one can define their \emph{ordered distance} $|j-i|$ by choosing representatives in $\ZZ$ such that $1 \le j-i \le n-1$, and letting $|j-i| := j-i$ (the result is independent of choice). Let
\begin{equation} \al_{(i,j,l)} = x_i - z^{|j-i|+ln} x_{j} \end{equation}
for any pair $(i,j)$ as above, and for $l \in \ZZ$. Then let
\begin{equation} \label{eq:Phiplus} \Phi^1 = \{\al_{(i,j,l)} \mid (i,j) \in \Om \text{ with } i \ne j, l \ge 0\}. \end{equation}
In particular, the simple root $\al_{(i,i+1,0)} = \al_i$ is in $\Phi^1$.
\end{defn}

\begin{ex} Suppose $n=3$. Then $\Phi^1$ contains
\[ x_1 - z x_2, \; \; x_1 - z^4 x_2, \; \; x_1 - z^7 x_2, \ldots \]
for the ordered pair $(1,2)$, as well as
\[ x_2 - z^2 x_1, \; \; x_2 - z^5 x_1, \; \; x_2 - z^8 x_1, \ldots \]
for the ordered pair $(2,1)$. It also contains $x_1 - z^{2 + ln} x_3$ and $x_3 - z^{1 + ln} x_1$ and $x_2 - z^{1+ln} x_3$ and $x_3 - z^{2 + ln} x_2$, all for $l \ge 0$. \end{ex}

\begin{lem} \label{lem:posandneg} For each root in $\Phi$, there is a unique root in $\Phi^1$ which is colinear (over $\AC_z$). Let 
\begin{equation} \label{posandneg} \Phi^+ = \{ +z^k \Phi^1 \}_{k \in \ZZ}, \qquad \Phi^- = \{- z^k \Phi^1 \}_{k \in \ZZ}. \end{equation} 
Then there is a disjoint union $\Phi = \Phi^+ \sqcup \Phi^-$. \end{lem}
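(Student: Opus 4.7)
The plan is to analyze the two claims separately, using the explicit parametrization of $\Phi$ from the previous proposition. Recall that every root has the form $z^a x_i - z^b x_j$ for $i \ne j$ in $\Om$ and $a, b \in \ZZ$ with $a - b \equiv i - j \pmod n$, while elements of $\Phi^1$ have the form $x_i - z^{|j-i|+ln}x_j$ with $l \ge 0$.

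For existence of a colinear element in $\Phi^1$: given a root $\alpha = z^a x_i - z^b x_j$, I would consider two rewritings. Attempt 1 is $\alpha = z^a(x_i - z^{b-a}x_j)$, which lies in $z^a \cdot \Phi^1$ with pair $(i,j)$ precisely when $b - a \ge |j-i|$, i.e., when the integer $l$ defined by $b-a = |j-i| + ln$ is nonnegative. Attempt 2 is $\alpha = -z^b(x_j - z^{a-b} x_i)$, which lies in $-z^b \cdot \Phi^1$ with the flipped pair $(j,i)$ precisely when $a-b \ge |i-j|$. Since $|j-i| + |i-j| = n$ and $b - a \equiv |j-i| \pmod n$, I would check that for any $\alpha \in \Phi$, exactly one of these inequalities holds: if $l \ge 0$ we take attempt 1, and if $l \le -1$ then $a-b = |i-j| + (-l-1)n$ with $-l-1 \ge 0$, so attempt 2 works.

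For uniqueness, suppose $\pm z^{k_1} \beta_1 = \pm z^{k_2} \beta_2$ for $\beta_1, \beta_2 \in \Phi^1$. In the same-sign case, comparing coefficients of the four basis vectors that can occur forces the same pair $(i,j)$, then $k_1 = k_2$, then $l_1 = l_2$. In the opposite-sign case, the coefficients force $\beta_2$ to use the flipped pair $(j,i)$, and matching exponents yields the equation $|j-i| + l_1 n = -|i-j| - l_2 n$, i.e., $l_1 + l_2 = -1$, which contradicts $l_1, l_2 \ge 0$. The same argument applied with one of the $\beta$'s replaced by the given root $\alpha$ establishes that the colinear element in $\Phi^1$ is unique.

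For the disjoint union, the existence step produces a $\beta \in \Phi^1$ and a sign $\varepsilon \in \{+, -\}$ with $\alpha \in \varepsilon z^{\ZZ} \beta$; the uniqueness step shows that $\varepsilon$ is well-defined, so $\alpha$ lies in $\Phi^+$ or $\Phi^-$ but not both. Conversely every element of $\Phi^+ \cup \Phi^-$ is a root (roots are closed under multiplication by $\pm z^{\pm 1}$, since $-1$ acts as $s_i s_i$-free negation via the long word and $z^{\pm 1}$-scaling was established in the proof of the proposition). The main obstacle is the bookkeeping in the opposite-sign uniqueness check, where one must carefully use the identity $|j-i| + |i-j| = n$ for $i \ne j$ in $\Om$ to convert the constraint $l_1 + l_2 = -1$ into a contradiction with $l_1, l_2 \ge 0$; everything else is immediate matching of indices and exponents.
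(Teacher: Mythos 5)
Your proof is correct; the paper leaves this lemma to the reader, and your argument is exactly the routine verification intended: existence via the dichotomy $b-a\ge |j-i|$ versus $a-b\ge |i-j|$ (using $|j-i|+|i-j|=n$ for $i\ne j$), and uniqueness by matching basis-vector coefficients, with the mixed-sign case ruled out because the resulting equation forces $l_1+l_2=-1$ against $l_1,l_2\ge 0$. One small wording fix: closure of $\Phi$ under negation is most cleanly justified by $-\al_i = s_i(\al_i)$ together with $W_{\aff}$-equivariance (or by swapping the roles of $i$ and $j$ in the explicit parametrization of the preceding proposition), rather than your somewhat garbled appeal to ``the long word.''
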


\begin{proof} Left to the reader. \end{proof}
	
Our notation reflects that $\Phi^1$ has one element of $\Phi$ from each line, while $\Phi^+$ and $\Phi^-$ are the intersection with positive and negative ``rays.''

\begin{prop} \label{onlyonerootgoesneg} For each $i \in \Om$, the simple root $\al_i$ is sent to $-\al_i$ by $s_i$, and every other element of $\Phi^1$ is sent to $\Phi^+$. More precisely, if $\al \in \Phi^1$ is not equal to $\al_i$, then it is sent to $z^\epsilon \be$ for some $\be \in \Phi^1$ and $\epsilon \in \{-1,0,1\}$. 
\end{prop}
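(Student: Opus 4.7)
The plan is a direct case analysis on $\al = \al_{(j,k,l)} \in \Phi^1$, using the explicit action of $s_i$ on the basis $\{x_h\}$ from Definition~\ref{defn:Vz}. The first claim, $s_i(\al_i) = -\al_i$, I would verify immediately by computing $s_i(x_i - zx_{i+1}) = zx_{i+1} - z \cdot z^{-1} x_i = -(x_i - zx_{i+1})$.

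For the second claim I would partition $\al_{(j,k,l)} \ne \al_i$ into cases according to how $\{j,k\}$ meets $\{i, i+1\}$. When the two sets are disjoint, $s_i$ fixes both $x_j$ and $x_k$, so $\ep = 0$. When exactly one of $j, k$ lies in $\{i, i+1\}$, I would rely on the elementary identity $|h - (i+1)| = |h - i| - 1$, valid when $h \ne i, i+1$ and $|h - i| \ge 2$, which governs how the exponent of $z$ in $\al_{(j,k,l)}$ transforms under shifting the base index. A representative subcase: if $j = i$ and $k \ne i+1$, set $d = |k - i| \ge 2$; then
\[ s_i(x_i - z^{d + ln} x_k) = zx_{i+1} - z^{d + ln} x_k = z\bigl(x_{i+1} - z^{(d-1) + ln} x_k\bigr) = z \cdot \al_{(i+1, k, l)}, \]
so $\ep = 1$ and $\be = \al_{(i+1, k, l)} \in \Phi^1$. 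The other three ``one element in common'' subcases are analogous and yield $\ep \in \{-1, 0\}$ without changing $l$.

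The most delicate case is $\{j,k\} = \{i, i+1\}$, where $\al$ is colinear with $\al_i$. For $(j,k) = (i, i+1)$ the excluded element $\al_i$ is precisely the case $l = 0$; for $l \ge 1$ I would compute
\[ s_i(x_i - z^{1 + ln}x_{i+1}) = zx_{i+1} - z^{ln}x_i = z \cdot \al_{(i+1, i, l-1)}, \]
using $|i - (i+1)| = n - 1$ to rewrite the exponent $ln - 1$ as $(n-1) + (l-1)n$. Symmetrically, for $(j,k) = (i+1, i)$ and any $l \ge 0$, I would obtain $s_i(\al_{(i+1,i,l)}) = z^{-1} \al_{(i, i+1, l+1)}$.

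The hard part will be the bookkeeping: I must verify in each subcase that the shifted third index $l'$ stays nonnegative so that $\be$ actually lies in $\Phi^1$, and that the factor of $z^{\pm 1}$ introduced by $s_i$ combines correctly with the shift in ordered distance to land on an element of $\Phi^1$ on the nose. Nonnegativity of $l'$ fails precisely when $(j,k,l) = (i, i+1, 0)$, which is $\al_i$ itself, explaining why this is the unique exception.
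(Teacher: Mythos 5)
Your proof is correct and follows essentially the same route as the paper's: an exhaustive case analysis on how $\{j,k\}$ meets $\{i,i+1\}$, with explicit computation of $s_i(\al_{(j,k,l)})$ and the observation that the shift $l \mapsto l-1$ in the colinear case fails to stay nonnegative exactly for $\al_i$ itself. The only cosmetic difference is that the paper first reduces to $i = n-1$ by symmetry and covers the inverse subcases by ``applying the calculation in reverse,'' whereas you treat general $i$ and all subcases directly.
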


\begin{proof} By symmetry, it suffices to check the case $i=n-1$. Clearly $s_i(\al_i) = - \al_i$. Any root $\al_{(j,k,l)} \in \Phi^1$ will be fixed by $s_i$ unless either $j$ or $k$ is in $\{n-1,n\}$. The remainder follows from the following computations.
\begin{subequations}
\begin{equation} s_{n-1}(\alpha_{(i,n-1,l)}) = \alpha_{(i,n,l)} \in \Phi^1, \qquad \text{for } 1 \le i < n-1. \end{equation}
\begin{equation} s_{n-1}(\alpha_{(n-1,i,l)}) = z \alpha_{(n,i,l)} \in z \Phi^1, \qquad \text{for } 1 \le i < n-1. \end{equation}
\begin{equation} s_{n-1}(\alpha_{(n-1,n,l)}) = z \alpha_{(n,n-1,l-1)} \in z \Phi^1, \qquad \text{for } l \ge 1. \end{equation}

% \begin{equation} s_{n-1}(x_i - z^{n-1-i+ln} x_{n-1}) = x_i - z^{n-i+ln} x_n \in \Phi^1, \qquad \text{for } 1 \le i < n-1. \end{equation}
% \begin{equation} s_{n-1}(x_{n-1} - z^{i+1 +ln} x_i) = z(x_n - z^{i+ln} x_i) \in z \Phi^1, \qquad \text{for } 1 \le i < n-1. \end{equation}
% \begin{equation} s_{n-1}(x_{n-1} - z^{1+ln} x_n) = z(x_n - z^{ln-1} x_{n-1}) = z(x_n - z^{(n-1) + (l-1)n}) \in z \Phi^1, \qquad \text{for } l \ge 1. \end{equation}
\end{subequations}
Applying this calculation in reverse will show, for example, that $s_{n-1}(\alpha_{(n,i,l)}) \in z^{-1} \Phi^1$, and so forth.
% $s_{n-1}(x_n - z^{i+ln} x_i) \in z^{-1} \Phi^1$, and so forth.
\end{proof}

We do not use the following result in this paper. It follows the same standard ideas from \cite[Lemma 1.6 and Corollary 1.7]{HumpCox}, but one must show that these arguments still work up to powers of $z$.

\begin{prop} \label{prop:length} For all $w \in W_{\aff}$, let $n(w)$ be the number of roots in $\Phi^1$ sent to an element of $\Phi^-$ by $w$. Then $n(w) = \ell(w)$. \end{prop}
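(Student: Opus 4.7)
My plan is to adapt the classical proof of Humphreys (\cite[Lemma~1.6 and Corollary~1.7]{HumpCox}), using Proposition \ref{onlyonerootgoesneg} in place of the standard ``simple reflections permute positive roots'' fact. The critical observation is that by Lemma \ref{lem:posandneg}, both $\Phi^+$ and $\Phi^-$ are closed under multiplication by $z^{\pm 1}$, so the powers of $z$ appearing in Proposition \ref{onlyonerootgoesneg} are invisible to the sign count $n(w)$.

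The first step is to establish the recurrence
\begin{equation*}
n(ws_i) = \begin{cases} n(w)+1 & \text{if } w(\alpha_i)\in\Phi^+, \\ n(w)-1 & \text{if } w(\alpha_i)\in\Phi^-.\end{cases}
\end{equation*}
By Proposition \ref{onlyonerootgoesneg}, $s_i$ sends $\alpha_i$ to $-\alpha_i$, and for every other $\alpha \in \Phi^1$ one has $s_i(\alpha) = z^{\epsilon(\alpha)} f_i(\alpha)$ with $f_i(\alpha) \in \Phi^1$ and $\epsilon(\alpha) \in \{-1,0,1\}$. Since $s_i$ is an involution, $f_i$ is a bijection of $\Phi^1 \setminus \{\alpha_i\}$. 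For $\alpha \ne \alpha_i$, we have $ws_i(\alpha) \in \Phi^-$ if and only if $w f_i(\alpha) \in \Phi^-$, since $\Phi^-$ is $z$-closed. Partitioning the count over $\alpha = \alpha_i$ versus $\alpha \ne \alpha_i$, and invoking the bijection $f_i$ to re-index the latter sum, yields the recurrence.

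With the recurrence in hand, I would prove $n(w) = \ell(w)$ by induction on $\ell(w)$. The upper bound $n(w) \le \ell(w)$ is immediate: the recurrence implies $|n(ws_i) - n(w)| = 1$, so any expression of length $r$ for $w$ yields $n(w) \le r$, and choosing a reduced expression gives $n(w) \le \ell(w)$. For the inductive step, write $w = w' s_i$ with $\ell(w') = \ell(w) - 1$; the recurrence then forces $n(w) \in \{\ell(w),\, \ell(w)-2\}$, so one must rule out $n(w) = \ell(w) - 2$, equivalently show $w'(\alpha_i) \in \Phi^+$ whenever $s_i$ is a right ascent for $w'$. Following Humphreys, this is proved in tandem with the main statement: were $w'(\alpha_i) \in \Phi^-$, applying the recurrence backwards along a reduced expression for $w'$ would eventually produce some $\tilde{w}$ with $n(\tilde{w}) > \ell(\tilde{w})$, contradicting either the inductive hypothesis or the upper bound.

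The chief technical obstacle is to verify that Proposition \ref{onlyonerootgoesneg} suffices to recover the full classical behavior---in particular, that any ``colinear but unequal'' pathology that might arise from the $z^{\pm 1}$ scalings is inert for the counting. This is exactly where the $z$-closedness of $\Phi^\pm$ and the fact that $f_i$ descends to a genuine bijection of $\Phi^1$-lines become essential. Once these structural facts are in place, I expect the remaining combinatorics to parallel the classical argument essentially verbatim, as is typical for root-system arguments over crystallographic reflection representations.
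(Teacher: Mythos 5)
Your first half matches the paper exactly: establishing the recurrence $n(ws_i) = n(w) \pm 1$ from Proposition \ref{onlyonerootgoesneg}, using the $z$-closedness of $\Phi^{\pm}$ so that the $z^{\epsilon}$ scalars are invisible to the sign count, and then deducing $n(w)\le\ell(w)$. The gap is in the second half, and what you sketch does not work as stated. You propose to rule out $n(w)=\ell(w)-2$ by ``applying the recurrence backwards along a reduced expression to produce $\tilde w$ with $n(\tilde w)>\ell(\tilde w)$.'' But the recurrence only says $n$ changes by $\pm 1$ at each step of any expression (and so does $\ell$ along a reduced one), so running it forward or backward from a state with $n\le\ell$ can never overshoot $\ell$; you will never manufacture an element with $n>\ell$ this way. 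What is actually needed is the strong-exchange mechanism: given $w(\alpha_i)\in\Phi^-$ and a reduced expression $w=s_{i_1}\cdots s_{i_r}$, locate the first index $k$ (reading right to left) where $s_{i_k}$ flips $u\,\alpha_{i_j}$ from $\Phi^+$ to $\Phi^-$, conclude from Proposition \ref{onlyonerootgoesneg} that $u\,\alpha_{i_j}=z^d\alpha_{i_k}$, and then derive the relation $s_{i_k}u=u\,s_{i_j}$ to produce a shorter expression, contradicting reducedness.

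You correctly flag in your last paragraph that the $z$-scalings are the only possible source of trouble, but you don't identify where they bite. They bite in precisely the step $u\,\alpha_{i_j}=z^d\alpha_{i_k}\implies s_{i_k}u=u\,s_{i_j}$. In the classical (orthogonal) setting this is the standard fact that conjugating a reflection by $u$ reflects along $u(\alpha)$; in the present non-orthogonal realization one must additionally know that $u$ sends $\alpha_{i_j}^\vee$ to $z^{-d}\alpha_{i_k}^\vee$ (the \emph{inverse} power of $z$), which is exactly Corollary \ref{cor:actiononcoroots}. Without this coroot statement the conjugation relation does not follow and the exchange argument collapses. So the structure of your plan is right, but the decisive step is missing rather than merely unspelled: you need the strong-exchange argument plus the coroot-compatibility lemma, not a ``run the recurrence backwards'' contradiction.
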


\begin{proof} First we prove the following two statements: for $i \in \Om$ we have
	\begin{equation} \label{eq:nincrease} w(\alpha_i) \in \Phi^+ \implies n(ws_i) = n(w) + 1, \end{equation}
	\begin{equation} w(\alpha_i) \in \Phi^- \implies n(ws_i) = n(w) - 1. \end{equation}
Consider $\alpha \in \Phi^1 \setminus \{\alpha_i\}$. By Proposition \ref{onlyonerootgoesneg}, $s_i(\alpha) = z^{\epsilon} \beta$ for some $\beta \in \Phi^1 \setminus \{\alpha_i\}$. So $w(\beta) \in \Phi^{\pm}$ if and only if $w s_i(\alpha) \in \Phi^{\pm}$. Hence the elements of $\Phi^1 \setminus \{\alpha_i\}$ sent to $\Phi^-$ by $w$ is in bijection with those sent to $\Phi^-$ by $ws_i$. The difference between $n(ws_i)$ and $n(w)$ is purely accounted for by whether $\alpha_i$ is sent to $\Phi^+$ or $\Phi^-$. Now it is easy to verify the two statements above.

By \eqref{eq:nincrease}, it is easy to see that $n(w) \le \ell(w)$, since both agree on the identity element.

Now suppose $n(w) < \ell(w) = r$, and let $w = s_{i_1} \cdots s_{i_r}$ be a reduced expression. There must be some $j \le r$ such that $s_{i_1} s_{i_2} \cdots s_{i_{j-1}} \alpha_{i_j} \in \Phi^-$; otherwise $n(w) = r$ by \eqref{eq:nincrease}. Applying simple reflections to $\alpha_{i_j}$ eventually sends it from positive to negative, and we keep track of the place where this happens. That is, there is some $1 \le k \le j-1$ such that $s_{i_{k+1}} \cdots s_{i_{j-1}} \alpha_{i_j} \in \Phi^+$ but $s_{i_k} s_{i_{k+1}} \cdots s_{i_{j-1}} \alpha_{i_j} \in \Phi^-$. Let $u = s_{i_{k+1}} \cdots s_{i_{j-1}} \in W_{\aff}$. The only elements of $\Phi^+$ which are sent to $\Phi^-$ by $s_{i_k}$ are powers of $z$ times $\alpha_{i_k}$. Thus
\begin{equation} \label{eq:alphajalphak} u \alpha_{i_j} = z^d \alpha_{i_k} \end{equation}
for some $d \in \ZZ$. 

We claim that \eqref{eq:alphajalphak} implies that $s_{i_k} u = u s_{i_j}$. For any $v \in V$ we have
\begin{equation} s_{i_k} u(v) = u(v) - \langle \alpha_{i_k}^\vee, u(v) \rangle \cdot \alpha_{i_k}, \end{equation}
\begin{equation} w s_{i_j}(v) = u(v - \langle \alpha_{i_j}^\vee, v \rangle \cdot \alpha_{i_j}) = u(v) - \langle \alpha_{i_j}^\vee, v \rangle \cdot u(\alpha_{i_j}) =
	u(v) - \langle \alpha_{i_j}^\vee, v \rangle \cdot z^k \alpha_{i_k}. \end{equation}
These agree for all $v$ if $\langle \alpha_{i_k}^\vee, u(v) \rangle = z^d \langle \alpha_{i_j}^\vee, v \rangle$. Using \eqref{eq:Wpreservespairing}, this is a consequence of Corollary
 \ref{cor:actiononcoroots}.
 
Now $s_{i_k} u s_{i_j}$ appears within the expression for $w$, and $s_{i_k} u s_{i_j} = u$. We can remove $s_{i_k}$ and $s_{i_j}$ from the expression for $w$, obtaining a shorter expression for $w$, which is a contradiction. \end{proof}

\begin{rem} \label{rmk:rootsn2}  For completeness, we now describe the roots when $n=2$. It turns out that no two roots are colinear, aside from multiplication by $\pm 1$. We set 
\begin{equation} \alpha_{(i,l)} = z^{-l}(x_i - z^{2 l + 1} x_{i+1}) \end{equation}
for $i \in \Om$ and $l \ge 0$. Then
\begin{equation} \Phi^1 = \Phi^+ = \{\alpha_{(i,l)} \}, \qquad \Phi^- = \{ - \alpha_{(i,l)}\}. \end{equation}
Then $\Phi = \Phi^+ \cup \Phi^-$ is the set of roots, and length is the number of positive roots sent to negative roots. This is easy to prove, because
\begin{equation} s_i(\alpha_{(i+1,l)}) = \alpha_{(i,l+1)}, \qquad s_i(\alpha_{(i,0)}) = -\alpha_{(i,0)}. \end{equation} \end{rem}

%\BE{Commented out proposition below is cited several times...}

% \begin{prop} \label{prop:simpleonroots} Fix some $i \in \Om$. Then $\Phi^1_m$ is partitioned into subsets of the following form: singleton sets $\{\al\}$ which are fixed by $s_i$; pairs $\{\al,\be\}$ for which $s_i(\al) = z \be$ and $s_i(\be) = z^{-1} \al$ \BE{or just $s_i(\al) = \be$...}; the set $\{\al_i\}$, where $s_i(\al_i) = -\al_i$. \end{prop}
%
% \begin{proof} This is obvious from Proposition \ref{onlyonerootgoesneg}. Note that if $\al \in \Phi^+$ and $s_i(\al)$ is a multiple of $\al$, then either $s_i(\al) = \al$ or $\al = \al_i$, because the only eigenvalues of $s_i$ are $\pm 1$. \end{proof}

%========================================================
\subsection{All at a root of unity} \label{ssec:rou}
%========================================================

Now we set $\Bbbk = \CC$ and specialize $z$ to a complex number $\ze$. When $\ze = 1$, $V_z$ specializes to the boring representation $V_{\boring}$. In fact, when $\ze^n = 1$,
then $V_{\ze}$ is also isomorphic to $V_{\boring}$, as is evident from Lemmas \ref{lem:easypermute} and \ref{lem:easytranslate}.

\begin{defn} \label{defn:Vminbody} Fix $m \ge 2$. Suppose that $z$ is specialized to a primitive $(nm)$-th root of unity $\ze$ in $\CC$. Let $V_m := V_z \ot_{\CC[z, z^{-1}]} \CC$ be the specialization, a representation of $W_{\aff}$.  \end{defn}

By \eqref{transop}, we see that $(s_0 t_{\lon})^m$ acts trivially on $V_m$. By conjugation, this means that the $m$-th multiple of translation by any root acts trivially. Thus the
action of $W_{\aff}$ on $V_m$ factors through the quotient \begin{equation} W_m := W_{\aff} / m \La_{\rt} \cong S_n \ltimes (\ZZ/m\ZZ)^{n-1}. \end{equation} This quotient is also
known as the complex reflection group $G(m,m,n)$.

Let $\Phi_m$ be the image of the roots in $V_m$, or equivalently, the union of the orbits in $V_m$ of the simple roots under $W_m$. The image of $\Phi^1$ inside $V_m$ will not consist of non-colinear vectors. There is also no hope for a decomposition into positive and negative roots like in Lemma \ref{lem:posandneg}, as $\ze^{mn/2} = -1$ when $mn$ is even. Instead we just choose one vector from each line as follows.

\begin{defn} \label{defn:phiplusm} Let $\Phi^1_m \subset \Phi_m$ be defined as follows:
\begin{equation} \Phi^1_m = \{\al_{(i,j,l)} := x_i - \ze^{j-i+ln} x_j \mid 1 \le i < j \le n, 1 \le l \le m \}. \end{equation} \end{defn}

\begin{prop} \label{prop} The size of $\Phi^1_m$ is $m\binom{n}{2}$. The set $\Phi^1_m$ contains one exactly one root from each colinearity class in $\Phi_m$. Any element of $\Phi^1$ specializes to an element of $\Phi^1_m$. \end{prop}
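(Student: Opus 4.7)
The plan is to verify the three assertions in sequence; each reduces to bookkeeping with exponents modulo $nm$, using the classification of roots and the fact that $\ze$ has order exactly $nm$.

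First, for the cardinality, the indexing set $\{(i,j,l) \mid 1 \le i < j \le n,\ 1 \le l \le m\}$ has $\binom{n}{2} \cdot m$ elements, so I only need the corresponding $\al_{(i,j,l)} \in V_m$ to be pairwise distinct. I would observe that distinct ordered pairs $(i,j) \ne (i',j')$ produce vectors with different supports in the basis $\{x_k\}$, while for fixed $(i,j)$ and $l \ne l'$ in $\{1,\ldots,m\}$, the coefficients $\ze^{j-i+ln}$ and $\ze^{j-i+l'n}$ differ: their ratio $\ze^{(l-l')n}$ has exponent a nonzero multiple of $n$ of absolute value strictly less than $nm$, hence is a nontrivial power of $\ze$.

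Next, for the second assertion, I would first show every colinearity class in $\Phi_m$ meets $\Phi^1_m$. By the earlier classification of roots in $\Phi$, every root in $\Phi_m$ is a nonzero scalar multiple of a vector of the form $x_i - \ze^d x_j$ for some $i \ne j$ in $\Om$ and some $d \in \ZZ$ satisfying $d \equiv |j-i| \pmod n$. Picking representatives $i,j \in \{1,\ldots,n\}$: if $i > j$, I rewrite via $x_i - \ze^d x_j = -\ze^d(x_j - \ze^{-d} x_i)$ to swap into the pair $(j,i)$ with $j<i$, negating $d$ in the exponent. After this normalization one has $i < j$ and $d \equiv j-i \pmod n$, so there is a unique $l \in \{1,\ldots,m\}$ with $d \equiv (j-i)+ln \pmod{nm}$; the corresponding $\al_{(i,j,l)}$ sits in the given colinearity class. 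Uniqueness of the representative follows from the same support-and-coefficient comparison used in the cardinality step.

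For the third assertion, I read \emph{specializes to an element of $\Phi^1_m$} in the up-to-colinearity sense consistent with the previous parts. Given $\al_{(i,j,l)} = x_i - z^{|j-i|+ln} x_j \in \Phi^1$ with $i \ne j$ in $\Om$ and $l \ge 0$, its specialization is $x_i - \ze^{|j-i|+ln} x_j$. Using representatives $i, j \in \{1,\ldots,n\}$: if $i < j$, then $|j-i| = j - i$ and I pick the unique $l' \in \{1,\ldots,m\}$ with $l' \equiv l \pmod m$, giving equality with $\al_{(i,j,l')} \in \Phi^1_m$; if $i > j$, then $|j-i| = n - (i-j)$, and the swap $x_i - \ze^d x_j = -\ze^d(x_j - \ze^{-d}x_i)$ with $d = n-(i-j)+ln$, followed by the same exponent adjustment mod $m$, exhibits the specialization as a $\ze$-scalar multiple of some $\al_{(j,i,l')} \in \Phi^1_m$.

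The only real obstacle is the clean organization of the case analysis in the second and third assertions when flipping the ordered pair to enforce $i<j$, together with verifying that the resulting $l$ lands in the prescribed range $\{1,\ldots,m\}$; no ingredient is required beyond the earlier root classification and the order of $\ze$.
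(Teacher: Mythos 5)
Your proposal is correct and takes essentially the same route as the paper, whose proof simply declares the verification straightforward and only remarks on the asymmetry between $i$ and $j$ in the definition of $\Phi^1$ — exactly the index-flip $x_i - \ze^d x_j = -\ze^d\left(x_j - \ze^{-d} x_i\right)$ that you carry out, together with the same counting of exponents modulo $nm$. Your explicit observation that for pairs with $i>j$ the specialization lands in $\Phi^1_m$ only up to a scalar (a power of $-\ze$), so the third assertion must be read up to colinearity, is precisely the content of the paper's one remark, so your reading matches the intended argument.
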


\begin{proof} The proof is straightforward. The only comment to make is that the indices $i$ and $j$ are not treated symmetrically in \eqref{eq:Phiplus}, but the images of 
	\[ \{x_i - \ze^{|j-i|+ln} x_{j} \mid i \ne j \in \Om, l  \ge 0\} \quad \text{ and } \quad \{x_j - \ze^{|i-j|+ln} x_{i} \mid i \ne j \in \Om, l \ge 0\} \] in $V_m$ will agree. \end{proof}

\begin{ex} Suppose that $n=3$ and $m=2$. Note that $\zeta^3 = -1$. Here are the six roots in $\Phi^1_m$:
\begin{equation} x_1 - \ze x_2, \quad z_1 + \ze x_2, \quad x_2 - \ze x_3, \quad x_1 - \ze^2 x_3, \quad x_2 + \ze x_3, \quad x_1 + \ze^2 x_3. \end{equation}
The action of $s_1$ sends the first root to minus itself, fixes the second root, sends the third root to $\ze^{-1}$ times the fourth, and sends the fifth root to $\ze^{-1}$ times the sixth. \end{ex}

This collection of roots for $G(m,m,n)$ is common in the literature, see \cite{BreMal2, KirillovMaeno}. Let us state here the analogue of a well-known classical fact.

\begin{defn} \label{defn:delta}  Let $\De = \Pi_{\al \in \Phi^1_m} \al$, living in the symmetric algebra of $V_m$. \end{defn}

\begin{prop} \label{prop:deltaantiinvt} For any $k \in \Om$, $s_k(\Delta) = -\Delta$. \end{prop}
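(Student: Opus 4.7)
The plan is to analyze how $s_k$ permutes $\Phi^1_m$ up to scalar. Since $s_k$ is $\CC$-linear and preserves $\Phi_m$ (as $s_k \in W_m$ and $\Phi_m$ is $W_m$-stable), it sends each colinearity class to another colinearity class. By Proposition \ref{prop}, $\Phi^1_m$ contains exactly one representative per colinearity class, so this induces an involution $\pi$ on $\Phi^1_m$ together with a scalar function $c \co \Phi^1_m \to \CC^{\times}$ defined by $s_k(\alpha) = c(\alpha) \cdot \pi(\alpha)$. Then
\[ s_k(\Delta) = \prod_{\alpha \in \Phi^1_m} s_k(\alpha) = \Bigl( \prod_{\alpha \in \Phi^1_m} c(\alpha) \Bigr) \cdot \prod_{\alpha \in \Phi^1_m} \pi(\alpha) = \Bigl( \prod_{\alpha \in \Phi^1_m} c(\alpha) \Bigr) \cdot \Delta, \]
so it suffices to show that $\prod_\alpha c(\alpha) = -1$.

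I would compute this product by partitioning $\Phi^1_m$ into $\pi$-orbits. For a transposition orbit $\{\alpha, \beta\}$, the relation $s_k^2 = 1$ gives $c(\alpha) c(\beta) = 1$, so these contribute $+1$. For a fixed point $\pi(\alpha) = \alpha$, we have $c(\alpha)^2 = 1$, so $c(\alpha) \in \{\pm 1\}$ and $\alpha$ is an eigenvector for $s_k$. Because $s_k$ is an order-two reflection, its eigenspaces are the hyperplane $H_k$ (eigenvalue $+1$) and the one-dimensional line $\CC \alpha_k$ (eigenvalue $-1$). Thus every fixed singleton in $H_k$ contributes $+1$, while the unique representative of the $\alpha_k$-line contributes $-1$, giving $\prod_\alpha c(\alpha) = -1$.

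The only point that requires a brief verification is that the $\alpha_k$-line actually meets $\Phi^1_m$, and that its representative $\alpha$ satisfies $s_k(\alpha) = -\alpha$. For $1 \le k \le n-1$ this is immediate because $\alpha_k = \alpha_{(k,k+1,m)} \in \Phi^1_m$ (using $\ze^{nm} = 1$). For $k = 0$, the computation in \S\ref{ssec:roots} shows $\alpha_0 = -\ze \cdot \alpha_{(1,n,m-1)}$, so $\alpha_{(1,n,m-1)} \in \Phi^1_m$ lies on the $\alpha_0$-line and is still sent to its negative by $s_0$. The main obstacle, such as it is, is simply making sure this indexing bookkeeping for $\alpha_0$ is correct; once that is done the proof reduces to the involution/eigenspace dichotomy above.
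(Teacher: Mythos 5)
Your proof is correct, and while it shares the paper's overall skeleton --- factor $\Delta$ over $\Phi^1_m$, note that $s_k$ induces an involution on $\Phi^1_m$ because that set contains exactly one root per line (Proposition \ref{prop}), and observe that only the line of $\al_k$ contributes a sign --- the mechanism by which you control the scalars is genuinely different. The paper lifts $\Phi^1_m$ to a subset $\hat{\Phi}^1_m \subset \Phi^1$ in $V_z$ and invokes Proposition \ref{onlyonerootgoesneg}, which says $s_k$ sends every element of $\Phi^1 \setminus \{\al_k\}$ to $z^{\epsilon}$ times an element of $\Phi^1$ with $\epsilon \in \{-1,0,1\}$; after specializing, the explicit scalars $\ze^{\pm\epsilon}$ on a swapped pair cancel, and fixed lines are handled by the eigenvalue remark. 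You avoid both the generic-$z$ lift and Proposition \ref{onlyonerootgoesneg} entirely: for a swapped pair, $s_k^2 = 1$ forces $c(\al)c(\be) = 1$ no matter what the scalars are, and for a fixed line the eigenspace decomposition of the reflection ($H_k$ for $+1$, $\CC\al_k$ for $-1$) pins down the sign, so the only $-1$ comes from the unique element of $\Phi^1_m$ on the line $\CC\al_k$. Your explicit identifications are correct ($\al_k = \al_{(k,k+1,m)}$ for $1 \le k \le n-1$, and $\al_0 = -\ze\,\al_{(1,n,m-1)}$, with $l = m-1 \ge 1$ since $m \ge 2$), though strictly all you need is that the line $\CC\al_k$ meets $\Phi^1_m$ (immediate from Proposition \ref{prop}, as $\al_k \in \Phi_m$) and that its representative, being a nonzero multiple of $\al_k$, is negated by $s_k$; in fact the paper itself glosses over exactly this $k=0$ bookkeeping. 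What your route buys is a more elementary, self-contained argument carried out entirely at the root of unity; what the paper's route buys is consistency with its recurring strategy of proving statements generically in $z$ and then specializing, along with the finer information that the scalars arising in the pairing are just powers $z^{\pm 1}$.
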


\begin{proof} Let $\hat{\Phi}^1_m = \{x_i - z^{j-i+ln} x_j \mid 1 \le i < j \le n, 1 \le l \le m \} \subset \Phi^1$ be a particular lift of $\Phi^1_m$ to $\Phi^1$ in $V_z$. By Proposition \ref{onlyonerootgoesneg}, for any $\al \in \hat{\Phi}^1_m$ with $\al \ne \al_k$, there exists $\be \in \Phi^1$ and $\epsilon \in \{-1, 0, 1\}$ with $s_k(\al) = z^{\epsilon} \be$. Let $\be' \in \hat{\Phi}^1_m$ have the same image in $V_m$ as $\be$. Inside $V_m$ (abusively using the same notation for roots in $V_z$ and their specializations in $V_m$) we have $s_k(\al) = \ze^{\epsilon} \be'$ and $s_k(\be') = \ze^{-\epsilon} \al$. Note that if $\be' = \al$ then $\epsilon = 0$, since the only eigenvalues of $s_k$ are $\pm 1$.

Now examining the product $\Delta$, it factors as a product of $\al$ with $s_k(\al) = \al$, a product of $\al \be'$ with $\al \ne \be'$ and $s_k(\al \be') = \al \be'$, and one copy of $\al_k$ with $s_k(\al_k) = -\al_k$. Thus $s_k(\Delta) = -\Delta$. \end{proof}

\begin{prop} \label{prop:deltasigmainvt} We have $\si(\Delta) = \Delta$ and $\tau(\Delta) = (-1)^{\binom{n}{2}} \ze^{-m\binom{n+1}{3}} \Delta$. \end{prop}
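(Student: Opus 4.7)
The plan is to compute $\sigma(\alpha)$ and $\tau(\alpha)$ for each $\alpha \in \Phi^1_m$, observe that these symmetries permute $\Phi^1_m$ up to signs and powers of $\zeta$, and then collect the resulting scalar.

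\textbf{The $\sigma$ statement.} First I would analyze $\sigma(\alpha_{(i,j,l)}) = x_{i+1} - \zeta^{j-i+ln}\, x_{j+1}$ (indices mod $n$), splitting into two cases.
\emph{Case 1: $j\le n-1$.} Then $\sigma(\alpha_{(i,j,l)}) = \alpha_{(i+1,j+1,l)}$ exactly, with no sign or $\zeta$.
\emph{Case 2: $j=n$.} Then $\sigma(\alpha_{(i,n,l)}) = x_{i+1} - \zeta^{n-i+ln} x_1 = -\zeta^{n-i+ln}\,\alpha_{(1,i+1,l')}$ where $l'$ is determined by $l' \equiv -(l+1)\pmod m$. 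As $l$ runs over $\{1,\dots,m\}$, so does $l'$, so combining Cases 1 and 2 gives a bijection of $\Phi^1_m$ onto itself. Hence $\sigma(\Delta) = \varepsilon_\sigma \cdot \Delta$ with
\begin{equation}
\varepsilon_\sigma = (-1)^{(n-1)m}\,\zeta^{E}, \qquad E = \sum_{i=1}^{n-1}\sum_{l=1}^{m}(n-i+ln).
\end{equation}
A direct computation gives $E = mn\cdot \tfrac{(n-1)(m+2)}{2}$, which is a multiple of $mn$ except when $(n-1)(m+2)$ is odd, i.e.\ when $n$ is even and $m$ is odd; in that case $\zeta^E = -1$ and simultaneously $(-1)^{(n-1)m} = -1$, while in all other cases both factors are $+1$. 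Either way $\varepsilon_\sigma = 1$, proving $\sigma(\Delta) = \Delta$.

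\textbf{The $\tau$ statement.} Using $\tau(x_i) = x_{n+1-i}$ and $\tau(\zeta) = \zeta^{-1}$, I would compute for $1\le i<j\le n$:
\begin{equation}
\tau(\alpha_{(i,j,l)}) = x_{n+1-i} - \zeta^{-(j-i+ln)}\, x_{n+1-j} = -\zeta^{-(j-i+ln)}\,\alpha_{(n+1-j,\, n+1-i,\, l)},
\end{equation}
where I used $(n+1-i)-(n+1-j) = j-i$ to match the exponent. The map $(i,j,l)\mapsto (n+1-j,n+1-i,l)$ is clearly a bijection on $\Phi^1_m$, so
\begin{equation}
\tau(\Delta) = (-1)^{m\binom{n}{2}}\,\zeta^{-\Sigma}\,\Delta, \qquad \Sigma = \sum_{1\le i<j\le n}\sum_{l=1}^{m}(j-i+ln).
\end{equation}
I would split $\Sigma$ into two pieces. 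The first is $\sum_{i<j}\sum_l (j-i) = m\sum_{k=1}^{n-1}k(n-k) = m\binom{n+1}{3}$, giving exactly the $\zeta^{-m\binom{n+1}{3}}$ factor. The second is $\sum_{i<j}\sum_l ln = \binom{n}{2}\cdot\tfrac{mn(m+1)}{2} = mn\cdot\tfrac{\binom{n}{2}(m+1)}{2}$. Modulo $mn$ this contributes $0$ when $\binom{n}{2}(m+1)$ is even and $mn/2$ when it is odd, so $\zeta$ raised to the negative of this is $(-1)^{\binom{n}{2}(m+1)}$. Combining with the sign $(-1)^{m\binom{n}{2}}$ gives $(-1)^{\binom{n}{2}(m+1)+m\binom{n}{2}} = (-1)^{\binom{n}{2}}$, as claimed.

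\textbf{Main obstacle.} No step is deep; the difficulty is just careful bookkeeping. In particular for $\sigma$ one must track the permutation $l \mapsto l'$ on the translation index and verify it is a bijection; for $\tau$ one must separate the ``combinatorial'' part of $\Sigma$ (which gives the $\binom{n+1}{3}$) from the ``translation'' part, and use the identity $\zeta^{mn/2}=-1$ to recognize that the latter exactly accounts for the discrepancy between $(-1)^{m\binom{n}{2}}$ and $(-1)^{\binom{n}{2}}$. A sanity check on the cases $(n,m) = (2,2)$ and $(3,2)$, which I performed mentally, confirms both formulas.
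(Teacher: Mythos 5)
Your argument is correct and follows essentially the same route as the paper: compute $\si(\al_{(i,j,l)})$ and $\tau(\al_{(i,j,l)})$, observe that $\si,\tau$ permute $\Phi^1_m$ up to scalars (with the index shift $l\mapsto -(l+1)$ in the $j=n$ case for $\si$, and $(i,j,l)\mapsto(n+1-j,n+1-i,l)$ for $\tau$), and multiply the scalars; your closed-form exponents $E=\tfrac{mn(n-1)(m+2)}{2}$ and $\Sigma=m\binom{n+1}{3}+\tfrac{mn(m+1)}{2}\binom{n}{2}$ with the parity analysis via $\ze^{mn/2}=-1$ are just a different bookkeeping of the same computation the paper does using $\ze^{n\binom{m+1}{2}}=(-1)^{m-1}$ and $\ze^{m\binom{n}{2}}=(-1)^{n-1}$. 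No gaps.
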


\begin{proof} Let us note that 
\begin{equation}  \ze^{n\binom{m+1}{2}} = \begin{cases} 1 & \text{ if $m$ is odd}, \\ -1 & \text{ if $m$ is even} \end{cases} = (-1)^{m-1}. \end{equation}
This is because $\ze$ is a primitive $nm$-th root of unity. If $m$ is odd then $m$ divides $\binom{m+1}{2}$. If $m$ is even then $\ze^{nm/2} = -1$, and $(-1)^{m+1} = -1$. Similarly, 
\begin{equation} \ze^{m \binom{n}{2}} = (-1)^{n-1}. \end{equation}
	
For $1 \le i < j \le n$ and $1 \le l \le m$, we have the enumeration $\al_{(i,j,l)} = x_i - \ze^{j-i+ln} x_j$ of $\Phi^1_m$. Whenever $j < n$ we have $\si(\al_{(i,j,l)}) = \al_{(i+1, j+1, l)})$. Meanwhile, 
\begin{equation} \si(\al_{(i,n,l)}) = -\ze^{n-i+ln} x_1 + x_{i+1} = -\ze^{n-i+ln}(x_1 - \ze^{i-n(l+1)} x_{i+1}). \end{equation}
As $l$ ranges from $1$ to $m$ within $\ZZ/m\ZZ$, $-(l+1)$ also ranges from $1$ to $m$ in $\ZZ/m\ZZ$, so every root $\al_{(1,i+1,l')}$ appears as $\si(\al_{(i,n,l)})$ exactly once, up to a scalar. Multiplying these scalars together, we compute that
\begin{equation} \frac{\si(\Delta)}{\Delta} = \prod_{1 \le i < n} \prod_{l \in \ZZ/m\ZZ} (-\ze^{n-i+ln}) = \prod_{1 \le i < n} \left[ (-1)^{m} \ze^{-im} \prod_{l \in \ZZ/m\ZZ} \ze^{n(l+1)} \right].  \end{equation}
The product over $l$ of $\ze^{n(l+1)}$ is $\ze^{n\binom{m+1}{2}}$, which is $(-1)^{m-1}$. This combines with the factor $(-1)^m$ to produce $-1$. So
\begin{equation} \frac{\si(\Delta)}{\Delta} = \prod_{1 \le i < n} -\ze^{-im} = (-1)^{n-1} \prod_{1 \le i < n} \ze^{-im}. \end{equation}
The product over $i$ of $\ze^{-im}$ is $\ze^{-m\binom{n}{2}}$, which is $(-1)^{n-1}$. So we conclude that
\begin{equation} \frac{\si(\Delta)}{\Delta} = 1. \end{equation}

The symmetry $\tau$ is defined so that $\tau(x_i) = x_{1-i}$ and $\tau(\ze) = \ze^{-1}$. Thus $\tau(x_1) = x_0 = x_n$, and $\tau(x_2) = x_{-1} = x_{n-1}$, and so forth. We have
\begin{equation} \tau(\al_{(i,j,l)}) = x_{n+1-i} - \ze^{i-j-ln} x_{n+1-j}
	%= -\ze^{i-j-ln}(x_{n+1-j} - \ze^{j-i+ln} x_{n+1-i})
	= -\ze^{i-j-ln}\al_{(n+1-j,n+1-i,l)}. \end{equation}
So we need only compute the product, over all triples $(i,j,l)$, of $-\ze^{i-j-ln}$. As before, the product over $l$ of $-\ze^{-ln}$ is $(-1)^m \ze^{-n \binom{m+1}{2}} = -1$. The product over $l$ of $\ze^{i-j}$ is $\ze^{m(i-j)}$. So 
\begin{equation} \frac{\tau(\Delta)}{\Delta} = \prod_{1 \le i < j \le n} -\ze^{m(i-j)}. \end{equation}
It is a standard fact that
\begin{equation} \sum_{1 \le i < j \le n} (j - i) = \binom{n}{2} + \binom{n-1}{2} + \ldots + \binom{2}{2} = \binom{n+1}{3}, \end{equation}
from which the result follows immediately. \end{proof}

\begin{rem} \label{rmk:notpreservedbytaureally} The fact that $\tau(\Delta) = (-1)^a \ze^b \Delta$ (for some value of $a$ and $b$) does not contradict the fact that $\tau^2$ is the identity. Since $\tau$ is conjugate linear, we have
\begin{equation} \tau^2(\Delta) = \tau((-1)^a \ze^b \Delta) = (-1)^a \ze^{-b} \tau(\Delta) = (-1)^{2a} \ze^{-b+b} \Delta = \Delta. \end{equation}
Said another way, if $\ze$ is a complex number, then $\tau$ is only $\RR$-linear. The $\CC$-span of $\Delta$ is two-dimensional over $\RR$, and always affords the regular representation of the group of size 2 generated by $\tau$. When $n$ is relatively prime to $6$, then $n$ divides $\binom{n+1}{3}$, and the $\RR$-span of $\Delta$ is preserved by $\tau$, giving either the trivial or sign representation based on $\binom{n}{2}$. Otherwise, the $\RR$-span of $\Delta$ is not preserved by $\tau$. \end{rem}

%========================================================
\subsection{Classical viewpoints on $G(m,m,n)$} \label{ssec:otherviews}
%========================================================

While the representation $V_q$ of $W_{\aff}$ was new in \cite{EQuantumI}, and the more symmetric version $V_z$ first appears here, the specializations $V_m$ viewed as
representations of $G(m,m,n)$ are isomorphic to the standard reflection representation of $G(m,m,n)$ appearing in the literature. However, there are differences
between these viewpoints which we wish to emphasize.

Let us consider the less symmetric representation $V_q$ rather than $V_{z}$, because this more closely resembles the literature. The symmetric group $S_n \subset W_{\aff}$ acts merely
to permute the basis elements $\{v_1, \ldots, v_n\}$. Meanwhile, we have added a new simple reflection $s_0$ which swaps $v_1$ and $q^2 v_n$. We get a representation of $G(m,m,n)$ after
specializing $q^2$ to an $m$-th root of unity. Instead, the literature (c.f. \cite[Thm 3.25]{BMR} and \cite{Ramp}) frequently introduces a reflection $s_1'$ which swaps $v_1$ and $q^2
v_2$. For example, when $q^2 = -1$, this is exactly the usual simple reflection in type $D_n$. Obviously $s_1'$ is also a reflection in $W_{\aff}$, the conjugate of $s_0$ by the
transposition $(2n)$. Our representation $V_q \ot \CC$ (specializing $q$ to a primitive $2m$-th root of unity) is exactly the same as their reflection representation of $G(m,m,n)$,
described using different generators for the group. But $s_1'$ is not a simple reflection in $W_{\aff}$, and this leads to a different notion of simple roots, a different length
function, etcetera. It is a subtle difference which leads to many changes.

% \begin{rem} See \cite[Thm 3.25]{BMR} for a presentation of $G(m,m,n)$ more common in the literature
% 	generalizes the fork of $D_n$ into a ``parachute.'' Meanwhile, our generating set for $G(m,m,n)$ (which has $s_0$ but not $s_1'$) does
% not fit into the framework of the ``Coxeter presentations'' of \cite{BMR}. \BE{Daniel will fix this if its wrong?}

The ring $R_m$ and the invariant subring $R_m^{W_m}$ and the Frobenius trace $\pa_{W_m}$ (studied in the next chapter) will be the same in these two pictures, since these constructions
are intrinsic to the group and the representation. However, one obtains two different subalgebras of $\End_{R_m^{W_m}}(R_m)$, both containing the Demazure operators of $S_n$, and one
additionally generated by $\pa_{s_0}$, the other by $\pa_{s_1'}$. This latter subalgebra was studied by Rampetas \cite{Ramp}, while the former is our $\NC(m,m,n)$. We will discuss
Rampetas' results in more detail in \S\ref{ssec:rampetas}.

%%%%%%%%%%%%%%%%%%%%%%%%%%%%%%%%%%%%%%%%%%%%%%%%%%%%%%%%%
%========================================================
\section{Symmetric and anti-symmetric polynomials} \label{sec:polys}
%========================================================
%%%%%%%%%%%%%%%%%%%%%%%%%%%%%%%%%%%%%%%%%%%%%%%%%%%%%%%%%

Let $z$ and $\AC_z$ and $V_z$ be as in Definition \ref{defn:Vz}. Let $R_z = \AC_z[x_1, \ldots, x_n]$ be the polynomial ring of $V_z$, equipped with its
action of the affine Weyl group $W_{\aff}$. We equip $R_z$ with a grading where $\deg x_i = 1$.

\begin{rem} One often equips $R_z$ with a grading where $\deg x_i = 2$ instead, to match the grading on the torus-equivariant cohomology of the point. \end{rem}

Let $\ze$ be a primitive $(mn)$-th root of unity in $\CC$. Let $R_m = \CC[x_1, \ldots, x_n]$ be the polynomial ring of $V_m$, equipped with its action of the affine Weyl group
$W_{\aff}$ from Definition \ref{defn:Vminbody}. The action factors through the quotient $W_m$. We equip $R_m$ with a grading where $\deg x_i = 1$.

The ring $R_m$ is commonly studied in the literature. As noted previously, many algebraic properties of $R_m$ do not depend on the choice of presentation of $W_m$. For sake of completeness we present our own proofs of some well-known results, e.g. the generating set of symmetric polynomials.

When we are not actively disambiguating between $R_m$ and $R_z$ or $W_m$ and $W_{\aff}$, we write $R = R_m$ and $W = W_m$.

%========================================================
\subsection{Symmetric polynomials}
%========================================================

\begin{prop} \label{prop:CST} The subring $R^{W}$ is a polynomial ring with generators in degrees $n, m, 2m, \ldots, (n-1)m$. The ring $R$ is free as a graded module over
$R^{W}$ with graded rank \begin{equation} \label{pim} \pi_m := \frac{(1-v^n)(1-v^m)(1-v^{2m}) \cdots (1-v^{(n-1)m})}{(1-v)^n} = (n)_v (m)_v (2m)_v \cdots ((n-1)m)_v, \end{equation}
where $(k)_v = 1 + v + \ldots + v^{k-1}$. \end{prop}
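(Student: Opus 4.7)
The plan is to exploit Lemmas \ref{lem:easypermute} and \ref{lem:easytranslate} to change basis, reducing to the classical invariant theory of $G(m,m,n)$ in its standard representation. Set $y_k := \ze^k x_k$ for $1 \le k \le n$. Then $S_n \subset W_m$ permutes the $y_k$ by index, while the translation subgroup $T := \La_{\rt}/m\La_{\rt} \subset W_m$ acts diagonally via $(a_1, \ldots, a_n) \cdot y_k = \omega^{a_k} y_k$, where $\omega := \ze^n$ is a primitive $m$-th root of unity and $(a_1, \ldots, a_n)$ ranges over $(\ZZ/m\ZZ)^n$ with $\sum_k a_k \equiv 0 \pmod m$. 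This is exactly the standard reflection representation of $G(m,m,n)$ on $\CC[y_1, \ldots, y_n]$.

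The natural candidate invariants are $\Pi := y_1 y_2 \cdots y_n$, of degree $n$, and $\sigma_k := e_k(y_1^m, \ldots, y_n^m)$, of degree $km$, for $1 \le k \le n-1$. All are $S_n$-symmetric; $\Pi$ is fixed by $T$ since $\prod_k \omega^{a_k} = \omega^0 = 1$, and each $y_i^m$ is $T$-fixed (since $\omega^{m a_i} = 1$), so the $\sigma_k$ are too. To show these generate $R^W$, I would compute in two stages. A monomial $y^b$ is $T$-invariant iff its exponents share a common residue $c \in \{0, 1, \ldots, m-1\}$ mod $m$; since $b_i \ge 0$ forces $b_i \ge c$ under this constraint, such a monomial factors uniquely as $\Pi^c$ times a monomial in $y_1^m, \ldots, y_n^m$. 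Hence
\[ R^T = \bigoplus_{c=0}^{m-1} \Pi^c \cdot \CC[y_1^m, \ldots, y_n^m]. \]
Taking $S_n$-invariants and using $\CC[y_1^m, \ldots, y_n^m]^{S_n} = \CC[\sigma_1, \ldots, \sigma_n]$ together with $\sigma_n = \Pi^m$, one concludes $R^W = \CC[\Pi, \sigma_1, \ldots, \sigma_{n-1}]$. The direct-sum decomposition combined with algebraic independence of $\sigma_1, \ldots, \sigma_n$ (elementary symmetric polynomials in the algebraically independent $y_i^m$) then forces $\Pi, \sigma_1, \ldots, \sigma_{n-1}$ to be algebraically independent, so $R^W$ is a polynomial ring in these generators of degrees $n, m, 2m, \ldots, (n-1)m$.

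Finally, freeness of $R$ over $R^W$ follows from Chevalley--Shephard--Todd applied to $W_m = G(m,m,n)$, and the graded rank is computed by dividing Hilbert series:
\[ \frac{\mathrm{Hilb}(R, v)}{\mathrm{Hilb}(R^W, v)} = \frac{(1-v^n)(1-v^m)(1-v^{2m}) \cdots (1-v^{(n-1)m})}{(1-v)^n} = \pi_m. \]
The main substantive step is the $T$-invariant bookkeeping producing the $\Pi^c$-factorization cleanly and uniformly in $c$; the rest is a direct combination of classical facts.
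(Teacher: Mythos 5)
Your proof is correct, but it takes a genuinely more self-contained route than the paper. The paper's own proof of Proposition \ref{prop:CST} is essentially a citation: $W_m$ is a finite group acting faithfully on $V_m$ and generated by reflections, so the Shephard--Todd theorem gives the polynomial structure of $R^{W}$ and the freeness of $R$, and the degrees $n, m, 2m, \ldots, (n-1)m$ are simply quoted from \cite[\S 6 (2)]{ShephardTodd}. You instead derive the polynomial structure and the degrees directly: after rescaling to $y_k = \ze^k x_k$ (justified by Lemmas \ref{lem:easypermute} and \ref{lem:easytranslate}; note your $y_k^m = \ze^{mk}x_k^m$ is the paper's $y_k$ from Lemma \ref{lem:y}), you compute the translation-invariants as $\bigoplus_{c=0}^{m-1} \Pi^c\,\CC[y_1^m,\dots,y_n^m]$ via the character computation identifying $T$-invariant monomials, then take $S_n$-invariants to get $R^W = \CC[\Pi,\sigma_1,\dots,\sigma_{n-1}]$ with the asserted degrees, invoking Shephard--Todd only for freeness; the Hilbert-series division for the graded rank is the same in both treatments. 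In effect your argument re-proves, without circularity, the content of Lemma \ref{lem:y} and the lemma following it (which identify the generators $\{e_i(\yb_n)\}_{i=1}^{n-1}\cup\{x_1\cdots x_n\}$): in the paper those lemmas deduce \emph{generation} from the degree count of Proposition \ref{prop:CST} itself, whereas your $T$-isotypic bookkeeping proves generation directly. The paper's approach buys brevity; yours buys an explicit, elementary verification of the degrees and generators that does not require looking them up. If you write this up, make two small steps explicit: the identification with the standard monomial representation of $G(m,m,n)$ (hence faithfulness and generation by reflections) is what licenses the appeal to Shephard--Todd for freeness, and the algebraic-independence claim should record that a relation $\sum_{c=0}^{m-1}\Pi^c Q_c(\sigma_n,\sigma_1,\dots,\sigma_{n-1})=0$ forces each $Q_c=0$ by the directness of your decomposition together with independence of $\sigma_1,\dots,\sigma_n$, exactly as you indicate.
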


\begin{proof} Since $W$ is a finite group acting faithfully on the complex vector space $V_m$, and $W$ is generated by elements which act by reflections, then this is a consequence of the Shephard-Todd theorem \cite{ShephardTodd}. The degrees of $G(m,m,n)$ are well-known, and are also found in \cite[\S 6 (2)]{ShephardTodd}.  \end{proof}

\begin{lem} \label{lem:y} Let $y_k = \ze^{mk} x_k^m$. Then the $\CC$-span of $\yb_n = \{y_1, \ldots, y_n\}$ inside $R$ is a copy of the $W_{\aff}$-representation $V_{\boring}$. For $1 \le i \le n$, the elementary symmetric polynomials $e_i(\yb_n)$ are in $R^{W}$, and are algebraically independent. \end{lem}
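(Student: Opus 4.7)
The plan is to verify the two assertions essentially directly from the definitions, relying on the fact that $y_k = \ze^{mk} x_k^m$ only depends on $k$ modulo $n$ (since $\ze^{nm} = 1$), so that the indexing by $\Omega = \ZZ/n\ZZ$ is consistent.

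First I would check the claim about the $W_{\aff}$-representation by a direct calculation. Using Definition \ref{defn:Vz} specialized at $z = \ze$, for any $i \in \Omega$ we have $s_i(x_i^m) = \ze^m x_{i+1}^m$, $s_i(x_{i+1}^m) = \ze^{-m} x_i^m$, and $s_i(x_j^m) = x_j^m$ for $j \notin \{i,i+1\}$. Multiplying through by the appropriate factors gives
\[
s_i(y_i) = \ze^{mi}\cdot\ze^m x_{i+1}^m = \ze^{m(i+1)} x_{i+1}^m = y_{i+1},
\]
\[
s_i(y_{i+1}) = \ze^{m(i+1)}\cdot\ze^{-m} x_i^m = \ze^{mi} x_i^m = y_i,
\]
and $s_i(y_j) = y_j$ otherwise. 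Comparing with Definition \ref{defn:boring} identifies the $\CC$-span of $\yb_n$ with $V_{\boring}$. The only subtlety is the case $i=0$, where the indices wrap around; one needs to check that $\ze^{m(n+1)} = \ze^m$ (using $\ze^{nm}=1$) so that the formula $y_{n+1}=y_1$ is consistent. This is the one piece of bookkeeping I expect to require a moment of care.

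Next I would deduce invariance. Since $V_{\boring}$ is a permutation representation of $S_n$ (the $W_{\aff}$-action factors through $W_{\aff} \onto S_n$), every element of $W_{\aff}$ acts on $\yb_n$ by some permutation. Therefore each $e_i(\yb_n)$, being a symmetric polynomial in the $y_k$'s, is fixed by every element of $W_{\aff}$ and hence lies in $R^{W}$ (recall $R^{W_{\aff}} = R^{W_m}$ because the $W_{\aff}$-action on $R_m$ factors through $W_m$).

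Finally, for algebraic independence, I would use a transport-of-structure argument. The ring homomorphism $\CC[t_1,\ldots,t_n] \to R_m$ sending $t_k \mapsto y_k = \ze^{mk} x_k^m$ is injective, since its image is the subring $\CC[x_1^m,\ldots,x_n^m]$ and the monomials $x_k^m$ are algebraically independent (they involve disjoint variables). The classical fact that $e_1(t_1,\ldots,t_n),\ldots,e_n(t_1,\ldots,t_n)$ are algebraically independent in $\CC[t_1,\ldots,t_n]$ therefore passes to their images $e_i(\yb_n)$. No serious obstacle is expected; the entire lemma is essentially a compatibility check between the definitions of $V_z$, $V_{\boring}$, and the chosen powers of $\ze$.
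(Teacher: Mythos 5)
Your proposal is correct and follows essentially the same route as the paper: verify by direct computation (using $\ze^{mn}=1$ for the wrap-around index) that each $s_i$ permutes the $y_k$ as in $V_{\boring}$, deduce that the $e_i(\yb_n)$ are $W$-invariant since the action factors through $S_n$, and obtain algebraic independence because $\CC[y_1,\ldots,y_n]$ is a polynomial subring of $R$ in which the elementary symmetric polynomials are classically independent. Your explicit remark that the $y_k$ themselves are algebraically independent (disjoint variables) just makes precise a point the paper leaves implicit.
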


\begin{proof} Clearly $s_k(y_j) = y_j$ for $j \notin \{k,k+1\}$. Also, \[ s_k(y_k) = \ze^{mk} (\ze x_{k+1})^m = \ze^{m(k+1)} x_{k+1}^m = y_{k+1}. \] This final computation works with indices modulo $n$, because $\ze^{mn} = 1$. Thus $W_{\aff}$ acts on the variables
$y_k$ just as it does in the representation $V_{\boring}$, where the action factors through $S_n$.

If $R' = \CC[y_1, \ldots, y_n]$ is the polynomial ring of $V_{\boring}$, then $(R')^{W_{\aff}} = (R')^{S_n}$ is the usual ring of symmetric polynomials (as the representation factors through
the symmetric group quotient of $W_{\aff}$). Since $(R')^{W_{\aff}} \subset R^{W}$ is a subalgebra, the elementary symmetric polynomials remain algebraically independent in $R_m$. \end{proof}

\begin{lem} Let $\zz := x_1 x_2 \cdots x_n$. As a ring, $R^{W}$ is a polynomial ring generated by $\BB := \{e_i(\yb_n)\}_{i=1}^{n-1} \cup \{\zz\}$. \end{lem}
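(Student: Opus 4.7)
The plan is to verify the three things: every element of $\BB$ lies in $R^W$, the elements of $\BB$ are algebraically independent, and the subring $\CC[\BB]$ they generate coincides with $R^W$ by a Hilbert-series comparison using Proposition \ref{prop:CST}.

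First I would check invariance. Under each simple reflection $s_i$, using the formulas of Definition \ref{defn:Vz} (specialized at $\ze$), one computes $s_i(x_i x_{i+1}) = (\ze x_{i+1})(\ze^{-1} x_i) = x_i x_{i+1}$, and all other $x_j$ are fixed; hence $\zz = x_1 \cdots x_n$ is fixed by all simple reflections and so by $W$. Invariance of each $e_i(\yb_n)$ is already recorded in Lemma \ref{lem:y}.

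Next I would establish algebraic independence of $\BB$. The key observation is the scalar identity
\begin{equation*}
e_n(\yb_n) \;=\; y_1 y_2 \cdots y_n \;=\; \ze^{m(1+2+\cdots+n)} x_1^m \cdots x_n^m \;=\; \ze^{m\binom{n+1}{2}} \zz^m,
\end{equation*}
so $\zz^m$ is a nonzero scalar multiple of $e_n(\yb_n)$. Suppose for contradiction that $\zz$ were algebraic over $\CC(e_1(\yb_n), \ldots, e_{n-1}(\yb_n))$. Then the field $\CC(e_1(\yb_n), \ldots, e_{n-1}(\yb_n), \zz)$ would have transcendence degree at most $n-1$ over $\CC$. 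But this field contains $\zz^m$, hence $e_n(\yb_n)$, and therefore contains all of $e_1(\yb_n), \ldots, e_n(\yb_n)$, which are algebraically independent by Lemma \ref{lem:y}. This forces the transcendence degree to be at least $n$, a contradiction. Thus $\zz$ is transcendental over $\CC(e_1(\yb_n), \ldots, e_{n-1}(\yb_n))$, and $\CC[\BB]$ is a polynomial ring on these $n$ generators.

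Finally, I would compare Hilbert series. The generators of $\CC[\BB]$ have degrees $m, 2m, \ldots, (n-1)m, n$, so
\begin{equation*}
\mathrm{Hilb}(\CC[\BB]) \;=\; \frac{1}{(1-v^n)(1-v^m)(1-v^{2m})\cdots(1-v^{(n-1)m})}.
\end{equation*}
By Proposition \ref{prop:CST}, $R^W$ is a polynomial ring with generators in exactly those degrees, so it has the same Hilbert series. Since $\CC[\BB] \subset R^W$ is a graded inclusion of graded $\CC$-algebras with equal Hilbert series, the inclusion is an equality, proving the lemma.

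None of the steps is especially delicate — the mildest subtlety is the algebraic independence argument, but it reduces cleanly to the transcendence-degree count using the relation $\zz^m \propto e_n(\yb_n)$ together with Lemma \ref{lem:y}, so I do not anticipate a serious obstacle.
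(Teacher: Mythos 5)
Your proof is correct and follows essentially the same route as the paper: use the relation $e_n(\yb_n) = \ze^{m\binom{n+1}{2}}\zz^m$ (the paper writes the equivalent scalar $\ze^{m\binom{n}{2}}$ since $\ze^{mn}=1$) to deduce that $\zz$ is transcendental over $\CC[e_1(\yb_n),\ldots,e_{n-1}(\yb_n)]$, hence $\BB$ is algebraically independent, and then conclude generation of $R^W$ by matching the generator degrees with Proposition \ref{prop:CST}. You merely spell out the transcendence-degree and Hilbert-series bookkeeping that the paper leaves implicit.
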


\begin{proof} That $\zz$ is in $R^W$ is clear. Note that $e_n(\yb_n) = \ze^{m \binom{n}{2}} \prod_k x_k^m = \ze^{m \binom{n}{2}} \zz^m$. Thus $e_n(\yb_n)$ is not needed as a
generator of $R^W$. Since $\zz^m$ is not algebraic over the subring generated by $\{e_i(\yb_n)\}_{i=1}^{n-1}$, neither is $\zz$. Thus $\BB$ is algebraically independent. The
graded degrees of $\BB$ match those stated in Proposition \ref{prop:CST}, so they must generate all of $R^W$. \end{proof}

\begin{rem} Before specializing to a root of unity, the invariant polynomials on $V_z$ are generated by $\zz$. After all, any symmetric polynomial on $V_{\ze}$ descends to a symmetric polynomial on $V_m$ for all $m \ge 2$. For a given degree, when $m \gg 0$, the only invariant polynomials are powers of $\zz$.

Readers familiar with invariant rings for Coxeter groups will note the absence of a degree $2$ invariant polynomial. Typically one exists, corresponding to the invariant quadratic
form induced by the Cartan matrix. Note however that our Cartan matrix is not symmetric, and does not induce an invariant symmetric bilinear form. \end{rem}

%========================================================
\subsection{A basis for polynomials over symmetric polynomials}
%========================================================

\begin{prop} \label{prop:abasis} Choose a total order $i_1 < i_2< \ldots < i_n$ on the set $\{1, \ldots, n\}$. We let $\XX = \{x_{i_1}^{a_1} \cdots x_{i_n}^{a_n}\}$ denote the set of monomials satisfying the following properties. \begin{itemize}
\item $a_1 \le m(n-1)$.
\item $a_2 \le m(n-2)$ unless $a_1 = 0$, in which case $a_2 \le m(n-1)-1$.
\item $a_3 \le m(n-3)$ unless $a_1=0$ or $a_2 = 0$, in which case $a_3 \le m(n-2)-1$.
\item $\ldots$
\item $a_k \le m(n-k)$ unless $a_i = 0$ for some $i < k$, in which case $a_k \le m(n+1-k)-1$.
\end{itemize}
Then $\XX$ is a homogeneous basis of $R$ as a free $R^W$-module. \end{prop}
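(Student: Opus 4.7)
The plan is to combine a Hilbert series calculation with a structural analysis of the coinvariant algebra $C := R/(R^W_+ \cdot R)$. By Proposition \ref{prop:CST}, $R$ is free over $R^W$ of graded rank $\pi_m$, so $C$ has Hilbert series $\pi_m$. By graded Nakayama combined with matching dimensions, it suffices to show that the image of $\XX$ in $C$ has Hilbert series $\pi_m$ and $\CC$-spans $C$.

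For the Hilbert series I will stratify $\XX = \bigsqcup_{k_0 = 1}^n \XX_{k_0}$ by the smallest index $k_0$ with $a_{k_0} = 0$. Each $\XX_{k_0}$ factors as $x_{i_1} \cdots x_{i_{k_0-1}}$ times a monomial in the remaining $n-1$ variables with independent exponent bounds. A change of index ($k \mapsto n-k$ for $k < k_0$ and $k \mapsto n-k+1$ for $k > k_0$) will show that the contribution of $\XX_{k_0}$ to the Hilbert series is $v^{k_0-1} \prod_{j=1}^{n-1}(jm)_v$; summing over $k_0$ yields $(n)_v \prod_{j=1}^{n-1}(jm)_v = \pi_m$.

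For the spanning I will identify $C_{k_0} := C/(x_{i_{k_0}})$ with the coinvariant algebra of the complex reflection group $G(m,1,n-1)$ acting on $\{x_k : k \ne i_{k_0}\}$. Both $\zz$ and $y_{i_{k_0}} = \ze^{m i_{k_0}} x_{i_{k_0}}^m$ vanish modulo $x_{i_{k_0}}$, and the identity $e_i(\yb_n) = e_i(\yb_n \setminus \{y_{i_{k_0}}\}) + y_{i_{k_0}} \cdot e_{i-1}(\yb_n \setminus \{y_{i_{k_0}}\})$ shows that the only surviving relations in $C_{k_0}$ are $e_i(\yb_n \setminus \{y_{i_{k_0}}\}) = 0$ for $1 \le i \le n-1$. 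By Shephard--Todd applied to $G(m,1,n-1)$, the algebra $C_{k_0}$ has Hilbert series $\prod_{j=1}^{n-1}(jm)_v$ and admits an Artin-type staircase basis. Now consider the filtration $F^k := (x_{i_1} \cdots x_{i_k}) \cdot C$ with $F^0 = C$ and $F^n = 0$ (the latter since $\zz \equiv 0$). Multiplication by $x_{i_1} \cdots x_{i_k}$ induces a graded surjection $(C_{i_{k+1}})[-k] \twoheadrightarrow F^k/F^{k+1}$, so coefficient-wise $\mathrm{Hilb}(F^k/F^{k+1}) \le v^k \prod_{j=1}^{n-1}(jm)_v$. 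Summing and using $\sum_k \mathrm{Hilb}(F^k/F^{k+1}) = \mathrm{Hilb}(C) = \pi_m$ forces each inequality to be an equality, so each surjection is a graded isomorphism. Under this isomorphism $\XX_{k+1}$ corresponds to $x_{i_1} \cdots x_{i_k}$ times the staircase basis of $C_{i_{k+1}}$, and the standard filtered argument concludes that $\XX = \bigsqcup_k \XX_{k+1}$ is a basis of $C$, hence of $R$ over $R^W$.

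The main obstacle will be the bookkeeping required to match $\XX_{k_0}$ with an honest staircase basis of $C_{i_{k_0}}$. After removing $x_{i_{k_0}}$ and factoring out the prefix $x_{i_1} \cdots x_{i_{k_0-1}}$, the two apparently different regimes ``$a_k \le m(n-k)$ with $a_k \ge 1$'' (for $k < k_0$) and ``$a_k \le m(n-k+1) - 1$ with $a_k \ge 0$'' (for $k > k_0$) must both collapse to a single uniform bound $m(n-s) - 1$ where $s$ indexes position among the $n-1$ survivors. Verifying this combinatorial collapse is what guarantees the match with the $G(m,1,n-1)$ staircase basis and makes the filtration argument go through.
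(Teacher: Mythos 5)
Your proposal is correct, and its main part takes a genuinely different route from the paper. The counting half (stratify $\XX$ by the first vanishing exponent, rewrite the bounds, sum to get $\pi_m$) is the same as the paper's. For the basis property, the paper proves that $\XX$ spans the coinvariant algebra $C$ by exhibiting explicit relations $h_k(\yb_n)=0$, $h_{n-k+1}(\yb_k)=0$ and $x_1\cdots x_k\,h_{n-k}(\yb_k)=0$ and running a lexicographic leading-term induction, then gets linear independence for free from the count. You instead filter $C$ by $F^k=(x_{i_1}\cdots x_{i_k})C$, note the surjections $C/(x_{i_{k+1}})\twoheadrightarrow F^k/F^{k+1}$ (shifted by $k$), identify $C/(x_{i_{k_0}})$ with the coinvariant algebra of $G(m,1,n-1)$, and use a Hilbert-series squeeze against $\pi_m=(n)_v\prod_{j=1}^{n-1}(jm)_v$ to force each surjection to be an isomorphism; this gives spanning and independence simultaneously, and your ``combinatorial collapse'' of the two regimes to uniform bounds $(n-s)m-1$ on the $n-1$ surviving variables does check out. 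Two points need tightening to make this airtight: (i) Shephard--Todd gives the Hilbert series $\prod_{j=1}^{n-1}(jm)_v$ of the $G(m,1,n-1)$ coinvariants, but not by itself the monomial staircase basis you invoke; you should add the short standard argument (the polynomial ring in $u_1,\dots,u_{n-1}$ is free over $\CC[u_1^m,\dots,u_{n-1}^m]$ with exponents $<m$, and the latter has the Artin basis over its $S_{n-1}$-invariants, yielding bounds $sm-1$ in any chosen order of the variables); (ii) your $y_j=\ze^{mj}x_j^m$ are not honest $m$-th powers, so either perform a diagonal rescaling of variables (available over $\CC$) to reduce to $e_i$ of genuine $m$-th powers, or argue directly that $e_1(\yb'),\dots,e_{n-1}(\yb')$ form a regular sequence so the quotient has the stated Hilbert series. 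With those routine additions your argument is complete: it buys a structural explanation of the shape of $\XX$ by reduction to known $G(m,1,n-1)$ coinvariant theory, whereas the paper's straightening argument is more elementary and self-contained but relies on ad hoc polynomial identities.
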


We prove Proposition \ref{prop:abasis} at the end of this section, after some lemmas. The definition of $\XX$ is a mouthful, but the formula \eqref{betterbounds} in the first
lemma gives a nicer restatement. The second lemma computes the graded count of $\XX$ (the sum over elements of $v$ raised to the degree of that element) to show it matches the
known graded rank from \eqref{pim}. In the third lemma, we show that $\XX$ spans $R$ over $R^W$. Thus $\XX$ must be a basis.

\begin{lem} Let $1 \le k \le n$. Inside $\XX$ there are $m \cdot \binom{n}{2}$ monomials for which $a_k = 0$ and $a_i \ne 0$ for $i < k$. Together, these enumerate $\XX$, which has size $n \cdot m \cdot \binom{n}{2}$. \end{lem}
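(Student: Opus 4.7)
The plan is to partition $\XX$ by the location of the first vanishing exponent and count each piece by a direct product formula.

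First I would observe that every monomial in $\XX$ contains at least one zero exponent. Indeed, if $a_1,\ldots,a_{n-1}$ were all positive, then no ``earlier zero'' would have appeared, so the strict bound on $a_n$ would apply, namely $a_n\le m(n-n)=0$; thus $a_n=0$ in any case. Consequently every element of $\XX$ has a well-defined smallest index $k$ with $a_k=0$, and sorting monomials by this index gives a disjoint decomposition $\XX=\XX_1\sqcup\cdots\sqcup\XX_n$, where $\XX_k$ is exactly the set described in the lemma.

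To count $\XX_k$ I would walk through the exponents in order. For $i<k$ no earlier zero has appeared yet, so the strict bound $a_i\le m(n-i)$ is in force; combined with the requirement $a_i\ne 0$, this gives $m(n-i)$ choices. The exponent $a_k=0$ is fixed. For $i>k$ the zero at position $k$ activates the relaxed bound $a_i\le m(n+1-i)-1$, yielding $m(n+1-i)$ choices for $a_i\in\{0,1,\ldots,m(n+1-i)-1\}$. Multiplying,
\begin{equation*}
|\XX_k|=\prod_{i=1}^{k-1} m(n-i)\cdot\prod_{i=k+1}^{n} m(n+1-i).
\end{equation*}
The integer parts telescope: the sequence $(n-1)(n-2)\cdots(n-k+1)$ from the first product concatenates with $(n-k)(n-k-1)\cdots 1$ from the second to cover $\{1,2,\ldots,n-1\}$, so their product is $(n-1)!$. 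The total power of $m$ is $m^{n-1}$. Thus $|\XX_k|$ is independent of $k$, and summing over $k=1,\ldots,n$ gives $|\XX|=n\cdot|\XX_1|$. I would then cross-check the result against $|W_m|=n!\,m^{n-1}$ from Proposition~\ref{prop:CST}, since $\XX$ is meant to index a basis of $R$ over $R^W$.

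The only obstacle, such as it is, is the bookkeeping of which bound is active for each exponent. The partition by the \emph{first} vanishing index handles any interaction with further zeros automatically, since the constraint on $a_i$ for $i>k$ depends only on whether \emph{some} $a_j$ with $j<i$ vanishes, not on how many do; so there is no overcounting, and the telescoping product drops out cleanly.
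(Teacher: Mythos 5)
Your decomposition is exactly the paper's: every element of $\XX$ has a vanishing exponent (if no $a_i$ with $i<n$ vanishes, the bound $a_n \le m(n-n)=0$ forces $a_n=0$), one partitions $\XX$ by the first index $k$ with $a_k=0$, and one counts each piece from the reformulated bounds in which the earlier exponents satisfy $1\le a_i\le m(n-i)$ and the later ones satisfy $0\le a_i\le m(n+1-i)-1$. Where you diverge is only the final arithmetic, and there you are right while the printed statement is not. The number of exponent tuples in the $k$-th piece is the product $\prod_{i<k} m(n-i)\cdot\prod_{i>k} m(n+1-i)=m^{n-1}(n-1)!$, independent of $k$, giving $|\XX|=n!\,m^{n-1}$; the paper's proof instead adds the bounds, $m+2m+\cdots+(n-1)m=m\binom{n}{2}$ (which is the degree of the staircase monomial $\stair$, not a cardinality), and this is where the stated sizes $m\binom{n}{2}$ per piece and $n\cdot m\cdot\binom{n}{2}$ in total come from. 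For $n\ge 3$ those sizes cannot be correct: in Example \ref{ex:dualbasesn3} with $n=3$ each of the three pieces visibly has $2m^2$ elements and $|\XX|=6m^2$, and the subsequent lemma shows the graded count of $\XX$ is $\pi_m$ from \eqref{pim}, whose value at $v=1$ is $n\cdot m\cdot 2m\cdots (n-1)m=n!\,m^{n-1}=|W_m|$ --- precisely the cross-check you propose. So treat the lemma's numerical claim as a slip (a sum written where a product is needed); your count is the one consistent with the rest of the paper, and nothing later appears to rely on the erroneous size --- what gets reused is the partition by first vanishing index, the reformulated bounds \eqref{betterbounds}, and the graded count, all of which your argument establishes.
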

	
\begin{proof} Choose a monomial in $\XX$. Either $a_n \le m(n-n) = 0$ or some previous exponent is zero. Therefore, some exponent is zero. There exists a unique $1 \le k \le n$ such that $a_k =0$ and $a_i \ne 0$ for $i < k$.
	
By adding one to all the exponents $a_i$ for $i > k$, we get a nicer version of the criteria defining $\XX$:
\begin{align} \label{betterbounds} \nonumber 1 \le a_1 \le m(n-1), \quad 1 \le a_2 \le m(n-2), \quad \ldots, \quad 1 \le a_{k-1} \le m(n-k+1),& \\
a_k = 0, &  \\
\nonumber 1 \le a_{k+1}+1 \le m(n-k), \quad  1 \le a_{k+2}+1 \le m(n-k-1),\quad \ldots, \quad 1 \le a_n+1 \le m.& \end{align}
The upper bound lowers by $m$ each time, skipping a beat when it passes over the zero exponent $a_k$.

It is now easy to count that the number of possibilities for the exponents is $m + 2m + \ldots + m(n-1) = m \cdot \binom{n}{2}$. \end{proof}

\begin{lem} The graded count of $\XX$ is equal to $\pi_m$, defined in \eqref{pim}. \end{lem}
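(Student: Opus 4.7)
The plan is to compute the graded count explicitly using the partition of $\XX$ supplied by the previous lemma, and show it telescopes into $\pi_m$.

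First I would fix $k$ with $1 \le k \le n$ and compute the graded count $C_k(v)$ of the monomials in $\XX$ for which $a_k = 0$ and $a_i \ne 0$ for $i < k$. Using the reformulation \eqref{betterbounds}, the exponents satisfy $1 \le a_i \le m(n-i)$ for $i < k$, $a_k = 0$, and $0 \le a_i \le m(n+1-i)-1$ for $i > k$. Thus
\begin{equation} C_k(v) = \prod_{i=1}^{k-1} \bigl( v + v^2 + \cdots + v^{m(n-i)} \bigr) \cdot \prod_{i=k+1}^{n} \bigl( 1 + v + \cdots + v^{m(n+1-i)-1} \bigr). \end{equation}
Pulling out a factor of $v$ from each term in the first product, this becomes
\begin{equation} C_k(v) = v^{k-1} \prod_{i=1}^{k-1} (m(n-i))_v \cdot \prod_{i=k+1}^{n} (m(n+1-i))_v. \end{equation}

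Next I would reindex. Substituting $j = n-i$ in the first product gives a product over $j \in \{n-k+1,\dots,n-1\}$, and substituting $j = n+1-i$ in the second gives a product over $j \in \{1,\dots,n-k\}$. Together these two ranges partition $\{1,2,\dots,n-1\}$, so
\begin{equation} C_k(v) = v^{k-1} \prod_{j=1}^{n-1} (mj)_v = v^{k-1} \cdot (m)_v (2m)_v \cdots ((n-1)m)_v. \end{equation}
The surprise here, and the only ``trick'' in the argument, is that the factor $(m \cdot (n-k))_v$ that is missing from the first range is exactly supplied by the second range, so the product over $j$ never depends on $k$.

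Finally I would sum over $k$:
\begin{equation} \sum_{k=1}^{n} C_k(v) = (m)_v (2m)_v \cdots ((n-1)m)_v \cdot \sum_{k=1}^{n} v^{k-1} = (n)_v (m)_v (2m)_v \cdots ((n-1)m)_v = \pi_m, \end{equation}
matching \eqref{pim}. Since the earlier lemma showed that every element of $\XX$ falls into exactly one class (indexed by the smallest $k$ with $a_k = 0$), this sum is the full graded count of $\XX$. I do not anticipate any real obstacle: the whole statement is a bookkeeping computation, and the only observation that makes it painless is the cancellation of the index $k$ from the product after reindexing.
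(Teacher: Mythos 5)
Your proof is correct and is essentially the same as the paper's: both compute the graded count of the class with $a_k=0$ and $a_i\ne 0$ for $i<k$ from the bounds \eqref{betterbounds}, observe it equals $v^{k-1}(m)_v(2m)_v\cdots((n-1)m)_v$ independently of $k$, and sum over $k$ to obtain $(n)_v$ times that product, which is $\pi_m$. Your explicit reindexing of the two products is just a slightly more detailed write-up of the same bookkeeping.
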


\begin{proof} Consider the monomials in $\XX$ for which $a_k = 0$ and $a_i \ne 0$ for $i < k$. By \eqref{betterbounds}, if $k < n$ then $a_n$ can be any number between $0$ and $m-1$, the graded count of which is
\begin{subequations}
\begin{equation} 1 + v + \ldots + v^{m-1} = \frac{v^m - 1}{v-1} = (m)_v. \end{equation}
Similarly, for any $j$ with $1 \le j \le n-k$, the graded count of the choices of $a_{n+1-j}$ is 
\begin{equation} 1 + v + \ldots + v^{jm-1}  = (jm)_v. \end{equation}
However, for any $i$ with $1 \le i \le k-1$, we have $1 \le a_i \le m(n-i)$, the graded count of which is
\begin{equation} v + v^2 + \ldots + v^{m(n-i)} = v \cdot ((n-i)m)_v. \end{equation}
\end{subequations}
In conclusion, the overall graded count of monomials with $a_k = 0$ and $a_i \ne 0$ for $i < k$ is the product of these choices, 
\begin{equation} v^{k-1} (m)_v (2m)_v \ldots ((n-1)m)_v. \end{equation}

Taking the sum over all $1 \le k \le n$ we get that the graded count of $\XX$ is
\begin{align} \nonumber  (m)_v (2m)_v & \cdots ((n-1)m)_v \left[ 1 + v + \ldots + v^{n-1} \right] = \\
	 & (m)_v (2m)_v \cdots ((n-1)m)_v (n)_v = \pi_m. \qedhere \end{align}
\end{proof}

To prove that $\XX$ spans $R$ over $R^{W_m}$, it is more convenient to reformulate the question.

\begin{lem} Let $I$ denote the ideal inside $R$ generated by the positive degree elements of $R^{W_m}$, or equivalently, the ideal generated by $\BB$. The set $\XX$ descends to a
spanning set for the coinvariant ring $C := R/I$. \end{lem}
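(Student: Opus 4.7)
The plan is to show that the image of any monomial $M = x_{i_1}^{b_1}\cdots x_{i_n}^{b_n}$ in $C$ is a $\CC$-linear combination of images of elements of $\XX$. Two families of relations drive the reduction. First, the monomial relation $\zz \equiv 0 \pmod I$ annihilates any $M$ with all $b_j \ge 1$. Second, the symmetric-function relations $e_k(\yb_n) \equiv 0$ for $1 \le k \le n-1$ hold in $C$; since $e_n(\yb_n) = \ze^{m\binom{n}{2}} \zz^m \in (\zz) \subset I$, in fact $e_k(\yb_n) \equiv 0$ for all $1 \le k \le n$. Consequently the image of $\CC[y_1,\ldots,y_n]$ in $C$ is (a quotient of) the classical $S_n$-coinvariant algebra, which admits the Artin monomial basis $\{y_{i_1}^{c_1}\cdots y_{i_n}^{c_n} : c_j \le n-j\}$.

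Write $b_j = mc_j + d_j$ with $0 \le d_j < m$, so $M$ is a nonzero scalar multiple of $y_{i_1}^{c_1}\cdots y_{i_n}^{c_n} \cdot x_{i_1}^{d_1}\cdots x_{i_n}^{d_n}$. Applying the Artin reduction to the $y$-factor via the $e_k$ relations, we rewrite $M$ modulo $I$ as a sum of monomials $x^{\mathbf{b}'}$ satisfying $c_j' \le n-j$, equivalently $b_j' \le m(n+1-j)-1$ for every $j$. For each resulting monomial, if all $b_j' \ge 1$ apply $\zz \equiv 0$ to kill it; otherwise let $k$ be the smallest index with $b_k' = 0$. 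The exponent bounds for $j \ge k$ then already match the conditions defining $\XX$, and the only remaining obstruction is the tighter constraint $b_j' \le m(n-j)$ for $j < k$.

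When $b_j' > m(n-j)$ with $j < k$, one has $c_j' = n-j$ and $d_j \ge 1$. I would apply the identity $y_{i_j} = -\sum_{\ell \ne j} y_{i_\ell}$ (coming from $e_1(\yb_n) = 0$) to one factor of $y_{i_j}$ inside $y_{i_j}^{n-j}$, rewriting $M$ modulo $I$ as a sum of monomials in which the $x_{i_j}$-exponent has decreased by $m$ and the $x_{i_\ell}$-exponent has increased by $m$. The summand with $\ell = k$ converts the zero at position $k$ into a positive exponent, so whenever $k$ was the unique zero the resulting monomial is divisible by $\zz$ and vanishes. The other summands are handled by iteration, possibly re-applying the Artin reduction.

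The main obstacle is establishing termination of this iterative reduction, since a single step can decrease some exponents while increasing others. I would introduce a well-founded partial order on exponent tuples, for example a lex-type measure on $(b_1, b_2, \ldots, b_n)$ refined by the number of positions at which the tight $\XX$-bound is violated, and verify that every rewriting step strictly decreases this measure. Once termination is in place, every monomial descends in $C$ to a $\CC$-linear combination of images of elements of $\XX$, proving that $\XX$ spans $C$.
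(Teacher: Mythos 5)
Your overall strategy is legitimate in outline, but it diverges from the paper's proof at exactly the point where you leave a hole, and that hole is the crux of the argument. The paper does not try to iterate the single relation $e_1(\yb_n)\equiv 0$; instead it first derives, by descending induction starting from $\zz = x_1\cdots x_n\equiv 0$, the family of relations $h_{n-k+1}(\yb_k)\equiv 0$ and $x_1\cdots x_k\, h_{n-k}(\yb_k)\equiv 0$ in $C$. These relations have explicitly known lexicographically-maximal monomials, namely $x_k^{m(n+1-k)}$ and $x_1\cdots x_{k-1}x_k^{m(n-k)+1}$, and every monomial outside $\XX$ is divisible by one of them; hence one application rewrites such a monomial as a combination of strictly lex-smaller monomials of the same degree, and induction on the (finite, per degree) lex order terminates immediately. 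In other words, the paper packages the entire reduction into relations whose leading terms are under control, so termination is automatic.

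In your proposal the termination question is not a technicality you can defer: it is the missing idea. The measure you suggest (a "lex-type" order refined by the number of violated $\XX$-bounds) is not shown to decrease, and for the natural candidates it does not. When you substitute $y_{i_j}=-\sum_{\ell\ne j}y_{i_\ell}$, the summands with $\ell<j$ raise the exponent at an \emph{earlier} position by $m$, which increases the usual lex order, can create a brand-new violation at position $\ell$ (e.g.\ if $b_\ell$ was exactly $m(n-\ell)$), and can even break the Artin bound there, forcing a re-straightening whose output monomials you have no lex control over; so neither the lex order nor the violation count is monotone. Note also that the whole process takes place among finitely many monomials of a fixed degree, so "finiteness of states" alone does not rule out cycling -- you genuinely need a decreasing potential, and none is exhibited. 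Your handling of the $\ell=k$ summand only disposes of it when $k$ is the \emph{unique} zero; otherwise it joins the uncontrolled iteration. To repair the argument along the paper's lines, prove the consolidated relations $x_1\cdots x_k\, h_{n-k}(\yb_k)\equiv 0$ (they follow from $\zz\equiv 0$ and $h_i(\yb_k)=h_i(\yb_{k+1})-y_{k+1}h_{i-1}(\yb_{k+1})$ by descending induction on $k$) and then run the lex induction; as written, your proof is incomplete.
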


\begin{proof}
For ease of discussion we prove the result for the standard total order $1 < 2 < \ldots$, but the argument would work in the same way for any other order of the monomials. Recall that $y_k = \ze^{mk} x_k^m$. We let $\yb_k$ denote the family of variables $\{y_1, \ldots, y_k\}$, and $\yb^{n-k} = \{y_{k+1}, \ldots, y_n\}$, for the purpose of considering symmetric polynomials like $e_i(\yb_k)$.

Inside $C$ we already have the relations
\begin{subequations} \label{relationsmodRWplus}
\begin{equation} \label{ekreln} h_k(\yb_n) = 0, \quad k > 0. \end{equation}
Let us recall the standard argument which deduces the relations
\begin{equation} \label{hkreln} h_{n-k+1}(\yb_k) = 0, \qquad k > 0. \end{equation}
A well-known property of symmetric polynomials is that
\begin{equation} h_c(\yb_k) = \sum_{a+b=c} (-1)^b h_a(\yb_n) e_b(\yb^{n-k}). \end{equation}
In the quotient where \eqref{ekreln} holds, only the $a=0$ term survives, giving $h_c(\yb_k) \equiv \pm e_c(\yb^{n-k})$. But the latter vanishes when $c > n-k$, because there aren't enough variables.

Inside $C$ we also have the relation
\begin{equation} \label{zreln} x_1 x_2 \cdots x_n = 0. \end{equation}
We claim that we also have the relation
\begin{equation} \label{hkplusreln} x_1 \cdots x_k h_{n-k}(\yb_{k}) = 0 \end{equation}
\end{subequations}
for any $1 \le k \le n$. Let us prove this result by descending induction on $k$. The $k=n$ case is precisely \eqref{zreln}.  Assume that \eqref{hkplusreln} holds for $k+1$. Note that
\begin{equation} h_i(\yb_k) = h_i(\yb_{k+1}) - y_{k+1} h_{i-1}(\yb_{k+1}). \end{equation}
Thus
\begin{equation} \label{foobar1} x_1 \cdots x_k h_{n-k}(\yb_{k}) = x_1 \cdots x_k h_{n-k}(\yb_{k+1}) - x_1 \cdots x_k y_{k+1} h_{n-k-1}(\yb_{k+1}). \end{equation}
The first term on the RHS of \eqref{foobar1} vanishes by \eqref{hkreln}. Meanwhile, $x_{k+1}$ divides $y_{k+1}$ so the second term is zero by the $k+1$ case of \eqref{hkplusreln}. Thus the LHS of \eqref{foobar1} is zero, as desired. In particular, when $k=1$, we see that $x_1 h_{n-1}(y_1) = x_1^{(n-1)m+1} = 0$.

We will use the relations in \eqref{relationsmodRWplus} to replace the lexicographically-maximal monomial with a linear combination of other monomials. The lexicographically maximal
monomial in \eqref{hkreln} is $x_k^{(n+1-k)m}$ (which up to a scalar is $y_k^{n+1-k}$). The lexicographically-maximal monomial in \eqref{hkplusreln} is $x_1 x_2 \cdots x_{k-1}
x_k^{(n-k)m+1}$. Any monomial not in $\XX$ will be divided by one of these lexicographically-maximal monomials, so it lies in the span of monomials which come earlier in the
lexicographic order. By induction on the lexicographic order, monomials in $\XX$ will span all monomials. \end{proof}

\begin{proof}[Proof of Proposition \ref{prop:abasis}] Because $R$ is free over $R^{W_m}$, $\XX \subset R$ will be a basis of $R$ over $R^{W_m}$ if and only if it descends to a
basis of $C$ over $\CC$. Spanning was proven in the preceding lemma, and linear independence follows because the graded count of $\XX$ matches the graded dimension of $C$. \end{proof}

\begin{ex} \label{ex:dualbasesn3} Suppose that $n=3$. Each of the following sets is a basis for $R$ over $R^W$:
\begin{align} \nonumber \XX_{1<2<3} & = \{1\} \cup \{x_1^a \mid 1 \le a \le 2m \} \cup \{x_2^b \mid 1 \le b \le 2m-1\} \cup \{x_3^c \mid 1 \le c \le m-1\} \\ \nonumber & \cup \{x_1^a x_2^b \mid 1 \le a \le 2m,\; 1 \le b \le m\} \cup \{x_1^a x_3^c \mid 1 \le a \le 2m, 1 \le c \le m-1 \} \\ & \cup \{x_2^b x_3^c \mid 1 \le b \le 2m-1, 1 \le c \le m-1\}. \end{align}
\begin{align} \nonumber \XX_{3<2<1} =& \{1\} \cup \{x_3^c \mid 1 \le c \le 2m \} \cup \{x_2^b \mid 1 \le b \le 2m-1\} \cup \{x_1^a \mid 1 \le a \le m-1\} \\ \nonumber & \cup \{x_3^c x_2^b \mid 1 \le c \le 2m,\; 1 \le b \le m\} \cup \{x_3^c x_1^a \mid 1 \le c \le 2m, 1 \le a \le m-1 \}\\ & \cup \{x_2^b x_1^a \mid 1 \le b \le 2m-1, 1 \le a \le m-1\}. \end{align}
\end{ex}

%========================================================
\subsection{Antisymmetric polynomials and antisymmetrization}
%========================================================

\begin{defn} The \emph{antiinvariant polynomials} $R^{-W}$ are those polynomials $f \in R$ for which $wf = (-1)^{\ell(w)} f$ for any $w \in W_{\aff}$.\end{defn}

This definition makes sense for any representation of $W_{\aff}$, not necessarily factoring through a finite quotient $W_m$. However, normally only the zero polynomial is antiinvariant, the proof being similar to the proof of Proposition \ref{prop:rootsdivideanti} below. When the representation factors through $W_m$, the antiinvariants become nontrivial, and we can produce an antisymmetrization operator.

\begin{lem} The sign representation of $W_{\aff}$ descends to $W_m$. \end{lem}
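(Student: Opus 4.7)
The plan is to show directly that the kernel of the quotient $W_{\aff} \twoheadrightarrow W_m$, namely $m\La_{\rt}$, lies inside the kernel of the sign character $\epsilon \co W_{\aff} \to \{\pm 1\}$, where $\epsilon(w) = (-1)^{\ell(w)}$. Since the sign representation is by definition the one-dimensional representation on which every simple reflection acts by $-1$, and since $\epsilon$ is a group homomorphism (equivalently, $\ell$ is additive modulo $2$), descending it to $W_m$ amounts precisely to checking that every element of $m\La_{\rt}$ has even length.

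The key observation is that every reflection in a Coxeter group is conjugate to some simple reflection, so every reflection has $\epsilon$-value $-1$. In an affine Weyl group, translation by a root $\alpha \in \La_{\rt}$ can always be written as a product of two reflections (the reflection across the hyperplane $\{\alpha = 0\}$ composed with the reflection across a parallel affine hyperplane $\{\alpha = 1\}$). Concretely, the formula $s_0 t_{\lon}$ from \S\ref{ssec:affineweyl} exhibits translation by the highest root $\alpha_{\lon}$ as a product of two reflections, and translation by any other root is conjugate to this in $W_{\aff}$, hence also a product of two reflections. It follows that $\epsilon(t_\alpha) = (-1)^2 = 1$ for every root $\alpha$.

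Now I would finish as follows. The root lattice $\La_{\rt}$ is generated as an abelian group by the simple roots $\alpha_1, \ldots, \alpha_{n-1}$ of the finite Weyl group $S_n$, so $m\La_{\rt}$ is generated by $m\alpha_1, \ldots, m\alpha_{n-1}$. Under the semidirect product presentation $W_{\aff} = S_n \ltimes \La_{\rt}$, the generator $m\alpha_i$ corresponds to the translation $t_{m\alpha_i} = (t_{\alpha_i})^m$. Since $\epsilon(t_{\alpha_i}) = 1$ by the previous paragraph, we have $\epsilon((t_{\alpha_i})^m) = 1$, and therefore $\epsilon$ is trivial on each generator of $m\La_{\rt}$, hence on all of $m\La_{\rt}$. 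Thus $\epsilon$ factors through $W_m$, and the sign representation descends as claimed.

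There is no serious obstacle here — the only point requiring care is the standard fact that translations in an affine Weyl group are products of two reflections, and one must be careful that the sign character is computed on all of $W_{\aff}$ (as a Coxeter group) rather than only on the image in $W_m$, where $\ell$ has not been discussed. Once that distinction is kept clear, the proof is a one-line consequence of $\epsilon$ being a homomorphism.
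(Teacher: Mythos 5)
Your proof is correct and takes essentially the same route as the paper: both arguments reduce to the observation that translation by a root (e.g.\ $s_0 t_{\lon}$ and its conjugates, which generate $\La_{\rt}$) is a product of two reflections and hence has even length, so the sign character is already trivial on $\La_{\rt}$, and in particular on the kernel $m\La_{\rt}$.
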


\begin{proof} Recall that $\La_{\rt}$ is generated by conjugates of $s_0 t_{\lon}$. All these conjugates are even length elements of $W_{\aff}$. So $\La_{\rt}$ already acts
trivially on the sign representation of $W_{\aff}$. \end{proof}

For $w \in W_m$ we let $\ell(w)$ be the length of the shortest representative of $w$ in $W_{\aff}$. Then $w$ acts on the sign representation by $(-1)^{\ell(w)}$.

\begin{defn} The \emph{antisymmetrization operator} $A \co R \to R^{-W}$ is 
\begin{equation} A \co f \mapsto \sum_{w \in W_m} (-1)^{\ell(w)} w(f). \end{equation}
Viewing $S_n$ as a subgroup of $W_m$ generated by $\{s_1, \ldots, s_{n-1}\}$, the \emph{finite antisymmetrization operator} $A' \co R \to R^{-S_n}$ is
\begin{equation} A' \co f \mapsto \sum_{w \in S_n} (-1)^{\ell(w)} w(f). \end{equation}
\end{defn}

Let us consider what happens when $A$ is applied to a monomial.

\begin{prop} \label{prop:antisymmonomial} Let $b = x_1^{a_1} \cdots x_n^{a_n}$ be a monomial in $R$. If $m$ divides $a_i-a_j$ for all $1 \le i,j \le n$, then $A(b) = m^{n-1} A'(b)$. If there are some $i,j$ such that $m$ does not divide $a_i-a_j$, then $A(b) = 0$. In particular, $A(b) = 0$ if $a_i = a_j$ for some $i \ne j$.  \end{prop}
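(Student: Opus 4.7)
The plan is to exploit the semidirect-product structure $W_m = S_n \ltimes T$, where $T$ denotes the image of $\La_{\rt}$ in $W_m$, so $T \cong \La_{\rt}/m\La_{\rt} \cong (\ZZ/m\ZZ)^{n-1}$. Every element of $W_m$ factors uniquely as $wt$ with $w \in S_n$ and $t \in T$, and by the preceding lemma $\La_{\rt}$ acts trivially on the sign representation (each translation is a conjugate of the even-parity element $s_0 t_{\lon}$), so $(-1)^{\ell(wt)} = (-1)^{\ell(w)}$. This lets me rewrite
\begin{equation*}
A(b) \;=\; \sum_{w \in S_n} (-1)^{\ell(w)}\, w\!\left(\sum_{t \in T} t(b)\right).
\end{equation*}

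Next I evaluate the inner sum as a character sum. By Lemma \ref{lem:easytranslate}, after specializing $z = \ze$, the translation indexed by $c = (c_1, \ldots, c_n) \in T$ acts diagonally by $x_i \mapsto \mu^{c_i} x_i$, where $\mu := \ze^n$ is a primitive $m$-th root of unity. Hence $t_c(b) = \mu^{\sum_i c_i a_i}\, b$, and
\begin{equation*}
\sum_{t \in T} t(b) \;=\; \Bigl(\sum_{c \in T} \mu^{\sum_i c_i a_i}\Bigr)\, b.
\end{equation*}
Orthogonality of characters on the finite abelian group $T$ makes the bracketed sum equal $|T| = m^{n-1}$ when the character $c \mapsto \mu^{\sum c_i a_i}$ is trivial on $T$, and $0$ otherwise. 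Testing triviality on a generating set $\{e_i - e_{i+1}\}$ of $\La_{\rt}$ yields the condition $m \mid a_i - a_{i+1}$ for all $i$, which is equivalent to $m \mid a_i - a_j$ for every pair. Combining the two cases produces exactly the two main assertions in the proposition.

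For the ``in particular'' clause, I use a cleaner direct argument that avoids splitting into cases. By Lemma \ref{lem:easypermute}, the transposition $s_{ij} \in S_n \subset W_m$ satisfies $s_{ij}(x_i) = \ze^{j-i} x_j$, $s_{ij}(x_j) = \ze^{i-j} x_i$, and fixes the remaining variables. A short calculation then gives $s_{ij}(b) = \ze^{(a_i - a_j)(j-i)}\, b$, which equals $b$ as soon as $a_i = a_j$. Pairing each $w \in W_m$ with $w s_{ij}$ in the defining sum for $A(b)$ forces $A(b) = -A(b)$, hence $A(b) = 0$.

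The main subtlety I anticipate is keeping careful track of the phases $\ze^{j-i}$ introduced by the $S_n$-action on $V_z$, which permutes $\{\ze^k x_k\}$ rather than $\{x_k\}$ themselves. These phases are what could, in principle, prevent the equal-exponent transposition argument from going through, and they appear as the exponents in the character sum. Fortunately they collapse exactly when $a_i = a_j$, and elsewhere they assemble into the clean character-orthogonality picture that governs the whole computation.
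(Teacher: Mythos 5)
Your argument is correct and is essentially the paper's proof in a slightly cleaner package: both isolate the (even-length) translation subgroup and make the root-of-unity sum over translations vanish unless $m$ divides every difference $a_i-a_j$ — the paper does this with a geometric series over a single cyclic subgroup of translations after normalizing the minimal exponent to zero, while you do both cases in one stroke via character orthogonality on all of $T\cong(\ZZ/m\ZZ)^{n-1}$, also handling explicitly the final clause that the paper leaves implicit. One harmless slip: the intermediate formula $s_{ij}(b)=\ze^{(a_i-a_j)(j-i)}\,b$ is false when $a_i\ne a_j$ (the exponents of $x_i$ and $x_j$ are swapped on the left-hand side, so $s_{ij}(b)$ is a scalar times a different monomial); you only invoke it when $a_i=a_j$, where it correctly gives $s_{ij}(b)=b$, so your pairing argument still goes through.
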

	
\begin{proof} Let $a$ be the minimum exponent among the $a_i$, and let $c = x_1^a \cdots x_n^a$. Then $c$ is $W$-invariant, and $b = cb'$. Thus $A(b) = c A(b')$. If we can prove these results for $b'$ we deduce them for $b$. Thus we may assume $a_l = 0$ for some index $l$, and we choose such an $l$.

Let $j$ be some index for which $m$ does not divide $a_j$. Necessarily $j \ne l$. Inside $\La_{\rt}$ is an element $\la$, a conjugate of $s_0 t_{\lon}$, which acts by the following formula:
\begin{equation} \la(x_k) = x_k \text{ for } k \ne j, l, \qquad \la(x_j) = \ze^n x_j, \qquad \la(x_l) = \ze^{-n} x_l. \end{equation}
Moreover, $\la$ has order $m$ inside $\La_{\rt}/m\La_{\rt}$, and has even length. Let $H$ be the subgroup of $W_m$ generated by $\la$, and let $[G/H]$ denote any set of coset representatives. Then
\begin{equation} A(b) = \sum_{w \in [G/H]} (-1)^{\ell(w)} w \cdot \left( \sum_{k=1}^{m} \la^k(b) \right), \end{equation}
and
\begin{equation} \sum_{k=1}^{m} \la^k(b) = (\sum_{k=1}^m \ze^{ka_jn}) b = 0. \end{equation}
The equality with zero follows because $\ze^{a_j n}$ is a non-trivial $m$-th root of unity.

Now suppose that $m$ divides $a_i$ for all $i$. Then $\La_{\rt}$ acts trivially on $b$. There are $m^{n-1}$ elements of $W_m$ in the image of $\La_{\rt}$, and they all have even length. Thus
\begin{equation} A(b) = \sum_{w \in S_n} (-1)^{\ell(w)} w \cdot \left( \sum_{\la \in \La_{\rt}} \la(b) \right) = m^{n-1} \sum_{w \in S_n} (-1)^{\ell(w)} w(b) = m^{n-1} A'(b). \end{equation}
\end{proof}

\begin{prop} \label{prop:rootsdivideanti} Recall the element $\De$ from Definition \ref{defn:delta}.
The antiinvariants $R^{-W_m}$ are a free module of rank $1$ over the invariants $R^{W_m}$, generated by $\De$. \end{prop}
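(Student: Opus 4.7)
The plan is to mimic the classical Weyl group argument, which reduces the proposition to three facts: (a) $\Delta$ is a nonzero antiinvariant, (b) every element of $R^{-W_m}$ is divisible by $\Delta$ in $R$, and (c) the resulting quotient is $W_m$-invariant. Part (a) is already established, since $\Delta \in R^{-W_m}$ by Proposition \ref{prop:deltaantiinvt} and $\Delta$ is a product of nonzero linear forms in the domain $R$. Once (b) and (c) are in hand, the $R^{W_m}$-linear map $R^{W_m} \to R^{-W_m}$ sending $g \mapsto \Delta g$ is surjective by (b)+(c) and injective because $R$ is a domain and $\Delta \ne 0$, giving the claimed free module of rank one.

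For (b) the crucial substep is to show that each individual root $\alpha \in \Phi^1_m$ divides any $f \in R^{-W_m}$. For this I would produce, for each such $\alpha$, a reflection $r_\alpha \in W_m$ that (i) fixes the hyperplane $\{\alpha = 0\}$ pointwise, (ii) sends $\alpha \mapsto -\alpha$, and (iii) has sign $-1$ on the sign representation. Since $\alpha$ is by definition in the $W_{\aff}$-orbit of some simple root $\alpha_i$, there exists $w \in W_{\aff}$ with $w(\alpha_i)$ a scalar multiple of $\alpha$ in $V_m$; set $r_\alpha := w s_i w^{-1}$. Properties (i) and (ii) are immediate from the corresponding properties of $s_i$, and (iii) holds because a conjugate of a simple reflection in a Coxeter group has odd length, and the sign representation descends to $W_m$ as noted just before the definition of antisymmetrization. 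Now $r_\alpha(f) = -f$ gives $f - r_\alpha(f) = 2f$, while the pointwise fixing of $\{\alpha = 0\}$ means $f - r_\alpha(f)$ vanishes on this hyperplane and is therefore divisible by $\alpha$. Since $2$ is a unit, this yields $\alpha \mid f$.

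Proposition \ref{prop} says that distinct elements of $\Phi^1_m$ lie in distinct colinearity classes, so they are pairwise coprime linear forms in the UFD $R$; thus $\Delta = \prod_{\alpha \in \Phi^1_m} \alpha$ divides $f$, completing (b). For (c), writing $f = \Delta g$, each simple reflection $s_k$ satisfies $s_k(f) = -f$ and $s_k(\Delta) = -\Delta$ (Proposition \ref{prop:deltaantiinvt}), so $s_k(g) = g$; since the $s_k$ generate $W_m$, $g \in R^{W_m}$. The main obstacle is the reflection construction in (b): the argument is clean for simple roots but requires a short verification for a general $\alpha \in \Phi^1_m$, since one must choose the conjugating element $w$ carefully (most cleanly by lifting $\alpha$ to $V_z$) and confirm that the resulting $r_\alpha$ really acts as claimed on $\alpha$ itself, up to the scalar ambiguity introduced by powers of $\zeta$.
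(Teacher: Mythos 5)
Your proof is correct and follows essentially the same route as the paper: establish that each root in $\Phi^1_m$ divides an antiinvariant $f$ via the fixed-hyperplane/Nullstellensatz argument, use that the elements of $\Phi^1_m$ are pairwise non-colinear to conclude $\Delta \mid f$, and then check the quotient is $W_m$-invariant using $s_k(\Delta) = -\Delta$. The only cosmetic difference is in the transport step: the paper passes from the simple root case to a general root by applying $w$ directly to the divisibility relation $\alpha_i \mid f$ (using $w(f) = \pm f$ and that every root is $w(\alpha_i)$ up to a power of $\ze$), rather than constructing the conjugate reflection $r_\alpha = w s_i w^{-1}$ and rerunning the hyperplane argument as you do.
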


This is a fairly standard argument, adapted from the case of finite Coxeter groups.

\begin{proof} Let $f \in R^{-W}$, and $i \in \Om$. We think about $f$ as a polynomial function on $V_m^*$, and use the Nulstellensatz. The fact that $s_i f = -f$ implies that $f$ vanishes on any point fixed by $s_i$. The fixed point set of $s_i$ is the hyperplane cut out by $\al_i = 0$. Thus $\al_i$ divides $f$.
	
Because one root $\al$ divides $f$, then all roots divide $f$. This is because $w(\al)$ divides $w(f) = \pm f$, and all roots are in the same $W_{\aff}$ orbit up to a power of
$\ze$. So any $f \in R^{-W}$ has the form $g \De$ for some $g \in R$.

By Proposition \ref{prop:deltaantiinvt}, $\De \in R^{-W}$. If $g \De \in R^{-W}$ then for any $i \in \Om$ we have
\begin{equation} g \De = - s_i(g \De) = - s_i(g) s_i(\de) = s_i(g) \De, \end{equation} whence $g = s_i(g)$. Consequently $g \in R^{W}$. \end{proof}

\begin{defn} \label{defn:J} Let $J \co R \to R^{W_m}$ denote the operator
\begin{equation} J \co f \mapsto \frac{A(f)}{m^{n-1} \De}. \end{equation}
\end{defn}

\begin{lem} The operator $J$ is a well-defined, $R^{W}$-linear map of degree $- m \binom{n}{2}$. \end{lem}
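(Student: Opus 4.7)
The plan is to verify three things in turn: that $A(f)$ lands in the antiinvariants $R^{-W_m}$, that antiinvariants are divisible by $\Delta$ with $R^{W_m}$-quotient (which is exactly Proposition \ref{prop:rootsdivideanti}), and finally that the resulting map is $R^{W_m}$-linear with the claimed degree.

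First I would verify that $A(f) \in R^{-W_m}$. For any simple reflection $s$, reindexing the sum defining $A(f)$ via $w' = sw$ gives
\begin{equation} s \cdot A(f) = \sum_{w \in W_m} (-1)^{\ell(w)} sw(f) = \sum_{w' \in W_m} (-1)^{\ell(sw')} w'(f) = -A(f), \end{equation}
since $\ell(sw') \equiv \ell(w') + 1 \pmod 2$. As simple reflections generate $W_m$ and each changes the sign, $w \cdot A(f) = (-1)^{\ell(w)} A(f)$ for all $w \in W_m$, so $A(f) \in R^{-W_m}$.

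Next, Proposition \ref{prop:rootsdivideanti} tells us $R^{-W_m} = R^{W_m} \cdot \Delta$. Hence there exists a unique $g \in R^{W_m}$ such that $A(f) = g \Delta$, and the quotient $A(f)/\Delta$ is a well-defined element of $R^{W_m}$ (and in particular an honest polynomial, not a rational function). Dividing further by the nonzero scalar $m^{n-1}$ keeps us inside $R^{W_m}$, so $J(f) = A(f)/(m^{n-1}\Delta)$ is well-defined as a map $R \to R^{W_m}$.

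For $R^{W_m}$-linearity, note that if $h \in R^{W_m}$ then $w(hf) = h \cdot w(f)$ for all $w \in W_m$, so $A(hf) = h \cdot A(f)$; dividing by $m^{n-1} \Delta$ gives $J(hf) = h \cdot J(f)$. Additivity is immediate from additivity of $A$. Finally, the degree count: each of the $m \binom{n}{2}$ roots in $\Phi^1_m$ is homogeneous of degree $1$ (see Definition \ref{defn:phiplusm}), so $\Delta$ is homogeneous of degree $m\binom{n}{2}$; since $A$ preserves degree and division by $\Delta$ lowers degree by $m\binom{n}{2}$, the operator $J$ is homogeneous of degree $-m\binom{n}{2}$. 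There is no serious obstacle here — the whole argument reduces to invoking Proposition \ref{prop:rootsdivideanti}; the factor of $m^{n-1}$ plays no role in well-definedness and is inserted purely as a normalization, foreshadowing Proposition \ref{prop:antisymmonomial} so that $J$ will take the value $1$ on an appropriately chosen monomial.
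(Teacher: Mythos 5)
Your proof is correct and follows the same route as the paper: well-definedness comes from Proposition \ref{prop:rootsdivideanti} (antiinvariants are $R^{W_m}\cdot\Delta$), with $R^{W_m}$-linearity and the degree count being immediate from the construction. You simply spell out details the paper leaves implicit (antisymmetry of $A(f)$, which uses that the sign character descends to $W_m$, and the degree of $\Delta$), so there is nothing to correct.
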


\begin{proof} The operator $J$ is well-defined (rather than mapping to the fraction field) by Proposition \ref{prop:rootsdivideanti}. It is $R^{W}$-linear by construction.  \end{proof}

Note that $m^{n-1}$ divides the numerator of $J$ by Proposition \ref{prop:antisymmonomial}, so even if we work integrally $J$ is still well-defined.

\begin{lem} \label{lem:stair1} Define the \emph{$m$-staircase polynomial} as
\begin{equation} \stair := x_1^{(n-1)m} x_2^{(n-2)m} \cdots x_{n-1}^m. \end{equation}
Then $J(\stair) = 1$. \end{lem}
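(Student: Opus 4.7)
The plan is to show that $A(\stair) = m^{n-1} \Delta$, which directly gives $J(\stair) = 1$ by Definition \ref{defn:J}.

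First, I would apply Proposition \ref{prop:antisymmonomial} to $\stair = x_1^{(n-1)m} \cdots x_{n-1}^m x_n^0$. Every exponent is a multiple of $m$, so $A(\stair) = m^{n-1} A'(\stair)$. This reduces the computation from the whole group $W_m$ to the symmetric group $S_n$.

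Next, I would rewrite $\stair$ in terms of the $y$-variables of Lemma \ref{lem:y}, on which $S_n$ acts by ordinary permutation. Using $x_k^m = \zeta^{-mk} y_k$, one gets
\begin{equation}
\stair = c \cdot y_1^{n-1} y_2^{n-2} \cdots y_{n-1}, \qquad c := \prod_{k=1}^{n-1} \zeta^{-(n-k)mk}.
\end{equation}
Since $A'$ is $\CC$-linear and $S_n$ permutes the $y_k$, the classical Vandermonde identity yields
\begin{equation}
A'(\stair) = c \cdot \prod_{1\le i<j\le n}(y_i - y_j).
\end{equation}

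The key step is then to factor each $y_i - y_j = \zeta^{mi}(x_i^m - \zeta^{m(j-i)} x_j^m)$ over the roots. Using that $\zeta^n$ is a primitive $m$-th root of unity, one has
\begin{equation}
x_i^m - \zeta^{m(j-i)} x_j^m = \prod_{l=1}^{m} (x_i - \zeta^{j-i+ln} x_j),
\end{equation}
and the right-hand side is precisely the product of the $m$ elements of $\Phi^1_m$ of the form $\alpha_{(i,j,l)}$ with $i<j$ fixed (cf.\ Definition \ref{defn:phiplusm}). Taking the product over all $i<j$ therefore gives $\prod_{i<j}(x_i^m - \zeta^{m(j-i)} x_j^m) = \Delta$. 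Collecting the leftover $\zeta^{mi}$ factors, one finds $\prod_{i<j} \zeta^{mi} = \zeta^{m \sum_{i=1}^{n-1} i(n-i)} = c^{-1}$, so $c \cdot \prod_{i<j}(y_i - y_j) = \Delta$, giving $A'(\stair) = \Delta$ and hence $A(\stair) = m^{n-1}\Delta$.

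The only mild obstacle is verifying that the two scalars $c$ and $\prod_{i<j}\zeta^{mi}$ are genuinely inverse to each other, which is a direct exponent comparison $\sum_{k=1}^{n-1} k(n-k) = \sum_{i=1}^{n-1} i(n-i)$ (trivially the same sum). Everything else is an assembly of Proposition \ref{prop:antisymmonomial}, Lemma \ref{lem:y}, the classical Vandermonde identity, and the factorization of $X^m - Y^m$ over $m$-th roots of unity.
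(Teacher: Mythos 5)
Your proof is correct, but it takes a genuinely different route from the paper's in the key step. Both arguments start identically, using Proposition \ref{prop:antisymmonomial} to reduce from $A$ to the finite antisymmetrization, so that $J(\stair) = A'(\stair)/\Delta$. From there the paper argues that $A'(\stair)$ must be a scalar multiple of $\Delta$ and determines the scalar by extracting a single coefficient: since $S_n$ acts on $\stair$ without stabilizer, the monomial $\stair$ occurs in $A'(\stair)$ with coefficient $1$, and a greedy choice argument (the only factors of $\Delta$ involving $x_1$ are the $(n-1)m$ roots of the form $x_1 - \ze^k x_i$, from each of which one must take the $x_1$ term, and so on down the indices) shows that $\stair$ also occurs in $\Delta$ with coefficient $1$. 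You instead compute both sides in closed form: passing to the variables $y_k$ of Lemma \ref{lem:y}, which $S_n$ genuinely permutes, the classical Vandermonde identity evaluates $A'(\stair)$ exactly, while the factorization $x_i^m - \ze^{m(j-i)} x_j^m = \prod_{l=1}^m \al_{(i,j,l)}$ over the $m$-th roots of unity $\ze^{ln}$ identifies $\Delta$ with a power of $\ze$ times $\prod_{i<j}(y_i - y_j)$, and the two powers of $\ze$ cancel on the nose. Your version costs a bit more scalar bookkeeping but yields the stronger and reusable identity $\Delta = \prod_{i<j}\bigl(x_i^m - \ze^{m(j-i)} x_j^m\bigr)$, i.e.\ $\Delta$ is (up to a unit) the Vandermonde in the $y_k$, and it does not even need the preliminary observation that $A'(\stair)$ is proportional to $\Delta$; the paper's coefficient-matching route avoids all such bookkeeping at the price of computing only the one coefficient it needs.
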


\begin{proof} By Proposition \ref{prop:antisymmonomial}, $J(\stair) = A'(\stair)/\De$. For degree reasons we know that $A'(\stair)$ is a scalar multiple of $\De$, and we need only
determine the scalar. Since $S_n$ has no stabilizer when acting on $\stair$, we need only compute the coefficient of $\stair$ inside $\De$. There are exactly $(n-1)m$ roots of the
form $x_1 - \ze^k x_i$ inside $\Phi_m^+$, and these are the only roots with $x_1$, so to get exponent $x_1^{(n-1)m}$ we need to choose the $x_1$ factor from each. Similarly, there
are $(n-2)m$ remaining roots of the form $x_2 - \ze^k x_i$ (for $i > 2$), and we must choose the $x_2$ factor from each. Continuing, there is a unique way to obtain the monomial
$\stair$ inside the product $\De$, and the coefficient is $1$. \end{proof}

\begin{lem} \label{lem:uniqueuptoscalar} Let $\pa$ be any homogeneous $R^{W}$-linear map $R \to R^{W}$ of degree $-m\binom{n}{2}$. Then $\pa$ is a scalar multiple of $J$. More precisely, $\pa = \pa(\stair) \cdot J$. \end{lem}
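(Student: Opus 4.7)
The plan is to show that the graded piece $\operatorname{Hom}_{R^W}(R, R^W)_{-m\binom{n}{2}}$ is one-dimensional, and then pin down the normalization via Lemma \ref{lem:stair1}.

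First, I would invoke Proposition \ref{prop:abasis} with the order $1 < 2 < \cdots < n$ to obtain a homogeneous basis $\XX$ of $R$ over $R^W$. Any $R^W$-linear map $\pa\co R \to R^W$ of degree $-m\binom{n}{2}$ is determined by its values on $\XX$. Since $R^W$ is concentrated in nonnegative degrees, $\pa(b) = 0$ whenever $\deg b < m\binom{n}{2}$, and $\pa(b) \in \CC$ when $\deg b = m\binom{n}{2}$. So the space of such maps has dimension equal to the number of basis elements of $\XX$ in degree $m\binom{n}{2}$.

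The key combinatorial step is to show there is exactly one such element, namely $\stair = x_1^{(n-1)m} x_2^{(n-2)m} \cdots x_{n-1}^m$. Using the reformulation \eqref{betterbounds}, every $b \in \XX$ has a unique first index $k$ with $a_k = 0$; for $i < k$ the bound is $a_i \le (n-i)m$, and for $i > k$ the bound is $a_i \le (n+1-i)m - 1$. Summing, for $k < n$ the maximum total degree is strictly less than $m\binom{n}{2}$ (each index $i > k$ contributes a deficit of at least $1$ compared to $(n+1-i)m$, giving a total degree at most $m\binom{n}{2} - (n-k)$). Only $k = n$ attains the maximum degree $\sum_{i=1}^{n-1}(n-i)m = m\binom{n}{2}$, and this maximum forces $a_i = (n-i)m$ for all $i < n$, so $b = \stair$.

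Consequently $\operatorname{Hom}_{R^W}(R,R^W)_{-m\binom{n}{2}}$ is one-dimensional. Both $\pa$ and $J$ lie in this space, so $\pa = c \cdot J$ for some $c \in \CC$. Evaluating on $\stair$ and using $J(\stair) = 1$ from Lemma \ref{lem:stair1} gives $c = \pa(\stair)$, as claimed. The only slightly delicate point is the degree-counting argument in the middle paragraph, but once the bounds from Proposition \ref{prop:abasis} are in hand it is a short calculation.
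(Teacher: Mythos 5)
Your proof is correct and follows essentially the same approach as the paper: both reduce to the observation that $\XX$ has a unique element of top degree $m\binom{n}{2}$, namely $\stair$, and then pin down the constant using $J(\stair)=1$. The paper simply asserts this uniqueness, whereas you supply the short degree-counting verification via \eqref{betterbounds}, which is a fine addition.
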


\begin{proof} An $R^{W}$-linear map is determined by what it does to the basis $\XX$. For degree reasons, $\pa$ kills any basis element of degree $<m \binom{n}{2}$. The basis $\XX$ has a unique element of top degree $m\binom{n}{2}$, which is $\stair$, and $\stair$ must be sent to an element of degree zero, i.e. a multiple of the identity. So the space of such maps is one-dimensional, identified by where it sends $\stair$, and $J$ corresponds to $1$ by the previous lemma. \end{proof}

\begin{cor} \label{cor:sitauonJ} The action of $\si$ on the span of $J$ is the trivial representation. \end{cor}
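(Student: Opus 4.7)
The plan is to reduce the claim to the explicit formula $J = A/(m^{n-1}\Delta)$ from Definition \ref{defn:J} and invoke $\si(\Delta) = \Delta$ from Proposition \ref{prop:deltasigmainvt}. First I would unpack what the statement means: since $\si$ is a diagram automorphism of $W_{\aff}$ that descends to $W_m$ and acts $\CC$-linearly on $R$ preserving $R^{W_m}$ and the grading, it acts on the space of homogeneous $R^{W_m}$-linear maps $R \to R^{W_m}$ by conjugation $T \mapsto \si \circ T \circ \si^{-1}$. By Lemma \ref{lem:uniqueuptoscalar}, the space of such maps of degree $-m\binom{n}{2}$ is one-dimensional, spanned by $J$, so $\si$ preserves this span and acts by some scalar. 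The claim is that this scalar equals $1$, equivalently $\si \circ J = J \circ \si$.

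Next I would verify that $\si$ commutes with the antisymmetrization operator $A$. Because $\si$ permutes the simple reflections $s_i \mapsto s_{i+1}$, conjugation by $\si$ preserves length: $\ell(\si w \si^{-1}) = \ell(w)$ for all $w \in W_m$. The compatibility of the $W_m$-action on $R$ with the diagram automorphism gives $\si(w(f)) = (\si w \si^{-1})(\si(f))$. Therefore
\begin{equation*} \si(A(f)) = \sum_{w \in W_m} (-1)^{\ell(w)} \si(w(f)) = \sum_{w \in W_m} (-1)^{\ell(w)} (\si w \si^{-1})(\si(f)) = A(\si(f)), \end{equation*}
after reindexing via the bijection $w \mapsto \si w \si^{-1}$ on $W_m$.

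Finally, combining this with $\si(\Delta) = \Delta$ and extending $\si$ to the fraction field of $R$ gives
\begin{equation*} \si(J(f)) = \frac{\si(A(f))}{m^{n-1}\si(\Delta)} = \frac{A(\si(f))}{m^{n-1}\Delta} = J(\si(f)), \end{equation*}
so $\si \circ J = J \circ \si$, which proves the corollary. There is no serious obstacle here: Proposition \ref{prop:deltasigmainvt} already does the nontrivial combinatorial work of computing $\si(\Delta)/\Delta$, and the argument above is essentially bookkeeping about equivariance of the antisymmetrization.
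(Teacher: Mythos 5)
Your proof is correct and follows essentially the same route as the paper: you show that $\si$ intertwines with the antisymmetrization operator $A$ (because conjugation by the diagram automorphism preserves length, i.e.\ the sign representation) and then use $\si(\Delta)=\Delta$ from Proposition \ref{prop:deltasigmainvt} to conclude that $\si$ fixes $J$. The additional framing via Lemma \ref{lem:uniqueuptoscalar} is harmless but not needed, since your computation gives $\si \circ J \circ \si^{-1} = J$ directly.
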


\begin{proof} The operator $\si$ intertwines with the antisymmetrization operator $A$, since Dynkin diagram automorphisms preserve the sign representation. By Proposition \ref{prop:deltasigmainvt}, $\si$ fixes $\Delta$, and hence fixes $J$. \end{proof}

\begin{rem} The operator $\tau$ also intertwines with $A$, since it preserves the sign representation. So it acts on the complex span of $J$, c.f. Remark \ref{rmk:notpreservedbytaureally}. \end{rem}

%========================================================
\subsection{Frobenius trace via antisymmetrization} \label{ssec:frobtrace}
%========================================================

% We could not find the following result in the literature.

\begin{thm} \label{thm:JFrob} The map $J \co R \to R^W$ is a Frobenius trace map. \end{thm}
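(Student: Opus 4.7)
The plan is to establish nondegeneracy of the $R^W$-bilinear pairing $(f,g) \mapsto J(fg)$ on $R$, which is equivalent to the statement that $J$ is a Frobenius trace. Because $R$ is a free $R^W$-module of finite rank (Proposition~\ref{prop:CST}) and $J$ is $R^W$-linear, this pairing is perfect if and only if its reduction to the coinvariant algebra $C = R/IR$ (where $I \subset R$ is the ideal generated by positive-degree $R^W$-invariants) is a nondegenerate $\CC$-bilinear pairing on $C$. So the task reduces to showing the induced map $\bar J \co C \to \CC$ satisfies: $(f,g) \mapsto \bar J(fg)$ is nondegenerate on $C$.

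Two key inputs are already in hand. First, $C$ is concentrated in degrees $0$ through $m\binom{n}{2}$, and its top piece $C_{m\binom{n}{2}}$ is one-dimensional, spanned by the class of the staircase $\stair$; this follows from the graded count $\pi_m = (n)_v(m)_v(2m)_v \cdots ((n-1)m)_v$ having leading coefficient $1$ in degree $m\binom{n}{2}$, together with $\stair$ being the unique top-degree element of the monomial basis $\XX$ from Proposition~\ref{prop:abasis}. Second, $\bar J(\stair) = 1 \ne 0$ by Lemma~\ref{lem:stair1}, so $\bar J$ is nonzero on the top.

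The core step is to exhibit dual bases. For each basis monomial $b = x_1^{a_1} \cdots x_n^{a_n} \in \XX_{1 < \cdots < n}$ with $a_n = 0$ and $a_i \le (n-i)m$ for all $i$, the naive partner $b^{*} := x_1^{(n-1)m-a_1} x_2^{(n-2)m-a_2} \cdots x_{n-1}^{m-a_{n-1}}$ satisfies $b \cdot b^{*} = \stair$, giving $J(b b^{*}) = 1$ immediately by Lemma~\ref{lem:stair1}. The main obstacle is the remaining basis elements — those with $a_n > 0$, or with some $a_k$ saturating the exceptional bound $a_k \le m(n+1-k)-1$ — for which the naive partner has formally negative exponents. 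For these I would construct $b^{*}$ by modifying the naive choice using the coinvariant relations $h_{n-k+1}(\yb_k) \equiv 0$ and $x_1 \cdots x_k \cdot h_{n-k}(\yb_k) \equiv 0 \pmod{IR}$ established in the proof of Proposition~\ref{prop:abasis}. Combined with a lexicographic ordering on $\XX$, this yields a pairing matrix that is upper triangular with $1$'s on the diagonal, hence invertible.

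A cleaner alternative routes everything through the Vandermonde: one computes that $\prod_{l=1}^m(x_i - \ze^{j-i+ln} x_j) = x_i^m - \ze^{m(j-i)} x_j^m = \ze^{-mi}(y_i - y_j)$, since $\ze^n$ is a primitive $m$-th root of unity, so $\De$ equals, up to a unit scalar, the Vandermonde $V(y_1, \ldots, y_n)$ in the variables $y_k = \ze^{mk} x_k^m$. Combined with Proposition~\ref{prop:antisymmonomial}, $J$ applied to a monomial whose exponents are all congruent to $r \pmod m$ reduces, after factoring out $\zz^{r} \in R^W$, to a nonzero scalar times the classical Demazure trace for $S_n$ acting on $\CC[y_1, \ldots, y_n]$ via $f \mapsto A'(f)/V(y)$, which is a Frobenius trace by Demazure's classical theorem. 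The various residue classes of exponents mod $m$ are then tied together using multiplications by $\zz$, and assembling the classical dual bases together with powers of $\zz$ yields explicit dual bases for $J$ on all of $R$.
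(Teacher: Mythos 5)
Your reduction to the coinvariant algebra, the observation that $C$ has one-dimensional top degree spanned by $\stair$ with $J(\stair)=1$ (Lemma~\ref{lem:stair1}), and the identity $\prod_{l=1}^m(x_i-\ze^{\,j-i+ln}x_j)=\ze^{-mi}(y_i-y_j)$ are all correct; in particular your Vandermonde remark, i.e.\ that $\De$ is a unit times the Vandermonde in the $y_k=\ze^{mk}x_k^m$ and hence $J$ restricted to polynomials in the $y_k$ is a unit times the classical $S_n$-trace $J_1$, is exactly the mechanism the paper exploits through Proposition~\ref{prop:antisymmonomial} and Lemma~\ref{lem:finitedualpair}. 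But both of your proposed completions stop precisely where the real difficulty begins. In the first route, the partners $b^*$ for the monomials whose naive complement has negative exponents are never constructed (``I would construct $b^*$ by modifying the naive choice\dots''), and the asserted unitriangular pairing matrix is not argued even for the monomials you do cover; the paper's own analysis suggests you should not expect unitriangularity within a single basis: pairing $\XX$ (order $1<\dots<n$) against $\XX'$ (reversed order) one only gets a \emph{block} upper-triangular matrix whose diagonal blocks are copies of the finite pairing matrix of $\XX_1$ against $\XX_2$ under $J_1$, which is invertible only by the nontrivial Schubert/dual-Schubert change-of-basis argument of Lemma~\ref{lem:finitedualpair}, not by inspection.

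The second route has a concrete quantitative gap at the ``assembly'' step. The elements of the form $\zz^r\cdot(\text{monomial in the }y_k)$, i.e.\ monomials all of whose exponents are congruent mod $m$, span an $R^W$-submodule of rank $n!$ (since $S=\CC[y_1,\dots,y_n]$ has rank $n!$ over $S^{S_n}$ and $R^W=S^{S_n}[\zz]$ with $\zz^m\in S^{S_n}$ up to a unit), whereas $R$ has rank $n!\,m^{n-1}$ over $R^W$; so classical dual bases in the $y_k$ multiplied by powers of $\zz$ cannot even be bases of $R$, let alone dual bases. The genuinely hard case is pairs $p,p'$ of monomials whose individual exponent residue vectors mod $m$ are non-constant but whose exponent sums realize $\{0,m,\dots,(n-1)m\}$; multiplying by $\zz$ shifts residues by $(1,\dots,1)$ and never reduces such a class to the constant-residue one. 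For a single pair of residue classes the induced form is a classical pairing twisted by a monomial $y^{\bar c}$, and such twisted pairings are degenerate over $S^{S_n}$ in general (already for $n=2$, the form $(f,g)\mapsto J_1(y_1fg)$ on the basis $\{1,y_1\}$ has determinant $\pm e_2$, not a unit), so nondegeneracy only emerges from how the different residue classes and the position of the zero exponent interlock. Organizing exactly that is the content of the paper's proof: two monomial bases with opposite variable orders, block upper-triangularity according to which variable has exponent zero, a further splitting by residue vectors, and an identification of each resulting block with the $\XX_1$--$\XX_2$ pairing of Lemma~\ref{lem:finitedualpair}. Your sketch has the right classical input but is missing this combinatorial organization, which is the actual proof.
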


First let us note a presumably well-known fact.

\begin{lem} \label{lem:finitedualpair} Consider the standard action of $S_n$ on a polynomial ring $R_1 = \CC[y_1, \ldots, y_n]$ which permutes the variables. Let $\stair_1 = y_1^{n-1} y_2^{n-2} \cdots y_{n-1}$, and let $\XX_1$ denote the set of monomials dividing $\stair_1$. Let $\stair_2 = y_2 y_3^2 \cdots y_n^{n-1}$, and let $\XX_2$ denote the set of monomials dividing $\stair_2$. Let $J_1$ denote the usual Frobenius trace $R_1 \to R_1^{S_n}$. Then $\XX_1$ and $\XX_2$ pair nondegenerately under $J_1$.
\end{lem}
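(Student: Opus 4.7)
My plan is to reduce the claim to Demazure's classical theorem \cite{Demazure}. Since $S_n$ acts on $R_1$ via the standard reflection representation of a finite Coxeter group, the operator $J_1(f) = \sum_{w \in S_n}(-1)^{\ell(w)} w(f) / \prod_{i<j}(y_i - y_j)$ is a Frobenius trace for the extension $R_1^{S_n} \subset R_1$, so the $R_1^{S_n}$-bilinear pairing $(f,g) \mapsto J_1(fg)$ is nondegenerate. Granted this, it suffices to show that both $\XX_1$ and $\XX_2$ are $R_1^{S_n}$-bases of $R_1$: letting $(b_j^\ast)_j$ denote the basis dual to $\XX_1$ under the Frobenius pairing, expressing each element of $\XX_2$ uniquely as an $R_1^{S_n}$-linear combination of the $b_j^\ast$ yields an invertible change-of-basis matrix $C$ over $R_1^{S_n}$, and a one-line calculation shows that the pairing matrix $(J_1(b_1 b_2))_{b_1 \in \XX_1,\; b_2 \in \XX_2}$ equals $C$, hence is invertible.

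That $\XX_1$ is an $R_1^{S_n}$-basis is the classical descent-monomial (Artin) basis theorem for the coinvariant algebra of $S_n$. For $\XX_2$, I would exploit the longest element $w_0 \in S_n$: the permutation $y_i \mapsto y_{n+1-i}$ defines a $\CC$-algebra automorphism of $R_1$ which fixes $R_1^{S_n}$ pointwise, hence is $R_1^{S_n}$-linear, and therefore carries any $R_1^{S_n}$-basis of $R_1$ to another one. A direct check on exponent vectors yields $w_0(\stair_1) = \stair_2$ and, more generally, $w_0(\XX_1) = \XX_2$, so $\XX_2$ is a basis as well.

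There is essentially no substantive obstacle here: the main content is Demazure's nondegeneracy theorem, invoked externally, together with two basis identifications that are respectively classical and immediate by symmetry. One might alternatively attempt a direct ``triangularity'' proof, but the most natural matching --- pairing $b \in \XX_1$ with $w_0(b) \in \XX_2$ --- fails, since the exponent vector of $b \cdot w_0(b)$ is symmetric under $i \leftrightarrow n+1-i$ and so typically has repeated entries, forcing $J_1$ to vanish on it. The abstract Frobenius-pairing argument sidesteps this awkwardness entirely.
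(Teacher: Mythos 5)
Your proof is correct, and it takes a genuinely different route from the paper's. The paper's argument goes through Schubert polynomials: it cites \cite{LauSL2} for the fact that the Schubert basis is related to $\XX_1$ by an invertible integral change of basis, and likewise the dual Schubert basis to $\XX_2$; since Schubert and dual Schubert bases are dual under $J_1$, the pairing matrix of $\XX_1$ against $\XX_2$ is a product of invertible matrices and hence invertible. You dispense with the Schubert machinery entirely and work in the general formalism of Frobenius extensions: once one knows that $J_1$ gives a perfect $R_1^{S_n}$-bilinear pairing (Demazure) and that $\XX_1$ and $\XX_2$ are both $R_1^{S_n}$-bases of $R_1$, nondegeneracy follows from the formal observation that any two bases of a free module under a perfect pairing have an invertible pairing matrix. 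For $\XX_1$ this is Artin's classical theorem; for $\XX_2$ your $w_0$ argument is exactly the right shortcut, since $w_0$ is an $R_1^{S_n}$-algebra automorphism carrying $\XX_1$ to $\XX_2$. Both arguments ultimately express the pairing matrix as a conjugate of an identity matrix by changes of basis; the difference is that the paper supplies two explicitly known dual bases, whereas you construct a dual basis abstractly from perfectness, which makes your argument more modular and independent of Schubert combinatorics. Your closing remark --- that the naive matching $b \leftrightarrow w_0(b)$ fails because $J_1$ annihilates monomials whose exponent vector is symmetric under $i \leftrightarrow n+1-i$, hence has a repeat --- is a useful sanity check that some abstraction is genuinely needed here.
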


\begin{proof} The Schubert polynomials (obtained by applying Demazure operators to $\stair_1$) span the same subspace as $\XX_1$, and are related to $\XX_1$ by an invertible integral change of basis matrix. We learned this fact from \cite[Proposition 3.4]{LauSL2}.
%\BE{(they cite Manivel's book, which I can't get)}.
Similarly, the dual Schubert polynomials (obtained by applying Demazure operators to $\stair_2$) are related to $\XX_2$ in the same way. Since Schubert and dual Schubert bases are, indeed, dual under $J_1$, the pairing of $\XX_1$ and $\XX_2$ must be nondegenerate. \end{proof}

\begin{proof}[Proof of Theorem \ref{thm:JFrob}] We claim that it suffices to prove that $J$ descends to a Frobenius trace map from $C$ to $\CC$. Here is the standard argument. Suppose one has homogeneous bases for $R$ over $R^W$ which descend to dual bases for $C$ over $\CC$. Order these bases by degree. Then the pairing matrix in $R$ must be upper-triangular for degree reasons, with $1$s on the diagonal. Hence it is non-degenerate.
	
To prove that $J$ is a Frobenius trace map on $C$, we need bases $\XX = \{b_i\}$ and $\XX'
= \{c_i\}$ for $C$ such that the matrix $J(b_i c_j)$ of complex numbers is non-degenerate. We claim that we can choose two bases of monomials constructed as in Proposition \ref{prop:abasis}: the
basis $\XX$ for the order $1 < 2 < \ldots < n$, and the basis $\XX'$ for the order $n < \ldots < 2 < 1$. See Example \ref{ex:dualbasesn3} for the case $n=3$.

We claim that the pairing matrix of $\XX$ against $\XX'$ is block upper-triangular. Moreover, each block on the diagonal is $M$, the pairing matrix between $\XX_1$ and $\XX_2$ from Lemma \ref{lem:finitedualpair}. This proves the desired nondegeneracy.

Suppose that $p = \prod x_i^{a_i} \in \XX$ and $p' = \prod x_i^{b_i} \in \XX'$ are monomials such that $\deg pp' = m\binom{n}{2}$. If $z = x_1 \ldots x_n$ divides $pp'$ then $J(pp') = z J(\frac{pp'}{z}) = 0$ for degree reasons. Thus we can assume that some variable $x_i$ has zero exponent in both $p$ and $p$. By Proposition \ref{prop:antisymmonomial}, if some exponent $a_i + b_i$ in $pp'$ is not a multiple of $m$ then $J(pp')=0$. Moreover, the exponents $a_i + b_i$ must all be distinct, or $J(pp')=0$. As a consequence, the only possibility for $J(pp') \ne 0$ in this degree is if there is an equality of sets
\begin{equation} \{a_i + b_i\}_{i=1}^n = \{m(n-1), \ldots, 2m, m, 0\}. \end{equation}
In this case $J(pp') = (-1)^{\ell(w)}$ where $w$ is the permutation which matches these two ordered sets. This follows by Lemma \ref{lem:stair1} since $A'(wf) = (-1)^{\ell(w)} f$ for $w \in S_n$.

Let $i$ be the smallest index such that $a_i = 0$, and let $j$ be the largest index such that $b_j= 0$. If $i > j$ then there is no common index $k$ for which $a_k = b_k = 0$, and
hence $J(pp')=0$. This is our first upper-triangularity result: the pairing matrix for $J$ on $\XX$ and $\XX'$ is a block upper-triangular matrix, where the diagonal blocks correspond to places where $i=j$. We need only
verify that $J$ is nondegenerate on each block.

For the rest of this proof we assume that $a_k = 0$, $a_i \ne 0$ for $i < k$, $b_k = 0$, $b_j \ne 0$ for $j > k$. By the constructions of $\XX$ and $\XX'$ we see that
\begin{itemize} \item $1 \le a_1 \le (n-1)m$ and $0 \le b_1 \le m-1$,
	\item $1 \le a_2 \le (n-2)m$ and $0 \le b_2 \le 2m-1$,
	\item $\ldots$
	\item $1 \le a_{k-1} \le (n-k+1)m$ and $0 \le b_{k-1} \le (k-1)m-1$
	\item $a_k = 0$ and $b_k = 0$
	\item $0 \le a_{k+1} \le (n-k)m - 1$ and $1 \le b_{k+1} \le km$
	\item $\ldots$
	\item $0 \le a_{n-1} \le 2m-1$ and $1 \le b_{n-1} \le (n-2)m$
	\item $0 \le a_n \le m-1$ and $1 \le b_n \le (n-1)m$.
\end{itemize}
Let us define integers $c_i$ and $d_i$ for $1 \le i \le n-1$ as follows. We have $c_i = a_i -1$ for $i < k$ and $c_i = a_{i+1}$ for $i \ge k$. We have $d_i = b_i$ for $i < k$, and $d_i = b_{i+1}-1$ for $i \ge k$. Then we see that
\begin{itemize} \item $0 \le c_1 \le (n-1)m-1$ and $0 \le d_1 \le m-1$,
	\item $0 \le c_2 \le (n-2)m-1$ and $0 \le d_2 \le 2m-1$,
	\item $\ldots$
	\item $0 \le c_{k-1} \le (n-k+1)m-1$ and $0 \le d_{k-1} \le (k-1)m-1$
	\item $0 \le c_k \le (n-k)m - 1$ and $0 \le d_k \le km-1$
	\item $\ldots$
	\item $0 \le c_{n-2} \le 2m-1$ and $0 \le d_{n-2} \le (n-2)m-1$
	\item $0 \le c_{n-1} \le m-1$ and $0 \le d_{n-1} \le (n-1)m-1$.
\end{itemize}
The pairing is zero unless $c_i + d_i \equiv -1$ modulo $m$, in which case $J(pp')$ is the sign of the permutation which puts
\[ \{c_i + d_i\} \to \{(n-1)m-1, \ldots, m-1\} \]
in bijection, or zero if the sets are not equal. Note that this description is independent of $k$! Thus all diagonal blocks of the pairing matrix are the same.

For each $1 \le i \le n-1$, pick some number $0 \le g_i \le m-1$. Consider the basis elements for which $c_i \equiv g_i$ modulo $m$. These can only pair against basis elements for which $d_i \equiv -1-g_i$ modulo $m$, and vice versa. Thus each diagonal block of the pairing matrix is itself a block diagonal matrix, with one block for each vector $g = (g_1, \ldots, g_{n-1})$.

Fix $g$. We define integers $e_i$ and $f_i$ such that $e_i = c_i - g_i / m$ and $f_i = (d_i - (m-1-g_i))/m$ for each $1 \le i \le n-1$. Then we have
\begin{itemize} \item $0 \le e_1 \le (n-2)$ and $0 \le f_1 \le 0$,
	\item $0 \le e_2 \le (n-3)$ and $0 \le f_2 \le 1$,
	\item $\ldots$
	\item $0 \le e_k \le (n-k-1)$ and $0 \le f_k \le (k-1)$
	\item $\ldots$
	\item $0 \le e_{n-1} \le 0$ and $0 \le f_{n-1} \le (n-2)$.
\end{itemize}
The pairing $J(pp')$ is the sign of the permutation which puts
\[ \{e_i + f_i\} \to \{n-2,\ldots,1,0\} \]
in bijection, or zero if the sets are not equal.

But the possible integers $e_i$ and $f_i$ and their pairings exactly match the situation of $\XX_1$ and $\XX_2$ via $J_1$.	
\end{proof}

\begin{cor} \label{cor:nonzeroonstairgoodenough} An $R^{W}$-linear map of degree $-m\binom{n}{2}$ is a Frobenius trace if and only if it sends $\stair$ to an invertible scalar. \end{cor}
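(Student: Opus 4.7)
The plan is to combine the two previous results, Lemma \ref{lem:uniqueuptoscalar} and Theorem \ref{thm:JFrob}, in an essentially immediate way. Let $\pa \co R \to R^W$ be any homogeneous $R^W$-linear map of degree $-m\binom{n}{2}$. Since $\stair$ has degree $m\binom{n}{2}$, the scalar $\pa(\stair)$ lives in $R^W$ in degree $0$, which is just $\CC$. So ``invertible scalar'' in the statement means a nonzero complex number.

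First I would handle the ``if'' direction. Assume $c := \pa(\stair) \ne 0$. By Lemma \ref{lem:uniqueuptoscalar}, $\pa = c \cdot J$. By Theorem \ref{thm:JFrob}, $J$ is a Frobenius trace, meaning the bilinear pairing $(f,g) \mapsto J(fg)$ is perfect. Multiplying this pairing by the nonzero scalar $c$ yields the pairing $(f,g) \mapsto \pa(fg)$, which is therefore also perfect. Hence $\pa$ is a Frobenius trace.

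For the ``only if'' direction, suppose $\pa$ is a Frobenius trace. By Lemma \ref{lem:uniqueuptoscalar} we still have $\pa = \pa(\stair) \cdot J$. If $\pa(\stair) = 0$, then $\pa$ would be the zero map, whose associated pairing is identically zero and hence not perfect (since $R$ is a nonzero free $R^W$-module). This contradicts the Frobenius trace hypothesis, so $\pa(\stair) \ne 0$, which (being a nonzero element of $\CC$) is an invertible scalar.

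There is no real obstacle: the corollary is a direct bookkeeping consequence of the uniqueness statement in Lemma \ref{lem:uniqueuptoscalar} combined with the fact that $J$ has already been established as a Frobenius trace. The only point worth noting carefully is that the codomain of interest in degree $0$ is the ground field $\CC$, so nonvanishing automatically upgrades to invertibility.
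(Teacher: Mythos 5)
Your proof is correct and follows essentially the same route as the paper: apply Lemma \ref{lem:uniqueuptoscalar} to write $\pa = \pa(\stair)\cdot J$, invoke Theorem \ref{thm:JFrob}, and observe that a scalar multiple of a Frobenius trace is a Frobenius trace precisely when the scalar is invertible. Your explicit handling of both directions (and the remark that degree-zero invertibility in $\CC$ is just nonvanishing) simply spells out what the paper's one-line proof leaves implicit.
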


\begin{proof} This follows by Lemma \ref{lem:uniqueuptoscalar}, and the fact that a scalar multiple of a Frobenius trace is a Frobenius trace if and only if the scalar is
invertible. \end{proof}

%========================================================
\subsection{On the proof that $J$ is a Frobenius trace} \label{ssec:proofcomparison}
%========================================================

In \S\ref{ssec:frobtrace} we proved that $J$ is a Frobenius trace, by explicitly evaluating its action on products of monomials in a basis for $R$ over $R^{W_m}$. This ad hoc solution appears to contrast with Demazure's lovelier proof, so we wish to explain why Demazure's proof no longer works in our context, and what interesting questions this raises.

Let us explain Demazure's proof of the analogous theorem for Coxeter groups. Let $(W,S)$ be a finite Coxeter system, acting on a realization with positive roots $\Phi^+$. Let $A$ be
the anti-symmetrization map, and let \[ J(f) = \frac{A(f)}{\prod_{\al \in \Phi^+} \al}. \]
Demazure wishes to determine when $J$ is a Frobenius trace even when working over $\ZZ$ (he only treats Weyl groups), so he assumes\footnote{This is related to the index of torsion defined in \cite[\S 5]{Demazure} and the assumption that this index is $1$ in \cite[Theorem 2]{Demazure}.} the existence of a polynomial $\stair$ for which $J(\stair) = 1$. He proves this (necessary) condition is also sufficient.

Consider the element $\pa_{w_0} := \pa_{s_{i_1}} \pa_{s_{i_2}} \ldots \pa_{s_{i_d}}$ for a reduced expression of the longest element $w_0$ of a Coxeter group. This agrees
with $J$ up to scalar for degree reasons, and Demazure proves \cite[Lemmas 3 and 4]{Demazure} that the scalar is $1$ (an argument involving leading terms when expanding both operators over $Q[W]$, where $Q$ is the fraction field of $R$).

Demazure proves that $\pa_{w_0}$ is a Frobenius trace as follows, c.f. \cite[Proposition 4, Corollary afterwards, Theorem 2]{Demazure}. Recall the twisted Leibniz rule
\[ \pa_s(fg) = \pa_s(f) g + s(f) \pa_s(g). \]
By the nil-quadratic relation, $\pa_{w_0} \pa_s = 0$. Also, $\pa_{w_0}$ kills anything in $R^s$, so $\pa_{w_0}(sf) = -\pa_{w_0}(f)$.
Thus 
\begin{equation}\label{leibniz} \pa_{w_0}(\pa_s(f) g) = - \pa_{w_0}(s(f) \pa_s(g)). \end{equation}
Now consider the pairing of $\pa_w(\stair)$ with $\pa_x(\stair)$. We have
\begin{equation} \pa_{w_0}(\pa_w(\stair) \pa_x(\stair)) = (-1)^{\ell(w)} \pa_{w_0}(w(\stair) \pa_{w^{-1}} \pa_x(\stair)). \end{equation}
If $\ell(w^{-1} x) < \ell(w^{-1}) + \ell(x)$ then $\pa_{w^{-1}} \pa_x = 0$. The only way the result will be a nonzero scalar (i.e. degree zero) is if $\ell(w^{-1}) + \ell(x) = \ell(w_0)$ and $w^{-1} x = w_0$, or equivalently, if $x = w w_0$. If $x = w w_0$ then we get
\begin{equation} \pa_{w_0}(\pa_w(\stair) \pa_{w w_0}(\stair)) = (-1)^{\ell(w)} \pa_{w_0}(w(\stair) \pa_{w_0}(\stair)) = \pa_{w_0}(\stair) = 1. \end{equation}
Thus $\{\pa_w(\stair)\}$ and $\{\pa_{w w_0}(\stair)\}$ are \emph{almost dual bases}: their pairing matrix is upper-triangular, with $1$s on the diagonal, and all non-zero elements above the diagonal being of positive degree. The existence of almost dual bases implies the existence of dual bases (though it does not make it easy to compute them explicitly).

Suppose we try to imitate this proof for $G(m,m,n)$. The first major issue is that the Poincar\'{e} polynomial of $\NC(m,m,n)$ is much larger than that of $R$ (as an $R^W$-module), so
one should not expect a basis of $R$ by taking a basis of $\NC(m,m,n)$ and applying it to one polynomial like $\stair$. Instead one could potentially use a subbasis of $\NC(m,m,n)$ of the appropriate size. One might hope that this subbasis comes from a combinatorially-defined subset of $W_{\aff}$, and similarly for an (almost) dual (sub)basis.

The second key fact used by Demazure is that for each $w$ there is a unique element $w'$ of complementary length such that $w^{-1} w' = w_0$, and for all $x \ne w'$ of the same
length we have $\pa_{w^{-1}} \pa_x = 0$. In $\NC(m,m,n)$ this statement is patently false. For example, when $m=2$ (see \S\ref{ssec:223intro}) $sts$ can be extended to $stsuts$ or
$stsust$, both of which serve as the longest element in $\NC(m,m,n)$. As a consequence, $\pa_{sts}(\stair)$ will pair nontrivially against both $\pa_{uts}(\stair)$ and
$\pa_{ust}(\stair)$. In fact, computations for small values of $m$ indicate that finding two subsets of $W$ which pair perfectly against each other (for each element in one subset,
there is a unique element in the other subset for which the product is a longest element) is impossible. Thus finding almost dual bases combinatorially presents extra challenges,
which we leave open to future explorers.

\begin{rem} \label{rmk:stairslnvsgln} While on the topic of Demazure's proof, we discuss the difference between the $\sl_n$ and $\gl_n$ realizations. Since any Frobenius trace is surjective, $J$ can not be a Frobenius trace unless there is a polynomial $\stair$ satisfying $J(\stair) = 1$. Demazure proves his result over arbitrary domains using that assumption. For the $\gl_n$ realization (the usual permutation action of $S_n$ on variables $x_i$ for $1 \le i \le n$), we can set $\stair = \prod_{i=1}^{n} x_i^{n-i}$, and this works over $\ZZ$. For the $\sl_n$-realization, where one uses polynomials in the roots $x_i - x_j$, it is not possible to find a polynomial $\stair$ over $\ZZ$, so $J$ fails to be a Frobenius trace in some characteristics. In our affine setting, this is another reason to study the realization $V_m$ rather than its root subrealization. \end{rem}

\section{The exotic nilCoxeter algebra} \label{sec:nilcox}
%========================================================
%%%%%%%%%%%%%%%%%%%%%%%%%%%%%%%%%%%%%%%%%%%%%%%%%%%%%%%%%

In this chapter we define the exotic nilCoxeter algebra, and prove the roundabout relations. We also provide background on previous approaches to Demazure operators for $G(m,d,n)$.

%========================================================
\subsection{Demazure operators and braid relations} \label{ssec:nilcoxbasics}
%========================================================

For $V_z$ or $V_m$, as for any realization, we can define Demazure operators for simple reflections.

\begin{defn} Let $R_z$ and $R = R_m$ be as in \S\ref{sec:polys}. For each $i \in \Om$ we have a \emph{Demazure operator} $\pa_i$ which sends $R_z \to R_z^i := R_z^{s_i}$. It is
defined be \begin{equation} \label{paidef} \pa_i(f) = \frac{f - s_i f}{\al_i} = \frac{f - s_i f}{x_i - z x_{i+1}}. \end{equation} We define an operator $\pa_i$ on $R_m$ by the
same formula, replacing $z$ with $\ze$. \end{defn}

\begin{ex} We have $\pa_i(x_i) = 1$ and $\pa_i(x_{i+1}) = - z$ and $\pa_i(x_j) = 0$ for $j \ne i, i+1$. We have $\pa_i(x_i x_{i+1}) = 0$. We have
	\begin{equation} \pa_i(x_i^3) = x_i^2 + z x_i x_{i+1} + z^2 x_{i+1}^2, \end{equation}
	\begin{equation} \pa_i(x_{i+1}^3) = -z^{-1} (x_{i+1}^2 + z^{-1} x_i x_{i+1} + z^{-2} x_i^2) = -z^{-3} \pa_i(x_i^3). \end{equation}
More generally, $\pa_i(x_i^k)$ is a geometric series in $x_i$ and $z x_{i+1}$, while $\pa_i(x_{i+1}^k) = - z^{-k} \pa_i(x_i^k)$.
\end{ex}

\begin{defn} Let $\NC(z,n)$ denote the subalgebra of $\End_{R_z^{W_{\aff}}}(R_z)$ generated by $\pa_i$, the \emph{deformed affine nilCoxeter algebra}.  Let $\NH(z,n)$, the \emph{deformed affine nilHecke algebra}, denote the subalgebra generated by $\NC(z,n)$ and multiplication by $x_i$.  Let $\NC(m,m,n)$ denote the subalgebra of $\End_{R_m^{W_m}}(R_m)$ generated by $\pa_i$. Let $\NH(m,m,n)$ denote the subalgebra generated by $\NC(m,m,n)$ and multiplication by $x_i$. \end{defn}

Now we state some standard properties of Demazure operators, which generalize to arbitrary realizations. These relations hold in both $\NC(z,n)$ and $\NC(m,m,n)$.

\begin{lem} Let $i \in \Om$. We have $\pa_i^2 = 0$, the \emph{nil-quadratic relation}. The Demazure operators satisfy the \emph{twisted Leibniz rule}
\begin{equation} \label{eq:Leibniz} \pa_i(fg) = \pa_i(f)g + s_i(f) \pa_i(g). \end{equation}
 \end{lem}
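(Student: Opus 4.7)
The plan is to verify both identities by direct computation using only the definition \eqref{paidef}, after first checking the key fact $s_i(\alpha_i) = -\alpha_i$. This latter fact is immediate from Definition \ref{defn:Vz}:
\begin{equation}
s_i(\alpha_i) = s_i(x_i) - z s_i(x_{i+1}) = z x_{i+1} - z \cdot z^{-1} x_i = -\alpha_i.
\end{equation}
(The same computation with $z$ replaced by $\ze$ handles $R_m$.)

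Before anything else, I would confirm that $\pa_i$ actually lands in the polynomial ring rather than just its fraction field, i.e.\ that $\alpha_i$ divides $f - s_i f$ in $R_z$ (resp.\ $R_m$). The short argument is that $f - s_i f$ is $s_i$-antiinvariant, so by the same reasoning as in Proposition \ref{prop:rootsdivideanti} (restricted to the single reflection $s_i$), it is divisible by $\alpha_i$; alternatively, one checks by hand that the operator $\frac{1 - s_i}{\alpha_i}$ is well-defined on each monomial in the two variables $x_i, x_{i+1}$ by evaluating a finite geometric series.

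For the nil-quadratic relation, the key observation is that $\pa_i(f)$ is $s_i$-invariant, since
\begin{equation}
s_i\bigl(\pa_i(f)\bigr) = \frac{s_i f - f}{s_i(\alpha_i)} = \frac{s_i f - f}{-\alpha_i} = \pa_i(f).
\end{equation}
Then $\pa_i^2(f) = \pa_i\bigl(\pa_i(f)\bigr) = \frac{\pa_i(f) - s_i\pa_i(f)}{\alpha_i} = 0$.

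For the twisted Leibniz rule, I would simply add and subtract $s_i(f) g$ in the numerator:
\begin{equation}
\pa_i(fg) = \frac{fg - s_i(f)s_i(g)}{\alpha_i} = \frac{(f - s_i f)g}{\alpha_i} + \frac{s_i(f)(g - s_i g)}{\alpha_i} = \pa_i(f) g + s_i(f) \pa_i(g).
\end{equation}
There is no real obstacle here; the only subtle point is the well-definedness at the start, and everything else is routine algebraic manipulation that proceeds identically over $\AC_z$ and over $\CC$ after specialization $z \mapsto \ze$.
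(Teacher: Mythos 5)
Your proof is correct and follows essentially the same route as the paper: the nil-quadratic relation comes from the image of $\pa_i$ lying in the $s_i$-invariants (which you verify via $s_i(\al_i)=-\al_i$) together with $\pa_i$ killing invariants, and the twisted Leibniz rule is the same direct computation from \eqref{paidef} by adding and subtracting $s_i(f)g$. Your extra remarks on well-definedness (divisibility of $f - s_i f$ by $\al_i$) are fine but not something the paper dwells on.
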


\begin{proof} This is true for any realization. Since the image of $\pa_i$ is contained in $R^i$, and $\pa_i$ kills $R^i$, we have $\pa_i^2 = 0$. The Leibniz rule follows by computation directly from \eqref{paidef}. \end{proof}

\begin{lem} Let $i \ne j \in \Om$. If $j \ne i \pm 1$ then $\pa_i \pa_j = \pa_j \pa_i$. \end{lem}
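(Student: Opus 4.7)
The plan is to verify the commutation via the standard Demazure trick, which carries over unchanged to our deformed setting. First I would note that the hypothesis $j \ne i \pm 1 \pmod n$ is vacuous for $n \le 3$, so assume $n \ge 4$; then the pairs $\{i,i+1\}$ and $\{j,j+1\}$ are disjoint subsets of $\Om$. From the defining formulas \eqref{zetarefrep} one sees immediately that $s_i$ fixes $x_j$ and $x_{j+1}$ (so $s_i(\alpha_j) = \alpha_j$), and symmetrically $s_j$ fixes $\alpha_i$; moreover $s_i s_j = s_j s_i$ as operators on $V_z$, hence on the polynomial ring $R_z$ (and by specialization on $R_m$).

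The second step is the auxiliary identity $s_j \circ \pa_i = \pa_i \circ s_j$. This is a direct calculation: for any $f$,
\begin{equation}
s_j \pa_i(f) \;=\; s_j\!\left(\frac{f - s_i f}{\alpha_i}\right) \;=\; \frac{s_j f - s_j s_i f}{s_j(\alpha_i)} \;=\; \frac{s_j f - s_i s_j f}{\alpha_i} \;=\; \pa_i(s_j f),
\end{equation}
where I used $s_j(\alpha_i) = \alpha_i$ and $s_i s_j = s_j s_i$.

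The third step is to slide $\pa_i$ past division by $\alpha_j$. Since $s_i(\alpha_j) = \alpha_j$, for any $g \in R_z$ we have $s_i(g/\alpha_j) = s_i(g)/\alpha_j$, so
\begin{equation}
\pa_i\!\left(\frac{g}{\alpha_j}\right) \;=\; \frac{g/\alpha_j - s_i(g)/\alpha_j}{\alpha_i} \;=\; \frac{\pa_i(g)}{\alpha_j}.
\end{equation}
Combining the two identities,
\begin{equation}
\pa_i \pa_j(f) \;=\; \pa_i\!\left(\frac{f - s_j f}{\alpha_j}\right) \;=\; \frac{\pa_i(f) - \pa_i(s_j f)}{\alpha_j} \;=\; \frac{\pa_i(f) - s_j \pa_i(f)}{\alpha_j} \;=\; \pa_j \pa_i(f),
\end{equation}
which is the desired equality. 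The same argument works verbatim in $\NC(m,m,n)$ after specializing $z \mapsto \ze$.

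There is no real obstacle here; the only thing to be careful about is confirming that $s_i$ and $s_j$ genuinely fix each other's simple root, which fails precisely when $j = i \pm 1$ (because then the pairs of indices overlap and $\alpha_j$ involves $x_i$ or $x_{i+1}$). The $q$-deformation plays no role, as the argument depends only on the formal identities $s_i(\alpha_j) = \alpha_j$, $s_j(\alpha_i) = \alpha_i$, and $s_is_j = s_js_i$.
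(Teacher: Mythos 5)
Your proof is correct and is exactly the computation the paper has in mind: the paper's own proof simply notes that $s_i(\al_j)=\al_j$, $s_j(\al_i)=\al_i$, and $s_is_j=s_js_i$, and says the commutation follows immediately from the defining formula \eqref{paidef}, which is what you have spelled out in detail. No further comment is needed.
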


\begin{proof} Since $s_i(\al_j) = \al_j$ and vice versa, and $s_i s_j = s_j s_i$, this follows immediately by computation from the formula \eqref{paidef}. \end{proof}

The next braid relation may be slightly unfamiliar, because our Cartan matrix is not ``balanced,'' see \cite[Appendix A]{ECathedral}.

\begin{lem} Suppose $n \ge 3$, and let $i \in \Om$. In $\NC(z,n)$ we have
\begin{equation} \label{zeR3} z \pa_i \pa_{i+1} \pa_i = \pa_{i+1} \pa_i \pa_{i+1}. \end{equation}
In $\NC(m,m,n)$ one should replace $z$ with $\ze$.
\end{lem}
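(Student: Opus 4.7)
The plan is to reduce the relation to a single direct computation on one test monomial. Since the operators $\pa_i$ and $\pa_{i+1}$ fix every $x_j$ with $j \notin \{i,i+1,i+2\}$, both sides of \eqref{zeR3} commute with multiplication by any such $x_j$, so it suffices to establish equality as $\AC_z$-linear endomorphisms of the subalgebra $R' := \AC_z[x_i, x_{i+1}, x_{i+2}]$.

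Next, I would observe that both $z\,\pa_i\pa_{i+1}\pa_i$ and $\pa_{i+1}\pa_i\pa_{i+1}$ are $R'^{W_I}$-linear endomorphisms of $R'$, where $W_I := \langle s_i, s_{i+1}\rangle \cong S_3$. This follows from iterating the twisted Leibniz rule \eqref{eq:Leibniz}: each $\pa_j$ is $R'^{s_j}$-linear on the left, and $R'^{W_I} \subset R'^{s_i} \cap R'^{s_{i+1}}$ so invariant factors may be pulled through any composition of $\pa_i$'s and $\pa_{i+1}$'s. By Lemma \ref{lem:easypermute}, $W_I$ acts on $R'$ as the standard permutation representation of $S_3$ in the rescaled variables $y_k := z^k x_k$, so the classical theory of $S_3$-invariants realizes $R'$ as a free $R'^{W_I}$-module with graded rank $1 + 2v + 2v^2 + v^3$, whose unique top-degree basis element is (up to a unit in $\AC_z$) the monomial $x_i^2 x_{i+1}$.

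Both sides of the proposed relation have graded degree $-3$, so they automatically annihilate every basis element of degree less than $3$, and the relation reduces to verifying equality on the single monomial $x_i^2 x_{i+1}$. A short direct calculation using $\pa_j(x_j) = 1$, $\pa_j(x_{j+1}) = -z^{-1}$, $\pa_j(x_j^2) = x_j + z x_{j+1}$, and $R'^{s_j}$-linearity to pull out invariant factors yields
\[ \pa_i \pa_{i+1} \pa_i(x_i^2 x_{i+1}) = 1, \qquad \pa_{i+1} \pa_i \pa_{i+1}(x_i^2 x_{i+1}) = z, \]
so the two $R'^{W_I}$-linear maps agree on the test element, hence everywhere. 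The statement in $\NC(m,m,n)$ follows by specializing $z \mapsto \ze$. The only step that demands care is the initial reduction to the $A_2$ subrealization $R'$: without it the relevant space of $R_z^{W_{\aff}}$-linear degree $-3$ maps is infinite-dimensional, whereas over $R'^{W_I}$ it is genuinely one-dimensional, which is what makes a single test computation decisive.
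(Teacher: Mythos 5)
Your proposal is correct, but it takes a genuinely different route from the paper's. The paper handles \eqref{zeR3} either by citing \cite[Claim A.7]{ECathedral}, which proves this unbalanced braid relation for arbitrary realizations, or by the direct observation that both $\pa_i\pa_{i+1}\pa_i$ and $\pa_{i+1}\pa_i\pa_{i+1}$ can be written as the signed sum $\sum (-1)^{\ell(w)} w(f)$ over the parabolic subgroup generated by $s_i, s_{i+1}$, divided by a product of three roots; one product is $\al_i\al_{i+1}s_i(\al_{i+1})$, the other $\al_i\al_{i+1}s_{i+1}(\al_i)$, and the scalar $z$ is exactly the ratio of the two. You instead localize to the $A_2$ subring $R'$, note that both compositions are $R'^{W_I}$-linear of degree $-3$, and use freeness of $R'$ over the $S_3$-invariants (staircase basis, graded rank $1+2v+2v^2+v^3$) to conclude that such an operator is determined by its value on the unique degree-$3$ basis element, so one test computation on $x_i^2x_{i+1}$ decides the identity --- and your values $1$ and $z$ agree with the example the paper records immediately after the lemma. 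This is essentially the ``unique up to scalar in top degree'' principle the paper itself deploys later (Lemma \ref{lem:uniqueuptoscalar}), transplanted to the finite $A_2$ parabolic: it is elementary and self-contained, at the cost of importing the invariant-theoretic freeness statement, whereas the paper's antisymmetrization comparison explains structurally where the factor $z$ comes from and applies verbatim to any unbalanced realization. One small imprecision to fix: Lemma \ref{lem:easypermute} as stated concerns only the finite $S_n \subset W_{\aff}$, so when $\{i,i+1\}$ contains the affine index $0$ you should either invoke the rotation symmetry $\si$ or observe directly that $\langle s_i, s_{i+1}\rangle$ permutes $x_i$, $z x_{i+1}$, $z^2 x_{i+2}$ (rescaling with integer lifts of the indices); this is a one-line repair, not a gap. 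Also note your value $\pa_j(x_{j+1})=-z^{-1}$ is the correct one, though it is not actually needed in your final computation.
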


\begin{proof} This was proven for general (unbalanced) realizations in \cite[Claim A.7]{ECathedral}. One can also use \eqref{paidef} to explicitly write out $\pa_i \pa_{i+1} \pa_i(f)$ and $\pa_{i+1} \pa_i \pa_{i+1}(f)$. Both can be described as a sum $(-1)^{\ell(w)} w(f)$, for $w$ in the parabolic subgroup generated by $s_i$ and $s_{i+1}$, divided by a product of three roots. One product is $\al_i \al_{i+1} s_i(\al_{i+1})$, and the other product is $\al_i \al_{i+1} s_{i+1}(\al_i)$. The multiplicative factor $z$ precisely matches the difference between $s_i(\al_{i+1})$ and $s_{i+1}(\al_i)$. \end{proof}

\begin{ex} We have
\begin{equation} \pa_1 \pa_2 \pa_1(x_1^2 x_2) = \pa_1 \pa_2(x_1 x_2) = \pa_1(x_1) = 1, \quad \pa_2 \pa_1 \pa_2(x_1^2 x_2) = \pa_2 \pa_1(x_1^2) = \pa_2(x_1 + z x_2) = z. \end{equation}
This matches the fact that $z \pa_1 \pa_2 \pa_1 = \pa_2 \pa_1 \pa_2$. \end{ex}

\begin{lem} Recall the symmetries $\si$ and $\tau$ from Definition \ref{defn:symmetries}. One has
	\begin{equation} \label{siondem} \si(\pa_i(f)) = \pa_{i+1}(\si(f)), \end{equation}
	\begin{equation} \label{tauondem} \tau(\pa_i(f)) = (-z) \pa_{-i}(\tau(f)). \end{equation}
\end{lem}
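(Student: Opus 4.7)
The plan is to observe that both $\si$ and $\tau$ extend to ring automorphisms of $R_z$ (and of $R_m$) that intertwine the action of simple reflections in a controlled way, and then simply apply these intertwining properties to the defining formula $\pa_i(f) = (f - s_i f)/\al_i$. Dividing numerator and denominator by the same element, and tracking the scalar that appears on the denominator, produces exactly the desired identities.

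First I would check the two intertwining relations at the level of the affine Weyl group action on $R_z$. From Definition~\ref{defn:symmetries} and the formulas \eqref{zetarefrep} one verifies directly that $\si \circ s_i = s_{i+1} \circ \si$ and $\tau \circ s_i = s_{-i} \circ \tau$ as $\CC$-algebra endomorphisms of $R_z$; for $\si$ this is immediate since both sides act by rotation of indices and fix $z$, and for $\tau$ one uses that $\tau(x_j) = x_{1-j}$ and $\tau(z) = z^{-1}$, so that the formula for $s_i$ is transported to that for $s_{-i}$. (Both $\si$ and $\tau$ are ring automorphisms of $R_z$, which is essential since we will move them across a fraction.)

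Next I would compute the images of the simple roots. For $\si$ we have $\si(\al_i) = \si(x_i - z x_{i+1}) = x_{i+1} - z x_{i+2} = \al_{i+1}$. For $\tau$,
\begin{equation}
\tau(\al_i) = \tau(x_i - z x_{i+1}) = x_{1-i} - z^{-1} x_{-i} = -z^{-1}(x_{-i} - z x_{1-i}) = -z^{-1} \al_{-i}.
\end{equation}
Now applying $\si$ to $\pa_i(f) = (f - s_i f)/\al_i$ and using that $\si$ is a ring automorphism,
\begin{equation}
\si(\pa_i(f)) = \frac{\si(f) - \si(s_i f)}{\si(\al_i)} = \frac{\si(f) - s_{i+1}(\si f)}{\al_{i+1}} = \pa_{i+1}(\si(f)),
\end{equation}
which is \eqref{siondem}. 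Similarly,
\begin{equation}
\tau(\pa_i(f)) = \frac{\tau(f) - s_{-i}(\tau f)}{-z^{-1}\al_{-i}} = -z \cdot \pa_{-i}(\tau(f)),
\end{equation}
which is \eqref{tauondem}.

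There is no real obstacle here; the only point to be careful about is the factor $-z^{-1}$ arising from $\tau(\al_i)$, which accounts for the prefactor $-z$ in \eqref{tauondem}. Note also that since $\tau(z) = z^{-1}$, the constant $-z$ on the right of \eqref{tauondem} is consistent under iterating $\tau$: applying $\tau$ twice produces the scalar $(-z)(-z^{-1}) = 1$, as required by $\tau^2 = \mathrm{id}$ on indices. The same proof, read with $z$ specialized to $\ze$, gives the analogous identities inside $\NC(m,m,n)$, keeping in mind the conjugate-linear nature of $\tau$ in that setting (cf.\ Remark~\ref{rmk:notpreservedbytaureally}).
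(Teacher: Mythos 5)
Your proposal is correct and is essentially the proof the paper gives: the paper likewise dispenses with $\si$ as immediate and computes $\tau(\pa_i(f))$ by applying $\tau$ to numerator and denominator, factoring $\tau(\al_i) = x_{1-i} - z^{-1}x_{-i} = -z^{-1}\al_{-i}$ to extract the scalar $-z$. Your write-up only spells out a bit more explicitly the intertwining $\tau\circ s_i = s_{-i}\circ\tau$ and the consistency check under $\tau^2 = \mathrm{id}$, but the argument is the same.
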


\begin{proof} The statement about $\si$ is straightforward, so we perform the computation with $\tau$. We have
	\begin{equation} \tau(\pa_i(f)) = \frac{\tau(f) - \tau(s_{i} f) }{x_{1-i} - z^{-1} x_{-i}} = (-z) \frac{\tau(f) - s_{-i} \tau(f)}{x_{-i} - z x_{-i+1}} = (-z) \pa_{-i}(\tau(f)). \qedhere \end{equation} \end{proof}

We believe the relations above give a presentation of $\NC(z,n)$.

\begin{conj} \label{conj:NCzpresentation} The algebra $N$ abstractly generated by symbols $\pa_i$ for $i \in \Om$, modulo the quadratic and braid relations above, is isomorphic to
$\NC(z,n)$, and has the same size as the affine Weyl group $W_{\aff}$ (i.e. graded dimension equals Poincar\'{e} polynomial). \end{conj}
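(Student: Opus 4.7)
The plan is to follow the standard blueprint for nilCoxeter algebras: produce a spanning set of $N$ indexed by $W_{\aff}$, and then establish linear independence of their images in $\NC(z,n)$ by using the action on $R_z$. The one new wrinkle is that the braid relation is twisted by $z$, so two reduced expressions for the same $w \in W_{\aff}$ will give products of Demazure operators that differ by a power of $z$ rather than being literally equal.

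For each $w \in W_{\aff}$ I would fix a preferred reduced expression $\underline{w} = s_{i_1}\cdots s_{i_\ell}$ and set $\pa_{\underline{w}} := \pa_{i_1} \cdots \pa_{i_\ell} \in N$. A standard Exchange-Lemma argument, using the nil-quadratic relation to kill any non-reduced subword and the twisted braid relation on length-$3$ moves, reduces any monomial in the generators to either $0$ or a $\CC[z,z^{-1}]$-multiple of some $\pa_{\underline{w}}$. This gives $\{\pa_{\underline{w}} \mid w \in W_{\aff}\}$ as a $\CC[z,z^{-1}]$-spanning set of $N$, bounding the graded dimension of $N$ above by the Poincar\'{e} series of $W_{\aff}$.

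The crucial step is the matching lower bound: the operators $\pa_{\underline{w}}$ must be $\CC[z,z^{-1}]$-linearly independent in $\NC(z,n)$. I would prove this by imitating the classical Schubert polynomial construction. Applying suitable sequences of Demazure operators to a ``large staircase'' $x_1^N x_2^{N-1} \cdots x_n^{N-n+1}$ with $N$ much larger than the lengths in play produces a family $\{\mathfrak{S}_w\} \subset R_z$ of twisted affine Schubert polynomials. Using the twisted Leibniz rule \eqref{eq:Leibniz} and induction on length, one aims to show: \emph{for each $\ell$, the matrix $\bigl[\pa_{\underline{w}}(\mathfrak{S}_x)\bigr]$ indexed by $w,x \in W_{\aff}$ with $\ell(w) = \ell(x) = \ell$ is triangular with respect to Bruhat (or some length-compatible) order, with diagonal entries that are units of $\CC[z,z^{-1}]$ (signed powers of $z$).} Such a triangularity statement immediately forces the $\pa_{\underline{w}}$ to be linearly independent in $\NC(z,n)$, matching the upper bound and completing the proof.

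The main obstacle is tracking the powers of $z$ through this inductive argument. A closely related difficulty is that Matsumoto--Tits connects two reduced expressions for $w$ by a chain of braid moves, each contributing $z^{\pm 1}$, and one must verify that the total power is an invariant of the two endpoints, i.e.\ compatible with the ``relations among braid relations.'' The slickest way to sidestep the explicit bookkeeping is to check the triangularity directly inside $\NC(z,n)$, where the operators are already well-defined as endomorphisms of $R_z$; this settles both the well-definedness of each $\pa_w$ up to a unit and the linear independence in one stroke, after which the conclusion for $N$ follows automatically from the canonical surjection $N \twoheadrightarrow \NC(z,n)$.
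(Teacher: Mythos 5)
You have not proved the statement, and neither does the paper: this is stated as an open conjecture, explicitly declared ``beyond the scope of this paper.'' The only part the paper establishes is the easy half, namely the lemma immediately following the conjecture, which shows that $\{\pa_{\un{w}}\}_{w \in W_{\aff}}$ spans $N$ by rewriting arbitrary words via the nil-quadratic and $z$-twisted braid relations. Your first two paragraphs reproduce exactly this spanning argument, so the upper bound is fine and matches the paper. Everything else in your proposal is the part that is genuinely open, and your write-up defers it rather than settling it: the triangularity of the pairing matrix $\bigl[\pa_{\un{w}}(\mathfrak{S}_x)\bigr]$ is introduced with ``one aims to show,'' and the compatibility of the accumulated powers of $z$ with the relations among braid relations (the Matsumoto--Tits issue you correctly identify) is acknowledged but never resolved. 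The authors flag precisely this obstruction: the scalar in the twisted braid relation \eqref{zeR3} takes the algebra outside the standard generalized-Hecke-algebra framework, one cannot deduce the result by specializing $z=1$, and their own suggested route is to extend diamond-lemma-type machinery to this affine Hecke-type setting.

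There is also a concrete reason to doubt that the Schubert-polynomial mechanism transplants as written. In finite type the dual family is defined by $\mathfrak{S}_w = \pa_{\un{w^{-1}w_0}}(\text{staircase})$, and both the construction and the duality $\pa_w(\mathfrak{S}_x)=\delta_{w,x}$ lean on the longest element $w_0$; in $W_{\aff}$ there is no longest element, and applying Demazure operators to a single large staircase $x_1^N\cdots x_n^{N-n+1}$ gives no canonical candidate for $\mathfrak{S}_x$ nor any obvious triangularity, so the ``crucial step'' is not just unverified bookkeeping but lacks a definition. If you want a plausible route to the lower bound, the more standard one is localization: work in $Q \rtimes W_{\aff}$ for $Q$ the fraction field of $R_z$ (the action is faithful since $V_z$ is), expand each $\pa_{\un{w}}$ as $\sum_{x \le w} c_x\, x$ with $c_w \ne 0$, and deduce linear independence of the operators from Bruhat-triangularity; this avoids both $w_0$ and the explicit $z$-power bookkeeping. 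Even then, turning that into a complete proof of the conjecture (graded dimension over $\AC_z$, injectivity of $N \twoheadrightarrow \NC(z,n)$) is exactly the open work the authors point to, so as it stands your proposal is a reasonable research plan, not a proof.
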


\begin{lem} Pick one reduced expression $\un{w}$ for each element $w \in W_{\aff}$ (e.g. one could use the parametrization $\un{w}(a,b,i)$ from Definition \ref{defn:abiintro}), and use this
expression to define the operator $\pa_{\un{w}} \in N$. Then $\{\pa_{\un{w}}\}_{w \in W_{\aff}}$ is a spanning set for $N$. \end{lem}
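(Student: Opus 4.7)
The plan is to argue by induction on the word length $k$ that every monomial $\pa_{i_1}\pa_{i_2}\cdots\pa_{i_k}$ in the generators of $N$ equals a scalar multiple in $\AC_z$ of some chosen $\pa_{\un{w}}$. The key preliminary step is a sub-claim: for any two reduced expressions $\un{v}_1$ and $\un{v}_2$ of the same element $v\in W_{\aff}$, one has $\pa_{\un{v}_1} = z^{d}\,\pa_{\un{v}_2}$ in $N$ for some $d\in\ZZ$. Matsumoto's theorem for Coxeter groups guarantees that $\un{v}_1$ can be transformed into $\un{v}_2$ by a sequence of commutations (when $m_{s_is_j}=2$) and length-$3$ braid moves. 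Commutations leave the corresponding element of $N$ unchanged, while each length-$3$ braid move alters it by a factor of $z^{\pm 1}$ by the relation \eqref{zeR3}; iterating yields the desired scalar.

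For the main induction, assume the claim for all words of length less than $k$. Set $w := s_{i_1}\cdots s_{i_{k-1}}$ and $s := s_{i_k}$. By induction, $\pa_{i_1}\cdots\pa_{i_{k-1}} = c\,\pa_{\un{w}}$ for some $c\in\AC_z$, so it suffices to analyze $\pa_{\un{w}}\pa_{i_k}$. We split on whether appending $s$ lengthens or shortens $w$. If $\ell(ws) > \ell(w)$, then $\un{w}\cdot(i_k)$ is a reduced expression for $ws$; the sub-claim gives $\pa_{\un{w}}\pa_{i_k} = z^{e}\,\pa_{\un{ws}}$, so the whole monomial equals $cz^{e}\,\pa_{\un{ws}}$. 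If instead $\ell(ws) < \ell(w)$, a standard Coxeter-theoretic fact (append $s$ to any reduced expression of $ws$) shows that $w$ admits a reduced expression $\un{w}'$ ending in $s$, say $\un{w}' = (j_1,\ldots,j_{k-2},i_k)$. Applying the sub-claim gives $\pa_{\un{w}} = z^{d}\,\pa_{j_1}\cdots\pa_{j_{k-2}}\pa_{i_k}$, and then the nil-quadratic relation $\pa_s^2=0$ forces $\pa_{\un{w}}\pa_{i_k} = 0$.

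The essential technical input is Matsumoto's theorem, applied to $W_{\aff}$ in the usual way. Since the Coxeter quadratic relations play no role in its proof (they would correspond to length-changing moves), only the braid moves appear, and these have been built into the defining relations of $N$ up to explicit powers of $z$. I do not expect a real obstacle here — bookkeeping of $z$-scalars suffices. Notably, we make no claim that the integer $d$ in the sub-claim is well-defined independent of the chosen braid sequence; for the spanning statement this is irrelevant. Such well-definedness would be equivalent to Conjecture \ref{conj:NCzpresentation}, which additionally asserts that the $\pa_{\un{w}}$ form a basis of $N$. Proving the conjecture is substantially harder, as it requires exhibiting a faithful module for $N$ in which the $\pa_{\un{w}}$ are linearly independent — the natural candidate being the action on $\End_{R_z^{W_{\aff}}}(R_z)$ itself.
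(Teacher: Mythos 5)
Your proof follows essentially the same route as the paper's terse argument: use the Matsumoto/Tits word property to transform any expression into the chosen reduced expression via braid moves, each costing a factor of $z^{\pm 1}$ by \eqref{zeR3}, and invoke the nil-quadratic relation to kill non-reduced words. One small wording fix: after setting $w := s_{i_1}\cdots s_{i_{k-1}}$, your stated inductive hypothesis only yields $\pa_{i_1}\cdots\pa_{i_{k-1}} = c\,\pa_{\un{w'}}$ for \emph{some} $w' \in W_{\aff}$, not necessarily your $w$; this is easily repaired by strengthening the inductive claim to say that a non-reduced word gives the zero operator while a reduced word gives a power of $z$ times $\pa_{\un{w}}$ for the element $w$ it represents.
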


\begin{proof} Using the quadratic and braid relations in $W_{\aff}$, one can transform any expression $\un{x}$ into one of the chosen reduced
expressions. Applying the analogous relations in $N$ to the corresponding composition $\pa_{\un{x}}$, one deduces that $\pa_{\un{x}}$ is zero if $\un{x}$ is not a reduced expression, and a scalar multiple of $\pa_{\un{w}}$ if $\un{x}$ is a reduced expression for $w$. \end{proof}

Conjecture \ref{conj:NCzpresentation} is beyond the scope of this paper. The scalar appearing in \eqref{zeR3} makes this algebra not fall into the standard framework of
generalized Hecke algebras for which the result is well-known, see \cite[\S 7]{HumpCox}. The analogous presentation holds for the usual nilCoxeter algebra of $W_{\aff}$, before $z$-deforming. As noted previously, one does not obtain the undeformed setting by specializing $z = 1$, so one can not obviously deduce this conjecture from the usual case by specialization.

% \BE{Note to colleagues: I attempted to prove this for $n=3$ using the fact that we can prove $\pa_{(a,b,i)}$ is always nonzero. But because you can have two elements like $\un{w}(a, b, 1)$ and $\un{w}(a-6, b+6, 1)$ which have the same descent sets on both sides, I found it hard to argue that any relation would force some $\pa_{(a,b,i)}$ to be nonzero... Anyway, I'm happy to leave as conjecture.}

\begin{rem} It is of independent interest to generalize the results of \cite{EDiamond} to apply to affine Hecke-type algebras. Then Conjecture \ref{conj:NCzpresentation} would follow as
a consequence. \end{rem}

% We expect that an affine version of the work done in \BE{cite
% EDiamond} would suffice to prove this result. Alternatively, this algebra acts on the polynomial ring of $V_{\ze}$ before specializing to a root of unity, and a careful analysis of this action could prove the result. \BE{ugh, do we want to actually prove it or do we not care.}

%========================================================
\subsection{Cyclic words} \label{ssec:cyclic}
%========================================================

\begin{notation} To a word $\un{w} = (i_1, \ldots, i_d)$ in $\Om$, i.e. an expression in the Coxeter system $(W_{\aff}, S)$. To this word we may associate an element
\begin{equation} s_{\un{w}} = s_{i_1} \cdots s_{i_d} \end{equation} in $W_{\aff}$ or $W_m$, and we may associate a \emph{Demazure operator} \begin{equation}
\pa_{\un{w}} := \pa_{i_1} \circ \cdots \circ \pa_{i_d} \end{equation} in $\NC(z,n)$ or $\NC(m,m,n)$.
A non-reduced expression will yield the zero operator. Because of \eqref{zeR3}, two different
reduced expressions for the same element of $W_{\aff}$ will give the same operator only up to an invertible scalar. For this reason we prefer to index Demazure operators by expressions rather than elements of
$W_{\aff}$. \end{notation}

\begin{ex} We write $\pa_{1231}$ or $\pa_{(1,2,3,1)}$ for the composition $\pa_1 \circ \pa_2 \circ \pa_3 \circ \pa_1$. \end{ex}

\begin{defn} A \emph{cyclic word} is a word in $\Om$ of the form $(i, i+1, i+2, \ldots, i+(d-1))$ (clockwise) or of the form $(i,i-1, i-2, \ldots, i-(d-1))$ (widdershins) for some $i \in \Om$ and $d \ge 1$. We write $(i, i+1, \ldots, i+(d-1) = j)$ as $\cw_{i_L, d}$ when we wish to emphasize that it starts with $i$, and as $\cw_{j_R, d}$ when we wish to emphasize that it ends in $j$. Similarly, we write $(i,i-1, i-2, \ldots, i-(d-1) = j)$ as $\ws_{i_L, d}$ or $\ws_{j_R, d}$. \end{defn}

\begin{ex} When $n=3$, $\cw_{2_L,5} = \cw_{3_R,5} = (2,3,1,2,3)$ is a clockwise cyclic word of length $5$. \end{ex}

Below we frequently use the phrase: let $i \in \Om$, and suppose $\cw_{i_L,d} = \cw_{j_R,d}$. This is a more useful way of saying  $j = i+(d-1)$. All the results we prove below for clockwise cyclic words have analogs for widdershins cyclic words, obtained by applying $\tau$.
	
\begin{prop} \label{prop:omdefined} Inside $W_m$ the element $s_{\cw_{i_R, m(n-1)}}$ is independent of $i$, and we denote it by $\om$. Moreover, in the action of $W_m$ on $V_m$ one has
\begin{equation} \om(x_j) = \ze^m x_{j+m} \end{equation}
for all $j \in \Om$. \end{prop}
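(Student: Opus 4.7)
The plan is to verify the explicit identity $s_{\cw_{i_R, m(n-1)}}(x_j) = \ze^m x_{j+m}$ for every $i, j \in \Om$ by tracing the action on each basis vector of $V_m$. Since $V_m$ is a faithful $W_m$-representation (Definition~\ref{defn:Vminbody}), an element of $W_m$ is determined by its action, so establishing this formula will simultaneously give the stated action and show $s_{\cw_{i_R, m(n-1)}}$ is independent of $i$.

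The word $\cw_{i_R, m(n-1)} = (i - m(n-1) + 1,\, i - m(n-1)+2,\,\ldots,\, i-1,\, i)$ in $\ZZ/n\ZZ$ is applied with the rightmost $s_i$ acting first, so I label the reflections so that $s_{i-t}$ is applied at time $t \in \{0, 1, \ldots, m(n-1)-1\}$. At each stage the vector has the form $c \cdot x_p$, and applying $s_k$ sends $p \mapsto p+1$ with an extra factor of $\ze$ if $k \equiv p \pmod n$, sends $p \mapsto p - 1$ with factor $\ze^{-1}$ if $k \equiv p-1 \pmod n$, and otherwise changes nothing. I split into two cases. First, if $j \not\equiv i + 1 \pmod n$, set $t_1 := (i-j) \bmod n \in \{0, \ldots, n-2\}$; the first relevant reflection is $s_j$ at time $t_1$, advancing the marker to $x_{j+1}$. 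The next forward move $s_{j+1}$ is scheduled at time $t_1 + (n-1)$, while the next reverse candidate $s_j$ is at time $t_1 + n$; by induction the marker advances forward at the arithmetic progression of times $t_1, t_1 + (n-1), \ldots, t_1 + (m-1)(n-1)$, all within $[0, m(n-1) - 1]$ since $t_1 \le n - 2$. This yields exactly $m$ advances (the next would be at time $t_1 + m(n-1) \ge m(n-1)$, beyond the word), so the marker ends at $x_{j+m}$ with factor $\ze^m$. Second, if $j \equiv i + 1 \pmod n$, the marker begins at $x_{i+1}$ and is moved leftward by the very first reflection $s_i$ to $\ze^{-1} x_i$; each subsequent time $t$ finds the marker at $x_{i - t + 1}$, which $s_{i-t}$ advances leftward to $x_{i - t}$ with another factor of $\ze^{-1}$. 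After all $m(n-1)$ reflections the marker reads $\ze^{-m(n-1)} x_{i - m(n-1) + 1} = \ze^m x_{j+m}$, using $\ze^{mn} = 1$.

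The main technical point is the no-interference claim in the first case: between two consecutive scheduled advances, the marker rests at some $x_p$ for a stretch of $n-1$ time steps, and neither $s_p$ (another advance) nor $s_{p-1}$ (a reverse) may appear in that window. This follows because each simple reflection occurs precisely once every $n$ consecutive time steps of the cyclic word, so the previous and next occurrences of any $s_k$ are $n$ time steps apart and sit just outside the $(n-1)$-step gap between scheduled advances. With this verified, both cases yield $\om(x_j) = \ze^m x_{j+m}$ for all $j$, proving the proposition by faithfulness.
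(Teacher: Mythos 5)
Your proof is correct, and it takes a somewhat different route than the paper's. The paper first computes the effect of a single length-$(n-1)$ clockwise block on each basis vector (their equation labeled ``onestep''), then decomposes $s_{\cw_{1_R,m(n-1)}}$ into $m$ such blocks and iterates the block formula $m$ times, finally invoking the rotation symmetry $\si$ to transfer the result from $i=1$ to arbitrary $i$. You instead trace the action one simple reflection at a time, parametrize by a discrete ``time'' variable, and argue that for generic $j$ the marker advances forward exactly at times $t_1, t_1+(n-1), \ldots, t_1+(m-1)(n-1)$, handling all $i$ uniformly and treating $j \equiv i+1$ as a separate (and cleaner, all-backward) case. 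Your no-interference argument — that within the $(n-1)$-step gap between scheduled advances neither the forward reflection $s_p$ nor the backward reflection $s_{p-1}$ can recur, because each label appears exactly once per $n$ consecutive positions of a cyclic word — is exactly the content the paper packages into its block formula. The paper's block decomposition is tidier and reuses a reusable lemma; your step-by-step argument is more elementary and avoids needing the symmetry $\si$ to get the statement for all $i$ at once.
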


\begin{ex} When $n=3$ and $m=3$ we are claiming that $s_1 s_2 s_3 s_1 s_2 s_3 = s_2 s_3 s_1 s_2 s_3 s_1 = s_3 s_1 s_2 s_3 s_1 s_2$ in $W_{\aff}/3 \La_{\rt}$. \end{ex}

\begin{proof} Using the fact that $V_m$ is a faithful representation of $W_m$, we can deduce this equality by evaluating each element of $W_m$ on the standard basis of $V_m$.

Consider $s_{\cw_{1_L, n-1}} = s_{\cw_{(n-1)_R, n-1}} = s_1 s_2 \ldots s_{n-1}$. We claim that 
\begin{equation} s_{\cw_{1, n-1}}(x_n) = \ze^{-(n-1)} x_1, \qquad s_{\cw_{1, n-1}}(x_j) = \ze x_{j+1} \text{ for } j \ne n. \end{equation}
This is a straightforward computation. Similarly, 
\begin{equation} \label{onestep} s_{\cw_{i_R, n-1}}(x_{i+1}) = \ze^{-(n-1)} x_{i+2}, \qquad s_{\cw_{i_R, n-1}}(x_j) = \ze x_{j+1} \text{ for } j \ne i+1. \end{equation}

Now we decompose
\begin{equation} s_{\cw_{1_R, m(n-1)}} = s_{\cw_{m_R, n-1}} \cdots s_{\cw_{3_R, n-1}} s_{\cw_{2_R, n-1}} s_{\cw_{1_R, n-1}}. \end{equation}
Applying \eqref{onestep} $m$ times, one deduces that 
\begin{equation} s_{\cw_{1_R, m(n-1)}}(x_2) = \ze^{-m(n-1)} x_{2+m}, \qquad s_{\cw_{1_R, m(n-1)}}(x_j) = \ze^m x_{j+m} \text{ for } j \ne 2. \end{equation}
However $\ze^{-m(n-1)} = \ze^m$, so we get the simpler formula
\begin{equation} s_{\cw_{1_R, m(n-1)}}(x_j) = \ze^m x_{j+m} \text{ for all } j \in \Om. \end{equation}
Applying the symmetry $\si$, we deduce that $s_{\cw_{i_R, m(n-1)}}$ obeys the same formula for any $i$, proving the proposition. \end{proof}

Now we digress to discuss how composing cyclic words will or will not produce reduced expressions.

\begin{lem} \label{lem:repeatnotreduced} Let $1 \le d \le n-1$. Then the concatenation $\cw_{i_L, d} \cw_{i_L, d}$ is not reduced. \end{lem}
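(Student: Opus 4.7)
The plan is to prove, by induction on $d$, that the element $w_d := s_i s_{i+1}\cdots s_{i+d-1}$ satisfies $\ell(w_d^2) < 2d$ in $W_{\aff}$, which is equivalent to the word $\cw_{i_L,d}\cw_{i_L,d}$ of length $2d$ not being reduced. The base case $d = 1$ is immediate: $s_i s_i = e$ has length $0$.

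For the inductive step ($d \ge 2$), I would establish the rewriting identity
\begin{equation*}
w_d^2 \;=\; s_{i+1}\, s_i \cdot (w_{d-1}')^2, \qquad \text{where } w_{d-1}' := s_{i+1}s_{i+2}\cdots s_{i+d-1},
\end{equation*}
by two manipulations inside $W_{\aff}$. Starting from
\begin{equation*}
w_d^2 \;=\; s_i s_{i+1} s_{i+2}\cdots s_{i+d-1}\cdot s_i s_{i+1} s_{i+2}\cdots s_{i+d-1},
\end{equation*}
first slide the middle $s_i$ (at position $d+1$) leftward past $s_{i+d-1},\dots,s_{i+2}$, using that $s_i$ commutes with each $s_{i+k}$ for $2 \le k \le d-1$. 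This yields
\begin{equation*}
s_i s_{i+1} s_i \cdot s_{i+2}\cdots s_{i+d-1}\cdot s_{i+1} s_{i+2}\cdots s_{i+d-1}.
\end{equation*}
Then apply the braid relation $s_i s_{i+1} s_i = s_{i+1} s_i s_{i+1}$ to the first three letters and regroup to recognize the last $2(d-1)$ letters as the expression $(w_{d-1}')^2$. This proves the identity.

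By the inductive hypothesis applied to $\cw_{(i+1)_L,d-1}\cw_{(i+1)_L,d-1}$, the word $(w_{d-1}')^2$ of length $2(d-1)$ is not reduced, so by the deletion condition for Coxeter groups the element $(w_{d-1}')^2$ admits an expression of length at most $2(d-1)-2 = 2d-4$. Prepending $s_{i+1}s_i$ then exhibits $w_d^2$ as a product of at most $2d-2 < 2d$ simple reflections, completing the induction.

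The principal subtlety, and the only place the hypothesis $d \le n-1$ is really needed, is in verifying that every commutation or braid relation invoked is valid in $W_{\aff}$: one must check that the indices $i+2,\dots,i+d-1$, considered modulo $n$, avoid both $i-1$ and $i+1$, so that $s_i$ truly commutes with each of $s_{i+2},\dots,s_{i+d-1}$, and similarly that the three-letter subword $s_i s_{i+1} s_i$ is a genuine instance of the braid relation rather than something degenerate under cyclic wraparound. Since $2 \le k \le d-1 \le n-2$, we indeed have $i+k \not\equiv i\pm 1 \pmod{n}$, so all of these manipulations are legitimate.
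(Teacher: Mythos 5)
Your proof is correct, but it takes a genuinely different route from the paper. The paper first observes that since $d \le n-1$ the indices $i, i+1, \ldots, i+d-1$ lie in a proper interval of $\Om$, so the whole computation takes place in a finite standard parabolic $S_{d+1}$ (where reducedness is inherited from $W_{\aff}$), and then reads off non-reducedness directly from the wire diagram of $s_1 \cdots s_d\, s_1 \cdots s_d$: strands $d$ and $d+1$ cross twice, whereas a reduced expression crosses each pair of strands at most once. Your argument instead stays inside $W_{\aff}$ and proceeds by induction on $d$, using commutations and a single braid move to extract the factor $s_{i+1} s_i$, then invokes the deletion condition on the shorter cyclic square $(w'_{d-1})^2$; the index checks you flag ($i+k \not\equiv i \pm 1 \pmod n$ for $2 \le k \le d-1 \le n-2$) are exactly the places where $d \le n-1$ is used, matching the role that hypothesis plays in the paper's reduction to a finite parabolic. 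The paper's proof is more compact and geometric; yours is longer but entirely elementary and keeps the Coxeter bookkeeping explicit, which may be preferable if one wants the argument to generalize without appealing to wire diagrams.
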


\begin{proof} This computation can be performed in some finite symmetric group containing all the indices which appear. Here is the prototypical case: $s_1 s_2 \cdots s_d s_1 s_2 \cdots s_d$ is not reduced in $S_{d+1}$, because strands $d+1$ and $d$ cross twice. \end{proof}
	
\begin{lem} \label{lem:shiftednotreduced} The concatenation $\cw_{j_R, n-1} \cw_{i_L, n-1}$ is not reduced unless $i = j+1$, in which case one obtains a clockwise cyclic word of length $2(n-1)$. \end{lem}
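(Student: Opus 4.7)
The case $i = j+1$ is immediate by inspection: $\cw_{j_R,n-1} = (j-n+2, j-n+3, \ldots, j)$ and $\cw_{(j+1)_L,n-1} = (j+1, j+2, \ldots, j+n-1)$, so their concatenation is literally $\cw_{(j-n+2)_L,\, 2(n-1)}$. The substance is to show that the concatenation is not reduced whenever $i \not\equiv j+1 \pmod n$, which I plan to do by computing the length of the product $w := s_{\cw_{j_R, n-1}} \cdot s_{\cw_{i_L, n-1}} \in W_{\aff}$ using the realization of $W_{\aff}$ as affine permutations.

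Realize $W_{\aff}$ as the group of bijections $w \colon \ZZ \to \ZZ$ satisfying $w(k+n) = w(k) + n$ and $\sum_{k=1}^n w(k) = \binom{n+1}{2}$, with $s_k$ swapping residues $k$ and $k+1$ modulo $n$; the length of $w$ equals the inversion count $\#\{(p,q) : 1 \le p \le n,\ p < q,\ w(p) > w(q)\}$. Direct computation with $s_1 s_2 \cdots s_{n-1}$ shows it is the $n$-cycle sending $k \mapsto k+1$ for $1 \le k \le n-1$ and $n \mapsto 1$; extending by $n$-periodicity and applying the symmetry $\si$, one finds
\[
s_{\cw_{i_L, n-1}}(k) \;=\; \begin{cases} k+1 & \text{if } k \not\equiv i-1 \pmod n, \\ k - (n-1) & \text{if } k \equiv i-1 \pmod n. \end{cases}
\]
That is, $s_{\cw_{i_L, n-1}}$ shifts every $k$ by $+1$ except at the single ``exceptional'' residue $i - 1 \pmod n$. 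Since $\cw_{j_R, n-1} = \cw_{(j-n+2)_L, n-1}$, the exceptional residue of $s_{\cw_{j_R, n-1}}$ is $j \pmod n$.

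Composing and case-checking on $k \pmod n$, in the case $i \not\equiv j+1 \pmod n$ the two exceptional residues $a := i-1$ and $b := j$ in $\ZZ/n\ZZ$ are distinct, and one obtains
\[
w(k) \;=\; \begin{cases} k+2 & k \not\equiv a,\, b \pmod n, \\ k - (n-2) & k \equiv a \text{ or } k \equiv b \pmod n. \end{cases}
\]
To count inversions of this $w$: if $p, q$ are both in residues outside $\{a, b\}$ with $p < q$, then $w(p) = p+2 < q+2 = w(q)$, contributing no inversion; and if $p$ lies in an exceptional residue then $w(p) = p - (n-2)$ is small enough that no $q > p$ can produce an inversion. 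The only contributions come from $p \in \{1, \ldots, n\}$ with $p \not\equiv a, b \pmod n$ paired with $q > p$ in an exceptional residue class; the inversion condition $w(p) > w(q)$ simplifies to $q < p + n$, and in the open interval $(p, p+n)$ each residue class modulo $n$ has exactly one representative. Hence each of the $n-2$ eligible values of $p$ contributes exactly two inversions, giving $\ell(w) = 2(n-2) < 2(n-1)$, so the concatenated expression of length $2(n-1)$ is not reduced.

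The main step requiring care is the residue case analysis when composing the two affine permutations---in particular distinguishing the case where the exceptional residues coincide, which is exactly the excluded case $i \equiv j+1$, from the generic case where they differ---after which the inversion count collapses to a clean sliding-window argument.
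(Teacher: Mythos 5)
Your argument is correct, but it is genuinely different from the paper's. The paper disposes of the case $i \ne j+1$ in two lines: writing $i = j+1+k$ with $k>0$ and $d = n-k$, the concatenation contains the subword $\cw_{i_L,d}\,\cw_{i_L,d}$, which is not reduced by Lemma \ref{lem:repeatnotreduced} (itself proved by a finite symmetric-group "two strands cross twice" observation). You instead import the affine-permutation model of $W_{\aff}$, compute the product explicitly (each factor shifts every integer by $+1$ except at one exceptional residue class, where it drops by $n-1$), and count inversions to get the exact length $2(n-2)$ of the product, compared with the word length $2(n-1)$. What your route buys is finer information — the product falls short of reduced by exactly two letters — and it is self-contained modulo the standard affine length-by-inversions formula; what it costs is exactly that external machinery, which the paper never sets up (lengths there are handled via the root system of $V_z$, Proposition \ref{prop:length}), plus a heavier case analysis. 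I verified your composite formula and the count $2(n-2)$ (e.g.\ for $n=3$, $(2,3,2,3)$ gives an element of length $2$), so the proof goes through; one small slip to fix in the write-up: the exceptional residue of $s_{\cw_{j_R,n-1}}$ itself is $j+1 \pmod n$, not $j$ (its left index is $j+2$, so your own rule gives $j+2-1$). The residue $j$ is the correct exceptional class \emph{of the composite} contributed by that factor, since inputs $k \equiv j$ are first shifted to $j+1$ by $s_{\cw_{i_L,n-1}}$ — which is exactly the formula you then use, so nothing downstream is affected, but the sentence as written is off by one.
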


\begin{ex} Suppose $n=5$ and $j=4$. Then $(1234)(5123) = \cw_{1,8}$. However, $(1234)(1234)$ is not reduced by Lemma \ref{lem:repeatnotreduced}. Similarly, $(1234)(2345)$ contains $(234)(234)$ as a subword, which is also not reduced by Lemma \ref{lem:repeatnotreduced}. Similarly, $(1234)(3451)$ contains $(34)(34)$ as a subword, and $(1234)(4512)$ contains $(4)(4)$. \end{ex}

\begin{proof} If $i = j+1 + k$ for $k>0$, and $d = n-k$, then $\cw_{i_L, d} = \cw_{j_R, d}$. Thus the composition $\cw_{j_R, n-1} \cw_{i_L, n-1}$ contains the subword $\cw_{i_L, d} \cw_{i_L,d}$ for $d = n-k$. This is not reduced by Lemma \ref{lem:repeatnotreduced}. \end{proof}

Now we use Lemma \ref{lem:shiftednotreduced} to discuss what happens when we remove an element from a cyclic word. Let $\un{w}$ be a word of length $d$, and let $1 \le \ell \le d$. We write $\hat{\ell}$ or $(\un{w}, \hat{\ell})$ for the word obtained by omitting the $\ell$-th term from $\un{w}$. 

\begin{lem} \label{lem:insidenotreduced} The expression $(\cw_{i_R, d}, \hat{\ell})$ is not reduced for any $n \le \ell \le d-(n-1)$.
	
In particular, for any $k \ge 3$, $(\cw_{i_R, k(n-1)}, \hat{\ell})$ is not reduced for any $n \le \ell \le (k-1)(n-1)$. \end{lem}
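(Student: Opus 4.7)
The plan is to exhibit, inside the expression $(\cw_{i_R, d}, \hat{\ell})$, a contiguous subword of length $2(n-1)$ which matches the non-reduced pattern of Lemma~\ref{lem:shiftednotreduced}, and then to invoke the standard fact that any contiguous subword of a reduced expression is itself reduced.

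First I would unpack the indices. The word $\cw_{i_R, d}$ reads from left to right as $(i-d+1, i-d+2, \ldots, i-1, i)$ in $\Om$, so position $\ell$ carries the letter $i-d+\ell$. The hypothesis $n \le \ell \le d-(n-1)$ guarantees that all of the positions $\ell-(n-1), \ldots, \ell-1$ and $\ell+1, \ldots, \ell+(n-1)$ exist in the original word. In the shortened word $(\cw_{i_R, d}, \hat{\ell})$ these two blocks of $n-1$ letters lie adjacent to each other, forming a contiguous block of length $2(n-1)$ straddling the gap.

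Reading off the letters in these two blocks: the left block is a clockwise cyclic word of length $n-1$ ending at $j := i-d+\ell-1$, and the right block is a clockwise cyclic word of length $n-1$ starting at $i-d+\ell+1 = j+2$. Thus the contiguous subword is precisely $\cw_{j_R,n-1}\,\cw_{(j+2)_L,n-1}$. Since $j+2 \ne j+1$ in $\Om$ (we are assuming $n \ge 2$), Lemma~\ref{lem:shiftednotreduced} directly applies to conclude that this subword is not reduced.

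To finish, I would cite the standard Coxeter-theoretic fact that a contiguous subword of a reduced expression is reduced: otherwise one could substitute a shorter equivalent expression for the subword back into the ambient word, producing a shorter expression for the same group element and contradicting reducedness. Hence $(\cw_{i_R, d}, \hat{\ell})$ itself is not reduced. The "in particular" clause follows immediately: substituting $d = k(n-1)$, the upper bound becomes $d - (n-1) = (k-1)(n-1)$, and the range $n \le \ell \le (k-1)(n-1)$ is nonempty exactly when $k \ge 3$. I do not anticipate any genuine obstacle; the only delicate step is the bookkeeping of cyclic indices to verify that the $2(n-1)$-letter block really matches the form $\cw_{j_R,n-1}\,\cw_{(j+2)_L,n-1}$ required to apply Lemma~\ref{lem:shiftednotreduced}.
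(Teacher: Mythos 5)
Your proof is correct and takes essentially the same route as the paper: both isolate the contiguous block of $n-1$ letters on each side of the deleted position, identify it as $\cw_{j_R,n-1}\,\cw_{(j+2)_L,n-1}$, and conclude non-reducedness from Lemma \ref{lem:shiftednotreduced}. You merely make explicit the index bookkeeping, the subword-of-a-reduced-expression fact, and the nonemptiness check for $k\ge 3$, all of which the paper leaves implicit.
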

	
In other words, if we want a reduced expression, we can only remove an element from the first or last $n-1$ elements in $\cw_{i_R, d}$.
	
\begin{ex} Let $n=5$ and $k=3$. Consider $\cw_{2_R, 3(n-1)} = (1234)(5123)(4512)$. Removing $5$ in the middle third, we have a subword $(1234)(1234)$, which is not reduced.
Removing $1$ in the middle third, we have a subword $(2345)(2345)$, which is not reduced. Similarly, removing any element in the middle third will produce a non-reduced
subexpression. \end{ex}

\begin{proof} Suppose the $\ell$-th index of $\un{w}$ is $s_j$. The expression $\hat{\ell}$ has subword
\begin{equation} \ldots \cw_{(j-1)_R, n-1} \hat{s}_j \cw_{(j+1)_L, n-1}. \end{equation}
This is not reduced, by Lemma \ref{lem:shiftednotreduced}. \end{proof}

%========================================================
\subsection{Rotational Demazure operators}
%========================================================

In this section we explore some interesting relations in the nilHecke algebra $\NH(z,n)$. In the next section, we explore their consequences for $\NC(m,m,n)$.

\begin{defn} Let $i \in \Om$. Suppose that $\cw_{i_L,n-1} = \cw_{j_R, n-1}$. Define the degree $-(n-1)$ operator $\Theta_{i_L} = \Theta_{j_R} \in \NC(z,n)$ by the formula
\begin{equation} \label{eq:Theta} \Theta_{i_L} := \pa_{\cw_{i_L, n-1}} + z^{-1} \pa_{\cw_{(i-1)_L, n-1}} + z^{-2} \pa_{\cw_{(i-2)_L, n-1}} + \ldots + z^{-(n-1)} \pa_{\cw_{(i+1)_L, n-1}}. \end{equation} \end{defn}

\begin{ex} Let $n=3$. Then
\begin{subequations}
\begin{equation} \Theta_{1_L} = \Theta_{2_R} = \pa_1 \pa_2 + z^{-1} \pa_3 \pa_1 + z^{-2} \pa_2 \pa_3, \end{equation}
\begin{equation} \Theta_{2_L} = \Theta_{3_R} = \pa_2 \pa_3 + z^{-1} \pa_1 \pa_2 + z^{-2} \pa_3 \pa_1, \end{equation}
\begin{equation} \Theta_{3_L} = \Theta_{1_R} = \pa_3 \pa_1 + z^{-1} \pa_2 \pa_3 + z^{-2} \pa_1 \pa_2. \end{equation}
\end{subequations}
\end{ex}

The following theorem is crucial.

\begin{thm} \label{thm:thetarotates} Suppose $\cw_{i_L,n-1} = \cw_{j_R, n-1}$. For any $f \in R$, we have
\begin{equation} \Theta_{j_R}(x_{j+1} f) = x_i \Theta_{(j-1)_R}(f). \end{equation}
\end{thm}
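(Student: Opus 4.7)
The plan is to expand
\[
\Theta_{j_R}(x_{j+1}f) = \sum_{k=0}^{n-1} z^{-k}\, \pa_{\un{w}_k}(x_{j+1}f), \qquad \un{w}_k := \cw_{(j-k)_R,\,n-1},
\]
via the twisted Leibniz rule \eqref{eq:Leibniz}, and match the result term-by-term with $x_i\Theta_{(j-1)_R}(f)$. The key preliminary observation is a \emph{commutation principle}: for any $\ell\in\Om$ with $\ell\notin\{j,j+1\}$, one has $\pa_\ell(x_{j+1})=0$ and $s_\ell(x_{j+1})=x_{j+1}$, hence $\pa_\ell(x_{j+1}g)=x_{j+1}\pa_\ell(g)$. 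The analogous statement holds for $x_i=x_{j+2}$ against $\pa_\ell$ for $\ell\notin\{j+1,j+2\}$. Since $\un{w}_k$ consists of all indices of $\Om$ except $j-k+1$, the operator $\pa_j$ appears in $\un{w}_k$ iff $k\ne 1$ and $\pa_{j+1}$ iff $k\ne 0$; whenever both appear (case $k\ge 2$) they are adjacent with $\pa_j$ immediately left of $\pa_{j+1}$.

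In each case, push $x_{j+1}$ rightward through the inert operators using the commutation principle, apply the Leibniz splits
\[
\pa_j(x_{j+1}g) = -z^{-1}g + z^{-1}x_j\pa_j(g), \qquad \pa_{j+1}(x_{j+1}g) = g + z x_{j+2}\pa_{j+1}(g),
\]
and propagate the produced $x_j$ or $x_{j+2}$ further leftward, splitting again whenever a new adjacent operator is encountered. For $k\ge 1$ the produced $x_{j+2}$ simply commutes with all remaining operators (since $\pa_{j+1}$ and $\pa_{j+2}$ sit to its right in $\un{w}_k$ and have already been applied), yielding exactly one good term $z^{-(k-1)}\, x_{j+2}\, \pa_{\un{w}_k}(f)$ and one junk term $z^{-k}\,\pa_{\un{w}_k\setminus\pa_{j+1}}(f)$. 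For $k=0$ a cascade of $n-1$ Leibniz splits carries $x_{j+1}\mapsto z^{-1}x_j\mapsto z^{-2}x_{j-1}\mapsto\cdots\mapsto z^{-(n-1)}x_{j+2}$, producing the good term $z^{-(n-1)}\,x_{j+2}\,\pa_{\un{w}_0}(f)$ together with one junk term $-z^{-\ell}\,\pa_{\un{w}_0\setminus w_{n-\ell}}(f)$ for each $\ell\in\{1,\ldots,n-1\}$, coming from the branch taken at the $\ell$-th split.

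Assembling the good terms into $x_i\Theta_{(j-1)_R}(f)$ is then a matter of index bookkeeping: since $\un{w}_k = \cw_{(j-1-(k-1))_R,\,n-1}$ (with $k-1$ read modulo $n$), $\un{w}_k$ appears as a summand of $\Theta_{(j-1)_R}$ with the correct coefficient, and direct comparison confirms that every good contribution matches the corresponding summand of $x_i\Theta_{(j-1)_R}(f)$.

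The main obstacle is verifying the pairwise cancellation of the junk terms. The claim is that for each $\ell\in\{1,\ldots,n-1\}$, the $\ell$-th junk from $\un{w}_0$ cancels exactly against the single junk from $\un{w}_\ell$. Both are Demazure operators of length-$(n-2)$ words on the same index set $\Om\setminus\{j+1,\,j+1-\ell\}$, but presented in clockwise order from different starting points --- they are cyclic rotations of one another. This rotation is realized by the commutations $\pa_a\pa_b=\pa_b\pa_a$ of \S\ref{ssec:nilcoxbasics}: the letters being moved form one clockwise arc, the letters being crossed form the complementary arc, and the two arcs are separated on the affine Dynkin cycle by the two single-index gaps $\{j+1\}$ and $\{j+1-\ell\}$, so every crossing pair is non-adjacent. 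In particular, no braid relation is needed. Matching the $\pm z^{-\ell}$ coefficients then completes the cancellation, and the identity follows.
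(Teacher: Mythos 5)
Your proposal is correct and is essentially the paper's own argument: expand each $\pa_{\cw}(x_{j+1}f)$ by the twisted Leibniz rule, observe that the leading (good) terms assemble into $x_{j+2}\Theta_{(j-1)_R}(f)$, and cancel the lower (junk) terms in pairs using that the two clockwise arcs on the complementary index set commute letterwise (your $\pa_{A\cdot B}=\pa_{B\cdot A}$ is exactly the paper's $\pa_{\un{v}_{p,q}}=\pa_{\un{v}_{q,p}}$, and your inertness observation is the paper's vanishing of $\kappa_{p,q}$ unless $j+1\in\{p,q\}$). The coefficients you state ($-z^{-\ell}$ versus $+z^{-\ell}$, and the good-term exponents) check out, so the only difference from the paper is organizational, not mathematical.
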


Note that $i = j+2$. Let us prove the result first when $n=3$. The general proof is identical with more annoying notation.

\begin{ex} \label{ex:theta1n3x3f} Applying the Leibniz rule twice in each equation, we have
\begin{subequations} \label{subeq:theta1x3f}
\begin{align} \nonumber \pa_1 \pa_2(x_3 f) & = \pa_1(\pa_2(x_3) \cdot f + s_2(x_3) \pa_2(f)) = \pa_2(x_3) \cdot \pa_1(f) + \pa_1(s_2(x_3)) \pa_2(f) + s_1 s_2(x_3) \pa_1 \pa_2(f)\\ &= -z^{-1} \pa_1(f) - z^{-2} \pa_2(f) + z^{-2} x_1 \pa_1 \pa_2(f). \end{align}
\begin{equation} \pa_3 \pa_1(x_3 f) = \pa_3(x_3 \pa_1(f)) = \pa_3(x_3) \cdot \pa_1(f) + s_3(x_3) \pa_3 \pa_1(f) = \pa_1(f) + z x_1 \pa_3 \pa_1(f). \end{equation}
\begin{equation} \pa_2 \pa_3(x_3 f) = \pa_2(\pa_3(x_3) \cdot f + s_3(x_3) \pa_3(f)) = \pa_2(f + z x_1 \pa_3(f)) = \pa_2(f) + z x_1 \pa_2 \pa_3(f). \end{equation}
\end{subequations}
In the first equation we tacitly use the fact that $\pa_2(x_3)$ is a constant, and hence is $W$-invariant. The other two equations have fewer terms, because $\pa_1(x_3) = 0$ and $\pa_2(x_1) = 0$.

Adding it all up, most of the terms cancel, and we get
\begin{equation} \Theta_{2_R}(x_3 f) = z^{-2} x_1 \pa_1 \pa_2(f) + x_1 \pa_3 \pa_1(f) + z^{-1} x_1 \pa_2 \pa_3(f) = x_1 \Theta_{1_R}(f). \end{equation}
\end{ex}

\begin{rem} Using \eqref{subeq:theta1x3f} to compute $\Theta_{3_R}(x_3 f)$ and $\Theta_{1_R}(x_3 f)$ instead, there is no cancellation of terms, and we end up with a big mess. \end{rem}

\begin{proof} Consider a clockwise word of length $n-1$; exactly one index in $\Om$ does not appear, which we call $p$. The word in question is $\cw_{(p+1)_L,n-1} = \cw_{(p-1)_R,n-1}$, which we simply shorten to $\cw_{\hat{p}}$. Now consider a subword of length $n-2$; now exactly two indices in $\Om$ do not appear, $p$ and $q$. Let $\un{v}_{p,q}$ denote the word obtained from $\cw_{\hat{p}}$ by removing the index $q$ from the one place it appears, for $q \ne p$. Note that 
\begin{equation} \un{v}_{p,q} = \cw_{(p+1)_L,a} \circ \cw_{(q+1)_L,b} = \cw_{(q-1)_R,a} \circ \cw_{(p-1)_R,b} \end{equation}
for some lengths $a$ and $b$ which are determined by $p$ and $q$. Each index in the first clockwise subword commutes with each index in the second clockwise subword. Thus, while $\un{v}_{p,q}$ and $\un{v}_{q,p}$ are not the same word, they agree up to commutation of distant indices. So we have
\begin{equation} \pa_{\un{v}_{p,q}} = \pa_{\un{v}_{q,p}}. \end{equation}
We will let $\cw_{\hat{p}\hat{q}}$ denote either word, because we will only use this notation to index a Demazure operator.

Suppose we wish to compute $\pa_{\cw_{\hat{p}}}(x_{j+1} f)$ by iteratively applying the twisted Leibniz rule. We claim that
\begin{equation} \pa_{\cw_{\hat{p}}}(x_{j+1} f) = s_{\cw_{\hat{p}}}(x_{j+1}) \pa_{\cw_{\hat{p}}}(f) + \sum_{q \ne p} \kappa_{p,q} \pa_{\cw_{\hat{p}\hat{q}}}(f). \end{equation}
In the first term,
each Demazure operator is applied to $f$, and passes over $x_{j+1}$, twisting it. Meanwhile, the sum consists of what we call \emph{lower terms}. Suppose that $q$ is the $k$-th index from the right in $\cw_{\hat{p}}$, for $1 \le k \le n-1$. In the term of the sum indexed by $q$, the first $k-1$ Demazure operators are applied to $f$, twisting $x_{j+1}$. Then $\pa_q$ is applied to the twist of $x_{j+1}$, yielding a scalar $\kappa_{p,q}$ (which is therefore $W$-invariant). The remaining Demazure operators act on $f$. The scalar $\kappa_{p,q}$ is therefore equal to
\begin{equation} \kappa_{p,q} = \pa_q(w_{p,q}(x_{j+1})), \qquad w_{p,q} = s_{\cw_{(q+1)_L,b}} = s_{\cw_{(p-1)_R,b}}. \end{equation}

Our goal is to argue that, in the sum $\Theta_{j_R}(x_{j+1} f)$, each lower term $\pa_{\cw_{\hat{p}\hat{q}}}(f)$ will appear exactly twice (up to scalar multiples), once coming from
$\pa_{\cw_{\hat{p}}}$ and once from $\pa_{\cw_{\hat{q}}}$. The coefficients will cancel out, and all lower terms will vanish. Let us verify this. We consider ``proof by example'' to be more illustrative than keeping track of the indices carefully en route, though we will state the general result when we're through.

We suppose $n=11$ and compute $\kappa_{9,4}$. We have $w_{9,4} = s_5 s_6 s_7 s_8$. There are four cases to treat, depending on $j$.
\begin{itemize} \item If $5 \le j+1 \le 8$ then $s_5 s_6 s_7 s_8(x_{j+1}) = x_{j+2}$, and $\pa_4(x_{j+2}) = 0$.
	\item If $10 \le j+1 \le 3$ then $s_5 s_6 s_7 s_8(x_{j+1}) = x_{j+1}$ and $\pa_4(x_{j+1}) = 0$.
	\item If $j+1 = 4$ then $s_5 s_6 s_7 s_8(x_4) = x_4$ and $\pa_4(x_4) = 1$.
	\item If $j+1 = 9$ then $s_5 s_6 s_7 s_8(x_9) = z^{-4} x_5$ and $\pa_4(x_5) = -z^{-1}$, so $\pa_4(s_5 s_6 s_7 s_8(x_9)) = - z^{-5}$.
\end{itemize}
More generally,
\begin{equation} \kappa_{p,q} = \begin{cases} 1 & \text{ if } j+1 = q, \\ -z^{q - p} & \text{ if } j+1 = p, \\ 0 & \text{ else.} \end{cases} \end{equation}
In this context, we choose representatives of $\Om$ such that $1 \le p-q \le n-1$.

Suppose $\cw_{i_L,n-1} = \cw_{j_R, n-1}$, so that $i = j+2$. In $\Theta_{j_R}$ the operator $\pa_{\cw_{9_L,n-1}}$ appears with coefficient $z^{9-i}$ and the operator $\pa_{\cw_{4_L,n-1}}$ appears with coefficient $z^{4-i}$, again with the interpretation that $0 \le i-9, i-4 \le n-1$. In our computations below we may add or subtract $n$ from an exponent; this is because we may need to choose the representative $i+n$ rather than $i$.

The overall contribution to the coefficient of $\pa_{\cw_{\hat{4},\hat{9}}}(f)$ is zero if $j+1 \ne 4,9$. If $j+1 = 4$ so that $i=5$, we get
\begin{equation} z^{9-i} \cdot \kappa_{9,4} + z^{4-i} \kappa_{4,9} = z^{9-5-n} \cdot 1 + z^{4-5} \cdot (-z^{9-4-n}) = z^{9-5-n} - z^{9-5-n} = 0. \end{equation}
Similarly, one gets zero if $j+1 = 9$.

We've proven the cancellation of all lower terms, so we need only examine the linear combination of terms $s_{\cw_{\hat{p}}}(x_{j+1}) \pa_{\cw_{\hat{p}}}(f)$. Note that 
\begin{equation} s_{\cw_{\hat{p}}}(x_{j+1}) = \begin{cases} z x_{j+2} & \text{ if } p \ne j+1, \\ z^{1-n} x_{j+2} & \text{ if } p = j+1. \end{cases} \end{equation}
From here, it is easy to match up the coefficient of $x_i \pa_{\cw_{\hat{p}}}(f)$ in $\Theta_{j_R}(x_{j+1} f)$ and in $x_i \Theta_{(j-1)_R}(f)$. \end{proof}

Now let us discuss what happens when we multiply $\Theta_{j_R}$ by $\Theta_{i_L}$.

\begin{ex} \label{ex:blahblahblah} Let $n=3$. The various $\Theta_{i_L}$ are built out of the three terms $\pa_1 \pa_2$, $\pa_3 \pa_1$ and $\pa_2 \pa_3$. Note that
\begin{equation} \pa_1 \pa_2 \circ \pa_2 \pa_3 = 0, \qquad \pa_1 \pa_2 \circ \pa_1 \pa_2 = 0. \end{equation}
So the only term that survives in the composition $\pa_1 \pa_2 \circ \Theta$ is the term of the form $\pa_1 \pa_2 \pa_3 \pa_1$. More generally, the only terms that survive when composing two different $\Theta$ operators are the terms associated to cyclic words. 
\end{ex}

\begin{defn} Let $i \in \Om$ and $k \ge 1$. Suppose that $\cw_{i_L,k(n-1)} = \cw_{j_R, k(n-1)}$. Define the degree $-k(n-1)$ operator $\Theta^{(k)}_{i_L} = \Theta^{(k)}_{j_R} \in \NC(z,n)$ by the formula
\begin{equation} \label{eq:Thetak} \Theta^{(k)}_{i_L} := \pa_{\cw_{i_L, k(n-1)}} + z^{-k} \pa_{\cw_{(i-1)_L, k(n-1)}} + z^{-2k} \pa_{\cw_{(i-2)_L, k(n-1)}} + \ldots + z^{-k(n-1)} \pa_{\cw_{(i+1)_L, k(n-1)}}. \end{equation}
Thus $\Theta_{i_L} = \Theta_{i_L}^{(1)}$.
\end{defn}

\begin{ex} Let $n=3$. Then
\begin{equation} \Theta_{1_L}^{(2)} = \Theta_{1_R}^{(2)} = \pa_{1231} + z^{-2} \pa_{3123} + z^{-4} \pa_{2312}. \end{equation}
\begin{equation} \Theta_{1_L}^{(3)} = \Theta_{3_R}^{(3)} = \pa_{123123} + z^{-3} \pa_{312312} + z^{-6} \pa_{231231}. \end{equation}	
Continuing Example \ref{ex:blahblahblah}, one can compute that $\Theta^{(1)}_{2_R} \circ \Theta^{(1)}_{3_L} = \Theta^{(2)}_{1_L}$.
\end{ex}

\begin{thm} \label{thm:composetheta} Let $i \in \Om$ and $k, \ell \ge 1$. Suppose that $\cw_{i_L,k(n-1)} = \cw_{j_R, k(n-1)}$. We have
\begin{equation} \Theta^{(k)}_{i_L} \Theta^{(\ell)}_{(j+1)_L} = \Theta^{(k)}_{j_R} \Theta^{(\ell)}_{(j+1)_L} = \Theta^{(k+\ell)}_{i_L}. \end{equation}
\end{thm}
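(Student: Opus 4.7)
The plan is to expand the product as a double sum indexed by offsets $(r,s) \in \{0,\ldots,n-1\}^2$, argue that all cross terms vanish by a non-reducedness argument, and check that the surviving diagonal terms assemble into $\Theta^{(k+\ell)}_{i_L}$ with the correct scalars.

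Concretely, I would begin by writing
\[ \Theta^{(k)}_{i_L} \circ \Theta^{(\ell)}_{(j+1)_L} = \sum_{r, s = 0}^{n-1} z^{-rk - s\ell} \, \pa_{\cw_{(i-r)_L, k(n-1)}} \circ \pa_{\cw_{(j+1-s)_L, \ell(n-1)}}. \]
For each $(r,s)$, the concatenated word is a composition of two clockwise cyclic words: the first starts at $i-r$ and ends at $i-r+k(n-1)-1 = j-r$, while the second starts at $j+1-s$. Its last $n-1$ letters form the subword $\cw_{(j-r)_R, n-1}$ and its first $n-1$ letters (after the cut) form $\cw_{(j+1-s)_L, n-1}$. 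By Lemma \ref{lem:shiftednotreduced}, the subword $\cw_{(j-r)_R, n-1} \cdot \cw_{(j+1-s)_L, n-1}$ is reduced if and only if $j+1-s = (j-r)+1$, i.e., $s = r$. Since any expression containing a non-reduced subword is itself non-reduced, the associated Demazure operator vanishes whenever $r \ne s$.

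For the diagonal contributions $r = s$, the two cyclic words glue seamlessly: the first ends at $j-r$ and the second begins at $(j-r)+1$ and runs for $\ell(n-1)$ more steps. The concatenation is precisely the clockwise cyclic word $\cw_{(i-r)_L, (k+\ell)(n-1)}$, so
\[ \pa_{\cw_{(i-r)_L, k(n-1)}} \circ \pa_{\cw_{(j+1-r)_L, \ell(n-1)}} = \pa_{\cw_{(i-r)_L, (k+\ell)(n-1)}}. \]
The accompanying scalar is $z^{-rk} \cdot z^{-r\ell} = z^{-r(k+\ell)}$, which is exactly the coefficient of this term in $\Theta^{(k+\ell)}_{i_L}$ as defined in \eqref{eq:Thetak}. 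Summing over $r$ from $0$ to $n-1$ reproduces $\Theta^{(k+\ell)}_{i_L}$.

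The only real obstacle, and a mild one, is justifying that the Demazure operator attached to a non-reduced expression vanishes in $\NC(z,n)$ despite the scalar $z$ appearing in the braid relation \eqref{zeR3}. This follows because the deformed braid relation is invertible up to a unit power of $z$, so the standard reduction of any non-reduced expression to one containing a factor $\pa_i^2 = 0$ carries over verbatim from the undeformed nilCoxeter algebra, possibly rescaled by an invertible monomial in $z$.
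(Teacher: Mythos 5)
Your proposal is correct and follows essentially the same route as the paper's proof: expand the product as a double sum of $n^2$ terms, kill the off-diagonal terms via Lemma \ref{lem:shiftednotreduced} applied to the length-$2(n-1)$ subword straddling the seam, and observe that the diagonal terms reassemble into $\Theta^{(k+\ell)}_{i_L}$ with matching scalars. Your closing remark justifying why non-reduced expressions yield zero operators in the presence of the scalar in \eqref{zeR3} is a reasonable point that the paper leaves implicit, and your reasoning for it is sound.
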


\begin{proof} There are $(n-1)$ terms in $\Theta^{(k)}_{j_R}$ and $(n-1)$ terms in $\Theta^{(\ell)}_{(j+1)_L}$, leading to $(n-1)^2$ terms in the composition. Composing the $d$-th term in each sum we obtain
\begin{equation} z^{-dk} \pa_{\cw_{(j-d)_R, k(n-1)}} \circ z^{-d \ell} \pa_{\cw_{(j+1-d)_L, \ell(n-1)}} = z^{-d(k+\ell)} \pa_{\cw_{(i-d)_L, (k+\ell)(n-1)}}, \end{equation}
which is the $d$-th term in $\Theta^{(k+\ell)}_{i_L}$. Meanwhile, any other composition is a scalar multiple of
\begin{equation} \pa_{\cw_{(j-d)_R, k(n-1)}} \circ  \pa_{\cw_{(j+1-d')_L, \ell(n-1)}} \end{equation}
for some $d' \ne d$. By Lemma \ref{lem:shiftednotreduced}, this composition is associated to a non-reduced word, and vanishes. \end{proof}

\begin{thm} \label{thm:thetakrotates} Suppose $\cw_{i_L,k(n-1)} = \cw_{j_R, k(n-1)}$. For any $f \in R$, we have
\begin{equation} \Theta^{(k)}_{j_R}(x_{j+1} f) = x_i \Theta^{(k)}_{(j-1)_R}(f). \end{equation}
\end{thm}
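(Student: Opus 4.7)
The plan is to prove this by induction on $k$, with the base case $k=1$ being precisely Theorem \ref{thm:thetarotates}. I would combine the base case with the factorization provided by Theorem \ref{thm:composetheta}, peeling off a single $\Theta^{(1)}$ on the left at each step. The expected main obstacle is not computational depth but careful bookkeeping: after each application of the base case and the inductive hypothesis, one has to verify that the left and right subscripts of the various $\Theta^{(\bullet)}_{\bullet}$'s align so that Theorem \ref{thm:composetheta} may be invoked, which amounts to arithmetic modulo $n$.

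For the inductive step, assuming the result holds for $k-1$, I would use Theorem \ref{thm:composetheta} to write
\[ \Theta^{(k)}_{j_R} \;=\; \Theta^{(k)}_{i_L} \;=\; \Theta^{(1)}_{i_L} \circ \Theta^{(k-1)}_{(i+n-1)_L}, \]
and observe that $\Theta^{(k-1)}_{(i+n-1)_L}$ has right subscript $(i+n-1)+(k-1)(n-1)-1 = j$, so it equals $\Theta^{(k-1)}_{j_R}$. Applying the inductive hypothesis to $x_{j+1}f$ yields
\[ \Theta^{(k-1)}_{j_R}(x_{j+1} f) \;=\; x_{i+n-1}\cdot \Theta^{(k-1)}_{(j-1)_R}(f). \]
Setting $g := \Theta^{(k-1)}_{(j-1)_R}(f)$, it remains to compute $\Theta^{(1)}_{i_L}(x_{i+n-1}\,g)$. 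Rewriting $\Theta^{(1)}_{i_L} = \Theta^{(1)}_{(i+n-2)_R}$ and applying the base case (Theorem \ref{thm:thetarotates}) with its $j$ replaced by $i+n-2$, I get
\[ \Theta^{(1)}_{(i+n-2)_R}(x_{i+n-1}\,g) \;=\; x_i\cdot \Theta^{(1)}_{(i+n-3)_R}(g) \;=\; x_i\cdot \Theta^{(1)}_{(i-1)_L}(g). \]

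To close the induction, I would reassemble using Theorem \ref{thm:composetheta} once more. The factor $\Theta^{(1)}_{(i-1)_L}$ has right subscript $i+n-3$, so the composition rule demands a second factor starting at $i+n-2$; the second factor $\Theta^{(k-1)}_{(j-1)_R}$ has left subscript $(j-1) - (k-1)(n-1) + 1 = i+n-2$, matching exactly. Hence
\[ \Theta^{(1)}_{(i-1)_L}\circ \Theta^{(k-1)}_{(j-1)_R} \;=\; \Theta^{(k)}_{(i-1)_L} \;=\; \Theta^{(k)}_{(j-1)_R}, \]
where the final identification uses $(i-1) + k(n-1) - 1 = j-1$. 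Substituting back produces $\Theta^{(k)}_{j_R}(x_{j+1}f) = x_i\,\Theta^{(k)}_{(j-1)_R}(f)$, as desired. All the substantive content lies in Theorems \ref{thm:thetarotates} and \ref{thm:composetheta}; the present argument is simply an inductive telescoping, and every nonobvious identification is verified by a direct index computation.
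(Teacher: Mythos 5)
Your proof is correct, and it is essentially the paper's own argument: the paper's proof (terse as it is) says to decompose $\Theta^{(k)}$ as an iterated composition of $\Theta^{(1)}$'s via Theorem \ref{thm:composetheta} and then apply Theorem \ref{thm:thetarotates} $k$ times, which is exactly your inductive telescoping. Your version spells out the index bookkeeping that the paper leaves implicit, and every step checks out.
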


\begin{proof} By Theorem \ref{thm:composetheta}, $\Theta^{(k)}$ is an iterated composition of $\Theta^{(1)}$ for various indices. Applying Theorem \ref{thm:thetarotates} $k$ times, we deduce the desired result. \end{proof}
	
\begin{rem} All these results hold in $\NH(m,m,n)$ by specialization. \end{rem}

%========================================================
\subsection{The roundabout relation}
%========================================================

The discussion of $\Theta^{(k)}$ in the previous section applied for generic $z$, but now we explore what happens when $z$ is specialized to a root of unity.

\begin{lem} \label{lem:thetamindep} Inside $\NC(m,m,n)$, we have
\begin{equation} \ze^{-m} \Theta^{(m)}_{i_L} = \Theta^{(m)}_{(i+1)_L}. \end{equation} \end{lem}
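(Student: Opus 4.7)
The plan is to do a direct computation from the definition of $\Theta^{(m)}_{i_L}$, exploiting the fact that at the root of unity we have $\ze^{nm} = 1$, which is what upgrades the $(n-1)$-fold ``almost cyclic'' sum defining $\Theta^{(k)}$ into an honest cyclic sum when $k=m$.

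First, I would rewrite both sides explicitly using the summation form
\begin{equation*}
\Theta^{(m)}_{i_L} = \sum_{d=0}^{n-1} \ze^{-dm}\, \pa_{\cw_{(i-d)_L,\, m(n-1)}}
\end{equation*}
coming from \eqref{eq:Thetak} (with $z$ specialized to $\ze$). Multiplying by $\ze^{-m}$ shifts the exponent:
\begin{equation*}
\ze^{-m}\,\Theta^{(m)}_{i_L} = \sum_{d=0}^{n-1} \ze^{-(d+1)m}\, \pa_{\cw_{(i-d)_L,\, m(n-1)}} = \sum_{e=1}^{n} \ze^{-em}\, \pa_{\cw_{(i+1-e)_L,\, m(n-1)}},
\end{equation*}
after reindexing $e=d+1$.

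Next I would compare this to
\begin{equation*}
\Theta^{(m)}_{(i+1)_L} = \sum_{e=0}^{n-1} \ze^{-em}\, \pa_{\cw_{(i+1-e)_L,\, m(n-1)}}.
\end{equation*}
The two sums run over ranges that differ only in their endpoints: the shifted sum has an $e=n$ term, while the target sum has an $e=0$ term. The key observation is that these two ``extra'' terms coincide. Indeed, indices in words live in $\Om = \ZZ/n\ZZ$, so $\cw_{(i+1-n)_L,\, m(n-1)} = \cw_{(i+1)_L,\, m(n-1)}$ as words in $\Om$, hence the associated Demazure operators are literally equal. Their coefficients are $\ze^{-nm}$ and $\ze^{0}$ respectively, which agree because $\ze$ is an $(nm)$-th root of unity. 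The remaining $n-1$ terms (for $e=1,\ldots,n-1$) match termwise on both sides, completing the proof.

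There is essentially no obstacle here: the lemma is a direct bookkeeping statement reflecting that multiplication of $\Theta^{(m)}$ by the scalar $\ze^{-m}$ implements the rotation $i \mapsto i+1$ of the Dynkin diagram index, once the ``boundary'' term $z^{-nm}$ becomes $1$. The only subtle point worth highlighting is precisely why this fails for $\Theta^{(k)}$ with $k<m$ at this root of unity, and why it fails for $\Theta^{(m)}$ in $\NC(z,n)$ before specialization: in both cases the extra factor of $z^{-nm}$ does not collapse to $1$, so the shifted sum is not literally a rotation of the original.
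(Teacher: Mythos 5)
Your proof is correct and is essentially the paper's own argument: expand $\Theta^{(m)}_{i_L}$ from \eqref{eq:Thetak}, multiply by $\ze^{-m}$, and observe that the boundary coefficient $\ze^{-nm}$ equals $1$ so the shifted sum is literally $\Theta^{(m)}_{(i+1)_L}$. Your summation-notation write-up and the closing remark about why this fails for $k<m$ or before specialization are fine additions, but the underlying computation is identical.
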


\begin{proof} We have
\begin{equation} \ze^{-m} \Theta^{(m)}_{i_L} = \ze^{-m} \pa_{\cw_{i_L, m(n-1)}} + \ze^{-2m} \pa_{\cw_{(i-1)_L, m(n-1)}} + \ze^{-3m} \pa_{\cw_{(i-2)_L, m(n-1)}} + \ldots + \ze^{-nm} \pa_{\cw_{(i+1)_L, m(n-1)}}. \end{equation}
Since $\ze^{-nm} = 1$, this exactly agrees with $\Theta^{(m)}_{(i+1)_L}$. \end{proof}

\begin{thm} \label{thm:roundabout} Inside $\NC(m,m,n)$, the operator $\Theta^{(m)}_{i_L}$ vanishes for any $i \in \Om$. \end{thm}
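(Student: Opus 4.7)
The plan is to show that $\Theta^{(m)}_{j_R}$ annihilates every monomial in $R_m$, which by $\CC$-linearity forces it to be the zero operator; then Lemma \ref{lem:thetamindep}, which identifies the various $\Theta^{(m)}_{i_L}$ as scalar multiples of one another, finishes the theorem. The key is to combine the rotation identity of Theorem \ref{thm:thetakrotates} with the scalar shift of Lemma \ref{lem:thetamindep} to obtain a twisted Leibniz-type relation that strips variables off one at a time.

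Fix $j \in \Om$. Since $m(n-1) \equiv -m \pmod{n}$, Theorem \ref{thm:thetakrotates} with $k=m$ reads
\[\Theta^{(m)}_{j_R}(x_{j+1} f) = x_{j+m+1}\, \Theta^{(m)}_{(j-1)_R}(f).\]
Lemma \ref{lem:thetamindep} gives $\Theta^{(m)}_{(j-1)_R} = \ze^m \Theta^{(m)}_{j_R}$, and substituting produces
\[\Theta^{(m)}_{j_R}(x_{j+1} f) = \ze^m x_{j+m+1}\, \Theta^{(m)}_{j_R}(f).\]
To obtain the analogous identity for any variable $x_{j+d}$, I would apply the rotation theorem to $\Theta^{(m)}_{(j+d-1)_R}$ and then rewrite both $\Theta^{(m)}_{(j+d-1)_R}$ and $\Theta^{(m)}_{(j+d-2)_R}$ as powers of $\ze^{-m}$ times $\Theta^{(m)}_{j_R}$. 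The resulting factors $\ze^{-m(d-1)}$ and $\ze^{-m(d-2)}$ telescope, leaving the clean identity
\[\Theta^{(m)}_{j_R}(x_i f) = \ze^m x_{i+m}\, \Theta^{(m)}_{j_R}(f)\]
valid for every $i \in \Om$ and every $f \in R_m$. One may recognize the substitution $x_i \mapsto \ze^m x_{i+m}$ as precisely the action of the element $\om$ from Proposition \ref{prop:omdefined}, which is a reassuring consistency check.

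Iterating the twisted identity on an arbitrary monomial gives
\[\Theta^{(m)}_{j_R}(x_{i_1} x_{i_2} \cdots x_{i_k}) = \ze^{mk} x_{i_1+m} x_{i_2+m} \cdots x_{i_k+m} \cdot \Theta^{(m)}_{j_R}(1).\]
Because $\Theta^{(m)}_{j_R}$ has strictly negative degree $-m(n-1)$, we have $\Theta^{(m)}_{j_R}(1) = 0$, so the right-hand side vanishes. Hence $\Theta^{(m)}_{j_R}$ kills every monomial, and hence is the zero operator on $R_m$; applying Lemma \ref{lem:thetamindep} again, all $\Theta^{(m)}_{i_L}$ vanish.

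The main obstacle, such as it is, lies in carefully tracking the powers of $\ze^{-m}$ that enter from Lemma \ref{lem:thetamindep} on the two sides of the rotation identity: one must verify that the factor coming from the left-hand side and the factor coming from the right-hand side are precisely out of sync by a single power of $\ze^m$, so that the twisted identity has the same scalar $\ze^m$ for every variable $x_i$. Once this bookkeeping is in place for $d=1$, the same calculation delivers the identity for arbitrary $d$, and the remainder of the argument is formal.
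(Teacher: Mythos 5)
Your proposal is correct and follows essentially the same route as the paper: it combines Theorem \ref{thm:thetakrotates} with Lemma \ref{lem:thetamindep} to get the twisted identity $\Theta^{(m)}_{j_R}(x_i f) = \om(x_i)\,\Theta^{(m)}_{j_R}(f)$ for every $i$, then iterates over the variables of a monomial and uses the negative degree of $\Theta^{(m)}$ to conclude $\Theta^{(m)}_{j_R}(1)=0$, hence vanishing everywhere. The scalar bookkeeping ($\ze^{-m(d-1)}$ versus $\ze^{-m(d-2)}$ telescoping to a single $\ze^m$, matching $\om(x_i)=\ze^m x_{i+m}$) is exactly the reindexing step in the paper's argument.
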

	
By the previous lemma, the relation $\Theta^{(m)}_{i_L} = 0$ for any given $i$ implies the same relation for any other $i$. We refer to the equality
\begin{equation} \Theta^{(m)}_{1_L} = 0 \end{equation}
as the \emph{(clockwise) roundabout relation}, with the understanding that it implies $\Theta^{(m)}_{i_L} = 0$ for any $i \in \Om$.

\begin{proof} Suppose $\cw_{i_L, m(n-1)} = \cw_{j_R, m(n-1)}$, or in other words $j = i + m(n-1) - 1$. By Theorem \ref{thm:thetakrotates}, $\Theta^{(m)}_{j_R}(x_{j+1} f) = x_i \Theta^{(m)}_{(j-1)_R}(f)$. Multiplying this by $\ze^{-m}$, and using Lemma \ref{lem:thetamindep}, we deduce that $\Theta^{(m)}_{(j+1)_R}(x_{j+1} f) = x_i \Theta^{(m)}_{j_R}(f)$ as well. Continuing to rotate in this fashion, and reindexing, we deduce that
\begin{equation} \label{eq:whatthetamdoes} \Theta^{(m)}_{j_R}(x_k f) = x_{k-m(n-1)} \Theta^{(m)}_{(j-1)_R}(f) \end{equation}
for all $k \in \Om$. Note that $k - m(n-1) = k + m$ modulo $n$. Another way to read this relation using Proposition \ref{prop:omdefined} is that
\begin{equation} \label{eq:whatthetamdoes2} \Theta^{(m)}_{j_R}(x_k f) = \ze^{-m} \om(x_{k}) \Theta^{(m)}_{(j-1)_R}(f) = \om(x_k) \Theta^{(m)}_{j_R}(f). \end{equation}

So if $f$ is any monomial, one can repeatedly use \eqref{eq:whatthetamdoes2} to deduce that 
\begin{equation} \label{eq:whatthetamdoes3} \Theta^{(m)}_{j_R}(f) = \om(f) \Theta^{(m)}_{j_R}(1). \end{equation}
Since $\Theta^{(m)}$ has degree $-m(n-1)$, it kills $1$. Thus $\Theta^{(m)}_{j_R}$ kills any monomial, and by linearity, any element of $R_m$. \end{proof}

By definition (after specialization $z \mapsto \ze$) we have
\begin{equation} \Theta^{(m)}_{1_L} = \pa_{\cw_{1_L, m(n-1)}} + \ze^{-m} \pa_{\cw_{n_L, m(n-1)}} + \ldots + \ze^{-m(n-1)} \pa_{\cw_{2_L, m(n-1)}}. \end{equation}
Reordering the terms in this relation and using that $\ze^{mn}=1$ we get
\begin{equation}\label{thetamoreconvenient} \Theta^{(m)}_{1_L} = \pa_{\cw_{1_L, m(n-1)}} + \ze^{m} \pa_{\cw_{2_L, m(n-1)}} + \ldots + \ze^{m(n-1)} \pa_{\cw_{n_L, m(n-1)}}. \end{equation}
This tends to be the version of the relation we use in practice. It also more closely matches the widdershins case below.

By applying the operator $\tau$, we obtain results about operators akin to $\Theta$ but using widdershins cyclic words instead.

\begin{defn} Let $i \in \Om$ and $k \ge 1$. Suppose that $\ws_{i_L,k(n-1)} = \ws_{j_R, k(n-1)}$. Define the degree $-k(n-1)$ operator $\bTheta^{(k)}_{i_L} = \bTheta^{(k)}_{j_R} \in \NC(z,n)$ by the formula
\begin{equation} \label{eq:bThetak} \bTheta^{(k)}_{i_L} := \pa_{\ws_{i_L, k(n-1)}} + \ze^{k} \pa_{\ws_{(i+1)_L, k(n-1)}} + \ze^{2k} \pa_{\ws_{(i+2)_L, k(n-1)}} + \ldots + \ze^{k(n-1)} \pa_{\ws_{(i-1)_L, k(n-1)}}. \end{equation}
\end{defn}

\begin{ex} Let $n=3$. Then
\begin{equation} \bTheta_{1_L}^{(2)} = \bTheta_{1_R}^{(2)} = \pa_{1321} + \ze^{2} \pa_{2132} + \ze^{4} \pa_{3213}. \end{equation}
\begin{equation} \bTheta_{1_L}^{(3)} = \bTheta_{2_R}^{(3)} = \pa_{132132} + \ze^{3} \pa_{213213} + \ze^{6} \pa_{321321}. \end{equation}	
\end{ex}

\begin{lem} We have $\tau(\Theta^{(k)}_{i_L}) = (-\zeta)^{k(n-1)} \bTheta^{(k)}_{-i_L}$. \end{lem}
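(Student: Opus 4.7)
The plan is to apply the antilinear automorphism $\tau$ term-by-term to the defining sum \eqref{eq:Thetak}, using only the single-step identity \eqref{tauondem}. First I would show by induction on length that for any word $(i_1,\ldots,i_d)$ in $\Om$,
\[ \tau(\pa_{i_1}\pa_{i_2}\cdots\pa_{i_d}(f)) = (-z)^d \pa_{-i_1}\pa_{-i_2}\cdots\pa_{-i_d}(\tau(f)). \]
This follows by repeatedly applying \eqref{tauondem}: each time $\tau$ moves past a $\pa_{i_j}$, one picks up a factor of $-z$ and flips the index to $-i_j$, and because Demazure operators are $\AC_z$-linear the scalars accumulated so far commute through. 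Viewed as the induced conjugation action of $\tau$ on operators, this identity says $\tau(\pa_{\un{w}}) = (-z)^d \pa_{-\un{w}}$, where $-\un{w}$ denotes the word obtained from $\un{w}=(i_1,\ldots,i_d)$ by negating every index.

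Next I would observe that negating indices converts a clockwise cycle into a widdershins cycle of the same length: the word $\cw_{i_L, d} = (i, i+1, \ldots, i+d-1)$ becomes $(-i,-i-1,\ldots,-i-d+1) = \ws_{-i_L, d}$. Applying $\tau$ to a general summand of \eqref{eq:Thetak}, and remembering that $\tau$ is $z$-antilinear so that $\tau(z^{-\ell k}) = z^{\ell k}$, I obtain
\[ \tau\bigl(z^{-\ell k}\pa_{\cw_{(i-\ell)_L,k(n-1)}}\bigr) \;=\; z^{\ell k}\cdot(-z)^{k(n-1)}\,\pa_{\ws_{(\ell-i)_L,k(n-1)}}. \]
Summing over $\ell = 0,1,\ldots,n-1$ and pulling out the common factor $(-z)^{k(n-1)}$, the remaining sum matches \eqref{eq:bThetak} applied at starting index $-i$, yielding
\[ \tau(\Theta^{(k)}_{i_L}) = (-z)^{k(n-1)}\,\bTheta^{(k)}_{-i_L}, \]
and specializing $z \mapsto \ze$ gives the stated identity.

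The only real subtlety, and the point I expect to have to be most careful about, is the antilinearity of $\tau$ converting each exponent $-\ell k$ in $\Theta^{(k)}_{i_L}$ into the exponent $+\ell k$ which appears in $\bTheta^{(k)}$. This sign flip is precisely what is needed for the reindexed widdershins sum to assemble into $\bTheta^{(k)}_{-i_L}$; without tracking it one would obtain some other linear combination of widdershins operators. Apart from this and the bookkeeping of matching the rotated starting indices $(\ell-i)_L$ with those in the definition of $\bTheta^{(k)}_{-i_L}$, the argument is entirely mechanical.
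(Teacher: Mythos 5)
Your proof is correct and is precisely the expansion of the paper's one-line proof, which simply cites \eqref{tauondem}. You correctly track all three ingredients: the induction giving $\tau(\pa_{\un{w}}) = (-z)^d\pa_{-\un{w}}$ for a word of length $d$, the $z$-antilinearity of $\tau$ flipping the sign of each exponent $-\ell k$ to $+\ell k$, and the observation that negating indices converts $\cw_{(i-\ell)_L}$ into $\ws_{(\ell-i)_L}$, which is exactly the indexing used in \eqref{eq:bThetak} for $\bTheta^{(k)}_{-i_L}$.
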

	
\begin{proof} This follows quickly from \eqref{tauondem}. \end{proof}

\begin{cor} \label{cor:otherroundabout} Inside $\NC(m,m,n)$, we have $\bTheta^{(m)}_{i_L} = 0$ for any $i \in \Om$. \end{cor}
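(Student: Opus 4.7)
The plan is to deduce this from Theorem \ref{thm:roundabout} by applying the symmetry $\tau$ from Definition \ref{defn:symmetries}. The preceding lemma identifies
\[ \tau(\Theta^{(k)}_{i_L}) = (-\zeta)^{k(n-1)} \bTheta^{(k)}_{-i_L}, \]
so the widdershins roundabout operators are, up to a nonzero scalar, the images of the clockwise roundabout operators under $\tau$. Since $\tau$ is a symmetry of the setup (an invertible $\ZZ$-linear automorphism that intertwines the various structures appropriately, as recorded in \eqref{tauondem}), it carries relations in $\NC(m,m,n)$ to relations.

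Concretely, fix $i \in \Om$. By Theorem \ref{thm:roundabout} we have $\Theta^{(m)}_{(-i)_L} = 0$ as an operator on $R_m$. Applying $\tau$ to both sides (i.e., composing with $\tau$ on the left and with $\tau^{-1}$ on the right, or equivalently, evaluating the identity $\tau(\Theta^{(m)}_{(-i)_L}(f)) = 0$ for all $f$) and invoking the lemma with $k=m$ yields
\[ (-\zeta)^{m(n-1)} \bTheta^{(m)}_{i_L} = 0. \]
The scalar $(-\zeta)^{m(n-1)}$ is a unit in $\CC$, so $\bTheta^{(m)}_{i_L} = 0$. Since $i$ was arbitrary, this gives the conclusion for all $i \in \Om$.

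There is essentially no obstacle: once the clockwise roundabout relation has been proven and the $\tau$-compatibility recorded, the widdershins version is automatic. The only thing to double-check is that the $\tau$-conjugation is well-defined on the specialized algebra $\NC(m,m,n)$, which follows because $\tau(z) = z^{-1}$ specializes to $\tau(\zeta) = \zeta^{-1}$ and $\zeta^{-1}$ is still a primitive $(nm)$-th root of unity; so $\tau$ descends to a (conjugate-linear) symmetry of $R_m$, and \eqref{tauondem} descends accordingly. Alternatively, one could mimic the proof of Theorem \ref{thm:roundabout} directly, establishing the widdershins analogues of Theorem \ref{thm:thetakrotates} and Lemma \ref{lem:thetamindep}, but invoking $\tau$ is strictly shorter.
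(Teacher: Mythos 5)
Your argument is correct and is essentially the paper's proof: the paper also deduces the widdershins roundabout relation from Theorem \ref{thm:roundabout} by applying $\tau$, via the preceding lemma $\tau(\Theta^{(k)}_{i_L}) = (-\zeta)^{k(n-1)} \bTheta^{(k)}_{-i_L}$. Your extra remarks (the scalar being a unit, and $\tau$ descending to the specialization since $\zeta^{-1}$ is again a primitive $(nm)$-th root of unity) are just a careful spelling-out of what the paper leaves implicit.
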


\begin{proof} This follows from Theorem \ref{thm:roundabout} by applying $\tau$. \end{proof}

Again, we refer to the equality
\begin{equation} \bTheta^{(m)}_{1_L} = 0 \end{equation}
as the \emph{(widdershins) roundabout relation}, with the understanding that it implies $\bTheta^{(m)}_{i_L} = 0$ for any $i \in \Om$.

While the clockwise and widdershins roundabout relations imply each other in the presence of $\tau$, they do not imply each other without it. When defining $\NC(m,m,n)$ by generators and relations, we must include both the clockwise and widdershins roundabout relations, giving two relations in degree $m(n-1)$.

Another consequence of the roundabout relations is the vanishing of certain cyclic Demazure operators.

\begin{cor} \label{cor:longcyclicvanishes} If $k \ge (m+1)(n-1)$, then 
\begin{equation} \label{eq:longcyclicvanishes} \pa_{\cw_{i_L, k}} = 0 = \pa_{\ws_{i_L, k}}\end{equation}
for any $i \in \Om$. \end{cor}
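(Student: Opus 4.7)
The plan is to establish the base case $k = (m+1)(n-1)$ first, and then bootstrap to larger $k$ by a trivial factorization. Since the widdershins statement follows from the clockwise one by applying the symmetry $\tau$ (or equally by running the same argument against Corollary \ref{cor:otherroundabout}), I will concentrate on the clockwise side.

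For the base case, I start from the clockwise roundabout relation in the form \eqref{thetamoreconvenient}, which writes $0$ as a $\CC$-linear combination of the $n$ operators $\pa_{\cw_{j_L,\, m(n-1)}}$ for $j \in \{1,\dots,n\}$, each with a nonzero coefficient. I right-multiply this identity by $\pa_{\cw_{q_L,\, n-1}}$ for a fixed but arbitrary $q \in \Om$. The cyclic word $\cw_{j_L,\, m(n-1)}$ ends at index $p_j := j + m(n-1) - 1 \equiv j - m - 1 \pmod n$, and its final $n-1$ letters form $\cw_{(p_j)_R,\, n-1}$. So each term of the right-multiplied relation contains $\pa_{\cw_{(p_j)_R,\, n-1}} \circ \pa_{\cw_{q_L,\, n-1}}$ as a consecutive factor, and Lemma \ref{lem:shiftednotreduced} forces this factor to be the Demazure operator of a non-reduced expression --- hence zero --- unless $q = p_j + 1$. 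Because $j \mapsto p_j$ is a bijection on $\Om$, exactly one value of $j$ survives, call it $j_\star$; a direct indexing check shows that the surviving composition $\pa_{\cw_{(j_\star)_L,\, m(n-1)}} \circ \pa_{\cw_{q_L,\, n-1}}$ equals $\pa_{\cw_{(j_\star)_L,\, (m+1)(n-1)}}$. The remaining coefficient is a nonzero scalar, so we obtain $\pa_{\cw_{(j_\star)_L,\, (m+1)(n-1)}} = 0$. As $q$ varies over $\Om$, $j_\star$ also varies over all of $\Om$, giving the base case for every starting index.

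For $k > (m+1)(n-1)$, the cyclic word $\cw_{i_L, k}$ splits as the concatenation of $\cw_{i_L,\, (m+1)(n-1)}$ with $\cw_{(i + (m+1)(n-1))_L,\, k - (m+1)(n-1)}$, so $\pa_{\cw_{i_L, k}}$ factors on the left through the zero operator $\pa_{\cw_{i_L,\, (m+1)(n-1)}}$ and vanishes.

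I do not anticipate any real obstacle here; the result is essentially a direct corollary of the roundabout relation once the bookkeeping is set up correctly. The only slightly delicate point is verifying, in the base case, both that the ``surviving'' subcomposition in the right-multiplied roundabout relation is genuinely the cyclic word of length $(m+1)(n-1)$ and that the set of surviving starting indices exhausts $\Om$ as $q$ varies. Both reduce to straightforward arithmetic modulo $n$ once the conventions of $\cw_{j_L, d}$ and $\cw_{p_R, d}$ are unwound.
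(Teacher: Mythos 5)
Your argument is correct and is essentially the paper's own proof: the paper likewise kills all but one term of the roundabout relation $\Theta^{(m)}=0$ by composing with a cyclic word of length $n-1$ and invoking Lemma \ref{lem:shiftednotreduced} (via the argument of Theorem \ref{thm:composetheta}), obtaining $\pa_{\cw_{i_L,(m+1)(n-1)}}=0$, with the widdershins case by $\tau$. The only differences are immaterial: the paper multiplies on the left by $\pa_{\cw_{j_R,n-1}}$ where you multiply on the right by $\pa_{\cw_{q_L,n-1}}$, and you spell out the bootstrap to $k>(m+1)(n-1)$ and the variation over starting indices, which the paper leaves implicit.
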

	
\begin{ex} \label{ex:longcyclicvanishesm2} When $m=2$ and $n=3$, the clockwise roundabout relation states that
\begin{equation} 0 = \pa_{1231} + \ze^{-2} \pa_{3123} + \ze^{-4} \pa_{2312}. \end{equation}
Left multiplying by $\pa_3$ we get
\begin{equation} 0 = \pa_{31231} + \ze^{-2} \pa_{33123} + \ze^{-4} \pa_{32312} = \pa_{31231} + \ze^{-4} \pa_{32312}. \end{equation}
Left multiplying again by $\pa_2$ we get
\begin{equation} 0 = \pa_{231231} + \ze^{-4} \pa_{232312} = \pa_{231231}. \end{equation}
Thus $\pa_{231231} = 0$, as is any other cyclic word (clockwise or widdershins) of length $6 = (m+1)(n-1)$. \end{ex}

\begin{proof} Let $\cw_{i_L, n-1} = \cw_{j_R, n-1}$. By the same argument we used to prove Theorem \ref{thm:composetheta}, one can prove that
	\begin{equation} \pa_{\cw_{j_R, n-1}} \circ \Theta^{(m)}_{(j+1)_L} = \pa_{\cw_{i_L, (m+1)(n-1)}}. \end{equation}
	But $\Theta^{(m)}_{(j+1)_L} = 0$, implying the result. \end{proof}

Because we find it to be edifying, let us sketch another proof of Theorem \ref{thm:roundabout}.

\begin{proof}[Sketch of alternate proof of Theorem \ref{thm:roundabout}] Let us attempt to directly prove the equality
\begin{equation} \Theta^{(m)}_{j_R}(x_k f) = \om(x_k) \Theta^{(m)}_{j_R}(f) \end{equation}
for all $j$ and $k$, c.f. \eqref{eq:whatthetamdoes2}. As in the proof of Theorem \ref{thm:roundabout}, this suffices to prove the roundabout relations. We will follow the style of computation used in the proof of Theorem \ref{thm:thetarotates}.

For a word $\un{w} = (i_1, i_2, \ldots, i_d)$ of length $d$, and for $1 \le \ell \le d$, let $\un{w}_{> \ell}$ denote the subword $(i_{\ell+1}, i_{\ell+2}, \ldots, i_d)$.  For any degree $1$ homogeneous polynomial $g$, one can apply the Leibniz rule iteratively to prove that
\begin{equation}\label{iteratedLeibniz} \pa_{\un{w}}(fg) = s_{\un{w}}(g) \pa_{\un{w}}(f) + \sum_{\ell=1}^d \pa_{i_\ell}(s_{\un{w}_{>\ell}}(g)) \pa_{(\un{w}, \hat{\ell})}(f). \end{equation}
The key point in proving this formula is that $s_{\un{w}_{>\ell}}(g)$ has degree $1$, so $\pa_{i_\ell}(s_{\un{w}_{>\ell}}(g))$ has degree zero. Being a scalar, it is $W$-invariant and pulls freely out of the remaining Demazure operators.

When applied to $\pa_{\cw_{j_L, m(n-1)}}$, all the terms with $n \le \ell \le (m-1)(n-1)$ will die by Lemma \ref{lem:insidenotreduced}.  We are left with $n-1$ terms at the start, and $n-1$ terms at the end. By Proposition \ref{prop:omdefined}, $s_{\cw_{j_L, m(n-1)}} = \om$ is independent of the choice of $j$. So we can write
\begin{equation} \pa_{\cw_{j_L, m(n-1)}}(fg) = \om(g) \pa_{\cw_{j_L, m(n-1)}}(f) + \sum_{\ell} \kappa_{j,\ell} \pa_{(\cw_{j_L, m(n-1)}, \hat{\ell})}(f). \end{equation}
The scalar $ \kappa_{j,\ell}$ is equal to the factor $\pa_{i_\ell}(s_{\un{w}_{>\ell}}(g))$ in \eqref{iteratedLeibniz}, applying a single Demazure operator to a twist of $g$ (though with apologies, we do not use the same indexing conventions for $\kappa$ as when studying $\cw_{\hat{p}\hat{q}}$). We think of the sum as \emph{lower terms}.

Now summing $n$ of these formulas together, we deduce that
\begin{equation} \Theta^{(m)}_{i_L}(fg) = \om(g) \Theta^{(m)}_{i_L}(f) + \sum_{j, \ell} \kappa_{j,\ell} \pa_{(\cw_{j_L, m(n-1)}, \hat{\ell})}(f). \end{equation}
We prove the desired statement if we can show that the sum of the lower terms is zero. The magic of the roundabout relation is the cancellation of these lower terms.

There are a total of $2n(n-1)$ nonzero terms, for $n$ choices of $j$ and $2(n-1)$ choices of $\ell$. Once again they cancel in pairs. Each reduced expression
$(\cw_{j_L, m(n-1)}, \hat{\ell})$ expresses an element of $W_{\aff}$ coming from precisely one other choice. The Demazure operators for the these paired expressions are not equal, not even up to commutation,
but they are equal up to a power of $\ze$ coming from the braid relations \eqref{zeR3}. Keeping track of the scalars, one can show these two paired terms cancel when $\ze^{mn} = 1$. Note the three
sources of scalars: powers of $\ze$ in the formula for $\Theta^{(m)}_{i_L}$, powers of $\ze$ which come from \eqref{zeR3}, and the scalars coming from applying a single Demazure
operator to a twist of $g$. This bookkeeping, though quite annoying, is familiar from the proof of Theorem \ref{thm:thetarotates}. \end{proof}

\begin{ex} When $m=2$ and $n=3$, we have
\begin{equation} \Theta := \Theta^{(2)}_{1_L} = \pa_{1231} + \ze^{-2} \pa_{3123} + \ze^{-4} \pa_{2312}. \end{equation}
When computing $\Theta(fg) = \om(g) \Theta(f) = \sum X(j,\ell)$, there are $12$ lower terms, coming from the four colength $1$ subexpressions of $1231$, $3123$, and $2312$.

Both $1231$ and $3123$ have a subexpression $123$, and no other subexpression expresses $s_1 s_2 s_3$. The terms involving $\pa_{123}(f)$ in this expression for $\Theta(fg)$
are \begin{equation} \label{123coeff} \pa_1(g) \pa_{123}(f) + \ze^{-2} \pa_3(s_1 s_2 s_3(g))\pa_{123}(f). \end{equation} In fact, \eqref{123coeff} vanishes for any $g$ of degree $1$. For example, if $g = x_1$, then $1 + \ze^{-2} \pa_3(\ze^{-3} x_1) = 0$, since $\ze^6 = 1$. One can also check vanishing directly for $x_2$ and $x_3$.

Similarly, both $1231$ and $2312$ have a subexpression for $s_1 s_2 s_1$, and no other subexpression expresses this element. The terms involving $\pa_{121}(f)$ in this expression for $\Theta(fg)$
are \begin{equation} \label{121coeff} \pa_3(s_1(g)) \pa_{121}(f) + \ze^{-4} \pa_3(s_1 s_2(g))\pa_{212}(f). \end{equation}
Recalling that $\pa_{212} = \ze \pa_{121}$, we seek to show that
$\pa_3(s_1(g)) + \ze^{-3} \pa_3(s_1 s_2(g)) = 0$ for any $g$ of degree $1$. Again, we encourage the reader to check this directly for $g = x_1, x_2, x_3$.
\end{ex}

%========================================================
\subsection{Detailed summary of literature on Demazure operators} \label{ssec:rampetas}
%========================================================

We try to summarize previous work relating to Demazure operators and complex reflection groups $G(m,d,n)$. In this section $W$ will represent the complex reflection group being discussed, either $G(m,1,n)$ or $G(m,m,n)$.

Previously, several authors had studied roots and reflections for complex reflection groups. In \cite{BreMal1}, Bremke and Malle introduced for $G(m,1,n)$ a partition of the roots
into positive and negative roots, and a special subset of the positive roots. The number of special positive roots sent to negative roots defines a length function on $G(m,1,n)$,
agreeing with the usual length function on the subgroup $S_n$. The Poincar\'{e} polynomial of this length function matches the graded rank of $R$ over $R^W$, so there is a unique
longest element $w_0$. Applications of this length function to the study of reduced words in $G(m,1,n)$ were begun in \cite{BreMal2} and continued in \cite{RamShoIandII}.

In \cite{BreMal2}, Bremke and Malle extended provided more concrete details on $G(m,1,n)$. They defined in \cite[Lemma 1.10 and
preceding]{BreMal2} certain special elements $w(a,i)$ for $1 \le a \le m$ and $1 \le i \le n$. For a sequence $\ab =(a_1, \ldots, a_n)$, the length of the element
\[ x_{\ab} := w(a_1,1) w(a_2,2) \cdots w(a_n,n) \]
is equal to the sum of the lengths of its parts. Letting $N$ denote the set of such $x_{\ab}$, they proved \cite[Corollary 1.16]{BreMal2} that $N$ is a set of distinguished coset
representatives for left cosets $S_n \backslash W$, for which $\ell(w x) = \ell(w) + \ell(x)$ for $w \in S_n$ and $x \in N$. In some sense, this gives an unusual notion of a distinguished reduced expression for any element of $W$, matching their length function. Note that the elements of $N$ are not
typically elements of the translation lattice.

Bremke and Malle in \cite{BreMal2} also extended their results to $G(m,m,n)$. By \cite[proof of Proposition 2.6]{BreMal2}, the same construction of $N$ works for $G(m,m,n)$ under
the assumption that $\sum a_i \equiv 0$ modulo $m$. Shoji adapted these ideas to the general case of $G(m,d,n)$ in \cite{ShojiGrpn}.

In part II\footnote{To clarify, \cite{RamShoIandII} is a compilation of two papers, part I and part II, and all references are from part II unless stated otherwise.} of
\cite{RamShoIandII}, Shoji and Rampetas defined Demazure operators associated to the reflection representation of $G(m,1,n)$. They defined in \cite[(2.3.1)]{RamShoIandII} a
Demazure operator $\De(a,i)$ for each element $w(a,i)$ of (negative) degree matching its length. By taking a product over these distinguished reduced expressions, they obtain an
operator $\De_w$ for each $w \in W$. They proved \cite[Proposition 2.12]{RamShoIandII} that these operators are linearly independent, and act perfectly on the coinvariant algebra
(making these operators isomorphic to the dual space to the coinvariant algebra). They prove \cite[Proposition 2.21]{RamShoIandII} that the coinvariant algebra is a Frobenius algebra, with Frobenius trace $\De_{w_0}$. They study the subspace of $\End_{R^W}(R)$
spanned by $\{\De_w\}_{w \in W}$, but they do not suggest it is a subalgebra (it is not). They do not investigate the algebra it generates. They do investigate the subalgebra
generated both by $\{\De_w\}$ and by multiplication by elements of $R$, which we might call the Rampetas-Shoji nilHecke algebra. They prove that $\{\De_w\}$ is a basis for their
nilHecke algebra over the subring $R$. However, the nilHecke algebra is quite large (it contains the group algebra of $W$). Passing to the nilHecke algebra obfuscates any
special features of the choice of simple reflections.

In \cite{Ramp}, Rampetas extended this work to $G(m,m,n)$, and there are considerable extra subtleties. Earlier in \cite{RamShoIandII}, the Demazure operator $\De_t$ associated to
the generating rotation element $t$ (of order $m$) was easy to construct, and other Demazure operators $\De(a,i)$ are built from this. For Rampetas there is no $\De_t$, and the
construction of $\De(a,i)$ is more complicated.

Let us recall from \S\ref{ssec:otherviews} the typical way to construct the group $G(m,m,n)$ acting on its reflection representation. Inside the representation $V_q$
one can define reflections $s_{i'}$ which swap $v_i$ with $q^2 v_{i+1}$. Most previous work views $G(m,m,n)$ as being generated by the symmetric group $S_n$ and the reflection
$s_{1'}$ (whereas we use the reflection $s_0$). Let $\pa_i$ and $\pa_{i'}$ be the ordinary divided difference operators associated to $s_i$ and $s_{i'}$. Rampetas \cite[(3.1.3), \S
3.5]{Ramp} alternates between $\pa_i$ and $\pa_{i'}$ to obtain degree $-a$ operators $\De_i^{(a)}$ and $\De_{i'}^{(a)}$ (the prime determines which of $\pa_i$ or $\pa_{i'}$ is
applied first), which he uses to build $\De(a,i)$ and then $\De_w$.

Let us note that the degree $-1$ Demazure operators $\De_w$ are precisely the ordinary Demazure operators $\pa_i$ associated to $S_n$, and the operator $\pa_{1'}$. Significantly,
$\pa_{1'}$ is not a linear combination of our degree $-1$ Demazure operators $\pa_i$ and $\pa_0$, which can be seen easily by evaluating these operators on the polynomial $v_1^2$.
In particular, Rampetas' operators $\De_w$ are not contained inside $\NC(m,m,n)$. Again, Rampetas does not suggest that the subspace spanned by $\{\De_w\}$ is a subalgebra of $\End_{R^W}(R)$, nor does he investigate the algebra generated by his operators, beyond investigating the nilHecke algebra. It would be interesting to present the algebra generated by Rampetas' operators by generators and relations.

Rampetas then states a non-degeneracy conjecture \cite[(3.12.2)]{Ramp}. To our knowledge, this conjecture has yet not been proven. Assuming this conjecture, he proves that $\{\De_w\}_{w
\in W}$ is linearly independent (Lemma 3.20), acts perfectly\footnote{This is effectively the same statement as the conjectural assumption (3.12.2)!} on the coinvariant algebra (Theorem
3.25), and further remarks that $\De_{w_0}$ agrees with $J$ up to nonzero scalar (Proposition 3.18). Unlike \cite{RamShoIandII}, there is no result stated that $\De_{w_0}$ is a
Frobenius trace (assuming the conjecture).

It would be interesting to try to construct analogs of the above work for the presentation of $G(m,m,n)$ as a quotient of $W_{\aff}$, with
simple reflection $s_0$ instead of $s_{1'}$. Is there a length function and a set of special positive roots\footnote{In \cite[part I]{RamShoIandII} they pin down the Bremke-Malle
length function as the unique function satisfying certain properties. One desires an analogous statement for this potential alternate length function.} which is compatible not just
with restriction to the standard copy of $S_n$, but also with restriction to the many different copies of $S_n$ within $W_{\aff}$? This seems like a tall order, but what is the
appropriate weakening? Can one find canonical coset representatives $N$ for each $S_n$, and a notion of length respecting this length function? Can one construct special elements
$\pa_w \in \NC(m,m,n)$ which are linearly independent and act perfectly on the coinvariant algebra?

%%%%%%%%%%%%%%%%%%%%%%%%%%%%%%%%%%%%%%%%%%%%%%%%%%%%%%%%%
%========================================================
\section{Computing the Frobenius trace} \label{sec:closedformularou}
%========================================================
%%%%%%%%%%%%%%%%%%%%%%%%%%%%%%%%%%%%%%%%%%%%%%%%%%%%%%%%%

Throughout this chapter, $n=3$, $z$ is a formal variable, $\ze$ is a primitive $3m$-th root of unity, and $R = R_m$. Computations take place either within $\NH(z,3)$ or $\NH(m,m,3)$. The conventions we establish here will also be used in the sequel \cite{EJY2}. We refer the reader to \S\ref{ssec:computations} for a preliminary discussion of the contents in this chapter.

\subsection{Choice of reduced expression, and setting expectations}
%========================================================

\begin{defn} Fix $a, b \ge 0$, and $i \in \Om$. Suppose that $\ws_{i_R,b} = \ws_{j_L,b}$, or if $b = 0$ set $j+1 = i$. Then let
	\begin{equation} \un{w}(a,b,i) = \cw_{j_R, a} \circ (j+1) \circ \ws_{i_R, b}. \end{equation}
This word has length $\ell = a+b+1$, and ends in $i$. \end{defn}

\begin{ex} We have $\un{w}(3, 5, 2) = (1,2,3,1,3,2,1,3,2)$. One can view this as a clockwise word of length $4 = a+1$ and a widdershins word of length $6 = b+1$ which overlap in the middle index $1 = j+1$. The last index is $i = 2$. \end{ex}

\begin{ex} Here are some edge cases. We have $\un{w}(0,0,i) = (i)$. we have $\un{w}(0,b,i) = \ws_{i_R, b+1}$ and $\un{w}(a,0,i) = \cw_{i_R, a+1}$. \end{ex}

\begin{lem} \label{lem:abiparametrize} Every non-identity element of $W_{\aff}$ has a reduced expression $\un{w}(a,b,i)$ for a unique triple $(a,b,i)$. We write this element as $w(a,b,i)$. \end{lem}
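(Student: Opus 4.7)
The plan is to combine a counting argument with direct verification of reducedness and surjectivity of the parametrization.

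\textbf{Counting.} The number of triples $(a,b,i)$ with $a,b\geq 0$ and $a+b+1 = \ell$ equals $3\ell$: there are $\ell$ pairs $(a,b)$ summing to $\ell-1$, and three choices of $i$. Independently, the affine Weyl group $W_{\aff}$ of type $\tilde A_2$ has exactly $3\ell$ elements of length $\ell$ for every $\ell\geq 1$, as seen from the triangular alcove tessellation of the plane (or equivalently from the semidirect product $S_3 \ltimes \Lambda_{\rt}$). So the two cardinalities already match at each length.

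\textbf{Reducedness.} Next I would prove by induction on $a+b$ that $\un{w}(a,b,i)$ is reduced of length $a+b+1$. The base $a=b=0$ is trivial. For the induction, I use the two elementary extensions
\[
\un{w}(a+1,b,i) = s_{j-a}\cdot\un{w}(a,b,i), \qquad \un{w}(a,b+1,i-1) = \un{w}(a,b,i)\cdot s_{i-1}
\]
(with $j=i+b-1$), which together allow one to reach any triple from $(0,0,i')$ for a suitable $i'$. At each step, the length-increase criterion ($\ell(sv)=\ell(v)+1$ iff $v^{-1}(\alpha_s)\in\Phi^+$, which follows from Proposition \ref{prop:length}) reduces the claim to a concrete root computation in the realization $V_z$, in the style of the proof of Proposition \ref{onlyonerootgoesneg}.

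\textbf{Surjectivity and bijection.} With reducedness in hand, the map $(a,b,i)\mapsto w(a,b,i)$ is length-preserving; since source and target both have cardinality $3\ell$ at each length $\ell$, bijectivity reduces to surjectivity. I would prove surjectivity by strong induction on $\ell$. Given $w\in W_{\aff}$ with $\ell(w)=\ell\geq 2$, pick any right descent $s_k$ of $w$; then $ws_k = w(a',b',i')$ by the inductive hypothesis, and $w = w(a',b',i')\cdot s_k$ needs to be rewritten in canonical form. A short case analysis on $k$ relative to $i'$ (with subcases for when $a'=0$ or $b'=0$) shows that in most cases $w$ simply extends either the widdershins tail or the clockwise block of $\un{w}(a',b',i')$, while in the remaining cases a single braid move $s_p s_{p+1} s_p = s_{p+1} s_p s_{p+1}$ is needed to rearrange the decomposition.

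\textbf{Main obstacle.} The main technical obstacle is the bookkeeping in the surjectivity case analysis. One must determine, for each $w(a',b',i')$, which simple reflections are right ascents (this depends on whether $a'$ and $b'$ are each zero or positive), identify the new triple $(a,b,i)$ to which $w(a',b',i')\cdot s_k$ corresponds, and verify in the braid-move subcases that the indices shift consistently modulo $n=3$ to yield a valid canonical form.
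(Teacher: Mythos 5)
You take a genuinely different route from the paper. The paper's proof is geometric and informal: it identifies each $\un{w}(a,b,i)$ with an alcove and argues from the picture of the two-sided cells that the six unique-reduced-expression rays together with the rays $\un{w}(a,-,i)$ sweeping through the six hextants exhaust the alcoves bijectively. You replace this with a cardinality count (both triples and group elements number $3\ell$ in length $\ell$; for $W_{\aff}$ this is exactly \eqref{affineWeylpoincare}, though that count is itself usually justified by the alcove tessellation you invoke, so the geometry is black-boxed rather than eliminated), plus two inductions: reducedness of $\un{w}(a,b,i)$ via the root criterion (your recursions $\un{w}(a+1,b,i)=s_{j-a}\cdot\un{w}(a,b,i)$ and $\un{w}(a,b+1,i-1)=\un{w}(a,b,i)\cdot s_{i-1}$ are correct, and the needed root computations are routine given how clockwise and widdershins words act on $V_z$), and surjectivity by stripping a right descent. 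This would work, and its payoff is a proof one can check without pictures, at the price of the bookkeeping you acknowledge. One concrete correction to the sketch: in the surjectivity step it is not true that a single braid move always suffices. For instance $\un{w}(1,4,1)\cdot s_2=(1,2,1,3,2,1,2)$ needs a cascade of two braid moves, $(1,2,1,3,2,1,2)\to(1,2,1,3,1,2,1)\to(1,2,3,1,3,2,1)=\un{w}(3,3,1)$, and in general the moves propagate from the appended letter back toward the junction of the clockwise and widdershins blocks. The honest way to run your induction is to verify closed-form identities such as $w(a',b',i')\,s_{i'+1}=w(a'+2,b'-1,i')$ for $a'\ge 1$ and $b'\ge 2$ even, with separate formulas when $a'=0$ or $b'=0$ (and noting that for $b'$ odd, $a'\ge 1$ the letter $s_{i'+1}$ is a descent, so that case does not arise); this is more case-work than one braid move, but it is bookkeeping rather than a gap in the idea.
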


\begin{proof} We write this proof mostly to help the reader familiarize themselves with $W_{\aff}$ if necessary. We encourage the reader to use their favorite visualization tool for $W_{\aff}$ or its Coxeter complex (one which indicates the two-sided cells) when following this discussion. We recommend
\url{https://www.jgibson.id.au/lievis/affine_weyl/#.cells:twosided,labels:rex,simples:one-}, written by Joel Gibson, and we have taken the following picture from this website with permission.

\[ \ig{.15}{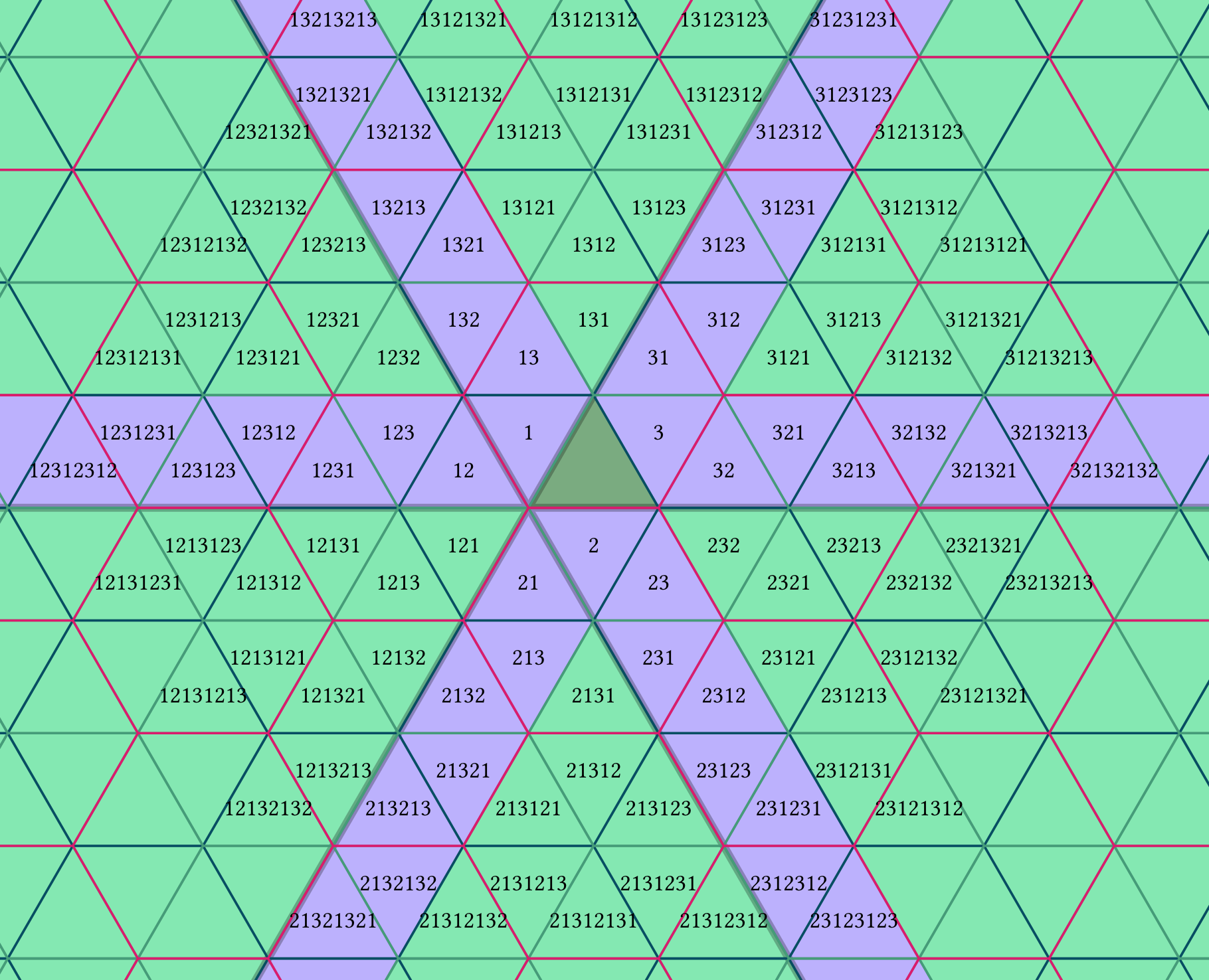} \]

Outside of the identity element, $W_{\aff}$ contains a two-sided cell (purple above) consisting of elements with a unique reduced expression, which form six rays emanating out from
the identity element. One ray is $\un{w}(a,0,1)$ as $a$ varies, which we denote as $\un{w}(-,0,1)$; this is the ray going due left of the identity element. The six rays are $\un{w}(-,0,i)$ and $\un{w}(0,-,i)$ for $i = 1, 2, 3$. Removing the six rays, what remains is the top two-sided cell (green above), which is split
into six connected regions we call \emph{hextants}.

We claim that the elements $\un{w}(a,b,1)$ for $a,b > 0$ comprise precisely two of these hextants, the ones which abut the ray $\un{w}(-,0,1)$. Similar statements hold for $\un{w}(a,b,i)$ for $i = 2,3$; we focus on $i=1$ for an example. The alcove associated to $\un{w}(a,0,1)$ is a triangle which shares an edge with the lower left hextant when $a$ is odd (and thus the length of the word is even), or with the upper left hextant when $a$ is even. The continuation $\un{w}(a,1,1)$ will step into the adjacent hextant across this edge. Fixing $a$ and letting $b \ge 0$ vary, the elements $\un{w}(a,b,1)$ form a ray (which we denote by $\un{w}(a,-,1)$) traveling through this hextant, parallel to the other bounding ray of the hextant. The rays $\un{w}(a,-,1)$ for $a=1, 2, 3, 4$ are depicted in yellow below, as spurs off the ray $\un{w}(-,0,1)$ which is red; when $a$ is odd the rays go down and left, and when $a$ is even the rays go up and left.

\[ \ig{.15}{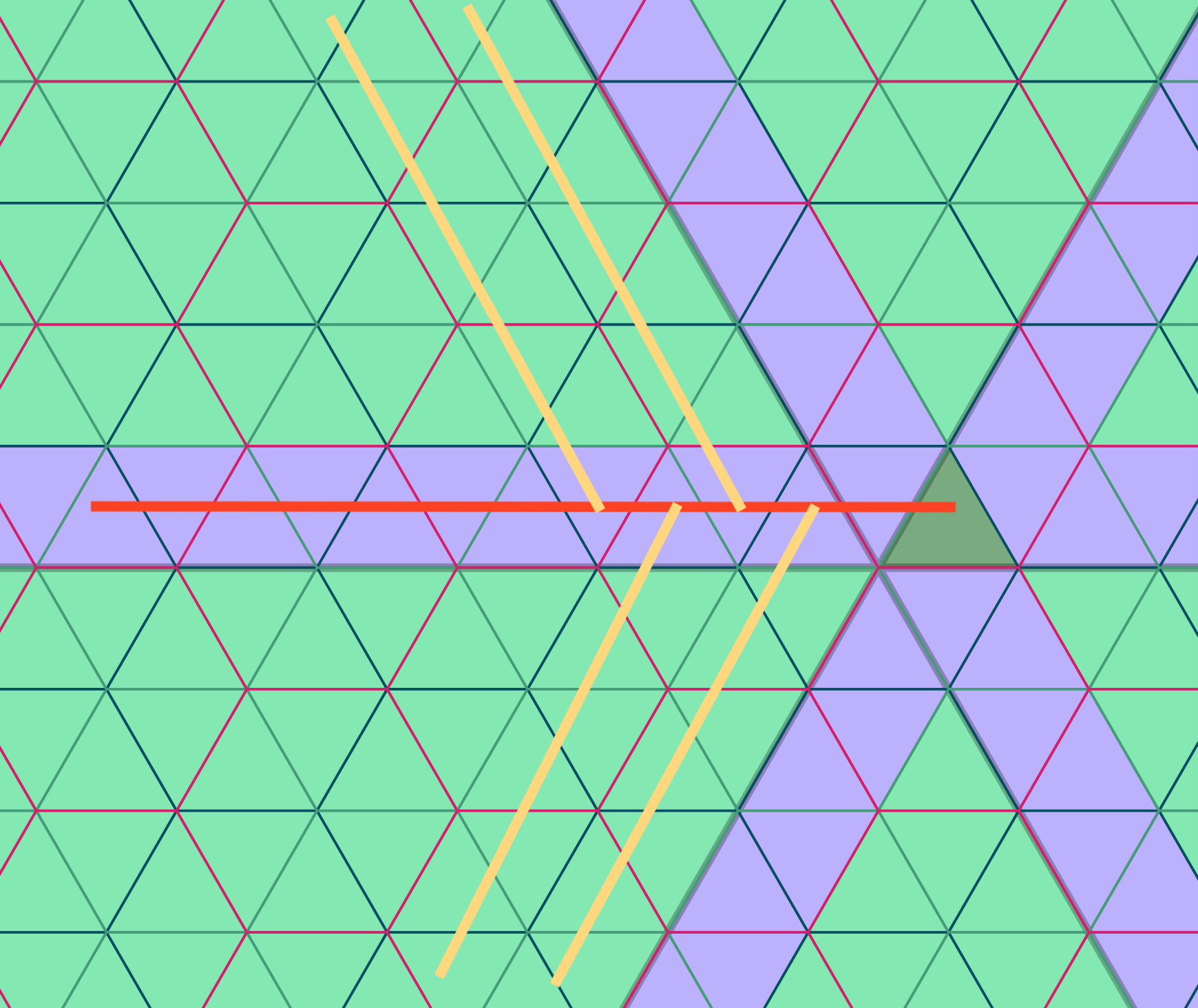} \]

Rotate this picture by 120 degrees to visualize $\un{w}(a,b,2)$ or $\un{w}(a,b,3)$. It is clear that, collectively, these rays bijectively exhaust $W_{\aff}$. \end{proof}

\begin{rem} \label{rmk:maximalcyclicsubwords} One useful feature of this parametrization is the following: no reduced expression of $w(a,b,i)$ has a clockwise subword of length $> a+1$ or a widdershins subword of length $> b+1$. Thus $\un{w}(a,b,i)$ maximizes the length of cyclic subwords. We do not use this result, but a sketch of the proof goes along the following lines. If one starts at any element $w \in W_{\aff}$ any composes on the right with a cyclic word of growing length, the result will be a ray pointing in one of six directions. Three of these directions are reserved for clockwise words, and three for widdershins, just as for the original six rays emanating from the origin in the (purple) unique reduced expression cell. Let $(x, y) \in \NN \times \NN$, and consider a usual square grid. When following grid lines from the origin to the point $(x,y)$ in the plane, when taking a minimal length path one can go at most $x$ steps to the right and at most $y$ steps up. One obtains the result by translating this idea to a triangular grid. \end{rem}

Let us describe some features of the element $w(a,b,i)$: \begin{enumerate}
\item If $a = 0$ or $b=0$, it has a unique reduced expression. For the remaining points we assume $a, b > 0$.
\item If $b$ is odd, then $w(a,b,i)$ has two elements in its right descent set. Its alcove is oriented the same way as its surrounding hextant, with one point aiming towards the identity.
\item If $b$ is even, then $w(a,b,i)$ has one element in its right descent set. Its alcove is oriented the opposite way to its surrounding hextant, with one point aiming away from the identity.
\item If $a$ is odd, then $w(a,b,i)$ has two elements in its left descent set. Such elements live in an \emph{inner hextant}, a hextant with a minimal length element of length $3$.
\item If $a$ is even, then $w(a,b,i)$ has one element in its left descent set. Such elements live in an \emph{outer hextant}, a hextant with a minimal length element of length $4$.
\end{enumerate}
Because of these differences, one should not be surprised to see our formulas for Demazure operators split into cases based on the parity of $a$ and $b$, with additional subcases when either $a$ or $b$ is zero.

\begin{lem} \label{lem:killx123} Let $\un{w}$ be a word of length $\ell$ and $f \in R_z$ be a monomial of degree $\ell$. If $x_1 x_2 x_3$ divides $f$ then $\pa_{\un{w}}(f) = 0$.
\end{lem}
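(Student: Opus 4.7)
The plan is to exploit a degree argument combined with the fact that $\zz := x_1 x_2 x_3$ is $W_{\aff}$-invariant, hence commutes with every Demazure operator.

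First I would verify that $\zz \in R_z^{W_{\aff}}$. It suffices to check invariance under each simple reflection $s_i$. Since $s_i$ fixes the third variable and
\begin{equation*}
s_i(x_i) \cdot s_i(x_{i+1}) = (z x_{i+1})(z^{-1} x_i) = x_i x_{i+1},
\end{equation*}
we get $s_i(\zz) = \zz$ for every $i \in \Om$, so $\zz$ is $W_{\aff}$-invariant. By construction $\NC(z,3)$ is a subalgebra of $\End_{R_z^{W_{\aff}}}(R_z)$, so each $\pa_{\un{w}}$ is $R_z^{W_{\aff}}$-linear, and in particular $\zz$-linear.

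Now write $f = \zz \cdot g$ for some monomial $g \in R_z$ of degree $\ell - 3$. By the linearity just observed,
\begin{equation*}
\pa_{\un{w}}(f) = \zz \cdot \pa_{\un{w}}(g).
\end{equation*}
The operator $\pa_{\un{w}}$ has degree $-\ell$, so $\pa_{\un{w}}(g)$ lives in degree $(\ell - 3) - \ell = -3$ of the nonnegatively graded ring $R_z$. Hence $\pa_{\un{w}}(g) = 0$, and therefore $\pa_{\un{w}}(f) = 0$.

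There is no real obstacle here: the only point requiring any computation is the $W_{\aff}$-invariance of $\zz$, which follows immediately from the normalization $s_i(x_i) = z x_{i+1}$, $s_i(x_{i+1}) = z^{-1} x_i$ that makes the $z$-factors cancel in the product $x_i x_{i+1}$. Everything else is a one-line degree count against the fact that Demazure operators preserve $R_z$ rather than mapping into the fraction field.
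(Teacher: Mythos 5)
Your proof is correct and matches the paper's own argument: factor out the $W_{\aff}$-invariant element $x_1x_2x_3$, use $R_z^{W_{\aff}}$-linearity of $\pa_{\un{w}}$, and conclude by a degree count. You simply make explicit the one-line check that $x_1x_2x_3$ is invariant, which the paper takes for granted.
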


\begin{proof} Suppose $f = g \cdot x_1 x_2 x_3$. Since $x_1 x_2 x_3$ is $W_{\aff}$-invariant, we have $\pa_{\un{w}}(f) = x_1 x_2 x_3 \pa_{\un{w}}(g)$. But for degree reasons,
$\pa_{\un{w}}(g) = 0$. \end{proof}

Because of Lemma \ref{lem:killx123}, we need only examine $\pa_{\un{w}}(f)$ when $f = x_1^k x_2^{\ell - k}$ or $x_2^k x_3^{\ell - k}$ or $x_3^k x_1^{\ell - k}$. But using the symmetry $\si$ and
\eqref{siondem}, we can reduce to the case when $f = x_1^k x_2^{\ell - k}$. We can also eliminate the case $k = 0$ if desired, since $\si$ will reduce it to the case of $k = \ell$.

The scalars we aim to compute are below.

\begin{notation} Suppose $w = w(a,b,i) \in W_{\aff}$, and $f$ is a polynomial of degree $\ell(w) = a+b+1$. Then we set
\begin{equation} \Xi(w,f) = \Xi(a,b,i,f) := \pa_{\un{w}(a,b,i)}(f). \end{equation}
For an integer $0 \le k \le a+b+1$ we set
\begin{equation} \Xi(w,k) = \Xi(a,b,i,k) := \Xi(w, x_1^k x_2^{a+b+1-k}). \end{equation}
These scalars are computed in $\NH(z,3)$, before specializing to a root of unity. To emphasize the fact that they live in $\ZZ[z,z^{-1}]$, we may write $\Xi(w,f)(z)$. When we specialize this scalar by setting $z \mapsto \ze$, we write $\Xi(w,f)(\ze)$ for the result. We set
\begin{equation} \Xi_m(a,i) = \Xi(a,3m-1-a,i,2m)(\ze). \end{equation}
This restricts our attention to elements of length $3m$ acting on the staircase mononomial $\stair = x_1^{2m} x_2^m$. In this context, we always assume $b := 3m-1-a$.
\end{notation}

One should expect the formulas for $\Xi(a,b,i,k)$ to depend on the parity of $a$ and $b$, as already noted. One should also expect that
$k = a+b+1$ or $k=0$ is an edge case with its own set of formulas. After all, the monomial $x_1^k$ possesses additional symmetry which other monomials lack. Finally, one should
expect three different formulas for $i=1$, $i=2$, and $i=3$; we have broken the symmetry by fixing our attention on $x_1^k x_2^{\ell - k}$. There are an immense number of cases. Thankfully, all the formulas with $a, b > 0$ and $0 < k < a+b+1$ have a great deal in common, with only minor annoying issues (a sign, a power of $z$, an offset in indexing) which depend on the specific
case. We say that quadruples $(a,b,i,k)$ with $a, b > 0$ and $0 < k < a+b+1$ live in the \emph{standard regime}.

We decided that it was not worth reprinting the formula in this paper, even only the formula in the standard regime, but it is discussed thoroughly in the sequel \cite[Section 5]{EJY2}. Instead we focus on $\Xi_m(a,i)$.

\begin{rem} \label{alreadyzero}
By \eqref{eq:longcyclicvanishes}, $\Xi_m(a,i) = 0$ if either $a \ge 2m+1$ or $b \ge 2m+1$, because then $\un{w}(a,b,i)$ contains a cyclic word of length at least $2m+2$. Since $a+b= 3m-1$, $\Xi_m(a,i)$ vanishes if either $a \ge 2m+1$ or $a \le m-2$. In particular, only the standard regime is relevant for the computation of $\Xi_m(a,i)$.
\end{rem}

%========================================================
\subsection{Extending scalars}
%========================================================

The formula for $\Xi(a,b,i,k)$ and $\Xi_m(a,i)$ will involve quantum numbers. Up to an invertible scalar, these are quantum numbers in $q$, not in $z$! Remember that $z^3 = q^{-2}$.
Thus the scalar $1 + z^3$ agrees with the unbalanced quantum number $(2)_{q^{-2}} = 1 + q^{-2}$, and $1 + z^3 + z^6$ agrees with $(3)_{q^{-2}} = 1 + q^{-2} + q^{-4}$, etcetera.
Unbalanced quantum numbers live within the ring $\ZZ[z^3]$.

To prove our formulas we will need various manipulations of quantum numbers, as well as their relationship to trigonometry. These are easier to state for balanced quantum numbers like $[2]_q = q + q^{-1}$
and $[3]_q = q^{-2} + 1 + q^2$. These can only be expressed using half-powers of $z$, e.g. $[2]_q = z^{\frac{3}{2}} - z^{-\frac{3}{2}}$. To this end, we introduce a square root of $z$, which we call $p$. Then 
\begin{equation} z = p^2, \qquad q = p^{-3}. \end{equation}
For example, $p^3[2]_q = 1 + z^3$. We give our formulas as expressions involving powers of $p$, $q$, and $z$, whatever is most convenient, but the result ultimately lives inside the ring
$\ZZ[z,z^{-1}] \subset \ZZ[p,p^{-1}]$.

In our formulas we will often need both ordinary binomial coefficients and quantum binomial coefficients. Ordinary binomial coefficients, which will be denoted $\binom{k}{c}$ and live in $\ZZ$, often appear within exponents. Balanced quantum binomial coefficients with respect to the variable $q$ will be denoted ${k \brack c}$.

%========================================================
\subsection{Formula at a root of unity} \label{ssec:evalatrou}
%========================================================

Here is a closed formula for $\Xi_m(a,i) = \Xi(a,3m-1-a, 2m, i)(\ze)$.

\begin{notation} \label{notation:bottom} Suppose $a + b + 1 = 3m$. Let $b = 2 \beta+1$ or $2 \beta + 2$, depending on parity. Let $m = 2d$ or $m = 2d+1$ depending on parity. Unless $m$ and $a$ and $b$ are odd, set $\bottom = d-1$. If $m$ and $a$ and $b$ are odd, set $\bottom = d$. \end{notation}
	
\begin{rem} Let $a = 2 \alpha+1$ or $2 \alpha+2$, depending on parity. The index $\bottom$ is designed so that $\bottom \le \alpha$ if and only if $\beta \le m-1$ if and only if $b \le 2m$. This is needed for $\Xi_m$ to be nonzero, see Remark \ref{alreadyzero}. Similarly, $\bottom \le \beta$ if and only if $\alpha \le m-1$ if and only if $a \le 2m$.
	
One also has $(\alpha - \bottom) + (\beta - \bottom) = m-1-\bottom$. Thus 
\[ {m-1-\bottom \brack \alpha - \bottom} = {m-1-\bottom \brack \beta - \bottom}.\] \end{rem}

\begin{thm} \label{thm:whatisw0} Let $m = 2d$ or $m = 2d+1$ depending on parity. Fix $a$ and $i$, let $b = 3m-1-a$, and let $\beta, d, \bottom$ be as in Notation \ref{notation:bottom}. If $\bottom \le \beta \le m-1$ then
\begin{equation} \Xi_m(a,i) = (-1)^d m^2 {m-1-\bottom \brack \beta - \bottom} \cdot \blahblah \end{equation}
where
\begin{equation} \blahblah = p^{2\binom{\beta}{2} - 7d} \cdot \begin{cases} p^{- 3 \beta d + \beta - 1} & \text{ if $m$ is even and $a$ is even,} \\ 
	-p^{ - 3 \beta d + 5\beta + 3} & \text{ if $m$ is even and $a$ is odd,} \\
	(-1)^{\beta+1} p^{-9 \beta d - \beta - 2} & \text{ if $m$ is odd and $a$ is even,} \\ 
	(-1)^{\beta+1} p^{-9\beta d - 2 \beta - 3} & \text{ if $m$ is odd and $a$ is odd.} \end{cases}
\end{equation}
Otherwise, $\Xi_m(a,i) = 0$.
\end{thm}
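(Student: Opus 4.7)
The plan is to avoid attacking Theorem \ref{thm:whatisw0} directly at the root of unity, and instead prove a much more general closed formula for $\Xi(a,b,i,k)(z) = \pa_{\un{w}(a,b,i)}(x_1^k x_2^{a+b+1-k})$ inside $\NH(z,3)$, then specialize $z \mapsto \ze$ and $k \mapsto 2m$ (with $a+b+1 = 3m$) at the end. The advantage of working generically is that $\NH(z,3)$ admits a clean inductive structure: $\un{w}(a,b,i)$ is built by appending a single simple reflection to a shorter word of the form $\un{w}(a',b',i')$, so $\Xi(a,b,i,k)$ can be expressed in terms of $\Xi(a',b',i',\,\cdot\,)$ evaluated on Demazure images of $x_1^k x_2^{a+b+1-k}$. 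Since we know each $\pa_j$ acts explicitly on monomials via geometric series in $z$, every inductive step reduces to a manipulable sum of quantum binomial coefficients.

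The first stage of the proof is therefore to guess and verify a closed formula for $\Xi(a,b,i,k)(z)$, the prototype being \eqref{eq:paabikintro} in the case $a$ even, $b$ odd, $0 < k < a+b+1$. One has twelve main cases, arising from the parities of $a$ and $b$ combined with the choice of $i \in \Om$, each producing its own sign, half-power of $z$, and slight reindexing of $\alpha, \beta$. Alongside these, one must establish matching formulas in the edge cases $a=0$, $b=0$, $k=0$, or $k = a+b+1$; these carry their own symmetries (e.g.\ via $\si$ and $\tau$ from Definition \ref{defn:symmetries}, which act on $\pa_i$ by \eqref{siondem} and \eqref{tauondem}) and feed into the recursion as base/side cases. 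With these formulas in hand, induction on $\ell = a+b+1$ proceeds case-by-case: one appends the next simple reflection, applies the Leibniz rule \eqref{eq:Leibniz} to split the resulting Demazure output, and recognizes the result as the predicted formula for $\Xi(a+1,b,i',k)$ or $\Xi(a,b+1,i',k)$ after a $q$-binomial identity.

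Once the generic formula is established, the second stage is the specialization. Set $z = \ze$ with $\ze^{3m}=1$, take $k=2m$, and use $a+b+1 = 3m$ so that $\alpha + \beta = m-1$ or thereabouts depending on parities. At this point the factors $[\alpha]!\,[\beta]!\,(q-q^{-1})^{\alpha+\beta+1}$ collapse thanks to the root-of-unity identity $(q-q^{-1})^{m-1}[m-1]! \sim m$ (up to a power of $\sqrt{-1}$), accounting for one factor of $m$; the other factor of $m$ comes from $[\alpha+\beta+2-k]$ specializing, or from the collapse of the inner sum. The dependence on $i$ disappears, in agreement with Corollary \ref{cor:sitauonJ} and the fact that $\Xi_m$ must be $\si$-equivariant up to a power of $\ze$, and one reads off the signs and powers of $p$ from $\eta'(a,b,i)$ case by case. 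The vanishing half of the statement, for $\beta < \bottom$ or $\beta > m-1$, follows from Remark \ref{alreadyzero} via the roundabout relation (Theorem \ref{thm:roundabout}), so one only needs the closed formula in the standard regime.

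The main obstacle is precisely the inner sum in \eqref{eq:paabikintro}: because the exponent $j(-3(\alpha+\beta+2)+2k)$ does not match the Chu--Vandermonde exponent $j(M+N)$ of \eqref{qchuvander}, there is no generic identity collapsing it to a single quantum binomial. One must instead show that at the specialization $z \mapsto \ze$, $k \mapsto 2m$, this sum reduces (up to a unit) to the single quantum binomial ${m-1-\bottom \brack \beta-\bottom}$. This is the step where the bulk of the quantum-number bookkeeping lives, and it is the central technical content delegated to the companion paper \cite{EJY2}; the remaining specializations --- the signs, the parities, the half-power of $\ze$ absorbed into $\eta(a,b)$, and the clean appearance of $m^2$ --- then fall out mechanically once this identity is in place.
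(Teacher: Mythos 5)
Your proposal follows essentially the same route as the paper: the paper's own proof of Theorem \ref{thm:whatisw0} is a citation to the companion paper \cite{EJY2}, whose strategy --- an inductive closed formula for $\Xi(a,b,i,k)(z)$ in $\NH(z,3)$ with twelve main cases plus edge cases, followed by the specialization $z \mapsto \ze$, $k = 2m$ using root-of-unity identities such as $(q-q^{-1})^{m-1}[m-1]! \sim m$, with the vanishing cases handled by the roundabout relation --- is exactly what you outline. Like the paper, you defer the genuinely hard steps (the case-by-case inductive verification and the collapse of the non-Chu--Vandermonde sum at the root of unity) to that computation, so there is nothing to fault beyond noting that the heavy lifting remains in \cite{EJY2}.
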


\begin{proof} This is \cite[Theorem 6.7]{EJY2}. \end{proof}

Note that this formula for $\Xi_m(a,i)$ is independent of $i$! This is essential given Corollary \ref{cor:sitauonJ}. This theorem serves to classify which elements of $W_{\aff}$ give rise to a Frobenius trace.

\begin{cor} \label{cor:actuallymaintheorem} Let $\un{w} \in W_{\aff}$ be any reduced expression of length $3m$. If it represents the same element as $\un{w}(a,b,i)$, then $\pa_{\un{w}}$ is a Frobenius trace if and only if $a, b \le 2m$. \end{cor}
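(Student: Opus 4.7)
The plan is to derive Corollary \ref{cor:actuallymaintheorem} as a direct synthesis of three ingredients already in hand: the parametrization of $W_{\aff}$ by triples $(a,b,i)$ from Lemma \ref{lem:abiparametrize}, the ``Frobenius trace iff nonzero on the staircase'' criterion from Corollary \ref{cor:nonzeroonstairgoodenough}, and the explicit formula for $\Xi_m(a,i) = \pa_{\un{w}(a,b,i)}(\stair)$ announced in Theorem \ref{thm:whatisw0}.

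First I would reduce to the canonical reduced expression. Any two reduced expressions for the same element of $W_{\aff}$ differ only by braid and commutation relations; by \eqref{zeR3}, the associated Demazure operators therefore differ only by an invertible scalar (a power of $\ze$). Hence $\pa_{\un{w}}$ is a Frobenius trace if and only if $\pa_{\un{w}(a,b,i)}$ is, so it suffices to treat the canonical reduced expressions $\un{w}(a,b,i)$ with $a+b+1 = 3m$.

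Next I would invoke the criterion. For $n=3$ we have $\binom{n}{2} = 3$, so $\pa_{\un{w}(a,b,i)}$ has degree $-3m = -m\binom{n}{2}$, which is exactly the degree in which Corollary \ref{cor:nonzeroonstairgoodenough} applies: the operator is a Frobenius trace if and only if its value on the staircase monomial $\stair = x_1^{2m} x_2^m$ is an invertible scalar, i.e.\ if and only if $\Xi_m(a,i) \ne 0$.

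Finally I would read off the conclusion from Theorem \ref{thm:whatisw0}. That theorem states $\Xi_m(a,i) = 0$ unless $\bottom \le \beta \le m-1$, and in the latter case gives a nonzero scalar (a unit times a quantum binomial coefficient, which is nonvanishing in the stated range, times $(-1)^d m^2$). The condition $\beta \le m-1$ translates, via $b \in \{2\beta+1, 2\beta+2\}$, into $b \le 2m$. The condition $\bottom \le \beta$ translates, via the identity $(\alpha-\bottom)+(\beta-\bottom) = m-1-\bottom$ noted just after Notation \ref{notation:bottom}, into $\alpha \le m-1$, which in turn is $a \le 2m$. Conversely, if $a \ge 2m+1$ or $b \ge 2m+1$ then $\un{w}(a,b,i)$ contains a cyclic subword of length $\ge 2m+2 = (m+1)(n-1)$, so $\pa_{\un{w}(a,b,i)} = 0$ by Corollary \ref{cor:longcyclicvanishes} (this is also Remark \ref{alreadyzero}). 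Combining these observations gives precisely the equivalence asserted. There is no real obstacle here: the genuine difficulty has been offloaded to Theorem \ref{thm:whatisw0}, whose proof lives in the companion paper \cite{EJY2}; this corollary is essentially a bookkeeping consequence.
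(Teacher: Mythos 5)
Your proposal is correct and follows essentially the same route as the paper: reduce to the canonical expression $\un{w}(a,b,i)$ since braid relations only rescale by a unit, note that in degree $-3m$ being a Frobenius trace is equivalent to a nonzero value on $\stair$ (Lemma \ref{lem:uniqueuptoscalar}, Theorem \ref{thm:JFrob}, Corollary \ref{cor:nonzeroonstairgoodenough}), and read off the nonvanishing condition $a,b\le 2m$ from Theorem \ref{thm:whatisw0}. The extra details you supply (translating $\bottom\le\beta\le m-1$ via the remark after Notation \ref{notation:bottom}, and the roundabout-relation vanishing for $a$ or $b \ge 2m+1$) are consistent with, and slightly more explicit than, the paper's version.
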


\begin{proof} It is a restatement of Lemma \ref{lem:uniqueuptoscalar} that $\pa_{\un{w}(a,b,i)} = \Xi_m(a,i) \cdot J$, and Theorem \ref{thm:JFrob} states that $J$ is a Frobenius trace
map. Thus $\pa_{\un{w}(a,b,i)}$ is a Frobenius trace map if and only if $\Xi_m(a,i)$ is nonzero (see also Corollary \ref{cor:nonzeroonstairgoodenough}). Any other reduced expression for
the same element differs from $\un{w}(a,b,i)$ by a sequence of braid relations, the effect of which is merely to rescale $\pa_{\un{w}}$ by a unit. \end{proof}

\section{Examples and Experiments} \label{sec:exex}
%========================================================
%%%%%%%%%%%%%%%%%%%%%%%%%%%%%%%%%%%%%%%%%%%%%%%%%%%%%%%%%

%========================================================
\subsection{The example of $G(2,2,3)$} \label{ssec:223intro}
%========================================================

Let us look at the simplest interesting example: $n=3$ and $m=2$. Let $\ze$ be a primitive $6$-th root of unity. As for all examples with $n=3$, we let $s = s_1$, $t = s_2$, $u = s_0$.

As a quotient of $W$, $G(2,2,3)$ has the presentation
\begin{equation} \label{G223present} G(2,2,3) = \langle s,t,u \mid s^2 = t^2 = u^2 = 1, sts=tst, tut=utu, usu=sus, stsu=usts \rangle. \end{equation}
In addition to the usual Coxeter relations for $W$, the final relation states that $t_{\lon} s_0 = s_0 t_{\lon}$, or that $(s_0 t_{\lon})^m = 1$.

Let $\CC^4 = \CC \langle z_1, z_2, z_3, z_4\rangle$ be the defining representation of $S_4$, where $S_4$ acts to permute the variables. The standard representation of $S_4$ can be viewed as a subspace of elements $\sum a_i z_i$ where $\sum a_i = 0$.

\begin{thm} There is a group isomorphism
\begin{equation} \label{G223isomS4} \phi \co G(2,2,3) \stackrel{\sim}{\rightarrow} S_4, \qquad s \mapsto (12), \quad t \mapsto (13), \quad u \mapsto (14). \end{equation}
The following map $V_m \into \CC^4$ induces an equivariant (relative to $\phi$) isomorphism from $V_m$ to the standard representation:
\begin{equation} x_1 \mapsto z_1 - z_2 + z_3 - z_4, \quad x_2 \mapsto \ze^{-1}(-z_1 + z_2 + z_3 - z_4), \quad x_3 \mapsto \ze^{-2}(z_1 + z_2 - z_3 - z_4). \end{equation}
\end{thm}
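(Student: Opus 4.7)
The plan is to verify the two claims separately, each by direct computation with the given generators.

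For the group isomorphism $\phi$, I would first check that $\phi$ is a well-defined homomorphism by verifying each defining relation in the presentation \eqref{G223present}. The involutive relations $s^2 = t^2 = u^2 = 1$ are immediate since transpositions are involutions. The three braid relations $sts = tst$, $tut = utu$, $usu = sus$ reduce to the standard $S_3$-style relations, giving the three-cycles $(23)$, $(34)$, $(24)$ respectively. The key relation is the commutation $stsu = usts$: under $\phi$, $sts \mapsto (23)$ and $u \mapsto (14)$, which are disjoint transpositions and hence commute, so both sides equal $(23)(14)$. Since $(12), (13), (14)$ generate $S_4$, $\phi$ is surjective; and since $|G(2,2,3)| = |S_3 \ltimes (\ZZ/2)^2| = 24 = |S_4|$, surjectivity forces $\phi$ to be an isomorphism.

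For the representation isomorphism, I would first observe that the images of $x_1, x_2, x_3$ each lie in the standard representation, since each image has coefficient-sum zero. Linear independence of the images is easy to check (the $3 \times 4$ coefficient matrix has rank $3$), so the map is injective, and both source and target are three-dimensional, hence the map is a linear isomorphism onto the standard representation. The substance is verifying $\phi$-equivariance on the three generators $s, t, u$, using the formulas $s_i(x_i) = \ze x_{i+1}$, $s_i(x_{i+1}) = \ze^{-1} x_i$, $s_i(x_j) = x_j$ from Definition \ref{defn:Vz} (with indices mod $3$, so $x_0 = x_3$). For instance, to check equivariance under $s$ one computes $(12)(z_1 - z_2 + z_3 - z_4) = -z_1 + z_2 + z_3 - z_4 = \ze \cdot \ze^{-1}(-z_1 + z_2 + z_3 - z_4)$, matching $\ze$ times the image of $x_2$. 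Analogous computations handle $s(x_2) = \ze^{-1}x_1$, $s(x_3) = x_3$, and similarly for $t$ via $(13)$ and $u$ via $(14)$.

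The one subtle step is the verification for $u = s_0$, where $s_0(x_1) = \ze^{-1} x_3$ (using $x_0 = x_3$). Here the computation $(14)(z_1 - z_2 + z_3 - z_4) = -z_1 - z_2 + z_3 + z_4$ must be compared with $\ze^{-1} \cdot \ze^{-2}(z_1 + z_2 - z_3 - z_4) = \ze^{-3}(z_1 + z_2 - z_3 - z_4)$, and this matches precisely because $\ze$ is a primitive $6$-th root of unity, so $\ze^{-3} = -1$. The appearance of $\ze^3 = -1$ is essential throughout; this is what relates the normalization of the basis $x_i$ (with its affine-Weyl action) to the standard permutation basis $z_i$, and it is the main bookkeeping obstacle. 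Everything else is routine linear algebra.
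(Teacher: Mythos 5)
Your proof is correct and carries out exactly the ``straightforward computation'' the paper defers: verify the five defining relations of \eqref{G223present} in $S_4$, count to conclude bijectivity, and then check $\phi$-equivariance of the linear map generator by generator, with the key simplification $\ze^3=-1$ appearing in the $u$-verification just as you note. One small terminological slip: $(23)$, $(34)$, $(24)$ are transpositions, not three-cycles; the underlying computation $sts\mapsto(23)$, $tut\mapsto(34)$, $usu\mapsto(24)$ is nevertheless what you meant and is correct.
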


\begin{proof} This is a straightforward computation. \end{proof}
	
The rings $R = \CC[x_1, x_2, x_3]$ and $\CC[z_1, z_2 ,z_3, z_4]/(e_1)$ are isomorphic as $G(2,2,3)$-modules (i.e. as $S_4$-modules), with the same invariant subrings. However, the simple reflections $\{s,t,u\}$ of $G(2,2,3)$ do not act the same way as the simple reflections $\{(12), (23), (34)\}$ of $S_4$, and the associated Demazure operators are different.

Let us equip $G(2,2,3)$ with a length function, sending a group element to the length of the minimal expression via the presentation \eqref{G223present}. The Poincar\'{e}
polynomial of this length function is 
\begin{equation} \xi = 1 + 3v + 6v^2 + 9v^3 + 5v^4.\end{equation}
This is in contrast with the Poincar\'{e} polynomial for the usual length function on $S_4$, which is 
\begin{equation} \label{eq:pi223} \pi = 1 + 3v + 5v^2 + 6v^3 + 5 v^4 + 3 v^5 + v^6.\end{equation}
This $\pi$ is also the Poincar\'{e} polynomial of the usual nilCoxeter algebra of $S_4$ (where $v$ represents degree $-1$ for this purpose), which is 24-dimensional.

The exotic nilCoxeter algebra $\NC(2,2,3)$ has its nil-quadratic relations
\begin{equation} \label{quad} \pa_s^2 = \pa_t^2 = \pa_u^2 = 0 \end{equation}
and its braid relations
\begin{equation} \label{braidish} \ze \pa_s \pa_t \pa_s = \pa_t \pa_s \pa_t, \quad \ze \pa_t \pa_u \pa_t = \pa_u \pa_t \pa_u, \quad \ze \pa_u \pa_s \pa_u = \pa_s \pa_u \pa_s, \end{equation}
see \S\ref{ssec:nilcoxbasics}. These relations held even for the nilCoxeter algebra $\NC(z,3)$ of $V_z$, before specialization to a root of unity, but the remaining relations hold only at a root of unity. The roundabout relations are
\begin{subequations}
\begin{equation} \label{roundaboutm2n3}\pa_s \pa_t \pa_u \pa_s + \ze^{2} \pa_t \pa_u \pa_s \pa_t + \ze^{4} \pa_u \pa_s \pa_t \pa_u = 0, \end{equation}
\begin{equation} \pa_s \pa_u \pa_t \pa_s + \ze^{2} \pa_t \pa_s \pa_u \pa_t + \ze^{4} \pa_u \pa_t \pa_s \pa_u = 0. \end{equation}
\end{subequations}
The final relations needed to present $\NC(2,2,3)$ are
\begin{subequations}
\begin{equation} \pa_s \pa_t \pa_u \pa_t \pa_s + \pa_t \pa_s \pa_u \pa_s \pa_t = \pa_s \pa_t \pa_u \pa_s \pa_t + \pa_t \pa_s \pa_u \pa_t \pa_s, \end{equation}
\begin{equation} \pa_s \pa_u \pa_t \pa_u \pa_s + \pa_u \pa_s \pa_t \pa_s \pa_u = \pa_s \pa_u \pa_t \pa_s \pa_u + \pa_u \pa_s \pa_t \pa_u \pa_s, \end{equation}
\begin{equation} \pa_u \pa_t \pa_s \pa_t \pa_u + \pa_t \pa_u \pa_s \pa_u \pa_t = \pa_u \pa_t \pa_s \pa_u \pa_t + \pa_t \pa_u \pa_s \pa_t \pa_u. \end{equation}
\end{subequations}
Note that each roundabout relation is preserved (up to scalar) by $\si$ and sent to the other roundabout relation by $\tau$, see Definition \ref{defn:symmetries}. The other three relations form an orbit under the action of $\si$ and $\tau$. So, up to the symmetries $\si$ and $\tau$, only two relations are needed to present $\NC(2,2,3)$ (beyond the generic relations of $\NC(z,3)$).

\begin{rem} Reversing the order of a word is an antiinvolution of $\NC(2,2,3)$, because it preserves all the relations. \end{rem}

The graded algebra $\NC(2,2,3)$ is 36-dimensional. For ease of discussion, we pretend it is positively graded rather than negatively graded.  It has Poincar\'{e} polynomial 
\begin{equation} \Xi = 1 + 3v + 6v^2 + 9 v^3 + 10v^4 + 6 v^5 + v^6.\end{equation}
It is generated in degree $1$, so it is impossible to find a subalgebra with the graded dimension $\pi$ or $\xi$. Nor is there a quotient algebra of graded dimension $\pi$, see the next remark.

However, $\NC(2,2,3)$ does have a left module with graded dimension $\pi$, because that is the dimension of the coinvariant algebra $C$. Recall that $C := R / (R^W_+)$, where
$R^W_+$ denotes the ideal in $R$ generated by the positive degree elements of $R^W$. The action of $\NC(2,2,3)$ on $R$ preserves $(R^W_+)$ since Demazure operators are $R^W$-linear,
so it descends to $C$. The action on $C$ is faithful; any operator killing $C$ will kill all of $R$ by $R^W$-linearity. It is easy to verify in this small example that $C$ is
cyclic, generated by the staircase polynomial $x_1^4 x_2^2$ as a module over $\NC(2,2,3)$. We now identify the corresponding left ideal.

The algebra $\NC(2,2,3)$ has a special element in degree $2$: \begin{equation} \gamma = \pa_{ts} - \ze \pa_{ut} + \ze^2 \pa_{su} - \ze^2 \pa_{us} + \ze \pa_{tu} - \pa_{st}.
\end{equation} It is killed by multiplication on the left (resp. on the right) by any element of degree $4$ in $\NC(2,2,3)$, and this property picks $\gamma$ out uniquely up to
scalar. It is an eigenvector for the symmetries $\si$ and $\tau$. It acts by zero on any degree $2$ polynomial, though it is nonzero in higher degree.

\begin{lem} The coinvariant algebra $C$ is isomorphic as a left $\NC(2,2,3)$ module to the quotient by the left ideal generated by $\gamma$. \end{lem}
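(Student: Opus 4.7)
Since the action of $\NC(2,2,3)$ on $C$ is faithful and $C$ is cyclic with generator $\stair = x_1^4 x_2^2$, evaluation gives a surjection $\NC(2,2,3) \twoheadrightarrow C$, $x \mapsto x(\stair)$, of left $\NC(2,2,3)$-modules. Its kernel $K$ has graded dimension $\Xi - \pi = v^2 + 3v^3 + 5v^4 + 3v^5$, totaling $12 = 36 - 24$. The lemma is equivalent to the equality $K = \NC(2,2,3)\cdot\gamma$.

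The first step is to verify the inclusion $\NC(2,2,3)\cdot\gamma \subseteq K$. Because Demazure operators are $R^{W_m}$-linear, the ideal $(R^{W_m}_+)$ is preserved by all of $\NC(2,2,3)$, so this inclusion reduces to the single assertion $\gamma(\stair) \in (R^{W_m}_+)$. I would verify this by direct computation: evaluate each of the six terms $\pa_{ts}(\stair)$, $\pa_{ut}(\stair)$, $\pa_{su}(\stair)$, $\pa_{us}(\stair)$, $\pa_{tu}(\stair)$, $\pa_{st}(\stair)$ using the twisted Leibniz rule \eqref{eq:Leibniz}, assemble the linear combination with coefficients $(1, -\ze, \ze^2, -\ze^2, \ze, -1)$, and recognize the resulting degree-$4$ polynomial as a member of the ideal generated by $\{e_1(\yb), e_2(\yb), x_1 x_2 x_3\}$ of Proposition \ref{prop:CST}.

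The second step is the dimension count $\dim_\CC \NC(2,2,3)\cdot\gamma = 12$, after which $K = \NC(2,2,3)\cdot\gamma$ follows from the first step. The hypothesis $\NC(2,2,3)_{\ge 4}\cdot\gamma = 0$ confines $\NC(2,2,3)\cdot\gamma$ to degrees $2, 3, 4, 5$, and the inclusion $\NC(2,2,3)\cdot\gamma \subseteq K$ imposes graded upper bounds $(1, 3, 5, 3)$. Degree $2$ holds since $\gamma \ne 0$. In degree $3$, the relations $\si\circ\pa_i = \pa_{i+1}\circ\si$ from \eqref{siondem} combined with $\gamma$ being a $\si$-eigenvector show that $\{\pa_s\gamma, \pa_t\gamma, \pa_u\gamma\}$ spans a $\si$-stable subspace of dimension $0$ or $3$, which is nonzero because evaluating $\pa_s\gamma$ on a conveniently chosen polynomial yields a nonzero value. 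Degrees $4$ and $5$ require showing that the left annihilator of $\gamma$ has dimension $1$ in $\NC(2,2,3)_2$ and dimension $6$ in $\NC(2,2,3)_3$; each is a finite check using the quadratic, braid, and roundabout relations.

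The main obstacle is the bookkeeping in the degree-$4$ and degree-$5$ dimension counts: there are many products to enumerate, and the cancellations depend delicately on $\ze^3 = -1$. A more conceptual route would use a Frobenius-style duality on $\NC(2,2,3)$ to identify $\NC(2,2,3)\cdot\gamma$ with $K$ directly by treating $\gamma$ as a ``socle-like'' element picked out by its bilateral annihilation property, but such a structural duality for $\NC(2,2,3)$ is not established in the excerpt.
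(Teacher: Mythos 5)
Your overall strategy is the same as the paper's: construct the evaluation surjection $\psi\colon\NC(2,2,3)\to C$ at $\stair=x_1^4x_2^2$, verify that $\gamma(\stair)\in(R^{W}_+)$ so that $\NC(2,2,3)\cdot\gamma\subseteq\ker\psi$, and then match dimensions. Where you diverge is in how the dimension count $\dim\NC(2,2,3)\cdot\gamma=12$ is established. The paper outsources this to a computer calculation (a linear algebra computation in the finite-dimensional algebra); you propose a degree-by-degree argument by hand. That is a legitimate thing to want, but as written it has a gap.

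The specific flaw is the degree-$3$ step. From $\sigma\pa_i\sigma^{-1}=\pa_{i+1}$ and $\gamma$ being a $\sigma$-eigenvector, you correctly conclude that conjugation by $\sigma$ cyclically permutes the three operators $\pa_s\gamma,\pa_t\gamma,\pa_u\gamma$ up to nonzero scalars, so their span is $\sigma$-stable. But a $\sigma$-stable subspace spanned by a three-element $\sigma$-orbit (up to scalar) need not have dimension $0$ or $3$: over $\CC$ such a span can perfectly well be $1$- or $2$-dimensional. Concretely, if $\sigma$ acts by $v_1\mapsto\lambda_1v_2\mapsto\lambda_1\lambda_2 v_3\mapsto v_1$ with $\lambda_1\lambda_2\lambda_3=1$, then for any cube root of unity $\mu$ the relation $v_1+(\lambda_1/\mu)v_2+(\lambda_1\lambda_2/\mu^2)v_3=0$ is $\sigma$-compatible and can hold in a $2$-dimensional span. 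So showing $\pa_s\gamma\ne 0$ does not give you $\dim\NC_1\gamma=3$. To close this, you would need to exhibit a polynomial (or several) on which $\pa_s\gamma,\pa_t\gamma,\pa_u\gamma$ evaluate to linearly independent results, or else express the three operators in a basis of $\NC_3$ and compute the rank directly — which is exactly the sort of finite check the paper delegates to a computer. The degree-$4$ and degree-$5$ annihilator counts you acknowledge as deferred; those, like degree $3$, are genuinely finite checks but must actually be carried out, and the symmetry considerations you invoke do not substitute for them. In short: right strategy, correct target bounds in each degree, but the $\sigma$-symmetry shortcut in degree $3$ does not hold, and the remaining checks are sketched rather than done.
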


\begin{proof} Define a surjective left module map $\psi \colon \NC(2,2,3) \to C$ by acting on $x_1^4 x_2^2$. One can verify that
\begin{align} \nonumber \frac{1}{2} \gamma(x_1^4 x_2^2) = & x_1 x_2^2 x_3 - \ze x_1 x_2 x_3^2 + \ze x_2^3 x_3 - x_2 x_3^3 \\ = & \ze^{-1} x_2 x_3 (x_1^2 + \ze^2 x_2^2 + \ze^4 x_3^2) + 2 (-\ze^{-1} x_1 + x_2 - \ze x_3) x_1 x_2 x_3. \end{align}
Recall that $R^W$ is generated by $x_1 x_2 x_3$ and $x_1^2 + \ze^2 x_2^2 + \ze^4 x_3^2$, so that $\gamma(x_1^4 x_2^2) \in (R^W_+)$. So $\psi(\gamma) = 0$. However, a computer calculation (found at \cite[\texttt{ActionOfGamma.m}]{EJYcode}) shows that the quotient of $\NC(2,2,3)$ by the left ideal of $\gamma$ has graded dimension $\pi$, so $\psi$ must descend to an isomorphism modulo this ideal. \end{proof}

\begin{rem} The quotient by the right ideal generated by $\gamma$ is a right module which also has dimension $\pi$. However, the two-sided ideal generated by $\gamma$ is too large, containing everything in degree $\ge 4$, so there is no algebra quotient of graded dimension $\pi$. \end{rem}

An important observation is that both the ordinary and exotic nilCoxeter algebras are one-dimensional in top degree (degree $6$). The graded dimension of $R$ over $R^W$ is
$\pi$, so (up to scalar) there is one $R^W$-linear map $R \to R$ of degree $-6$, and it is the Frobenius trace (with image lying in $R^W$). Thankfully, this one-dimensional space is
achieved within $\NC(2,2,3)$, so there is at least one word of length $6$ in the generators $\{\pa_s, \pa_t, \pa_u\}$ which is equal to the Frobenius trace map (up to scalar).

There are 18 elements of length $6$ in the affine Weyl group $W$. By pre- and
post-composing \eqref{roundaboutm2n3} with $\pa_s$ and $\pa_u$, and applying the nil-quadratic relation, one can deduce that $\pa_{stustu} = 0$. Similarly, the
following elements of $W$ give zero in degree $-6$ inside $\NC(2,2,3)$: \begin{equation} stustu, sutsut, tsutsu, tustus, ustust, utsuts. \end{equation} The other twelve elements
are all obtained from $stsust$ by applying the symmetries $\si$ and $\tau$ and by reversing the word. These twelve all serve as the
``longest element'' in $G(2,2,3)$, in that $\pa_w$ is a Frobenius trace. In fact, all twelve nonzero operators $\pa_w$ are equal up to a power of $\zeta$.

\begin{rem} Theorem \ref{thm:whatisw0} states precisely what these powers of $\zeta$ should be. Note that the quantum binomial coefficients ${m-1-\bottom \brack \alpha - \bottom}$ 
appearing in Theorem \ref{thm:whatisw0}, when $m=2$, are equal to either ${1 \brack 0}$ or ${1 \brack 1}$ or zero. \end{rem}

%========================================================
\subsection{The example of $G(3,3,3)$} \label{ssec:333intro}
%========================================================

Let $\ze$ be a $9$-th root of unity. The group $G(3,3,3)$ has size $54$, having presentation
\begin{equation} G(3,3,3) = \langle s,t,u \mid s^2 = t^2 = u^2 = 1, sts=tst, tut=utu, usu=sus, stsust=ustsus \rangle. \end{equation}

The exotic nilCoxeter algebra has a presentation with the nil-quadratic relations \eqref{quad}, the braid relations \eqref{braidish}, and two more relations up to symmetry. First we have the roundabout relation
\begin{equation} \label{roundaboutm3n3} \pa_{stustu} + \ze^{3} \pa_{tustus} + \ze^{6} \pa_{ustust} = 0. \end{equation}
Applying $\tau$, we obtain another roundabout relation in degree $6$. By pre- and post-composing \eqref{roundaboutm3n3} we can deduce that any \emph{cyclic} word $w$ of length $8$ in $W$ (see \S\ref{ssec:cyclic}), such as $stustust$ or $tsutsuts$, the
operator $\pa_w$ is zero. This implies that certain elements of length $9$ (like $ws$ and $sw$) induce the zero operator, and so forth for longer elements.

We find the following picture of the affine Weyl group helpful.
\begin{equation} \label{m3n3length8} \ig{.15}{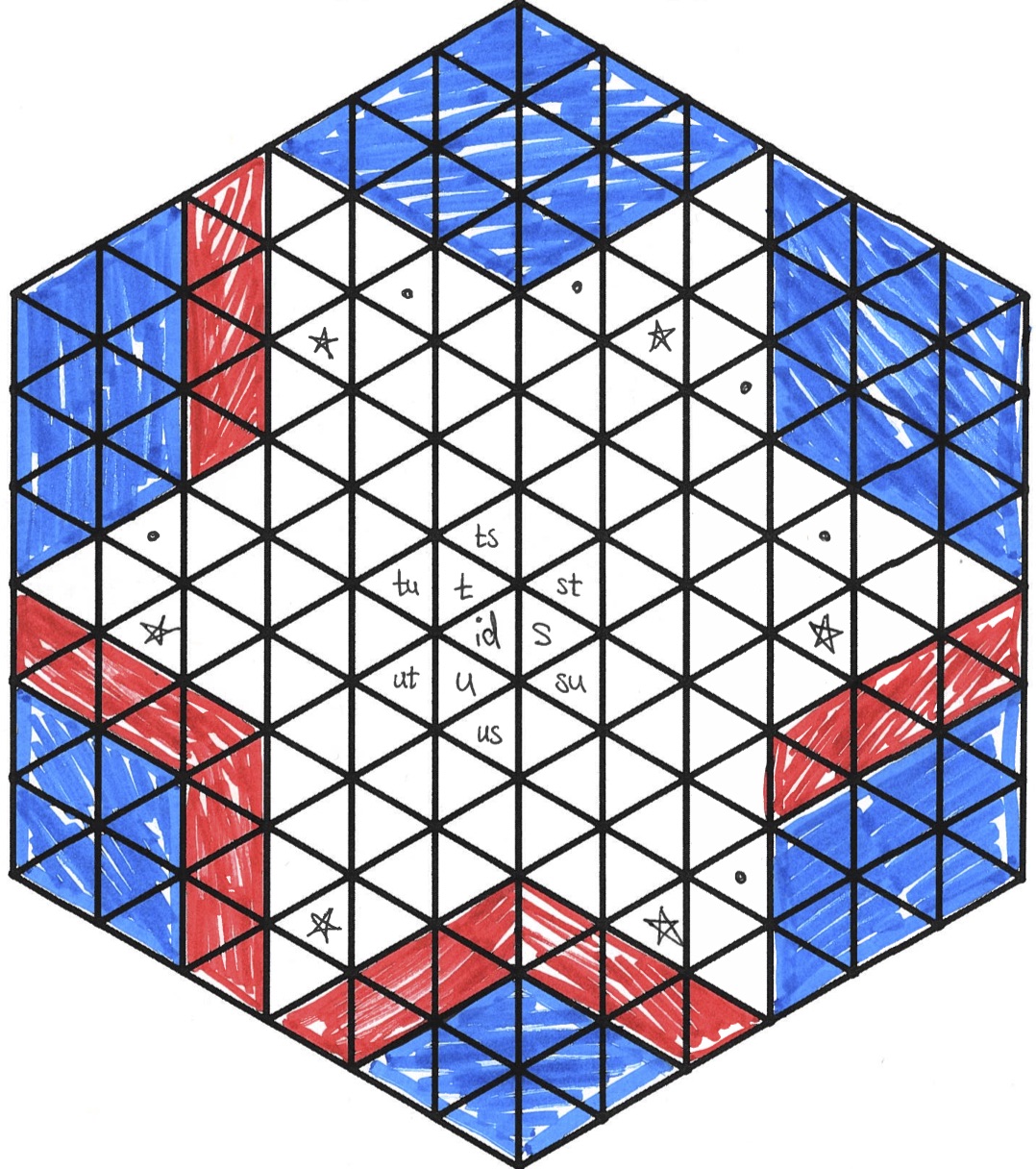} \end{equation}
Shaded in blue are the operators which are zero by the roundabout relation, as in the previous
paragraph. Shaded in red are non-zero operators which are not needed in a basis, because of the roundabout relations. For example, using \eqref{roundaboutm3n3} we need not use any
operator which begins with $\pa_{ustust}$ or (via the $\tau$ conjugate) $\pa_{utsuts}$. Precomposing \eqref{roundaboutm3n3} with $\pa_s$ or $\pa_t$, we can also rewrite
$\pa_{tustust}$ and $\pa_{sutsuts}$ in terms of other operators. The star-shape that remains after removing the red and blue shaded regions is a basis of the algebra obtained only
by imposing the roundabout relations but not \eqref{m3n3relation}, and its dimension matches Conjecture \ref{conj:justtheroundabout}. This behavior is typical as $m$ increases.

In addition to the roundabout relations, we need six relations in degree $8$. One of them is
\begin{equation} \label{m3n3relation} \pa_{stsutsut} = \pa_{utusutus} - \ze^3 \pa_{tusutusu}. \end{equation}
The orbit of this relation under $\si$ and $\tau$ is six-dimensional (rather than $3$-dimensional as happened for $m=2$). One way to think of these relations is as follows. The starred alcoves in the picture above are elements of the form $(sts)u(sts)u$ and $u(sts)u(sts)$, and their orbit under $\sigma$. They give rise to a basis for $\NC(3,3,3)$ in degree $8$, namely
\begin{equation} \label{m3n3basis8} \{ \pa_{stsustsu}, \pa_{sustsust}, \pa_{tutstuts}, \pa_{sutusutu}, \pa_{ustsusts}, \pa_{tsustsus} \}. \end{equation}
The six dotted alcoves need to be rewritten in terms of this basis. The relation \eqref{m3n3relation} and its conjugates more obviously rewrite the $\si$ and $\tau$ orbit of $stsutsut$ in terms of this basis. Some elements in this orbit live in the red shaded region; applying roundabout relations one reaches the remaining
dotted alcoves.

\begin{rem} Once again, reversing the order of the word gives an anti-involution of $\NC(3,3,3)$. \end{rem}

% \begin{rem} Our computer code gives six relations which are not obviously equal to the orbit of \eqref{m3n3relation}. We have verified manually that they span the same six-dimensional subspace of the twelve-dimensional space which remains in degree 8 after quotienting by the roundabout relations. \end{rem}

The Poincar\'{e} polynomial of $\NC_{(3,3,3)}$ is
\begin{equation} \label{poinpoly333} 1 + 3v + 6v^2 + 9v^3 + 12 v^4 + 15v^5 + 16v^6 + 15v^7 + 6 v^8 + v^9.\end{equation} It has dimension $84$. The
one-dimensional space in degree $-9$ is the minimal degree possible, and serves as a Frobenius trace map. Indeed, every element of length $9$ which is not in a blue-shaded region gives a nonzero Demazure operator. Choosing a particular normalization for the Frobenius trace $\pa_{W_m}$, some of these operators are equal to $\pa_{W_m}$, and some of these operators are $(1 + \ze^3) \pa_{W_m}$ (all up to a power of $\ze$). Note that $1 + \ze^3 = 1 + q^{-2}$ is (up to a power of $q$) the quantum number $[2]$. See \eqref{m3n3binomials}.

%========================================================
\subsection{Experimental data and interpretation} \label{ssec:results}
%========================================================

Let us now explain our experimental results, focusing on the case when $n=3$. Every statement in this section has been verified for $m \le \maxcalc$, and should be considered as conjectural in general.

The exotic nilCoxeter algebra $\NC(m,m,3)$ possesses the nil-quadratic relations \eqref{quad} and the braid relations \eqref{braidish}. Imposing only these two relations yields an infinite-dimensional algebra with graded dimension equal to the Poincar\'{e} polynomial of the affine Weyl group
\begin{equation} \label{affineWeylpoincare} 1 + 3v + 6v^2 + 9v^3 + 12v^4 + \ldots + (3k)v^k + \ldots. \end{equation}

The algebra $\NC(m,m,3)$ also has two roundabout relations in degree $2m$, as proven in Theorem \ref{thm:roundabout} and Corollary \ref{cor:otherroundabout}.

\begin{conj} \label{conj:justtheroundabout} Let $A$ be the algebra abstractly generated by $\pa_i$ for $i \in \Om$, modulo the quadratic and braid relations above, and the roundabout relations in degree $m(n-1)$. Then $A$ is a finite-dimensional negatively-graded (non-commutative) algebra with symmetric Poincar\'{e} polynomial, possibly a symmetric algebra or a Frobenius algebra. When $n=3$ it has Poincar\'{e} polynomial
\begin{align} \label{balancedpoincare} 1 + & 3v + 6v^2 + \ldots + (6m-6) v^{2m-2} + (6m-3) v^{2m-1} + (6m-2) v^{2m} + \\ 
	 & (6m-3) v^{2m+1} + (6m-6) v^{2m+2} + \ldots + 6v^{4m-2} + 3v^{4m-1} + v^{4m}. \nonumber \end{align}
\end{conj}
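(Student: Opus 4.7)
The plan is to bound the graded dimension of $A$ from both above and below: an explicit combinatorial spanning set indexed by affine Weyl group elements for the upper bound, and a Frobenius-type non-degeneracy argument for the lower bound.

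\textbf{Upper bound (spanning set).} Using Lemma \ref{lem:abiparametrize}, I would index the non-identity elements of $W_{\aff}$ by triples $(a,b,i)$ and, for each, set $\pa_{(a,b,i)} \in A$ via the reduced expression $\un{w}(a,b,i)$. First show that $\{\pa_{(a,b,i)}\}$ spans $A$: starting from any word in the generators, the nil-quadratic relation eliminates non-reduced expressions, after which Matsumoto's theorem for $W_{\aff}$ lets us transport any reduced expression to the canonical $\un{w}(a,b,i)$ by a sequence of braid moves, each costing only an invertible power of $\ze$. Next, observe that Corollary \ref{cor:longcyclicvanishes} is already derivable inside $A$ from the quadratic, braid, and roundabout relations (exactly as in Example \ref{ex:longcyclicvanishesm2}); combined with Remark \ref{rmk:maximalcyclicsubwords}, this forces $\pa_{(a,b,i)} = 0$ whenever $a \ge (m+1)(n-1) - 1$ or $b \ge (m+1)(n-1) - 1$, which for $n = 3$ restricts the spanning set to $0 \le a, b \le 2m$. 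Finally, the two roundabout relations themselves explain the $-2$ correction in the middle degree $c_{2m} = 6m - 2$, while the $-6$ corrections in each degree $2m{+}1 \le k \le 4m$ and the vanishing of $\pa_{\un{w}(2m, 2m, i)}$ in degree $4m{+}1$ must be extracted from derived relations obtained by pre- and post-composing the roundabouts with generators and rewriting via braid moves in $A$.

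\textbf{Lower bound.} Showing this upper bound is tight is the crux of the problem. The quotient map $A \onto \NC(m,m,n)$ is not faithful---for $m = 2$ the conjecture predicts $\dim A = 48$ while $\dim \NC(2,2,3) = 36$; see \S\ref{ssec:223intro}---so one cannot use $\NC(m,m,n)$ as a witness module. The cleanest route would be to prove directly that $A$ is a graded Frobenius algebra: verify that $A_{4m}$ is one-dimensional (spanned by any $\pa_{\un{w}(2m{-}1, 2m, i)}$ or $\pa_{\un{w}(2m, 2m{-}1, i)}$) and that multiplication furnishes a perfect pairing $A_k \times A_{4m-k} \to A_{4m}$; symmetry of the Poincar\'e polynomial and the stated dimensions \eqref{balancedpoincare} would then follow automatically. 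Candidate faithful modules might be constructed by ``de-specializing'' $R_m$ to retain cancellations killed when $z \mapsto \ze$, or one might attempt a Bergman diamond-lemma analysis with a monomial order adapted to the $\un{w}(a,b,i)$ parametrization.

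\textbf{Main obstacle.} The chief difficulty is precisely this non-degeneracy step. The roundabout involves $n$ terms each of length $m(n-1)$, so its critical pairs with the braid and nil-quadratic relations sit in long windows and proliferate rapidly with $m$, producing the genuinely new derived relations observed experimentally---for instance \eqref{m3n3relation} at $m = 3$, whose orbit under $\si$ and $\tau$ is needed for $\NC(m,m,n)$ but not (conjecturally) for $A$ itself. Ruling out hidden further relations in $A$ without enumerating them appears to require either a categorical model---for example, realizing $A$ inside a deformed $2$-category of singular Soergel bimodules from \S\ref{ssec:introGRT} before specialization, where the Frobenius structure should arise geometrically---or a finer combinatorial invariant than any currently in hand.
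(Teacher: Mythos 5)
First, a point of calibration: the statement you are addressing is stated in the paper as a \emph{conjecture}, and the paper offers no proof of it at all --- its only evidence is computer verification for $n=3$ and $m \le \maxcalc$ (see \S\ref{ssec:results}). So there is no argument of the authors to compare yours against, and your proposal, which explicitly leaves its crucial step open, does not close the gap either. Your diagnosis of where the difficulty lies (the lower bound, i.e.\ ruling out relations in $A$ beyond those generated by the quadratic, braid, and roundabout relations) is accurate and consistent with the paper: the authors' own data show that $\NC(m,m,3)$ satisfies many further relations (e.g.\ \eqref{m3n3relation} and the degree $3m-1$, $3m-2$, $3m-3$ relations tabulated in \S\ref{ssec:results}), so the surjection $A \onto \NC(m,m,3)$ cannot serve as a faithfulness witness, exactly as you say; your dimension check ($48$ versus $36$ for $m=2$) is correct.

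Beyond that, be aware that even your upper bound is not yet an argument but a plan with a real hole. The spanning set $\{\pa_{(a,b,i)}\}$ via Lemma \ref{lem:abiparametrize} and the vanishing from Corollary \ref{cor:longcyclicvanishes} (whose derivation does go through inside $A$, since it only uses the quadratic, braid, and roundabout relations) restricts you to $0 \le a,b \le 2m$, but counting those triples gives $6m-3j+6$ elements in degree $2m+j$ for $1 \le j \le 2m$ --- for instance $6$ candidates in degree $4m$ against the claimed dimension $1$ --- so in every degree strictly between $2m$ and $4m+1$ you must exhibit, purely from derived consequences of the roundabouts, enough linear dependencies to cut the count by exactly $6$ (and by $5$ in degree $4m$, and kill all three $\pa_{(2m,2m,i)}$ in degree $4m+1$), and then separately show no further collapse occurs. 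The paper's discussion of the $m=3$ picture (the red-shaded alcoves in \eqref{m3n3length8}) indicates how such rewritings arise, but it is only asserted there that the resulting count matches the conjecture, not proven. So the honest summary is: your proposal correctly identifies both halves of what a proof would require and correctly flags the non-degeneracy step as the crux, but neither half is carried out, and the statement remains, as in the paper, a conjecture supported by computation.
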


\begin{ex} When $n=3$ and $m=3$ the graded dimension of $A$ is
\begin{equation} 1 + 3v + 6v^2 + 9v^3 + 12v^4 + 15v^5 + 16v^6 + 15v^7 + 12 v^8 + 9 v^9 + 6 v^{10} + 3v^{11} + v^{12}. \end{equation}
The two relations in degree $2m = 6$, which cut down the dimension from $18$ to $16$ in this example.
\end{ex}

We have verified Conjecture \ref{conj:justtheroundabout} for $n=3$ and $m \le 20$ by computer.

% We believe some healthy combinatorics could easily establish an upper bound like \eqref{balancedpoincare} for the graded dimension of $A$, and a Bergman diamond lemma style argument could establish a basis.

However, the nilCoxeter algebra is much smaller than $A$. When $n=3$ it has minimal degree $-3m$ rather than $-4m$. The additional relations are very mysterious. We have written code to compute the relations for $m \le \maxcalc$, see \cite[\texttt{ExoticNilcoxeterPresentation.m}]{EJYcode}. The remaining relations occur in degree $3m-k$ for a relatively small number $k$, so that \eqref{balancedpoincare} controls all but the tail of the graded dimension of $\NC(m,m,3)$. We summarize the graded dimensions and the number of additional relations in this table, which focuses on the tail. 

\begin{table}[h!]
\label{table:gradeddimensions}
\centering
\makebox[\textwidth][c]{\begin{tabular}{r | r r r r r r || r ||| r | r r r r r r r || r |||}
$m$ & 2 & 3 & 4 & 5 & 6 & 7 & & & 12 & 13 & 14 & 15 & 16 & 17 & 18 & \\
$\deg$ & &&&&&& rel & $\deg$ & &&&&&&& rel \\
\hline
0 & 1 & 1 & 1 & 1 & 1 & 1 & &											32 & 48 & 60 & 72 & 84 & {\color{red} 94} & 96 & 96 & \\
1 & 3 & 3 & 3 & 3 & 3 & 3 & &											33 & 45 & 57 & 69 & 81 & 93 & 99 & 99 & \\
2 & 6 & 6 & 6 & 6 & 6 & 6 & &											34 & {\color{Green} 21} & 54 & 66 & 78 & 90 & {\color{red} 100} & 102 		& 21  \\
3 & 9 & 9 & 9 & 9 & 9 & 9 & &											35 & {\color{Mahogany} 6} & 51 & 63 & 75 & 87 & 99 & 105 & \\
4 & {\color{red} 10} & 12 & 12 & 12 & 12 & 12 & &						36 & {\color{Mahogany} 1} & 48 & 60 & 72 & 84 & 96 & {\color{red} 106} & \\
5 & {\color{LimeGreen} 6} & 15 & 15 & 15 & 15 & 15 & 	3 &			37 & & {\color{Green} 21} & 57 & 69 & 81 & 93 & 105 						& 24 \\
6 & {\color{Mahogany} 1} & {\color{red} 16} & 18 & 18 & 18 & 18 & &		38 & & {\color{Mahogany} 6} & 54 & 66 & 78 & 90 & 102 & \\
7 & & 15 & 21 & 21 & 21 & 21 & & 										39 & & {\color{Mahogany} 1} & {\color{JungleGreen} 50} & 63 & 75 & 87 & 99 	& 1 \\
8 & & {\color{LimeGreen} 6} & {\color{red} 22} & 24 & 24 & 24&	6 & 40 & & & {\color{JungleGreen} 21} & 60 & 72 & 84 & 96 						& 21 \\
9 & & {\color{Mahogany} 1} & 21 & 27 & 27 & 27 & & 						41 & & & {\color{Mahogany} 6} & 57 & 69 & 81 & 93 & \\
10 & & & 18 & {\color{red} 28} & 30 & 30 & &							42 & & & {\color{Mahogany} 1} & {\color{JungleGreen} 52} & 66 & 78 & 90 		& 2 \\
11 & & & {\color{LimeGreen} 6} & 27 & 33 & 33 & 			9 & 	43 & & & & {\color{JungleGreen} 21} & 63 & 75 & 87						 	& 18 \\						
12 & & & {\color{Mahogany} 1} & 24 & {\color{red} 34} & 36 & & 			44 & & & & {\color{Mahogany} 6} & 60 & 72 & 84 & \\
13 & & & & 21 & 33 & 39 & & 											45 & & & & {\color{Mahogany} 1} & {\color{JungleGreen} 52} & 69 & 81 			& 5 \\
14 & & & & {\color{LimeGreen} 6} & 30 & {\color{red} 40} & 	12 & 	46 & & & & & {\color{JungleGreen} 21} & 66 & 78 								& 3 \\
15 & & & & {\color{Mahogany} 1} & 27 & 39 & & 							47 & & & & & {\color{Mahogany} 6} & 63 & 75 & \\
16 & & & & & {\color{Green} 21} & 36 & 						3 & 	48 & & & & & {\color{Mahogany} 1} & {\color{PineGreen} 52} & 72 					& 8 \\
17 & & & & & {\color{Mahogany} 6} & 33 & & 								49 & & & & & & {\color{Mahogany} 21} & 69 & \\
18 & & & & & {\color{Mahogany} 1} & 30 & &								50 & & & & & & {\color{Mahogany} 6} & 66 & \\
19 & & & & & & {\color{Green} 21} &  							6 &		51 & & & & & & {\color{Mahogany} 1} & {\color{PineGreen} 52} 						& 11 \\
20 & & & & & & {\color{Mahogany} 6} & & 									52 & & & & & & & {\color{Mahogany} 21} & \\
21 & & & & & & {\color{Mahogany} 1} & & 									53 & & & & & & & {\color{Mahogany} 6} & \\
22 & & & & & & & &														54 & & & & & & & {\color{Mahogany} 1} & \\

\end{tabular}
}
\caption{The graded dimensions of $\NC(m,m,3)$.}
\end{table}

The red numbers in this table are places where the two roundabout relations appear. Above and below the red numbers the table follows the predictable pattern of \eqref{balancedpoincare}. Numbers which follow this pattern are in black or red. However, at some point additional relations are required, as indicated in differing shades of green. The number of these relations is recorded in the ``rel'' column. After these relations are enforced, the remainder of the graded dimension seems to be a stable tail, see \eqref{eq:tail} below. Degrees which are in the tail but for which no relations are required appear in brown.

For $2 \le m \le 5$, we need $3(m-1)$ relations in degree $3m-1$ to ensure that the dimension of $\NC(m,m,3)$ in that degree is always $6$. No longer is there only one orbit of
relations under $\si$ and $\tau$. For $6 \le m \le 13$, we instead have $3(m-5)$ relations in degree $3m-2$ to ensure that the dimension in that degree is always $21$, and this suffices
to imply that the dimension in degree $3m-1$ is $6$. For $14 \le m \le 16$ we enter a period of ``transition metals,'' where we need some relations in degree $3m-3$ and $3m-2$, until
eventually for $17 \le m \le \maxcalc$ we have only relations in degree $3m-3$, exactly the amount needed to ensure that the dimension in that degree is $52$. We have only computed up
to $m=\maxcalc$.

There is a mysterious transition involving an unknown number of relations to what seems to be a stable tail, having the form
\begin{equation} \label{eq:tail} \ldots + 52 v^{3m-3} + 21 v^{3m-2} + 6 v^{3m-1} + v^{3m}. \end{equation}
One would need to compute to approximately $m=30$ to determine the next number in this stable tail, and distinguish between several sequences on the OEIS \cite{OEIS} which might fit. If the next coefficient is $105$ then we might have the tantalizing sequence $[3]!_k v^{3m-k}$, \cite[sequence A069778]{OEIS}. Or perhaps the next coefficient is $103$, and we get \cite[sequence A135454]{OEIS}.
%note to coauthors: this is a joke, look it up!! I strongly advocate leaving this in the paper with no further comment.

To put this tail in context, just what is the graded dimension of $\End_{R^W}(R)$? By the Shephard-Todd theorem, the ``degrees'' of $G(m,m,3)$ are $3, m, 2m$, meaning that these are the degrees of the generators of $R^W$. More precisely, we have
\begin{equation} R^W = \CC[x_1 x_2 x_3, x_1^m + \ze^m x_2^m + \ze^{2m} x_3^m, x_1^m x_2^m + \ze^m x_1^m x_3^m + \ze^{2m} x_2^m x_3^m]. \end{equation}
By dividing their Poincar\'{e} polynomials, we see that $R$ is a free module over $R^W$ with graded rank (RHS correct for $m > 3$)
\begin{equation} \frac{(1-v^3)(1-v^m)(1-v^{2m})}{(1-v)^3} = 1 + 
% 1 + 3q + 6q^2 + 9q^3 +
\ldots + 9 v^{3m-3} + 6 v^{3m-2} + 3 v^{3m-1} + v^{3m} = \sum \pi_i v^i. \end{equation}
Meanwhile, $R$ has graded rank
\[ \sum \ga_i v^i = \frac{1}{(1-v)^3} = 1 + 3v + 6v^2 + 10v^3 + \ldots. \]
Then the dimension of $\End_{R^W}(R)$ in degree $-k$ is equal to
\begin{equation} d_k = \sum_{i \ge k} \pi_i \ga_{i-k}, \end{equation}
which is an upper bound on the possible dimension of $\NC(m,m,3)$. For small values of $k$ this upper bound is much larger than $\NC(m,m,3)$.

Here are the last nonzero values of $d$ (correct for $3 < m$):
\begin{equation} d_{3m} = 1, d_{3m-1} = 6, d_{3m-2} = 21, d_{3m-3} = 55. \end{equation}
Thus in degrees $3m$, $3m-1$, and $3m-2$ the subalgebra $\NC(m,m,n)$ is stably generic, in the sense that every possible $R^W$-module map is realized by Demazure operators. However, in degree $3m-3$ the subalgebra $\NC(m,m,n)$ is not stably generic, and some unknown constraints are being placed upon this subalgebra.
 % More investigation is needed to unravel this mystery.
 
We conclude with one more conjecture.

\begin{conj} There is a $\ze$-linear anti-involution of $\NC(m,m,n)$ which sends $\pa_i$ to $\pa_i$ for all $i \in \Om$. \end{conj}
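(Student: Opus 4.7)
The plan is to realize the desired anti-involution as the adjoint operation for the Frobenius pairing $\langle f, g \rangle := J(fg)$ on $R = R_m$. This pairing is $R^W$-bilinear, symmetric (by commutativity of $R$), and non-degenerate by Theorem \ref{thm:JFrob}, so every $R^W$-linear endomorphism $X$ of $R$ admits a unique adjoint $X^*$ characterized by $\langle X(f), g \rangle = \langle f, X^*(g) \rangle$, and the map $X \mapsto X^*$ is a $\CC$-linear anti-involution of $\End_{R^W}(R)$. If each generator $\pa_i$ is self-adjoint, then $\NC(m,m,n)$ is stable under this anti-involution, the restriction fixes each $\pa_i$, and a composition $\pa_{i_1} \cdots \pa_{i_k}$ is sent to its reverse $\pa_{i_k} \cdots \pa_{i_1}$.

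The key step is therefore to verify $\pa_i^* = \pa_i$. Using the twisted Leibniz rule \eqref{eq:Leibniz}, together with $\pa_i^2 = 0$ and the fact that $\pa_i(f) \in R^{s_i}$ is fixed by $s_i$, a direct computation gives
\begin{equation*}
\pa_i\!\bigl(\pa_i(f)\cdot g\bigr) \;=\; \pa_i(f)\,\pa_i(g) \;=\; \pa_i\!\bigl(f\cdot \pa_i(g)\bigr),
\end{equation*}
so the difference $\pa_i(f)\,g - f\,\pa_i(g)$ lies in $\ker(\pa_i) = R^{s_i}$. Hence it suffices to show that $J$ annihilates $R^{s_i}$. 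For $h \in R^{s_i}$, partition $W_m$ into the right cosets of $\langle s_i \rangle$: on each coset $\{w, ws_i\}$ the sign character (which descends from $W_{\aff}$ to $W_m$) takes opposite values while $w(h) = ws_i(h)$, so the two contributions cancel in the antisymmetrizer $A(h) = \sum_{w \in W_m} (-1)^{\ell(w)} w(h)$. Thus $A(h) = 0$ and $J(h) = 0$, yielding $\langle \pa_i(f), g\rangle = \langle f, \pa_i(g)\rangle$.

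The plan is short and I do not foresee a serious obstacle, given the machinery already developed: non-degeneracy of the pairing (Theorem \ref{thm:JFrob}), the twisted Leibniz rule, and the vanishing of antisymmetrization on $R^{s_i}$. A structurally cleaner route, should one prefer it, would be to exhibit $J$ as a composition $T \circ \pa_i$ for a Frobenius trace $T\colon R^{s_i} \to R^W$, which would conceptualize self-adjointness of $\pa_i$ as the standard fact that Frobenius traces are adjoint to inclusions. That alternative would require first establishing that $R^W \subset R^{s_i}$ is itself a Frobenius extension — an interesting question in its own right — whereas the direct antisymmetrization argument above sidesteps it entirely.
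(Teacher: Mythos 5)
The paper does not actually prove this statement: it appears only as a conjecture at the end of \S\ref{ssec:results}, and the authors explicitly say they know of no a priori reason for the symmetry to exist (they only observe that word reversal preserves the quadratic, braid, and roundabout relations). So there is no proof of theirs to compare against; what you propose is an actual proof, and I find no gap in it. Existence and uniqueness of adjoints needs only that $R$ is free of finite rank over $R^{W}$ (Proposition \ref{prop:CST}) and that the pairing $(f,g)\mapsto J(fg)$ is perfect (Theorem \ref{thm:JFrob}); since the pairing is symmetric, $X\mapsto X^{*}$ is a $\CC$-linear (hence $\ze$-linear) anti-involution of $\End_{R^{W}}(R)$. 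For self-adjointness of the generators, your Leibniz computation uses only $\pa_i^2=0$ and $\pa_i(f)\in R^{s_i}$, and yields $\pa_i\bigl(\pa_i(f)\,g-f\,\pa_i(g)\bigr)=0$; because $R_m$ is a domain and $\al_i\neq 0$, this forces $\pa_i(f)\,g-f\,\pa_i(g)\in R^{s_i}$. That $J$ kills $R^{s_i}$ follows exactly as you say from the paper's lemma that the sign character descends to $W_m$: pairing $w$ with $ws_i$ (distinct elements, since $s_i$ acts nontrivially on $V_m$) cancels the antisymmetrizer $A$ on any $s_i$-invariant. Hence $\pa_i^{*}=\pa_i$ for every $i\in\Om$, the adjoint preserves the subalgebra $\NC(m,m,n)$, fixes each generator, and sends a word to its reverse, which is precisely the conjectured anti-involution; as a bonus it explains a priori why word reversal must preserve \emph{every} relation among Demazure operators, not just the ones computed so far. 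When you write it up, spell out the three inputs explicitly ($\ker\pa_i=R^{s_i}$ via the domain argument, descent of the sign character to $W_m$, and perfectness of the pairing from Theorem \ref{thm:JFrob}); the alternative route via a Frobenius trace for $R^{W}\subset R^{s_i}$ is indeed unnecessary.
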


In order words, reversing the order of a word is a conjectural symmetry of $\NC(m,m,n)$. Clearly word reversal preserves the quadratic, braid, and roundabout relations. We do not know an a priori reason this symmetry should exist.

% We conclude with a discussion of degree $-3m$. As just observed, within $\End_{R^W}(R)$ this degree is one-dimensional. Indeed, it is spanned by the map $J$ from Definition \ref{defn:J}, see Lemma \ref{lem:uniqueuptoscalar}. By Theorem \ref{thm:whatisw0}, this one-dimensional space is realized within $\NC(m,m,n)$, so the final term in \eqref{eq:tail} being $v^{3m}$ is a theorem, not a conjecture.

% %========================================================
% \subsection{Appendix: relations} \label{ssec:lotsofrelations}
% %========================================================
%
% We list the experimentally-derived relations for $\NC(m,m,3)$ with $m \le \maxcalc$. Our code can be found at \BE{put on github}, and uses the MAGMA language \cite{MAGMA}.
% We give the precise relations for $m \le 8$ in \S\ref{ssec:lotsofrelations}, but opted not to write down the $24$ relations for $m = 13$ \BE{number}.
%
% \BE{do it}

\bibliographystyle{plain}
\bibliography{mastercopy}

\end{document}